\numberwithin{equation}{section}
\theoremstyle{plain}
\newtheorem{thm}{Theorem}[section]
\newtheorem{lem}[thm]{Lemma}
\newtheorem{prop}[thm]{Proposition}
\newtheorem{cor}[thm]{Corollary}
\newtheorem{claim}[thm]{Claim}
\newtheorem{rem}{Remark}[thm]
\theoremstyle{definition}
\theoremstyle{remark}
\newcommand{\R}{\mathbb{R}}
\newcommand{\C}{\mathbb{C}}
\newcommand{\I}{\mathcal{I}(t)}
\newcommand{\LL}{\mathcal{L}}
\newcommand{\B}{\mathcal{B}}
\newcommand{\PL}{\partial^2_x\LL}
\newcommand{\J}{\mathcal{J}(t)}
\newcommand{\M}{\mathcal{M}(t)}
\newcommand{\N}{\mathcal{N}(t)}
\newcommand{\Op}{(1-\partial_x^2)}
\newcommand{\vA}{\varphi_A}
\newcommand{\IOpg}{(1-\gamma \partial_x^2)^{-1}}
\newcommand{\Int}[1]{\int_{#1}}
\newcommand{\Jap}[2]{\langle {#1}, {#2} \rangle}
\newcommand{\sgn}{\operatorname{sgn}}
\newcommand{\tv}{\tilde{v}}
\newcommand{\lips}[2]{\widecheck{#1}_{#2}=#1_{#2}-\widetilde{#1}_{#2}}
\newcommand{\ch}[1]{\widecheck{#1}}
\newcommand{\sech}{\operatorname{sech}}
\begin{document}
	\title[Asymptotics for Good Boussinesq solitons]{Asymptotic stability manifolds for solitons in the generalized Good Boussinesq equation}
	\author[Christopher Maul\'en]{Christopher Maul\'en}  
	\address{Departamento de Ingenier\'{\i}a Matem\'atica and Centro
de Modelamiento Matem\'atico (UMI 2807 CNRS), Universidad de Chile, Casilla
170 Correo 3, Santiago, Chile.}
	\email{cmaulen@dim.uchile.cl}
	\thanks{Ch.M. was partially funded by Chilean research grants FONDECYT 1191412, and CONICYT PFCHA/DOCTORADO NACIONAL/2016-21160593 and CMM ANID PIA AFB170001.}

\keywords{Generalized Boussinesq Boussinesq, decay, virial}

	\begin{abstract}
		We consider the generalized Good-Boussinesq model in one dimension, with power nonlinearity and data in the energy space $H^1\times L^2$. This model has solitary waves with speeds $-1<c<1$. When $|c|$ approaches 1, Bona and Sachs showed orbital stability of such waves. It is well-known from a work of Liu that for small speeds solitary waves are unstable. In this paper we consider in more detail the long time behavior of zero speed solitary waves, or standing waves. By using virial identities, in the spirit of Kowalczyk, Martel and Mu\~noz, we construct and characterize a manifold of even-odd initial data around the standing wave for which there is asymptotic stability in the energy space. 
	\end{abstract}
	\maketitle
	\tableofcontents

	\section{Introduction}

	\subsection{Setting}

	 In the 1870's,  J. Boussinesq \cite{Bou1} deduced a system of equations to describe two-dimensional irrotational and inviscid fluids in a uniform rectangular channel with flat bottom. He was the first to give a favorable explanation to the traveling-waves, solitons, or solitary waves solutions discovered by Scott Rusell thirty years earlier \cite{SR}, which remained in their form and travelled with constant velocity.

	 \medskip
	 
	 In a first order approximation, Boussinesq's matrix model reduces to a scalar, fourth order model
	 \begin{align}
		  \partial_t^2 \phi - \partial^4_x \phi-\partial^2_x \phi+\partial_{x}^2(f(\phi))=0, \label{eq:BB}
		\end{align}
	However, this model, known as the bad Boussinesq equation, is strongly linearly ill-posed. Consequently, in order to repair this problem,
	  the following equation was proposed \cite{Z,MK}:
		\begin{align}
		  \partial_t^2 \phi+\partial^4_x \phi-\partial^2_x \phi+\partial_{x}^2(f(\phi))=0. \label{eq:GB}
		\end{align}
		Here the physical model considers the nonlinearity as quadratic, i.e. $f(\phi)=\phi^2$ and $\phi(t,x)$ is a real-valued function. This model is called good Boussinesq, and if formally $u=\phi$ and $v=\partial_x^{-1}\partial_t \phi$, this model has the following representation as $2\times2$ system:
		\begin{align}\label{eq:gGB}
		(g\mbox{GB)}\ \ \ \ 
		\begin{cases}
		\partial_t u=\partial_x v \\
		\partial_t v=\partial_x (-\partial_x^2 u+u-f(u)).
		\end{cases}
		\end{align}
This will be the exact model worked in this paper, which is Hamiltonian, and has the following associated conserved quantities:	
\begin{equation}\label{eq:energy}
\begin{aligned}
E[u,v] =&~{}  \frac12 \int \big[v^2 +u^2 +(\partial_x u)^2-2F(u)\big]\ \ \  &(\mbox{Energy}) ,\quad \\
 P[u,v] =&~{}\int uv \quad \quad  &(\mbox{Momentum}) .
\end{aligned}
\end{equation}	
(Here $\int$ means $\int_{\mathbb R} dx$.) These laws define a standard energy space $(u,v)\in H^1\times L^2$. As well as the Korteweg-de Vries (KdV) equation, ($g\mbox{GB}$) is considered as a canonical model of shallow water waves, see \cite{whitman}. In addition, ($g\mbox{GB}$) arises in the so-called "nonlinear string equation" describing small nonlinear oscillations in an elastic beam (see \cite{FST}). 

\medskip
The study of the Boussinesq-type equations has increased recently, mainly due to the versatility of these models when describing nonlinear phenomena. There are several authors that focus on the good Boussinesq equation. The fundamental works
Bona and Sachs \cite{Bona-Sachs}, using abstract techniques of Kato, proved that the Cauchy problem is locally and globally well-posed for small data, and showed the existence of solitary waves for velocities $c^2<1$.
Linares \cite{Linares,Notes_linares}, using Stricharz estimates, proved that the Cauchy problem is globally well-posed in the energy space in the case of small data.
Kishimoto \cite{kishimoto}, in the case of a quadratic nonlinearity, proved that the Cauchy problem is  globally well-posed in $H^{s}(\R)$,  for $s\leq -1/2$, and ill-posed for $s<-1/2$. In \cite{MPP}, it was proved that small solutions in the energy space must decay to zero as time tends to infinity in proper subsets of space. Recently, Charlier and Lenells \cite{CL} developed the inverse scattering transform and a Riemann-Hilbert approach for the quadratic ($g\mbox{GB}$), which is integrable. In general, solitons (solitary waves in integrable equations) are stable objects. However, this is not the case of good Boussinesq (similar to Klein-Gordon). Indeed, small perturbations of solitons may decay or form singularities in finite time, see \cite{FST,Liu,BZ,Z}. 
\medskip
 
In this paper, we are motivated by the long time behavior problem for solitary waves of the gGB \eqref{eq:GB} in the case where $f(s)=|s|^{p-1}s$ for $p>1$. A solitary wave is a solution to \eqref{eq:GB} of the form
\[
(u,v)=(Q_c,-cQ_c)(x-ct-x_0), \quad |c|<1, \quad x_0\in\R,
\]
with $Q_c$ solving $ (c^2-1) Q_c + Q_c'' + f(Q_c)=0$ in $H^1(\mathbb R)$. This interesting question has attracted the attention of several authors before us, showing that the behavior of solitary waves in the standard energy space $H^{1}\times L^2$ is not an easy problem. 
Bona and Sachs \cite{Bona-Sachs}, applying the theory developed by Grillakis, Shatath and Strauss (see \cite{GSS_stability}),
 proved that solitary waves are stable if the speed $c$ obeys the condition $(p-1)/4<c^2<1$ and $p>4$.
Li, Ohta, Wu and Xue \cite{LOWX2020} proved the orbital instability in the degenerate case $1<p<5$ and speed $c=(p-1)/4$. Additionally, 
Kalantarov and Ladyzhenkaya in \cite{KZ} proved that solutions associated to 
 initial data with nonpositive energy may blow up in some sense. Inspired by this work, Liu \cite{Liu} showed that there are solutions with initial data arbitrarily near the ground state ($c=0$) that blow up in  finite time.



		\subsection{Standing waves}
In the case that $f$ is a pure power nonlinearity of the form $f(s)=|s|^{p-1}s$ for $p>1$, it is well-known that (up to shifts) standing solitary waves have the form
	\begin{equation}
	u(t,x)=Q(x)=\left(\dfrac{p+1}{2\cosh^{2}\left(\frac{p-1}{2}x\right)}\right)^{1/(p-1)}, \quad v(t,x)=0.
	\end{equation}
	Here, $Q$ satisfies the equation
	\begin{equation}
	Q''(x)-Q(x)+f(Q(x))=0 \label{eq:soliton_eq}.
	\end{equation}
	Let us consider a perturbation in  \eqref{eq:gGB} of $Q$ of the form
	\[
		u(t,x)=Q(x)+w(t,x),\ \ \ v(t,x)=z(t,x).
	\]
	Then one can see that this perturbation satisfies the following linear system at first order:
		\begin{align} \label{eq:eq_lin}
		\begin{cases}
		\partial_t w= \partial_x z \\
		\partial_t z
				= \partial_x \LL w, 
		\end{cases}
		\end{align}
where
\begin{align}\label{eq:LL}
\LL(w)=-\partial_x^2w+V_0(x)w,
\quad \mbox{with} \quad V_0(x)=1-f'(Q).
\end{align}
$\LL$ is the classical Schr\"odinger operator associated to the soliton $Q$.  This operator has been extensively studied in  \cite{S_NLS} for instance.

\medskip

Therefore, from \eqref{eq:eq_lin} one has $\partial_t^2 w= \PL w$. Consequently, for the well-understanding of the problem we require to  study the fourth order operator $-\PL$, much in the spirit of the fundamental works by Pego and Weinstein results \cite{PW,PW2}.
In Appendix \ref{A}, we will prove the following: for any $p>1$, the linear operator 
\begin{equation}\label{eq:PL}
-\PL(u)=\partial^4_x u-\partial_{x}^2u+\partial_{x}^2(pQ^{p-1}u),
\end{equation}
has a unique eigenfunction $\phi_0(x)$ associated to a negative first eigenvalue $-\nu_0^2<0$, satisfying
	\begin{equation}\label{eq:phi0}
		\langle \partial_x^{-1} \phi_0, \partial_x^{-1}\phi_0 \rangle=1,\ \ \ -\PL(\phi_0)=-\nu_0^2\phi_0, \quad |\phi_0(x)|\lesssim e^{-1^{-}|x|}.
	\end{equation}
Note that we also have $\partial_x^{-1} \phi_0$ well-defined, exponentially decreasing and part of $L^2$. Here $\langle \cdot ,\cdot \rangle$ is the inner product in $L^2(\R)$, and $1^-$ is a number slightly below 1. The second eigenvalue of $-\PL$ is 0 but it is also a resonance in the classical sense (in $L^\infty\backslash L^2$), but the unique $L^2$ eigenvalue is $\phi_1(x)=c_1 Q'(x).$ Therefore, by the Spectral Theorem, orthogonal to $\phi_0$ the operator $-\PL$ is nonnegative. See Appendix \ref{A} for more details and full proofs of all the previous statements.

Let
\begin{equation}\label{eq:Y_Z}
	\boldsymbol{Y}_{\pm}=\left(
	\begin{array}{c}
	\phi_0\\
	\pm\nu_0 \partial_x^{-1}\phi_0
	\end{array}\right)
	, \ \ \ \
	\boldsymbol{Z}_{\pm}=\left(
		\begin{array}{c}
		\partial_x^{-2}\phi_0\\
		\pm\nu_0^{-1} \partial_x^{-1}\phi_0
		\end{array}\right).
\end{equation}
These are even-odd functions, i.e. the first coordinate is even and the second odd
(see Appendix \ref{cor:even_eigenfunction}). The functions $\textbf{u}_{\pm}(t,x)=e^{\pm \nu_0 t }\textbf{Y}_{\pm}(x)$ are solutions of the linearized problem \eqref{eq:eq_lin}, showing the presence of exponentially stable and unstable linear manifolds relevant for the dynamics of nonlinear solutions in a neighborhood of the soliton.

\medskip

In that follows, we refers to \emph{global solution} of \eqref{eq:gGB} to a function $C([0,\infty), H^{1}\times L^2)$ that satisfies \eqref{eq:gGB} for all $t\geq 0$.

\subsection{Main results}

It is not difficult to realize that \eqref{eq:gGB} preserves the even-odd parity in its variables $(u,v)$. In this paper, we will prove that any even-odd small perturbation  
of the static soliton ($c=0$) in the energy space, under certain orthogonality condition,
 is orbitally stable and in fact, it is 
 (locally) asymptotically stable. Furthermore, we will construct a manifold of initial data such that the associated solutions are orbitally stable in $H^{1}\times L^{2}$, and locally asymptotically stable in the space $L^2\cap L^{\infty}$. Our first result is:

\begin{thm}\label{thm1}
Let $p\geq 2$. There exists $\delta>0$ such that if a global even-odd solution $(\phi,\partial_t\partial_{x}^{-1}\phi)$ of \eqref{eq:gGB} satisfies for all $t\geq 0$,
\begin{equation}
\|(\phi,\partial_t\partial_{x}^{-1} \phi)(t)-(Q,0) \|_{H^1(\R)\times L^2(\R)}<\delta ,\label{eq:condition_globalsolution}
\end{equation}
then, for any $\gamma>0$ small enough and any compact interval $I$ of $\R$,
\begin{equation}\label{eq:local_stability}
\lim_{t\to +\infty} \left( \| \phi(t)-Q \|_{L^2(I)\cap L^\infty(I)}+\| (1-\gamma \partial_x^2)^{-1} \partial_t \phi(t) \|_{L^2(I)} \right)=0.
\end{equation}
\end{thm}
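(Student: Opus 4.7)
The plan is to carry out the virial program of Kowalczyk, Martel and Muñoz in the Hamiltonian formulation of (gGB). I would decompose the solution as $u = Q + w$, $v = z$, where, by preservation of even-odd parity under the flow, $(w,z) \in C([0,\infty); H^1 \times L^2)$ is even-odd and of size $O(\delta)$. The crucial consequence of the global-in-$t$ smallness \eqref{eq:condition_globalsolution} is that the projection of $(w,z)$ onto the exponentially unstable direction $\boldsymbol{Y}_+$ must remain bounded; since the linearized flow amplifies this component by $e^{\nu_0 t}$ while the nonlinear remainder is quadratic, a standard bootstrap/contradiction argument forces $\langle (w,z), \boldsymbol{Z}_+\rangle$ to decay exponentially. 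This delicately replaces a modulation/orthogonality ansatz and allows one to work with a coercive bilinear form on the even-odd subspace orthogonal to $\boldsymbol{Z}_\pm$.

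Next I would prove a first virial estimate. Picking a smooth weight $\vA(x)=A\tanh(x/A)$ with $A$ large, I would consider
\[
\I = \int \vA \Bigl( w\, z + \tfrac12 \vA'\, w^2 \Bigr) dx,
\]
or a close variant adapted to \eqref{eq:gGB}. Using the equations and integration by parts yields
\[
\frac{d}{dt}\I \leq -c \int \vA' \bigl( (\partial_x w)^2 + w^2 + z^2 \bigr) dx + \textup{(residual)},
\]
where the residual involves $\partial_x^2 w$ and the soliton coupling. Time integration gives local $L^2$ control of $(w,z)$ modulo unwanted higher-order contributions.

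To close these I would perform a change of variables and a second virial in the spirit of Martel's transformation for KdV-type systems: set $\eta = \LL w$, so that $(\eta, \partial_x z)$ satisfies a closed second-order-in-time evolution governed by the nonnegative operator $-\PL$ (nonnegative on the even-odd subspace orthogonal to $\phi_0$, by the spectral information \eqref{eq:phi0}). A second virial $\J$ built on $\eta$ with a distinct weight $\vB$ of larger scale yields
\[
\int_0^{+\infty} \int_I \bigl( \eta^2 + (\partial_x \eta)^2 \bigr) dx\, dt \lesssim \delta^2
\]
for every compact interval $I$; combining this with the first virial and the coercivity of $\LL$ under even-odd orthogonality to $\phi_0$ gives
\[
\int_0^{+\infty} \int_I \bigl( w^2 + (\partial_x w)^2 + z^2 \bigr) dx\, dt \lesssim \delta^2.
\]
A standard uniform-continuity-in-time argument then upgrades this integrability to $\lim_{t\to\infty} \|w(t)\|_{L^2(I)} = 0$ and, via the $H^1$ bound and Sobolev embedding on $I$, to the $L^\infty(I)$ part of \eqref{eq:local_stability}. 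The smoothing factor $\IOpg$ is there precisely to convert local $L^2$ control of $z$ into $L^2$ decay of $\IOpg \partial_t \phi = \IOpg \partial_x z$ on compacts, since $\partial_x \IOpg$ is convolution with an integrable kernel and hence continuous $L^2(I') \to L^2(I)$ for a slightly larger $I'$.

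The main obstacle is the construction of the second virial and its coupling to the first: the operator $-\PL$ is fourth order, the eigenfunction $\phi_0$ decays only as $e^{-1^-|x|}$, and the transformation $\eta = \LL w$ loses two derivatives, so the weights $\vA,\vB$ and their scales $A,B$ must be tuned carefully so that the residual commutator and boundary terms can be absorbed by the principal negative part. Controlling the contribution of the directions $\boldsymbol{Y}_\pm$ to $\I$ and $\J$ through the exponential decay of the unstable projection (rather than via an explicit modulation parameter) is the technical heart of the argument.
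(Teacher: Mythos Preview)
Your outline has the right architecture but several concrete steps would fail as stated.

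\medskip

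\textbf{Exponential decay of the unstable projection.} Global smallness \eqref{eq:condition_globalsolution} does \emph{not} force $\langle (w,z),\boldsymbol{Z}_+\rangle$ to decay exponentially; it only prevents exponential growth. The ODE is $\dot b_+=\nu_0 b_+ + O(\delta^2)$, which is compatible with $b_+(t)\equiv c\,\delta$ for all $t$ without any decay. In the paper the unstable component is extracted explicitly (writing $u=Q+a_1\phi_0+u_1$, $v=a_2\nu_0\partial_x^{-1}\phi_0+u_2$ with $(u_1,u_2)$ orthogonal to $\boldsymbol{Z}_\pm$) and controlled through the auxiliary functional $\mathcal B=b_+^2-b_-^2$, whose time derivative yields $+\tfrac{\nu_0}{2}(a_1^2+a_2^2)$ up to a term absorbed by the virials. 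Integrability of $a_1^2+a_2^2$ then comes from combining $\mathcal B$ with the virial functional $\mathcal H$, not from any a priori bootstrap on $b_+$ alone.

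\medskip

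\textbf{Regularity of the transformed variable.} Setting $\eta=\LL w$ with $w\in H^1$ puts $\eta\in H^{-1}$ only, so your second virial $\J$ on $\eta$ is not well-defined. The paper instead takes $v_1=(1-\gamma\partial_x^2)^{-1}\LL u_1$, $v_2=(1-\gamma\partial_x^2)^{-1}u_2$ with $\gamma=B^{-4}$ small; this regularization is essential and is precisely why the conclusion \eqref{eq:local_stability} involves $(1-\gamma\partial_x^2)^{-1}\partial_t\phi$ rather than $\partial_t\phi$ itself.

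\medskip

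\textbf{Two virials are not enough.} After the transformation, the coercivity estimate needed to absorb the bad term $\int\sech(x)u_1^2$ from the first virial produces a term in $\|\partial_x z_1\|_{L^2}^2$ which does \emph{not} appear in the second virial $\J$ (because $(v_1,v_2)\in H^1\times H^2$ have regularities in the ``wrong'' order). The paper closes this via two further functionals: a third virial $\M=\int\psi_{A,B}\,\partial_x v_1\,\partial_x v_2$ on the derivative system, and a transfer functional $\N=\int\rho_{A,B}\,\partial_x v_1\,v_2$ exchanging $\partial_x z_1$ and $\partial_x^2 z_2$. Only a carefully weighted combination $\mathcal H=\J+cB^{-1}\I+B^{-1}\M-c'B^{-5}\N$, together with $\mathcal B$, yields the time-integrability that feeds the final $\mathcal K(t)\to 0$ argument.
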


This is, as far as we understand, the first description of the standing wave dynamics in the Good Boussinesq model, which is unstable by nature. Clearly the data under which \eqref{eq:condition_globalsolution} is satisfied is not empty, the soliton $(Q,0)$ being its most important representative. However, \eqref{eq:condition_globalsolution} cannot define an open set in the energy space as simple as in some stable, subcritical dynamics, such as KdV. Our second result will describe the manifold of initial data leading to \eqref{eq:condition_globalsolution}, but first we need to clarify some remarks.

\begin{rem}[On the lack of decay of derivatives] Estimate \eqref{eq:local_stability} provides a clean and clear description of the local decay of $\phi(t)$ in the Lebesgue spaces $L^2\cap L^\infty$. However, no clear description of the derivative $\partial_x\phi(t)$ has been found, which remains an interesting open problem. 
\end{rem}

\begin{rem}[On the $\partial_t\partial_{x}^{-1}\phi$ term] We have been unable to provide a clean description of decay for the second component of the Good Boussinesq system. This is due to some deep problems present at the level of the dynamics. However, \eqref{eq:local_stability} provides additional information on the decay of a suitable modification of the second variable. The constant $\gamma$ depends on $\delta$, but it can be taken arbitrarily small if needed. 
\end{rem}

\begin{rem}[About general data]
The construction performed in this paper uses in several steps the parity of the data. Extending our results to general data is a challenging problem, mainly because one needs to introduce shifts that may affect in a strong fashion the dynamics. We hope to consider this problem in a forthcoming publication. 
\end{rem}

\begin{rem}[About the condition $p\geq 2$]
The condition $p\geq 2$ is of technical type, and it is needed to ensure a control on the unstable direction, sufficiently good for our purposes. We believe that the situation for $p$ close to 1 may be very complicated because of the weak decay of the amplitude associated to the unstable direction.  
\end{rem}

The following result provides a description of the manifold of initial data leading to global solutions for which \eqref{eq:condition_globalsolution}  holds.

\medskip

Let $\delta_0>0$, and let $\mathcal{A}_0$ be  the manifold given by
\begin{equation}\label{eq:A0_variedad}
\mathcal{A}_0=
\left\{ 
\boldsymbol{\epsilon} \in H^1(\R)\times L^2(\R) \vert \ \boldsymbol{\epsilon} \mbox{ is even-odd }, \Vert \boldsymbol{\epsilon} \Vert_{H^1\times L^2}<\delta_0\mbox{ and }\langle \boldsymbol{\epsilon}, \boldsymbol{Z}_{+}\rangle=0  \right\}.
\end{equation}


\begin{thm}\label{thm2}
Let $p>1$. There exist $C, \delta_0>0$ and a Lipschitz function $h:\mathcal{A}_0\to \R$ with $h(0)=0$ and $|h(\epsilon )|\leq C \|\epsilon \|_{H^1\times L^2 }^{3/2}$ such that, denoting
\begin{equation}\label{eq:M_variedad}
\mathcal{M}=\left\{ (Q,0)+\epsilon +h(\epsilon ) Y_{+}  \mbox{ with } \epsilon\in  \mathcal{A}_0\right\},
\end{equation}
the following holds:
\begin{enumerate}
	\item If $\boldsymbol{\phi}_0\in \mathcal{M}$ then the solution of \eqref{eq:gGB} with initial data $\boldsymbol{\phi}_0$ is global and satisfies, for all $t\geq 0$,
	\begin{equation}\label{eq:condition_close}
	\| \boldsymbol{\phi}(t)-(Q,0) \|_{H^1(\R)\times L^2 (\R)}\leq C \|\boldsymbol{\phi}_0-(Q,0) \|_{H^1(\R)\times L^2(\R)}.
	\end{equation}
	\item If a global even-odd solution $\boldsymbol{\phi}$ of \eqref{eq:gGB} satisfies, for all $t\geq 0$,
	\begin{equation}
	\| \boldsymbol{\phi}(t)-(Q,0)\|_{H^1(\R)\times L^2(\R)}\leq \frac12 \delta_0,
	\end{equation}
	then for all $t\geq 0$, $\phi(t)\in \mathcal{M}$.
\end{enumerate}
\end{thm}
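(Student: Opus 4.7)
The plan is to construct $\mathcal{M}$ as a Lyapunov--Perron center-stable manifold tailored to the one-dimensional unstable direction $\boldsymbol{Y}_{+}$ of the linearized flow. First I would set up a spectral decomposition of every small even-odd perturbation $\boldsymbol{u}=\boldsymbol{\phi}-(Q,0)$ as
\[
\boldsymbol{u}=\alpha\,\boldsymbol{Y}_{+}+\beta\,\boldsymbol{Y}_{-}+\boldsymbol{\eta},
\]
where $\alpha,\beta\in\R$ are extracted by pairing with the adjoint eigenpair $\boldsymbol{Z}_{\pm}$ and $\boldsymbol{\eta}$ lies in the spectral complement on which $-\partial_x^{2}\LL$ is nonnegative. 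The orthogonality condition $\langle\boldsymbol{\epsilon},\boldsymbol{Z}_{+}\rangle=0$ defining $\mathcal{A}_{0}$ is precisely the vanishing of one of the two hyperbolic coefficients, so that $\mathcal{A}_{0}$ parametrises the transverse slice to which $h(\boldsymbol{\epsilon})\boldsymbol{Y}_{+}$ will be added. Under the flow one obtains the scalar ODEs $\dot\alpha=\nu_{0}\alpha+F_{+}(\boldsymbol{u})$, $\dot\beta=-\nu_{0}\beta+F_{-}(\boldsymbol{u})$ with $F_{\pm}$ of order at least $\min(p,2)$ in $\boldsymbol{u}$, together with a center equation of the form $\partial_{t}\boldsymbol{\eta}=JL\boldsymbol{\eta}+R(\boldsymbol{u})$.

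For part (1), fix $\boldsymbol{\epsilon}\in\mathcal{A}_{0}$ and look for a global solution of \eqref{eq:gGB} with initial data $(Q,0)+\boldsymbol{\epsilon}+h\,\boldsymbol{Y}_{+}$ whose unstable coefficient stays bounded for all $t\geq 0$. Boundedness forces $\alpha$ to obey the backward Duhamel identity
\[
\alpha(t)=-\int_{t}^{\infty} e^{\nu_{0}(t-s)}F_{+}(\boldsymbol{u}(s))\,ds,
\]
while $\beta$ and $\boldsymbol{\eta}$ are reconstructed forward in time by Duhamel from the prescribed initial data. I would cast the coupled system as a contraction mapping on the ball
\[
\Bigl\{\boldsymbol{u}\in C\bigl([0,\infty);H^{1}\times L^{2}\bigr):\sup_{t\geq 0}\|\boldsymbol{u}(t)\|_{H^{1}\times L^{2}}\leq K\|\boldsymbol{\epsilon}\|_{H^{1}\times L^{2}}\Bigr\},
\]
and define $h(\boldsymbol{\epsilon})$ so that the fixed-point value of the initial unstable coefficient matches the one prescribed on $\mathcal{M}$. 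Lipschitz continuity of $h$ comes from the usual differentiation of the Banach fixed-point in the parameter $\boldsymbol{\epsilon}$, while the estimate $|h(\boldsymbol{\epsilon})|\leq C\|\boldsymbol{\epsilon}\|^{3/2}$ is obtained by bounding the Duhamel integral $\int_{0}^{\infty}e^{-\nu_{0}s}F_{+}(\boldsymbol{u}(s))\,ds$ against the exponential weight, exploiting the quadratic/superlinear nature of $F_{+}$ together with a local decay gain on $\boldsymbol{\eta}$ of the kind furnished by Theorem~\ref{thm1}. Finally, the orbital bound \eqref{eq:condition_close} follows from conservation of $E$ and $P$ combined with coercivity of $\langle\LL w,w\rangle+\|z\|_{L^{2}}^{2}$ on the symplectic complement of $\{\boldsymbol{Y}_{\pm}\}$, since $\alpha(t)$ is already under control.

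For part (2), I would use time-translation invariance and uniqueness of the Lyapunov--Perron fixed point. If $\boldsymbol{\phi}$ is globally small, then for each $t_{0}\geq 0$ the shifted trajectory $\boldsymbol{\phi}(t_{0}+\cdot)$ is still a bounded global small solution of \eqref{eq:gGB} close to $(Q,0)$, so its unstable coefficient at $t_{0}$ must coincide with the value prescribed by the backward Duhamel identity above. By uniqueness of the fixed point this forces $\boldsymbol{\phi}(t_{0})\in\mathcal{M}$ for each $t_{0}\geq 0$.

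The main obstacle, in my view, is that unlike in the purely finite-dimensional Lyapunov--Perron setting the semigroup generated by $JL$ on the center subspace is neither contracting nor dispersive at the regularity of $H^{1}\times L^{2}$, so the contraction cannot close on semigroup estimates alone. The way out is to supply an independent a priori $H^{1}\times L^{2}$ bound on $\boldsymbol{\eta}(t)$, which I expect to obtain from coercivity of $\LL$ on the spectral complement of $\{\boldsymbol{Y}_{\pm}\}$ (Appendix~A) together with the conservation of $E$ and $P$, in the Bona--Sachs spirit. A secondary subtlety is that the $3/2$ exponent in the bound on $|h|$ must hold uniformly in $p>1$; this should follow by feeding the local decay gain on $\boldsymbol{\eta}$ produced by the virial estimates underlying Theorem~\ref{thm1} into the Duhamel integral defining $h$, absorbing an extra half power of $\|\boldsymbol{\epsilon}\|$ coming from the $L^{2}$-localised behaviour of $F_{+}$.
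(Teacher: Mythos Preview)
Your route is genuinely different from the paper's. The paper does \emph{not} set up a Lyapunov--Perron contraction; instead it runs a shooting argument in the single unstable coefficient. Concretely, with the decomposition $\boldsymbol{\phi}=(Q,0)+b_{+}\boldsymbol{Y}_{+}+b_{-}\boldsymbol{Y}_{-}+(u_{1},u_{2})$ the paper (i) uses conservation of $E$ together with the coercivity $\langle\LL u_{1},u_{1}\rangle\geq\lambda\|u_{1}\|_{H^{1}}^{2}$ under the orthogonality \eqref{eq:orthogonal_condition} to get the a priori bound $\|u_{1}\|_{H^{1}}^{2}+\|u_{2}\|_{L^{2}}^{2}\lesssim b_{+}^{2}+b_{-}^{2}+\delta_{0}^{2}$; (ii) integrates the $b_{-}$ equation to strictly improve a bootstrap hypothesis $|b_{-}|\leq K\delta_{0}$; and (iii) shows that if the bootstrap ever saturates it must be because $|b_{+}(T)|=K^{5}\delta_{0}^{2}$, and at that instant $\frac{d}{dt}b_{+}^{2}>0$. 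The map $b_{+}(0)\in[-K^{5}\delta_{0}^{2},K^{5}\delta_{0}^{2}]\mapsto b_{+}(T)\in\{\pm K^{5}\delta_{0}^{2}\}$ is then continuous, and a connectedness contradiction produces at least one $b_{+}(0)$ with $T=\infty$. This directly yields $|h(\boldsymbol{\epsilon})|\leq K^{5}\delta_{0}^{2}$ (hence the claimed $3/2$ bound, in fact quadratic) with no appeal to Theorem~\ref{thm1}. Lipschitz regularity and uniqueness are obtained by a second bootstrap on the differences $\check b_{\pm},\check u_{i}$, not by differentiating a fixed point.

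Your proposal has a structural gap precisely at the point you flag as the ``main obstacle''. Replacing semigroup contraction on the center block by an energy a priori bound gives you boundedness of $\boldsymbol{\eta}$, not a contraction: two nearby data lead to two solutions whose difference is only controlled via a Gronwall growing like $e^{C\delta t}$, so the map you write is not a self-map of a fixed ball with Lipschitz constant $<1$, and the Banach fixed point does not close. In this situation one is effectively forced back to a one-parameter shooting in $b_{+}(0)$, which is exactly what the paper does. A second issue is your mechanism for the $3/2$ bound on $|h|$: invoking the decay of Theorem~\ref{thm1} is both circular (that theorem \emph{assumes} the global smallness you are trying to establish) and restricted to $p\geq 2$, whereas Theorem~\ref{thm2} is stated for all $p>1$. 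The paper's bound comes for free from the bootstrap interval $|b_{+}(0)|\leq K^{5}\delta_{0}^{2}$ and needs nothing from Section~\ref{sec:2}--\ref{sec:5}. Your argument for part~(2) via time-translation plus uniqueness is correct in spirit and matches the paper's, but it relies on having first secured uniqueness of $h$, which in your scheme was to come from the fixed point.
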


\begin{rem}[About blow-up] Liu \cite{Liu} showed that initial data $(u_0,v_0)$ for which $E(u_0,v_0)<0$, or $E(u_0,v_0)\geq 0$ and less than a particular function of $\hbox{Im} \int \partial_x^{-1}u_0v_0$ (which is zero in our case), lead to blow up solutions in finite time. In our case, we work with perturbation of the soliton $(Q,0)$. One can easily check that $E(Q,0)= \frac{p-1}{2(p+1)}\int Q^{p+1}>0$, therefore we are not in the blow-up regime determined by Liu. 
\end{rem}

\begin{rem}[Extension to other models]
We believe that our results open the door to the understanding of long time solitary wave dynamics in several other Boussinesq models. We mention for instance the asymptotic stability of abcd solitary waves, at least in the zero speed even data case \cite{BCS1,BCS2}, and the more involved case of the Improved Boussinesq solitary wave; see \cite{MaMu} for further details on this challenging problem.
\end{rem}

\subsection{Idea of the proof}
%
The proofs in this paper follow the lines of the ideas used recently by Kowalczyk, Martel and Mu\~noz in \cite{KMM} to understand the unstable soliton dynamics in the nonlinear Klein-Gordon equation, and  by Kowalczyk, Martel, Mu\~noz and Van Den Bosch \cite{kink} to study the stability properties of kinks for (1+1)-dimensional nonlinear scalar field theories. 

\medskip

More precisely, the proofs are based in a series of localized virial type arguments, similar to the ones used in \cite{ACKM,AM,KMM,kink,KMM2017,Martel-Merle1,MM_solitonsKdV}.
In our case, we will use a combination of virials to obtain the integrability in time of the $L^{2}\times L^{2}$-norm of $(  \phi(t)-Q , (1-\gamma \partial_x^2)^{-1} \partial_t \phi(t) )$, for any $\gamma>0$ small enough, and in any  compact interval $I$, i.e., 
\[
\int_{0}^{\infty} \left( \| \phi(t)-Q \|_{L^2(I)}^2 +\| (1-\gamma \partial_x^2)^{-1} \partial_t \phi(t) \|_{L^2(I)}^2 \right)dt <\infty.
\]
However, some important issues, not present in the previously mentioned works \cite{KMM,kink} will appear along the proofs. The beginning of the proof is similar to \cite{KMM}: The first step is to decompose the solution close to the solitary waves in an adequate way.  We will consider  $(u_1,u_2)\in H^1\times L^2$ be an even-odd perturbation of the solitary waves, which are in some sense orthogonal to $\boldsymbol{Y}_+$ and $\boldsymbol{Y}_-$, and the flow on these directions: for $a_1,a_2$ unique,
\begin{equation*}
\begin{cases}
u(t,x)= Q(x)+a_1(t)\phi_{0}(x)+u_{1}(t,x),\\
v(t,x)= a_2(t)\nu_0 \partial_{x}^{-1}\phi_{0}(x)+u_{2}(t,x).
\end{cases}
\end{equation*}
Then, we will focus on $(u_1,u_2)\in H^1\times L^2$, which satisfy the linearized equation \eqref{eq:eq_lin}. Following \cite{MPP}, for an adequate weight function $\varphi_A$ placed at scale $A$ large, we obtain the virial estimate
\begin{equation}\label{eq:I_intro}
\begin{aligned}
\dfrac{d}{dt}\int \varphi_{A}(x) u_1 u_2
\leq&-\dfrac{1}{2} \int \left[ w_2^2 +2(\partial_x w_1)^2
+\left(1- C_1A^{-1} \right)w_1^2 \right] \\& +C_1 a_1^4 +  C_1\int \sech\left(x \right) u_1^2,
\end{aligned}
\end{equation}
where $(w_1,w_2)$ is localized version of $(u_1,u_2)$ at $A$ scale, and $C_1$ denotes a fixed constant.  This virial estimate has no good sign because of the term $C_1\int \sech\left(x \right) u_1^2$. Then we require to transform the system to a new one which has better virial estimates, in the spirit of  Martel \cite{Martel_linearKDV}. For any $\gamma>0$ small enough, we  define new variables $(v_1,v_2)\in H^{1}\times H^{2}$ by
\begin{equation*}
	\begin{cases}
	v_1= (1-\gamma \partial_x^2)^{-1}\LL  u_1,\\
	v_2= (1-\gamma \partial_x^2)^{-1}  u_2.
	\end{cases}
\end{equation*}
(see \eqref{eq:change_variable}). Note that $(v_1,v_2)\in H^1\times H^2$, which is bad news because of the lack of a correct regularity order in the variables. This will cause problems later on. However, the new system for $(v_1,v_2)$ (see \eqref{eq:syst_v}) satisfies, for an adequate weight function $\psi_{A,B}$, $B\ll A$, the virial estimate
	\begin{equation}\label{eq:J_intro}
	\begin{aligned}
	\dfrac{d}{dt}\int \psi_{A,B} v_1 v_2 \leq & -\frac{C_2}{2} \int \left[z_1^2+(V_0(x)-B^{-1}) z_2^2 +2(\partial_x z_2)^2 \right] \\
	& +B^{-1} \bigg(\| w_1\|_{L^2}^2
	+\| w_2\|_{L^2}^2 \bigg)
	+ |a_1|^3,
	\end{aligned}
	\end{equation}
where $(z_1,z_2)$ is a lozalized version of $(v_1,v_2)$,  at the smaller scale $B$, $V_0$ given by \eqref{eq:LL}, and $C_2$ denotes a fixed constant.

\medskip

Following \cite{KMM}, in order to combine estimates \eqref{eq:I_intro} and \eqref{eq:J_intro} we need an estimate for the last term in \eqref{eq:I_intro}. However, unlike previous works, here we have the following coercivity estimate in terms of the variables $(w_1,w_2)$ and $(z_1,z_2)$:
	\begin{equation}\label{eq:sech_u1_intro}
	\begin{aligned}
	\int \sech ( x)  u_1^2
	\lesssim & ~{}  B^{-1/2} \left( \| w_1\|_{L^2} +\|\partial_x w_1\|_{L^2}^2 \right)
		+B^{1/2} \|z_1\|_{L^2}^2 +B^{-4}\| \partial_x z_1\|_{L^2}^2.
	\end{aligned}
	\end{equation}
We can directly observe that the term $\partial_x z_1$ does not appears in \eqref{eq:J_intro}, leading to the main obstruction present in this paper. This problem is deeply related to the fact that $(v_1,v_2)\in H^1\times H^2$, i.e., the new variables are in opposed order of regularity. 

\medskip

In order to overcome this problem, we introduce a series of modifications that will allow us to close estimates \eqref{eq:I_intro} and \eqref{eq:J_intro} properly. First, we must gain derivates. In a new virial estimate for the system of $(\partial_x v_1,\partial_x v_2)$ (see \eqref{eq:syst_vx}), we obtain the third virial estimate
		\begin{equation}\label{eq:M_intro}	
		\begin{aligned}
	 \frac{d}{dt} \int \psi_{A,B} \partial_x v_1 \partial_x v_2 
	 \leq & ~{}  
	-\dfrac{1}{2}\int \left( (\partial_x z_1)^2+\left(V_0(x) -  C_3B^{-1}\right) (\partial_x z_2)^2+2(\partial_x^2 z_2)^2\right)	\\
	&+ C_3\| z_2\|_{L^2}^2
	+ C_3 B^{-1}\| z_1\|_{L^2}^2\\
	&+C_3 B^{-1} \left( \| \partial_x w_1\|_{L^2}^2+\| w_1\|_{L^2}^2
	+ \| w_2\|_{L^2}^2 \right)+
	C_3|a_1|^3,
	\end{aligned}
	\end{equation}
with $C_3>0$ fixed.	This new estimate give us local $L^2$ control on $\partial_x z_1$ and $\partial_x^2 z_2$, which was not present before. Finally, our last contribution is a  transfer virial estimate that exchanges information between  $\partial_x z_1$, $\partial_x z_2$ and $\partial_x^2 z_2$, in the form of
		\begin{equation}\label{eq:Dintro}
		\begin{aligned}
		\frac12	\int (\partial_x z_1)^2 \le &~{} \frac{d}{dt} \int \rho_{A,B} \partial_x v_1 v_2
		+\frac32 \int \left[ (\partial_x^2 z_2)^2  + (\partial_x z_2)^2 + z_2^2+z_1^2\right]\\
		&+ C_4 B^{-3} \bigg(\| w_1\|_{L^2}^2+\| w_2\|_{L^2}^2\bigg) +C_4 |a_1|^3.
		\end{aligned}
		\end{equation}
Here $C_4>0$ is fixed and $\rho_{A,B}$ is a suitable weight function. Finally, we consider a functional $\mathcal{H}$ being a well-chosen linear combination of \eqref{eq:I_intro}, \eqref{eq:J_intro}, \eqref{eq:M_intro}, \eqref{eq:sech_u1_intro} and \eqref{eq:Dintro}. We get
	\begin{equation*}
	\begin{aligned}
	\dfrac{d}{dt}\mathcal{H}(t)
			\leq &		
	-C_2 B^{-1} \left(\|w_1\|_{L^2}^2+\|\partial_x w_1\|_{L^2}^2 
	+\| w_2\|_{L^2}^2  \right)
	+C_5 |a_1|^3 , \ \ \mbox{for all } t\geq 0.
	\end{aligned}
	\end{equation*}
This final estimate allows us to close estimates, and prove local decay for $u_1$ after some standard change of variables from $w_j$ to $u_j$.

\subsection*{Organization of this paper} This paper is organized as follows. Section \ref{sec:2} deals with a first virial estimate for a decomposition system, namely \eqref{eq:decomposition}. In Section \ref{sec:3} we introduce the transformed problem and prove first virial estimates on that system. In Section \ref{sec:4} we obtain virial estimates for higher order derivatives of the transformed problem. Section \ref{sec:5} is devoted to a technical transfer estimate dealing with higher order transformed variables. Finally, in Section \ref{sec:6} we prove Theorem \ref{thm1}, and in Section \ref{proof_TH2} we prove Theorem \ref{thm2}.

\subsection*{Acknowledgments} I deeply thank professors Didier Pilod (U. Bergen), Juan Soler (U. Granada), Francisco Gancedo (U. Sevilla) and Miguel A. Alejo (U. C\'ordoba) for the funding and their hospitality during the research stays where this work was completed.

\medskip

\section{A virial identity for the ($g$GB) system}\label{sec:2}

Recall the ($g$GB) system \eqref{eq:gGB}. The first step in our proof is to consider a small {\bf even-odd} perturbation of soliton $(Q,0)$. In what follows we will describe this decomposition, introduce some notation, and develop a virial estimate for the good Boussinesq system.

\subsection{Decomposition of the solution in a vicinity of the soliton}\label{sub:2.1}

Let $(u,v)=(\phi,\partial_t\partial_{x}^{-1}\phi)$ be a solution of \eqref{eq:gGB} satisfying \eqref{eq:condition_globalsolution} for some small $\delta>0$. Using $\boldsymbol{Y_+}$ as in \eqref{eq:Y_Z}, we decompose $(u,v)$
as follows
\begin{equation}\label{eq:decomposition}
\begin{cases}
u(t,x)= Q(x)+a_1(t)\phi_{0}(x)+u_{1}(t,x),\\
v(t,x)= a_2(t)\nu_0 \partial_{x}^{-1}\phi_{0}(x)+u_{2}(t,x),
\end{cases}
\end{equation}
where (see \eqref{eq:phi0})
\[
\begin{aligned}
a_1(t)&=\Jap{u(t)-Q}{\nu_0^{-2}\LL\phi_0}=\Jap{u(t)-Q}{\partial_x^{-2}\phi_0},\\
a_2(t)&=\frac{1}{\nu_0}\Jap{\partial_x v }{\nu_0^{-2}\partial_x\LL\phi_0}=\frac{1}{\nu_0}\Jap{\partial_xv }{\partial_x^{-1}\phi_0},
\end{aligned}
\]
such that 
\begin{equation}\label{eq:orthogonal_condition}
\Jap{u_1(t)}{\partial_x^{-2}\phi_0}
=0=\Jap{u_2(t)}{\partial_x^{-1}\phi_0},
\end{equation}
or equivalently,
\begin{equation}\label{eq:orthogonal_condition2}
\Jap{u_1(t)}{\LL\phi_0}
=0=\Jap{u_2(t)}{\partial_x\LL\phi_0}.
\end{equation}
Orthogonalities \eqref{eq:orthogonal_condition} are nonstandard particular choices motivated by key cancelation properties. See Appendix \ref{A} for a detailed construction of $\partial_x^{-1}\phi_0$ and $\partial_x^{-2}\phi_0$. Setting
\begin{equation}\label{eq:b}
b_{+}=\frac{1}{2}(a_1+a_2),\ \ b_{-}=\frac{1}{2}(a_1-a_2),
\end{equation}
from \eqref{eq:condition_globalsolution}, we have for all $t\in\R_{+}$
\begin{equation}\label{eq:ineq_hip}
\Vert u(t)\Vert_{H^1}+\Vert v(t)\Vert_{L^2}+|a_1(t)|+|a_2(t)|+|b_+(t)|+|b_-(t)|\leq C_0 \delta.
\end{equation}
Moreover, using \eqref{eq:soliton_eq}, \eqref{eq:phi0} and \eqref{eq:orthogonal_condition},  $(a_1,a_2)$ satisfies the following differential system
\begin{equation}\label{eq:motion}
\begin{cases}
\dot{a}_1= \nu_0 a_2 \\
\dot{a}_2=  \nu_0 a_1+\dfrac{N_0}{\nu_0},
\end{cases} \ \ \mbox{or equivalently} \ \ \
\begin{cases}
\dot{b}_{+}= \nu_0 b_{+}+\dfrac{N_0}{2\nu_0}\vspace{0.05in} \\ 
\dot{b}_{-}=-\nu_0 b_{-}-\dfrac{N_0}{2\nu_0}.
\end{cases}
\end{equation}
where
\begin{equation}\label{eq:Nperp}
\begin{aligned}
N=&\ \partial_x\left(f(Q)+f'(Q)(a_1\phi_0+u_1)-f(Q+a_1\phi_0+u_1)\right),\\
N^{\perp}=&\ \  N-N_0\partial_x^{-1}\phi_0, \quad \ \mbox{and} \  \quad N_0=\Jap{N}{\partial_x^{-1} \phi_0}.
\end{aligned}
\end{equation}
Then, $(u_1,u_2)$ satisfies the system
\begin{equation}\label{eq:u_linear}
	\begin{cases} 
	\dot{u}_1= \partial_x u_2 \\
	\dot{u}_2=\partial_x \LL(u_1)+N^{\perp},
	\end{cases}
\end{equation}
with $u_1$ even and $u_2$ odd.

\subsection{Notation for virial argument}
We consider a smooth even function $\chi:\R\to \R$ satisfying
\begin{equation}\label{chichi}
\chi=1 \mbox{ on } [-1,1], \quad \chi=0 \mbox{ on } (-\infty,2]\cup [2,\infty), \quad \chi'\leq 0 \mbox{ on } [0,\infty).
\end{equation}
For $A>0$, we define the functions $\zeta_A$ and $\varphi_A$ as follows
\begin{equation}\label{eq:bound_phiA}
\zeta_A(x)=\exp\left( -\dfrac{1}{A}(1-\chi(x))|x|\right),
\quad \varphi_A(x)=\int_{0}^{x} \zeta_A^2(y)dy, \ \ x\in \R.
\end{equation}
For $B>0$, we also define
\begin{equation}\label{eq:zetaB}
\zeta_B(x)=\exp\left( -\dfrac{1}{B}(1-\chi(x))|x|\right),
\quad \varphi_B(x)=\int_{0}^{x} \zeta_B^2(y)dy, \ \ x\in \R.
\end{equation}
We consider the function $\psi_{A,B}$ defined as
\begin{equation}\label{eq:psi_chiA}
\psi_{A,B}(x)=\chi^2_A(x)\varphi_B(x) \mbox{ where }\ \ \chi_A(x)=\chi\left(\dfrac{x}{A}\right), \ \  x \in \R.
\end{equation}
These functions will be used in two distinct virial arguments with different scales
\begin{equation}\label{eq:scales}
	1\ll B \ll B^{10}  \ll A.
\end{equation}

The following remark will be essential for the well-boundedness of some nonlinear terms in what follow. 
\begin{rem}\label{rem:chiA_zetaA4}
	One can see that for each function $v$
	\begin{equation*}
		\int \chi_A^2 v^2\leq\int_{|x|\leq 2A} v^2 \leq C\int_{|x|\leq 2A} e^{-4|x|/A}v^2 \lesssim 	\int v^2 \zeta_A^4 \leq \| \zeta_A^2 v\|^2_{L^2} .
	\end{equation*}
This estimate will be useful later on (see Subsections \ref{control_tJ1} and \ref{control_J4}).	
\end{rem}

\subsection{Virial estimate}
Set
\begin{equation}\label{eq:I}
\I=\Int{\R} \varphi_A(x) u_1 u_2  ,
\end{equation}
and
\begin{equation} \label{eq:wi}
w_i=\zeta_A u_i, \quad  i=1,2.
\end{equation}

Here, $(w_1,w_2)$ represents a localized version of $(u_1,u_2)$ at scale $A$. The following virial argument has been used in \cite{KMM,kink} in a similar context.
 \begin{prop}\label{prop:virial_I}
There exist $C_1>0$ and $\delta_1>0$ such that for any $0<\delta\leq\delta_1$, the following holds. Fix $A=\delta^{-1}$. Assume that for all $t\geq 0$,  \eqref{eq:ineq_hip} holds. Then for all $t\geq 0$, 
\begin{equation}\label{eq:dI_w}
\begin{aligned}
\dfrac{d}{dt}\I
\leq&-\dfrac{1}{2} \int \left[ w_2^2 +2(\partial_x w_1)^2
+\left(1-C_1A^{-1} \right)w_1^2 \right]  +C_1 a_1^4 +  C_1\int \sech\left(\dfrac{x}{2}\right) w_1^2  .
\end{aligned}
\end{equation}
\end{prop}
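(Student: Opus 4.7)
The plan is to differentiate $\I = \int \vA u_1 u_2$ in time via system \eqref{eq:u_linear}, integrate by parts in every linear term, pass to the localized variables $w_i = \zeta_A u_i$, and finally control the nonlinear contribution from $N^\perp$. Plugging in $\dot u_1 = \partial_x u_2$ and $\dot u_2 = \partial_x \LL u_1 + N^\perp$, and recalling that $\vA' = \zeta_A^2$, I get $\int \vA u_2 \partial_x u_2 = -\tfrac12\int w_2^2$ for the $u_2$-piece. For the $u_1$-piece, writing $\LL u_1 = -\partial_x^2 u_1 + V_0 u_1$ and integrating by parts three times produces
\[
\int \vA u_1 \partial_x \LL u_1 = -\tfrac{3}{2}\int \vA'(\partial_x u_1)^2 - \tfrac12 \int \vA' V_0 u_1^2 + \tfrac12 \int \vA V_0' u_1^2 + \tfrac12 \int \vA''' u_1^2.
\]

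Next I transfer these bounds from $u_1$ to $w_1$ via the identity
\[
\int \zeta_A^2 (\partial_x u_1)^2 = \int (\partial_x w_1)^2 + \int \zeta_A \zeta_A'' u_1^2,
\]
together with the pointwise estimates $|\zeta_A'| \lesssim A^{-1}\zeta_A$ and $|\zeta_A''| + |\vA'''|/\zeta_A \lesssim A^{-1}\zeta_A$ (the loss $A^{-1}$, rather than $A^{-2}$, being dictated by the transition region $1 \leq |x| \leq 2$ where $\chi'$ and $\chi''$ are supported). Splitting $V_0 = 1 - pQ^{p-1}$ yields the coercive term $-\tfrac12\int w_1^2$ plus an exponentially localized remainder; $V_0' = -p(p-1)Q^{p-2}Q'$ is likewise exponentially localized, and $|\vA(x)| \leq |x|$ only allows linear growth against these weights. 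Since $\zeta_A \simeq 1$ on any compact support, $u_1$ and $w_1$ are interchangeable there up to constants, so all localized contributions fit inside $C \int \sech(x/2) w_1^2$, while the $A^{-1}$ losses collect into $C A^{-1} \int w_1^2$. Altogether,
\[
\tfrac{d}{dt}\I \leq -\tfrac12\int w_2^2 - \tfrac32 \int (\partial_x w_1)^2 - \tfrac12 (1 - CA^{-1}) \int w_1^2 + C \int \sech(x/2) w_1^2 + \int \vA u_1 N^\perp,
\]
carrying $-\tfrac12 \int (\partial_x w_1)^2$ of slack for the nonlinear step.

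The hardest step is controlling $\int \vA u_1 N^\perp$ within the stated right-hand side, where the $a_1$-dependence must be upgraded to $a_1^4$. Writing $N = -\partial_x g(h)$ with $h = a_1 \phi_0 + u_1$ and $g(h) = f(Q+h) - f(Q) - f'(Q) h$ a Taylor remainder satisfying $|g(h)| \lesssim Q^{p-2} h^2$, an integration by parts gives $\int \vA u_1 N = \int (\zeta_A^2 u_1 + \vA \partial_x u_1)\, g(h)$. Expanding $h^2 = a_1^2 \phi_0^2 + 2 a_1 \phi_0 u_1 + u_1^2$ and applying Young's inequality in the form $a_1^2 M \leq \varepsilon^{-1} a_1^4 + \varepsilon M^2$ to each piece produces bounds of the shape $\varepsilon^{-1} a_1^4 + \varepsilon \int (\partial_x w_1)^2 + C \int \sech(x/2) w_1^2$; choosing $\varepsilon$ small enough absorbs the $\varepsilon \int (\partial_x w_1)^2$ contribution into the slack, while the cubic and quartic pieces in $u_1$ carry a factor $\|u_1\|_{L^\infty} \lesssim \delta$ and are absorbable for $\delta$ small. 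For the orthogonal correction, $|N_0| = |\langle g(h), \phi_0\rangle| \lesssim a_1^2 + \int \sech(x/2) w_1^2$ and the weighted bound $|\vA \partial_x^{-1}\phi_0(x)| \lesssim |x| e^{-|x|}$ give $|\int \vA u_1 \partial_x^{-1}\phi_0| \lesssim \big(\int \sech(x/2) w_1^2\big)^{1/2}$ by weighted Cauchy--Schwarz; a final Young then closes the estimate. The main difficulty I anticipate is keeping every error strictly within the allowed $C_1 A^{-1} \int w_1^2$ loss on the main coercivity, which is what forces the choice $A = \delta^{-1}$ large together with $\delta$ small.
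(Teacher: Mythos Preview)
Your approach is essentially the paper's: differentiate $\I$, integrate by parts in the linear terms, convert to $w_i$, and control $\int\vA u_1 N^\perp$ via Taylor expansion and Young. The only organizational difference is that you keep $V_0$ in the linear part and take the nonlinear remainder as the full $g(h)=f(Q+h)-f(Q)-f'(Q)h$, whereas the paper cancels the $f'(Q)u_1$ piece against the linear term so that its ``nonlinear'' integrand is $f(Q+h)-f(Q)-f'(Q)a_1\phi_0$; the end result is the same.

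One step in your nonlinear sketch needs tightening. The pointwise bound $|g(h)|\lesssim Q^{p-2}h^2$ is incomplete: for $p>2$ it misses the $|h|^p$ tail, and for $p=2$ the weight $Q^0=1$ is not localizing. Consequently the pure-$u_1$ contribution to $\int\vA\,\partial_x u_1\,g(h)$ carries no decaying weight and cannot be handled by a pointwise bound against $|\vA|\lesssim A$ (the resulting quantity is not expressible in the allowed right-hand side). The paper resolves this via its $I_1$--$I_4$ decomposition in Lemma~\ref{lem:2p4}: it recognizes the integrand as $\vA\,\partial_x[F\text{-based expression}]$ plus exponentially localized corrections, integrates by parts to trade $\vA$ for $\vA'=\zeta_A^2$, and is left with the single non-localized residue $\int\zeta_A^2|u_1|^{p+1}\lesssim A^2\|u_1\|_{L^\infty}^{p-1}\int(\partial_x w_1)^2$, which your $-\tfrac12\int(\partial_x w_1)^2$ slack then absorbs. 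Your statement that the higher-order $u_1$ pieces ``carry a factor $\|u_1\|_{L^\infty}$ and are absorbable'' is the correct endpoint, but getting there requires this structural integration by parts rather than a pointwise bound on $g$.
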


Some remarks are in order.

\begin{rem}
This virial has several similarities with the developed in \cite{KMM} for nonlinear Klein-Gordon equation. 
In that paper, the main part of the virial is composed by the $\dot{H}^1$-norm  of $w_1$. In our case, this main part is similar to the $H^{1}\times L^2$-norm of $(w_1,w_2)$, and the rest of the terms are the same. Unlike \cite{KMM}, we did not use a correction term since the momentum of the equation \eqref{eq:energy} works well in this case. This virial was already used in \cite{MPP} in a different context (small solutions around zero).
\end{rem}

The proof of Proposition \ref{prop:virial_I} follows after the next intermediate lemma.

\begin{lem}
Let $(u_1,u_2)\in H^1(\R)\times L^2(\R)$ a solution of \eqref{eq:u_linear}. Consider $\varphi_A =\varphi_A (x)$ a smooth bounded function to be chosen later. Then
\begin{equation}
	\begin{aligned}
   \frac{d}{dt}\I=&- \dfrac{1}{2}\int \vA' \left( u_2^2 + u_1^2
   +3(\partial_x u_1)^2\right) 
   +\dfrac{1}{2}\int  \vA''' u_1^2   \\
   &+\int ( \vA' u_1+\vA \partial_x u_1)  \left(f(Q)+f'(Q)a_1\phi_0-f(Q+a_1\phi_0+u_1)-N_0\partial_x^{-2}\phi_0 \right).
   \end{aligned}
\end{equation}
\end{lem}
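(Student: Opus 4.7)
The plan is a direct calculation: differentiate $\I$ in time, substitute the linearized system \eqref{eq:u_linear}, and reorganize via integration by parts. Using $\dot u_1 = \partial_x u_2$ and $\dot u_2 = \partial_x \LL(u_1) + N^\perp$, one gets
\[
\frac{d}{dt}\I = \int \vA (\partial_x u_2) u_2 + \int \vA u_1 \partial_x \LL(u_1) + \int \vA u_1 N^\perp,
\]
and I would handle each integral separately. The first one is immediate: $\tfrac12\int \vA \partial_x(u_2^2) = -\tfrac12 \int \vA' u_2^2$.

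For the linear term, I would expand $\LL(u_1) = -\partial_x^2 u_1 + u_1 - f'(Q)u_1$ so that the integral splits into three pieces. The first-order piece $\int \vA u_1 \partial_x u_1$ integrates by parts once to give $-\tfrac12 \int \vA' u_1^2$. The third-order piece $-\int \vA u_1 \partial_x^3 u_1$ requires three successive integrations by parts with careful bookkeeping; the derivatives are moved progressively onto the weight and onto one factor of $u_1$, producing exactly $\tfrac12\int \vA''' u_1^2 - \tfrac32 \int \vA' (\partial_x u_1)^2$. The potential piece $-\int \vA u_1 \partial_x(f'(Q) u_1)$ integrates by parts once to yield $\int(\vA' u_1 + \vA \partial_x u_1) f'(Q) u_1$, which I would set aside to combine with the nonlinear term.

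For the nonlinear term, the key move is to put everything in divergence form. I would write $N^\perp = \partial_x G - N_0 \partial_x^{-1}\phi_0$ with $G = f(Q) + f'(Q)(a_1\phi_0+u_1) - f(Q+a_1\phi_0+u_1)$, and use $\partial_x^{-1}\phi_0 = \partial_x(\partial_x^{-2}\phi_0)$ (the primitive is well defined by Appendix \ref{A}). A single integration by parts then gives
\[
\int \vA u_1 N^\perp = -\int(\vA' u_1 + \vA \partial_x u_1)\bigl(G - N_0 \partial_x^{-2}\phi_0\bigr).
\]
Now I would combine this with the set-aside potential piece. The $f'(Q)u_1$ summand inside $G$ cancels against that piece exactly, leaving the reduced nonlinearity $f(Q) + f'(Q)a_1\phi_0 - f(Q+a_1\phi_0+u_1)$ that appears in the statement. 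Summing all contributions produces the claimed identity.

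The main bookkeeping challenge will be the $\partial_x^3 u_1$ piece, where three integrations by parts must be tracked carefully to land on the precise coefficients $\tfrac12$ and $\tfrac32$ attached to $\vA'''$ and $\vA'$. All boundary terms vanish because $(u_1,u_2)\in H^1\times L^2$ while $\vA$ is bounded and $\vA', \vA'', \vA'''$ are exponentially localized by \eqref{eq:bound_phiA}. The other subtlety to monitor is the cancellation between the $f'(Q)u_1$ coming from $\LL$ and the one sitting inside $G$; this is exactly what turns $f'(Q)(a_1\phi_0+u_1)$ in $N$ into the cleaner $f'(Q)a_1\phi_0$ in the final nonlinearity.
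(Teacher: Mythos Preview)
Your proposal is correct and follows essentially the same route as the paper's own proof: differentiate, substitute \eqref{eq:u_linear}, and integrate by parts each piece. The only cosmetic difference is where the cancellation of the $f'(Q)u_1$ contributions happens---you integrate the potential piece by parts first and cancel it against the corresponding summand of $G$ after the IBP on $N^\perp$, whereas the paper leaves both in the form $\pm\int \vA u_1\,\partial_x(f'(Q)u_1)$ and cancels them before any IBP; similarly, you fold the $N_0$ term into the bracket by writing $\partial_x^{-1}\phi_0=\partial_x(\partial_x^{-2}\phi_0)$ and integrating by parts together with $G$, while the paper records it separately as $-N_0\int \vA u_1\,\partial_x^{-1}\phi_0$ and only implicitly recombines it with the bracket in the lemma statement. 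Both organizations yield the same identity.
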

\begin{proof}
Taking derivative in \eqref{eq:I} and using \eqref{eq:u_linear},
	\begin{equation}\label{eq:I'_RHS}
	\begin{aligned}
	\dfrac{d}{dt}\I
	=&\int \vA(\dot{u_1}u_2+u_1\dot{u_2}) 
	= \int \vA(\partial_x u_2 u_2+u_1 (\partial_x\LL(u_1)+N^{\perp})) \\
	=& -\frac12 \int \vA' u_2^2  
	+\int \vA u_1 \partial_x\LL(u_1)  
	+\int \vA u_1 N^{\perp}. 
	\end{aligned}
	\end{equation}
For the second integral in the RHS of the above equation, we have
	\begin{equation*}
	\begin{aligned}
	\int \vA u_1 \partial_x\LL(u_1)  
	=& \int \vA u_1 (-\partial_x^3 u_1+\partial_x u_1-\partial_x (f'(Q) u_1))  \\
	=& -\int \vA u_1 \partial_x^3 u_1   +\frac12 \int \vA \partial_x(u_1^2) -\int \vA u_1 \partial_x (f'(Q) u_1)  .
	\end{aligned}
	\end{equation*}
Integrating by parts
	\begin{equation}\label{eq:uPLu}
	\begin{aligned}
	\int \vA u_1 \partial_x\LL(u_1)  =& -\frac12 \int \vA' u_1^2 
	+\int (\vA' u_1+\vA \partial_x u_1) \partial_x^2 u_1  
	-\int \vA  u_1 \partial_x (f'(Q) u_1)  \\
	=& -\frac12 \int  \vA'  \left[ u_1^2+(\partial_x u_1)^2\right]
	+\int \vA' u_1 \partial_x^2 u_1 
		-\int \vA  u_1 \partial_x (f'(Q) u_1)  .
	\end{aligned}
	\end{equation}
Integrating by parts in the second integral in the RHS of the above equation, we get
	\begin{equation*}
	\begin{aligned}
	\int \vA' u_1 \partial_x^2 u_1  
	=& -\int ( \vA'' u_1+ \vA' \partial_x u_1 )\partial_x u_1   \\
	=& -\int \left( \vA'' \dfrac{\partial_x (u_1^2)}{2}+\vA' (\partial_x u_1)^2 \right)   =	-\int \vA' (\partial_x u_1)^2   +\int  \vA''' \dfrac{u_1^2}{2}. 
	\end{aligned}
	\end{equation*}
For the last integral in the RHS of \eqref{eq:I'_RHS}, separating terms and integrating by parts we obtain
	\begin{equation*}
	\begin{aligned}
	\int \vA u_1 N^{\perp}  
	=& \int \vA u_1 \left( \partial_x\left(f(Q)+f'(Q)(a_1\phi_0+u_1)-f(Q+a_1\phi_0+u_1)\right) -N_0 \partial_x^{-1}\phi_0 \right)  \\
	=&-\int (\vA' u_1+\vA \partial_x u_1) \left( f(Q)+f'(Q)a_1\phi_0-f(Q+a_1\phi_0+u_1)\right)\\
	&+\int \vA u_1  \partial_x (f'(Q)u_1)
	 -N_0\int \vA u_1  \partial_x^{-1}\phi_0   .
		\end{aligned}
	\end{equation*}
Cancelling terms, we finally obtain
\begin{equation}
	\begin{aligned}
	\dfrac{d}{dt}\I=& 
	- \int \vA' \left( \dfrac{u_2^2}{2} + \dfrac{u_1^2}{2}
	+(\partial_x u_1)^2 +\dfrac{(\partial_x u_1)^2}{2}\right)  \\
	& +\int  \vA''' \dfrac{u_1^2}{2} 
	-\int \vA u_1  \partial_x (f'(Q) u_1)  \\
	&+\int (\vA' u_1+\vA \partial_x u_1)  \left(f(Q)+f'(Q)a_1\phi_0-f(Q+a_1\phi_0+u_1)\right) \\
	& 	+\int \vA u_1  \partial_x (f'(Q) u_1)  -N_0\int \vA u_1  \partial_x^{-1}\phi_0 \\
	=&- \dfrac{1}{2}\int \vA' \left( u_2^2 + u_1^2
	+3(\partial_x u_1)^2\right) 
	+\dfrac{1}{2}\int  \vA''' u_1^2   -N_0\int \vA u_1  \partial_x^{-1}\phi_0 . \\
	&+\int (\vA' u_1+\vA \partial_x u_1)  \left(f(Q)+f'(Q)a_1\phi_0-f(Q+a_1\phi_0+u_1)\right).
	\end{aligned}
\end{equation}
This concludes the proof.
\end{proof}

Now we rewrite the main part of the virial identity using the new variables $(w_1,w_2)$. 
\begin{lem}\label{lem:2p3}
	It holds
	\begin{equation*}
		\begin{aligned}
		\int \vA' \left( u_2^2 + u_1^2
		+3(\partial_x u_1)^2\right) 
		-\int  \vA''' u_1^2
		=& \int  \left( w_2^2  + 3(\partial_x w_1)^2  
		+\left(1+\dfrac{ \zeta_A''}{\zeta_A} -2\dfrac{(\zeta_A')^2}{\zeta_A^2}\right)w_1^2 \right),
		\end{aligned}
	\end{equation*}
	with
	\begin{equation}\label{eq:zA''-zA2}
	\left|\dfrac{ \zeta_A''}{\zeta_A} -2\dfrac{(\zeta_A')^2}{\zeta_A^2} \right|\lesssim \frac{1}{A}.
	\end{equation}
\end{lem}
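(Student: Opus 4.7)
The plan is a direct substitution: set $u_i = w_i/\zeta_A$ (for $i=1,2$) and rewrite every integrand in terms of $w_1, w_2$, using that $\varphi_A' = \zeta_A^2$ on the left-hand side so the weights collapse nicely. The three terms carrying $\varphi_A'$ split independently and the $\varphi_A'''$ term will be combined with an integration-by-parts remainder coming from $(\partial_x u_1)^2$.

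First I handle the easy pieces. Since $\varphi_A' = \zeta_A^2$,
\[
\int \varphi_A' u_2^2 = \int w_2^2,\qquad \int \varphi_A' u_1^2 = \int w_1^2,
\]
which already produces the $w_2^2$ and the leading $w_1^2$ contributions. Next, from $\partial_x u_1 = \zeta_A^{-1}\partial_x w_1 - \zeta_A^{-2}\zeta_A' w_1$, squaring and multiplying by $\varphi_A' = \zeta_A^2$ gives
\[
\varphi_A'(\partial_x u_1)^2 = (\partial_x w_1)^2 - 2\frac{\zeta_A'}{\zeta_A} w_1 \partial_x w_1 + \frac{(\zeta_A')^2}{\zeta_A^2} w_1^2.
\]
Integrating by parts the cross term,
\[
-2\int \frac{\zeta_A'}{\zeta_A} w_1 \partial_x w_1 = -\int \frac{\zeta_A'}{\zeta_A}\partial_x(w_1^2) = \int \left(\frac{\zeta_A''}{\zeta_A} - \frac{(\zeta_A')^2}{\zeta_A^2}\right) w_1^2,
\]
so, collecting,
\[
\int \varphi_A'(\partial_x u_1)^2 = \int (\partial_x w_1)^2 + \int \frac{\zeta_A''}{\zeta_A} w_1^2.
\]

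Finally, differentiating $\varphi_A' = \zeta_A^2$ twice yields $\varphi_A''' = 2(\zeta_A')^2 + 2\zeta_A \zeta_A''$, and replacing $u_1^2 = w_1^2/\zeta_A^2$ gives
\[
-\int \varphi_A''' u_1^2 = -2\int \frac{(\zeta_A')^2}{\zeta_A^2} w_1^2 - 2\int \frac{\zeta_A''}{\zeta_A} w_1^2.
\]
Summing the three contributions (with the factor $3$ on the $(\partial_x u_1)^2$ piece) and cancelling $3-2=1$ copies of $\zeta_A''/\zeta_A$ against the $\varphi_A'''$ remainder produces exactly the stated identity.

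For the pointwise bound \eqref{eq:zA''-zA2}, I write $\zeta_A = e^{\psi}$ with $\psi(x) = -A^{-1}(1-\chi(x))|x|$, so that
\[
\frac{\zeta_A''}{\zeta_A} - 2\frac{(\zeta_A')^2}{\zeta_A^2} = \psi'' - (\psi')^2.
\]
Since $\chi \equiv 1$ on $[-1,1]$, $\psi$ vanishes identically for $|x|\le 1$. For $|x|\ge 2$ one has $\psi(x)=-|x|/A$, whence $\psi''=0$ and $(\psi')^2 = A^{-2}$. On the transition region $1\le |x|\le 2$, the factors $1-\chi$, $\chi'$, $\chi''$ and $|x|$ are all $O(1)$, so each derivative of $\psi$ produces exactly one factor of $A^{-1}$: $|\psi'|\lesssim A^{-1}$ and $|\psi''|\lesssim A^{-1}$. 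Combining the three regions yields $|\psi''-(\psi')^2|\lesssim A^{-1}$, as claimed. The only routine point to check carefully is the transition region; everywhere else the bound is either trivial or immediate.
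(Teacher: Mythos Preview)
Your proof is correct and follows essentially the same approach as the paper: the same substitution $w_i=\zeta_A u_i$, the same integration by parts on the cross term, and the same case analysis on $|x|\le 1$, $1\le|x|\le 2$, $|x|\ge 2$ for the bound (the paper just writes out $\zeta_A'/\zeta_A$ and $\zeta_A''/\zeta_A$ explicitly rather than naming the exponent $\psi$).
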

\begin{proof}
	Considering $w_i=\zeta_A u_i$, $i=1,2$, and $\vA' =\zeta_A^2$, we have
	\begin{equation}
	\int \vA' \left( u_2^2 + u_1^2\right) 
	=\int \zeta_A^2 \left( u_2^2 + u_1^2\right) 
	=\int \left( w_2^2 + w_1^2\right) .
	\end{equation}
	Also,
	\begin{equation}
	\int \vA' (\partial_x u_1)^2 
	= \int (\partial_x w_1)^2  
	+\int w_1^2\dfrac{ \zeta_A''}{\zeta_A} .
	\end{equation}
	In the case of the last terms we have,
	\begin{equation*}
	\int  \vA''' u_1^2  =\int \dfrac{(\zeta_A^2)''}{\zeta_A^2}w_1^2 
	= 2\int \left(\dfrac{\zeta_A''}{\zeta_A}+\dfrac{(\zeta_A')^2}{\zeta_A^2}\right)w_1^2 .
	\end{equation*}
	By \eqref{eq:bound_phiA}, we have
	\begin{equation}
	\begin{aligned}
	\dfrac{\zeta_A'}{\zeta_A}&=-\dfrac{1}{A}\left[-\chi'(x)|x|+(1-\chi(x)\sgn(x)) \right],\label{eq:zetaA'/zetaA}\\
	\dfrac{\zeta_A''}{\zeta_A}&=\left(\dfrac{\zeta_A'}{\zeta_A}\right)^2+\dfrac{1}{A}\left[\chi''(x)|x|+2\chi'(x)\sgn(x) \right].
	\end{aligned}
\end{equation}
Then, substracting
	\[
	\begin{aligned}
	\dfrac{ \zeta_A''}{\zeta_A}-2\left(\dfrac{\zeta_A'}{\zeta_A}\right)^2
	=&
	-\frac{1}{A^2}\left[-\chi'(x)|x|+(1-\chi(x))\sgn(x) \right]^2+\frac1A \left[\chi''(x)|x|+2\chi'(x)\sgn(x) \right].
	\end{aligned}
	\]
	For $1\leq|x|\leq 2$, one can see that 
	\[
	\left|\dfrac{ \zeta_A''}{\zeta_A} -2\left(\dfrac{\zeta_A'}{\zeta_A}\right)^2\right|
	\lesssim \frac 1A+\frac{1}{A^2}.
	\]
	 For $|x|\geq2$, we have that
	\[
	\left|\dfrac{ \zeta_A'}{\zeta_A} -2\left(\dfrac{\zeta_A'}{\zeta_A}\right)^2\right|
	= \frac{1}{A^2}.
	\]
	Then,
	\[
	\left| \dfrac{ \zeta_A''}{\zeta_A}-2\left(\dfrac{\zeta_A'}{\zeta_A}\right)^2\right|
	\lesssim \left(\frac1A+\frac{1}{A^2}\right)\textbf{1}_{\{|x|\geq 1\}} 
	\lesssim  \frac{1}{A}
	.
	\]
 This ends the proof.
\end{proof}

Next, we deal with the nonlinear terms.

\begin{lem}\label{lem:2p4}
\begin{equation}\label{eq:nonlinearterm_I}
\begin{aligned}
&\left| \int ( \vA' u_1+\vA \partial_x u_1)  \left(f(Q+a_1\phi_0+u_1)-f(Q)-f'(Q)a_1\phi_0 \right) -N_0\int \vA u_1  \partial_x^{-1}\phi_0\right|\\
& \quad\lesssim
a_1^4 +  \int \sech\left(\dfrac{x}{2}\right) w_1^2
+A^2 \Vert u_1 \Vert_{L^{\infty}}^{p-1} \int |\partial_x w_1|^2.
\end{aligned}
\end{equation}
\end{lem}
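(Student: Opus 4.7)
Set $h := a_1\phi_0 + u_1$ and Taylor-expand
\[
f(Q+h) - f(Q) - f'(Q) a_1\phi_0 = f'(Q) u_1 + G,
\]
where $G := f(Q+h) - f(Q) - f'(Q) h$ satisfies the pointwise bound $|G|\lesssim Q^{p-2} h^2+|h|^p$, valid for $p\geq 2$. The plan is to (i) treat the linear-in-$u_1$ piece $f'(Q) u_1$ together with the $N_0$-correction, both of which produce $\int \sech\, w_1^2$-type contributions via exponential localization, and (ii) bound the quadratic-and-higher remainder $G$ piece by piece, with the non-localized $|u_1|^p$ term being the sole source of the factor $A^2\|u_1\|_{L^\infty}^{p-1}\int|\partial_x w_1|^2$.

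\textbf{Linear-in-$u_1$ piece and the $N_0$ correction.} An integration by parts via $\int(\vA' u_1+\vA\partial_x u_1) f'(Q)u_1 = \frac12\int\vA' f'(Q) u_1^2 - \frac12\int\vA (f'(Q))' u_1^2$, combined with the exponential decay of $f'(Q)$ and $(f'(Q))'$ and the uniform bound $|\vA|\lesssim A$, gives control by $C\int\sech(x/2) u_1^2 \lesssim \int\sech(x/2)\, w_1^2$ (the last step uses $\zeta_A \approx 1$ on the support of $\sech$, by Remark \ref{rem:chiA_zetaA4} and \eqref{eq:bound_phiA}). For the $N_0$-correction, integrating by parts in \eqref{eq:Nperp} identifies $N_0 = \int G\,\phi_0$, so $|N_0|\lesssim a_1^2+\int\sech\, u_1^2$ by the pointwise estimate on $G$. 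The inner product $|\int\vA\, u_1\,\partial_x^{-1}\phi_0|$ is then controlled by $(\int\sech\, w_1^2)^{1/2}$ via Cauchy--Schwarz against the exponentially decaying weight $\vA\,\partial_x^{-1}\phi_0$, and Young's inequality combines these into $\lesssim a_1^4+\int\sech\, w_1^2$.

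\textbf{Higher-order terms and main obstacle.} Decompose $G$ by its degree in $a_1\phi_0$ versus $u_1$. Pure $a_1^k\phi_0^k Q^{p-k}$ and mixed $a_1^j\phi_0^j u_1^{k-j} Q^{p-k}$ contributions (with $k\geq 2$) are spatially localized by the $\phi_0$ and $Q^{p-2}$ factors; the identity $\int(\vA' u_1+\vA\partial_x u_1)\,g = -\int \vA\, u_1\, \partial_x g$ (valid for $u_1$-independent $g$) combined with Cauchy--Schwarz, Young's inequality, and the smallness $|a_1|,\|u_1\|_{L^\infty}\lesssim \delta$ gives $\lesssim a_1^4+\int\sech\, w_1^2$ for all such pieces. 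The main difficulty is the non-localized $|u_1|^p$ piece of $G$, which has no $Q$- or $\phi_0$-factor to enforce spatial decay. For this term I extract $\|u_1\|_{L^\infty}^{p-1}$ as an $L^\infty$-factor, use $|\vA|\lesssim A$, and rewrite the surviving $\partial_x u_1$ via $\zeta_A\,\partial_x u_1 = \partial_x w_1 - \zeta_A'\, u_1$; a final Cauchy--Schwarz then yields the claimed $A^2\|u_1\|_{L^\infty}^{p-1}\int|\partial_x w_1|^2$. The factor $A^2=\delta^{-2}$ is unavoidable and explains the technical restriction $p\geq 2$ in Theorem \ref{thm1}, since only then is $A^2\|u_1\|_{L^\infty}^{p-1}$ small enough to be absorbed by the coercive $\int(\partial_x w_1)^2$ term of Proposition \ref{prop:virial_I}.
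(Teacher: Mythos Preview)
Your strategy for the $N_0$-correction and for the linear-in-$u_1$ piece $f'(Q)u_1$ is correct and matches the paper. The gap is in your treatment of the non-localized $|u_1|^p$ part of $G$. Once you pass to the pointwise bound $|G|\lesssim |u_1|^p$ and then try to control $\int|\vA\,\partial_x u_1|\,|u_1|^p$ by ``extracting $\|u_1\|_{L^\infty}^{p-1}$, using $|\vA|\lesssim A$, rewriting $\partial_x u_1$ in terms of $\partial_x w_1$, and Cauchy--Schwarz'', you are left with $A\|u_1\|_{L^\infty}^{p-1}\int|\partial_x u_1|\,|u_1|$. This is bounded only by the \emph{global} $H^1$-norm $\|\partial_x u_1\|_{L^2}\|u_1\|_{L^2}$, not by $\int(\partial_x w_1)^2$: there is no weight $\zeta_A$ remaining in the integrand to localize it, and inserting one by hand via $u_1=\zeta_A^{-1}w_1$ produces growing factors $\zeta_A^{-2}$ that you cannot control.

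The paper avoids this by \emph{not} taking absolute values on the remainder. It rewrites the full nonlinear expression through the primitive $F$, grouping terms (the $I_1$--$I_4$ decomposition) so that the factor $\vA\,\partial_x u_1$ appears only inside total $x$-derivatives of the form $\vA\,\partial_x\big(F(Q+a_1\phi_0+u_1)-\cdots\big)$. After integration by parts this becomes a multiple of $\vA'=\zeta_A^2$, and the only surviving non-localized pieces are $\int\vA'\,F(u_1)$ and $\int\vA'\,u_1 f(u_1)$, both of the form $\int\zeta_A^2|u_1|^{p+1}$. These are then dispatched by the Claim $\int\zeta_A^2|u_1|^{p+1}\lesssim A^2\|u_1\|_{L^\infty}^{p-1}\int(\partial_x w_1)^2$ from \cite{KMM}. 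In short, you need the exact algebraic structure of the non-localized remainder (it equals $f(u_1)=\partial_{u_1}F(u_1)$ up to localized errors), not merely its size, so as to turn the dangerous $\vA\,\partial_x u_1$ factor into a harmless $\vA'=\zeta_A^2$.
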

\begin{proof}
	First, we treat the term $N_0\int \vA u_1  \partial_x^{-1}\phi_0$. Noticing that
	\begin{equation}
	  N_0
	  =\Jap{N}{\partial_x^{-1}\phi_0}
	  =-\Jap{f(Q)+f'(Q)(a_1\phi_0+u_1)-f(Q+a_1\phi_0+u_1)}{\phi_0},
	\end{equation}
	and by Taylor's expansion, one has
	\begin{equation}
	\label{eq:taylorFull}
	|f(Q+a_1\phi_0+u_1)-f(Q)-f'(Q)(a_1\phi_0+u_1)|\lesssim a_1^2 f''(Q) \phi_0^2+f''(Q) u_1^2+|a_1|^{p} \phi_0^p+|u_1|^{p}.
	\end{equation}
	Thus, by exponential decay estimates on $Q$ and  $\phi_0$ (see Appendix \ref{A}), and by \eqref{eq:ineq_hip}, $|a_1|\lesssim1, \Vert u_1\Vert_{L^\infty}\leq \Vert u_1\Vert_{H^1}\lesssim 1 $, it holds
	\begin{equation}
		\begin{aligned}
		|N_0|
		\lesssim  a_1^2 + \int \sech(9x/10) u_1^2,
		\end{aligned}
	\end{equation}
	taking $A\geq 4$, we have
    \begin{equation}
    |N_0|	\lesssim  a_1^2 + \int \sech\left( \dfrac{x}{2}\right) w_1^2.\label{eq:N_0_bounded}
    \end{equation}
	 Noticing that for all $x\in \R$, $|\vA|\leq |x|$,
	 \[
	 |\varphi_A \partial_x^{-1} \phi_0 |\lesssim |x \sech(9x/10)|\leq \left|\sech\left(\frac34 x\right)\right|,
	 \]
	  and using H\"older inequality, we have
	\begin{equation}
	\begin{aligned}\label{eq:bound_N_0_part}
	\left| \int  u_1 \vA \partial_x^{-1} \phi_0 \right|
	\lesssim & ~{}  \left|\int  w_1 \sech \left(\frac x2\right) \right|\\
		\lesssim & ~{}  \left|\int  w_1^2 \sech \left(\frac x2\right) \right|^{1/2} 	 \left|\int  \sech \left(\frac x2\right) \right|^{1/2} \lesssim \left|\int  w_1^2 \sech \left(\frac x2\right) \right|^{1/2} .
	\end{aligned}
	\end{equation}
	We conclude using Cauchy-Schwarz inequality
	\begin{equation*}
		\begin{aligned}
		\left|N_0 \int  \vA  u_1  \partial_x^{-1} \phi_0 \right|
		\lesssim & ~{} |N_0|^2+\left|\int  u_1 \vA \partial_x^{-1} \phi_0 \right|^2
		\lesssim  a_1^4 +  \int \sech\left(\dfrac{x}{2}\right) w_1^2.
		\end{aligned}
	\end{equation*}
	For the remaining terms, we consider the following decomposition
	\[
	\begin{aligned}
		\int  &\left( \varphi_A \partial_x u_1+\varphi_A' u_1\right)
		\left[f(Q+a_1\phi_0+u_1)-f(Q)-f'(Q)a_1\phi_0
		\right]\\
		=& \int  \varphi_A \partial_x\left[F(Q+a_1\phi_0+u_1)-F(Q+a_1\phi_0)-(f(Q)+f'(Q)a_1\phi_0)u_1 \right]\\
		&-\int  \varphi_A Q' \left[ f(Q+a_1\phi_0+u_1)-f(Q+a_1\phi_0)- (f'(Q)+f''(Q)a_1\phi_0)u_1 \right]\\
		&-a_1\int  \varphi_A \partial_x \phi_0 \left[ f(Q+a_1\phi_0+u_1)-f(Q+a_1\phi_0)- f'(Q)u_1\right]\\
		&+\int  \varphi_A' u_1[f(Q+a_1\phi_0+u_1)-f(Q)- f'(Q)a_1\phi_0]\\
		=: &~{} I_1+I_2+I_3+I_4,
	\end{aligned}
	\]
and rewriting as
	\[
	\begin{aligned}
	I_1
	=& -\int  \varphi_A' \left[F(Q+a_1\phi_0+u_1)-F(Q+a_1\phi_0)-F'(Q+a_1\phi_0)u_1-F(u_1) \right]\\
	&-\int  \varphi_A' \left[f(Q+a_1\phi_0)-f(Q)+f'(Q)a_1\phi_0)\right]u_1-\int \varphi_A' F(u_1),
	\end{aligned}
	\]
	\[
	\begin{aligned}
	I_2
	=&-\int  \varphi_A Q' \left[ f(Q+a_1\phi_0+u_1)-f(Q+a_1\phi_0)-f'(Q+a_1\phi_0)u_1  \right]]\\
	&-\int  \varphi_A Q'\left(f'(Q+a_1\phi_0)-f'(Q)+f''(Q)a_1\phi_0\right)u_1,
	\end{aligned}
	\]
			\[
	\begin{aligned}
	I_3
	=&-a_1\int  \varphi_A \partial_x \phi_0 \left[ f(Q+a_1\phi_0+u_1)-f(Q+a_1\phi_0)-f'(Q+a_1\phi_0)u_1\right]\\
	&-a_1\int  \varphi_A \partial_x \phi_0 \left( f'(Q+a_1\phi_0)-f'(Q)\right)u_1,
	\end{aligned}
	\]
	and
	\[
	\begin{aligned}
	I_4
	=&\int  \varphi_A' u_1[f(Q+a_1\phi_0+u_1)-f(Q+a_1\phi_0)-f(u_1)]\\
	&+\int  \varphi_A' u_1[f(Q+a_1\phi_0)-f(Q)- f'(Q)a_1\phi_0]+\int  \varphi_A' u_1 f(u_1).
	\end{aligned}
	\]
	By Taylor expansion, $p\geq 1$, $|a_1|, \Vert u_1 \Vert_{L^{\infty}}\lesssim 1$, we have
	\begin{equation*}
		\begin{aligned}
	\vert F(Q+&a_1\phi_0+u_1)-F(Q+a_1\phi_0)-F'(Q+a_1\phi_0)u_1-F(u_1)\vert, \\
	&\lesssim |Q+a_1\phi_{0}|^{p-1}u_1^2+|Q+a_1\phi_0||u_1|^{p},\\
	&\lesssim |Q+a_1\phi_{0}|^{p-1}u_1^2+|Q+a_1\phi_0||u_1|^{2} \lesssim  \sech(9x/10) u_1^2 \lesssim \sech\left(\dfrac{x}{2}\right) w_1^2.
		\end{aligned}
	\end{equation*}
	Similarly, using \eqref{eq:bound_phiA} and $A\geq 4$, we find the following estimates
	\begin{equation*}
	\begin{aligned}
	\vert \vA Q' \left[f(Q+a_1\phi_0+u_1)-f(Q+a_1\phi_0)-f'(Q+a_1\phi_0)u_1\right]\vert
		&\lesssim
		\sech\left(\dfrac{x}{2} \right)w_1^2, \\
	\vert a_1 \vA \partial_x \phi_0 \left[f(Q+a_1\phi_0+u_1)-f(Q+a_1\phi_0)-f'(Q+a_1\phi_0)u_1\right]\vert
	&\lesssim
	 \sech\left(\dfrac{x}{2} \right) w_1^2, \\
	\vert\vA' u_1 \left[f(Q+a_1\phi_0+u_1)-f(Q+a_1\phi_0)-f(u_1)\right]\vert
		&\lesssim \sech\left(\dfrac{x}{2} \right)w_1^2.
	\end{aligned}
	\end{equation*}
	Furthermore, once again by Taylor expansion, we have
	\begin{equation}
	\begin{aligned}\displaystyle
	&\vert \varphi_A' \left[f(Q+a_1\phi_0)-f(Q)+f'(Q)a_1\phi_0)\right]u_1 \vert \\
	&\quad +\vert \varphi_A Q'\left[f'(Q+a_1\phi_0)-f'(Q)+f''(Q)a_1\phi_0\right]u_1 \vert \\
	&\quad +\vert a_1\varphi_A \partial_x \phi_0 \left[ f'(Q+a_1\phi_0)-f'(Q)\right]u_1\vert \\
	&\quad +\vert \varphi_A' u_1\left[f(Q+a_1\phi_0)-f(Q)- f'(Q)a_1\phi_0\right] \vert\\
	&\lesssim 	\sech\left(\dfrac{x}{2}\right)|a_1|^2|u_1|
	 \lesssim \sech\left(\dfrac{x}{2}\right)|w_1|^2+	\sech\left(\dfrac{x}{4}\right)|a_1|^4.
	\end{aligned}
	\end{equation}
	For the last step, we need the following claim proved in \cite{KMM}.
	\begin{claim}
		It holds
		\[
		\int  \zeta_A^2 |u_1|^{p+1}= \int  \zeta_A^{-p+1} |w_1|^{p+1}\lesssim A^2 \Vert u_1 \Vert_{L^{\infty}}^{p-1} \int  |\partial_x w_1|^2.
		\]
	\end{claim}
	Using this claim, we have
	\begin{equation*}
	\left| \int \varphi_A' F(u_1) \right|  +\left| \int \varphi_A' u_1f(u_1) \right| \lesssim \int \zeta_A^2 |u_1|^{p+1}\lesssim A^2 \Vert u_1 \Vert_{L^{\infty}}^{p-1} \int  |\partial_x w_1|^2.
	\end{equation*}
	Finally, we get
\begin{equation}
\begin{aligned}
&\left| \int  (\partial_x \vA u_1+\vA \partial_x u_1)  \left(f(Q+a_1\phi_0+u_1)-f(Q)-f'(Q)a_1\phi_0+N_0\nu_0^{-2}\LL\phi_0 \right) \right|\\
& \quad\lesssim
a_1^4 +  \int \sech\left(\dfrac{x}{2}\right) w_1^2
+A^2 \Vert u_1 \Vert_{L^{\infty}}^{p-1} \int  |\partial_x w_1|^2.
\end{aligned}
\end{equation}
This ends the proof of Lemma \ref{lem:2p4}.
\end{proof}

\subsection{End of Proposition \ref{prop:virial_I}}

Applying Lemmas \ref{lem:2p3} and \ref{lem:2p4}, and taking $\| u_1\|_{L^{\infty}}\leq \delta_A,$ for $\delta_A$ small enough, we have proved
\begin{eqnarray}
\begin{aligned}
\dfrac{d}{dt}\I
\leq&-\dfrac{1}{2} \int  \left[ w_2^2 +3(\partial_x w_1)^2
+\left(1-\left(\frac1A+\frac{1}{A^2}\right)\textbf{1}_{\{|x|\geq 1\}}\right)w_1^2 \right]   \\
&+C_1 a_1^4 +  C_1 \int \sech\left(\dfrac{x}{2}\right) w_1^2  
+A^2 \Vert u_1 \Vert_{L^{\infty}}^{p-1} \int  |\partial_x w_1|^2 \\
\leq&-\dfrac{1}{2} \int  \left[ w_2^2 +2(\partial_x w_1)^2
+\left(1-\frac{C_1}{A}\right)w_1^2 \right]   
+C_1a_1^4 +  C_1 \int \sech\left(\dfrac{x}{2}\right) w_1^2  .
\end{aligned}
\end{eqnarray}
This concludes the proof.

\section{Transformed problem and second virial estimates}\label{sec:3}

Following the idea of Martel \cite{Martel_linearKDV}, we will consider the function $v_1=\LL u_1$ instead $u_1$ to obtain a transformed problem with better virial properties. However, we must be careful since our original variables $(u_1,u_2)$ belong to $H^1(\R)\times L^2(\R)$, and by using $\LL$, the new variables are not well-defined. Therefore, we need a regularization procedure, as in \cite{KMM}.

\subsection{The transformed problem}
Let $\gamma >0$ small, to be determined later, set
\begin{equation}\label{eq:change_variable}
	\begin{cases}
	v_1= (1-\gamma \partial_x^2)^{-1}\LL  u_1,\\
	v_2= (1-\gamma \partial_x^2)^{-1}  u_2.
	\end{cases}
\end{equation}
From the system \eqref{eq:u_linear}, follows that  $(v_1,v_2)\in H^{1}(\R)\times H^{2}(\R)$, and  satisfies the system
\begin{equation}\label{eq:syst_v}
	\begin{cases}
	\dot{v}_1=\LL(\partial_x v_2)
	+G(x),\\
	\dot{v}_2= \partial_x v_1
	+H(x),
	\end{cases}
\end{equation}
where
\begin{equation}\label{eq:G_H}
\begin{aligned}
 H(x)=&~{}(1-\gamma\partial_x ^2)^{-1}  N^{\perp}, \\
 G(x)=&~{} \gamma (1-\gamma\partial_x ^2)^{-1}\left[  \partial_x^2(f'(Q))\partial_x v_2+2 \partial_x(f'(Q))\partial_x^2 v_2 \right].
 \end{aligned}
\end{equation}
Now we compute a second virial estimate, this time on $(v_1,v_2)$.

\subsection{Virial functional for the transformed problem}

Set now
\begin{equation}\label{eq:virial_J}
\J=\int   \psi_{A,B} v_1 v_2  ,
\end{equation}
with
\begin{equation}
\psi_{A,B}=\chi_A^2\varphi_B,\ \  \ z_i=\chi_A \zeta_B v_i,\ \  i= 1,2. \label{eq:def_psiB}
\end{equation}
Here, $(z_1,z_2)$ represents a localized version of the variables $(v_1,v_2)$ at the scale $B$. This scale is intermediate, and $\J$ involves a cut-off at scale $A$, which is needed to bound some bad  error and nonlinear terms; see \cite{kink} for a similar procedure. 

\begin{prop}\label{prop:virial_J}
There exist $C_2>0$ and $\delta_2>0$ such that for $\gamma=B^{-4}$ and for any $0<\delta\leq\delta_2$, the following holds. Fix $B=\delta^{-1/8}$. Assume that for all $t\geq 0$,  \eqref{eq:ineq_hip} holds. Then for all $t\geq 0$, 
	\begin{equation}\label{eq:dJ}
	\begin{aligned}
	\dfrac{d}{dt}\J\leq & -\frac12 \int  \left[z_1^2+(V_0(x)-C_2 B^{-1}) z_2^2 +2(\partial_x z_2)^2 \right] \\
&+C_2 B^{-1} \bigg(\| w_1\|_{L^2}^2
+\| w_2\|_{L^2}^2 \bigg)
+ C_2 |a_1|^3, 
	\end{aligned}
	\end{equation}
	where $V_0(x)$ is given by \eqref{eq:LL}.
\end{prop}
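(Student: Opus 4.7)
The strategy is to differentiate $\J$ in time, isolate the coercive quadratic form coming from the linearised transformed dynamics via integration by parts, convert the global variables $v_i$ to the localised ones $z_i=\chi_A\zeta_B v_i$, and finally absorb the contributions of the regularisation source $G$ and the nonlinear source $H$ into the $B^{-1}(\|w_1\|_{L^2}^2+\|w_2\|_{L^2}^2)$ and $|a_1|^3$ slack. Using the system \eqref{eq:syst_v} and writing $\psi=\psi_{A,B}$ for brevity,
\[
\frac{d}{dt}\J=-\tfrac12\int \psi' v_1^2 +\int \psi\, v_2\,\LL(\partial_x v_2)+\int \psi\, G\, v_2+\int \psi\, v_1\, H.
\]
Expanding $\LL(\partial_x v_2)=-\partial_x^3 v_2+V_0\partial_x v_2$ and integrating by parts (three times on the first piece, once on the second, in the manner of the computation leading to \eqref{eq:uPLu}), the second integral becomes
\[
\tfrac12\int\psi''' v_2^2-\tfrac32\int\psi'(\partial_x v_2)^2-\tfrac12\int\psi' V_0 v_2^2-\tfrac12\int\psi V_0' v_2^2.
\]
The last term drops harmlessly: for the power nonlinearity, $V_0'=-f''(Q)Q'$ and $\psi$ share the sign of $x$, so $\psi V_0'\ge 0$ and the integral is $\le 0$. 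The first term is of order $B^{-1}\|z_2\|_{L^2}^2$ since $|\psi'''|\lesssim B^{-2}\zeta_B^2$ on the bulk $\{|x|\le A\}$.

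I then convert $\int\psi' v_i^2$ into $\int z_i^2$ and $\int\psi'(\partial_x v_2)^2$ into $\int(\partial_x z_2)^2$, in the spirit of Lemma \ref{lem:2p3}. On $\{|x|\le A\}$ one has $\chi_A=1$ and $\psi'=\zeta_B^2$, and the corrections are controlled by $|\zeta_B'/\zeta_B|=O(B^{-1})$ times $\|z_i\|_{L^2}^2$ or $\|\partial_x z_2\|_{L^2}^2$. Since the derived coefficient $-\tfrac32$ in front of $(\partial_x z_2)^2$ is strictly stronger than the $-1$ asked for in the statement, the $O(B^{-1})\|\partial_x z_2\|_{L^2}^2$ corrections absorb, while the $O(B^{-1})\|z_2\|_{L^2}^2$ ones accommodate by loosening $V_0$ to $V_0-C_2 B^{-1}$. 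The residual errors, supported in the annulus $[A,2A]$ produced by $\chi_A'$, carry an exponentially small factor $\zeta_B^2\lesssim e^{-2A/B}$; combined with the bound $\|v_i\|_{L^2}\lesssim \|u_i\|_{L^2}+B^2\|\partial_x u_i\|_{L^2}$ (following from $\gamma=B^{-4}$ and the Fourier-multiplier identity $\|(1-\gamma\partial_x^2)^{-1}\partial_x^2 u\|_{L^2}\lesssim \gamma^{-1/2}\|\partial_x u\|_{L^2}$) and Remark \ref{rem:chiA_zetaA4}, they fit inside $B^{-1}(\|w_1\|_{L^2}^2+\|w_2\|_{L^2}^2)$ by virtue of the scale hierarchy \eqref{eq:scales}.

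The regularisation term $\int\psi G v_2$ is controlled via $\|G\|_{L^2}\lesssim B^{-2}\|\partial_x v_2\|_{L^2}$, which follows from $\gamma=B^{-4}$, the exponential localisation of $\partial_x^j(f'(Q))$ (after an internal integration by parts to move $\partial_x$ off $\partial_x^2 v_2$), $\|(1-\gamma\partial_x^2)^{-1}\|_{L^2\to L^2}\le 1$ and $\|\gamma(1-\gamma\partial_x^2)^{-1}\partial_x\|_{L^2\to L^2}\lesssim\gamma^{1/2}$. Cauchy--Schwarz then produces at most $CB^{-1}(\|z_2\|_{L^2}^2+\|\partial_x z_2\|_{L^2}^2)$ plus annular errors, absorbable in the slack. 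For the nonlinear term I use the self-adjointness of $(1-\gamma\partial_x^2)^{-1}$ to rewrite
\[
\int \psi v_1 H = \int \bigl[(1-\gamma\partial_x^2)^{-1}(\psi v_1)\bigr] N^{\perp},
\]
Taylor-expand $N^\perp=N-N_0\partial_x^{-1}\phi_0$ as in the proof of Lemma \ref{lem:2p4}, using \eqref{eq:N_0_bounded} and the orthogonality \eqref{eq:orthogonal_condition}, and apply Young's inequality with a small parameter $\eta$ to obtain $\eta\int z_1^2+C_\eta |a_1|^3+C_\eta B^{-1}\|w_1\|_{L^2}^2$; picking $\eta$ small, the $z_1^2$ contribution is absorbed into the main $-\tfrac12\int z_1^2$.

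The main obstacle I anticipate is the regularisation term: the asymmetric regularity $(v_1,v_2)\in H^1\times H^2$ forces $G$ to contain the high-order quantity $\partial_x^2 v_2$, which one can only trade for $\partial_x v_2$ at a cost of a factor $\gamma^{1/2}=B^{-2}$. This trade-off is precisely what dictates the relation $\gamma=B^{-4}$ and the scale hierarchy in \eqref{eq:scales}. A secondary subtlety is the absence of $\partial_x z_1$ in the target estimate, which forces the nonlinear analysis to route every $v_1$ contribution through its $L^2$ part (captured by $z_1$) rather than through $\partial_x v_1$; this is why Young's inequality is applied on $v_1$ and not on a derivative thereof.
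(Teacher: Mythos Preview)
Your overall strategy matches the paper's: differentiate $\J$, integrate by parts to extract the quadratic form in $(z_1,z_2)$, bound the potential from below by $V_0-CB^{-1}$, and then control the source terms $G,H$ and the cut-off errors. The sign observation $\psi V_0'\ge 0$ is exactly the mechanism behind the paper's Lemma~\ref{lem:bound_V}, and your treatment of $G$ via $\|\gamma(1-\gamma\partial_x^2)^{-1}\partial_x\|_{L^2\to L^2}\lesssim\gamma^{1/2}$ is also what the paper does for $J_4$.

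There is, however, a genuine gap in your handling of the annular errors. You assert that the residual terms supported on $\{A\le|x|\le 2A\}$ ``carry an exponentially small factor $\zeta_B^2\lesssim e^{-2A/B}$''. This is false for the dominant annular terms: expanding $\psi'=\chi_A^2\zeta_B^2+(\chi_A^2)'\varphi_B$ (and similarly $\psi'''$), the pieces containing $(\chi_A^2)'\varphi_B$ or $(\chi_A^2)'''\varphi_B$ have \emph{no} $\zeta_B$ weight whatsoever --- they are of size $A^{-1}B$ times a bare $\int_{A\le|x|\le 2A} v_i^2$ or $\int_{A\le|x|\le 2A}(\partial_x v_2)^2$. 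Your fallback, the global bound $\|v_i\|_{L^2}\lesssim\|u_i\|_{L^2}+B^2\|\partial_x u_i\|_{L^2}$, then produces an error controlled by global $\|u_i\|$-norms, which cannot be converted into the required $\|w_i\|_{L^2}^2$ form (and absorbing it via the a~priori bound $\|u_i\|\lesssim\delta$ would leave a constant-in-time remnant that is not dominated by $B^{-1}\|w_i\|^2+|a_1|^3$ when $w_i,a_1$ happen to be small).

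The correct route, which the paper follows, is: (i) use Remark~\ref{rem:chiA_zetaA4} to replace $\int_{|x|\le 2A}v_i^2$ by $\|\zeta_A^2 v_i\|_{L^2}^2$; (ii) invoke the \emph{weighted} commutator estimate $\|\zeta_A v_1\|_{L^2}\lesssim\gamma^{-1}\|w_1\|_{L^2}$ and its analogues for $v_2,\partial_x v_2$ (Lemma~\ref{lem:v_w}), which is where the localisation to $w_i$ actually happens; (iii) conclude via the scale hierarchy $B^{10}\ll A$ from \eqref{eq:scales}, so that $A^{-1}B\cdot\gamma^{-2}\lesssim B^{-1}$. Step~(ii) is the missing ingredient in your argument; the global Fourier-multiplier bound you quote is not a substitute for it.
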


The rest of this section is devoted to the proof of this proposition, which has been divided in several subsections.  

\subsection{Proof of Proposition \ref{prop:virial_J}: first computations}

We have from \eqref{eq:virial_J} and \eqref{eq:syst_v},
\begin{equation}
\begin{aligned}
	\dfrac{d}{dt}\J=&\int  \psi_{A,B}\left[(\LL\partial_{x} v_2)v_2+\frac12\partial_{x}(v_1^2)+H(x)v_1+G(x)v_2 \right]\\
	=&\int  \psi_{A,B} (\LL\partial_{x} v_2)v_2  -\frac12 \int  \psi_{A,B}' v_1^2 +\int  \psi_{A,B} \left[G(x)v_2+H(x)v_1 \right] .
\end{aligned}
\end{equation}
In a similar way to the computation in \eqref{eq:uPLu}, we have
\[
\begin{aligned}
\int  \psi_{A,B} (\LL\partial_{x} v_2)v_2 
=&-\dfrac{1}{2}\int   \psi_{A,B}'  \left(v_2^2+3(\partial_x v_2)^2\right) +\dfrac{1}{2}\int  \psi_{A,B}''' v_2^2 -\dfrac{1}{2}\int \psi_{A,B} f'(Q)\partial_x (v_2^2) .
\end{aligned}
\]
We consider now the following decomposition
\begin{equation}\label{eq:J'_v}
\begin{aligned}
	\dfrac{d}{dt}\J
	=&-\dfrac{1}{2}\int   \psi_{A,B}'  \left(v_1^2+v_2^2+3(\partial_x v_2)^2\right) 
	+\dfrac{1}{2}\int  \psi_{A,B}''' v_2^2 \\
	 &-\dfrac{1}{2}\int \psi_{A,B} f'(Q)\partial_x (v_2^2) 
	+\int  \psi_{A,B} G(x)v_2 +\int  \psi_{A,B} H(x)v_1  \\
	=: & ~{} (J_1+J_2)+ (J_3+J_4+J_5).
\end{aligned}
\end{equation}
By definition of $\psi_{A,B}$ (see \eqref{eq:def_psiB}), it follows that
\begin{equation}\label{eq:psi_deriv}
\begin{aligned}
\psi_{A,B}' =&~{}\chi_A^2 \zeta_B^2+(\chi_A^2)'\varphi_B,\\
 \psi_{A,B}'' =&~{} \chi_A^2 (\zeta_B^2)'+2(\chi_A^2)' \zeta_B^2+(\chi_A^2)''\varphi_B,\\
\psi_{A,B}''' =& ~{}\chi_A^2 (\zeta_B^2)''+3(\chi_A^2)' (\zeta_B^2)'+3(\chi_A^2)'' \zeta_B^2+(\chi_A^2)'''\varphi_B.
\end{aligned}
\end{equation}
Also, by the definition of $z_i$ in \eqref{eq:def_psiB}, we have:
\begin{equation}
\begin{aligned}
-2J_1 =& \int   \psi_{A,B}'  \left(v_1^2+v_2^2+3(\partial_x v_2)^2\right)\\
=& \int   (\chi_A^2 \zeta_B^2+(\chi_A^2)'\varphi_B)\left(v_1^2+v_2^2+3(\partial_x v_2)^2\right) \\
=& \int  (z_1^2+z_2^2)  +\int   (\chi_A^2)'\varphi_B\left(v_1^2+v_2^2+3(\partial_x v_2)^2\right) +3\int   \chi_A^2 \zeta_B^2(\partial_x v_2)^2.
\end{aligned}
\end{equation}
Derivating  $z_2=\zeta_{B}\chi_{A} v_2$, replacing and integrating by parts, we obtain
\begin{equation}\label{eq:chiAzetaB_v_ix}
\begin{aligned}
\int \chi_A^2\zeta_B^2(\partial_x v_2)^2 =& \int  \dfrac{\zeta_B''}{\zeta_B}z_2^2+\int  (\partial_x z_2)^2
+ \int   \left[ \chi_A'' +2\chi_A' \frac{\zeta_B'}{\zeta_B}\right]\chi_A \zeta_B^2 v_2^2.
\end{aligned}
\end{equation}
Then, for $J_1$ we obtain
\begin{equation}
\begin{aligned}
-2J_1
=& \int  (z_1^2+z_2^2) 
+3\int  \dfrac{\zeta_B''}{\zeta_B}z_2^2+3\int  (\partial_x z_2)^2\\
&+ 3\int   \left[ \chi_A'' +2 \chi_A' \frac{\zeta_B'}{\zeta_B}\right]\chi_A \zeta_B^2 v_2^2
+\int   (\chi_A^2)'\varphi_B\left(v_1^2+v_2^2 +3(\partial_x v_2)^2\right).
\end{aligned}
\end{equation}
Now we turn into $J_2$. By \eqref{eq:psi_deriv}, $J_2$ satisfies the following decomposition
\[
\begin{aligned}
2J_2
=&~{} \int (\chi_A^2 (\zeta_B^2)''+3(\chi_A^2)' (\zeta_B^2)'+3(\chi_A^2)'' \zeta_B^2+(\chi_A^2)'''\varphi_B) v_2^2 \\
=& ~{}2\int \left[ \left(\dfrac{\zeta_B'}{\zeta_B}\right)^2+\dfrac{\zeta_B''}{\zeta_B} \right]z_2^2
+ \int (3(\chi_A^2)' (\zeta_B^2)'+3(\chi_A^2)'' \zeta_B^2+(\chi_A^2)'''\varphi_B) v_2^2 .
\end{aligned}
\]
For $J_3$, integrating by parts and using the definition of $z_2$, we obtain
\[
\begin{aligned}
-2J_3
=& -\int \partial_x(\psi_{A,B} f'(Q)) v_2^2  
= -\int \left[(\chi_A^2\zeta_B^2+(\chi_A^2)'\varphi_B)f'(Q)+\chi_A^2 \varphi_B \partial_x (f'(Q))\right] v_2^2  \\
=& -\int  \left[f'(Q)+\partial_x (f'(Q))\dfrac{\varphi_B}{\zeta_B^2}\right]z_2^2  -\int (\chi_A^2)'\varphi_Bf'(Q) v_2^2. 
\end{aligned}
\]
Finally,  we obtain that the main part of the virial can be write as
\[
J_1+J_2+J_3= -\frac12 \int  \left[z_1^2+V(x)z_2^2 +3(\partial_x z_2)^2 \right] 
+\tilde{J_1},
\]
where
\[
V(x)=1+\frac{\zeta_B''}{\zeta_B}-2\dfrac{(\zeta_B')^2}{\zeta_B^2}-f'(Q)-\partial_x(f'(Q))\dfrac{\varphi_B}{\zeta_B^2
},
\]
and the error term is given by
\begin{equation}\label{eq:tildeJ1}
\begin{aligned}
\tilde{J_1}=&
-\frac12 \int   \left[ 3\big( \chi''_A \chi_A-(\chi_A^2)''\big)  \zeta_B^2-\frac32(\chi_A^2)'( \zeta_B^2)'+\big((\chi_A^2)'-(\chi_A^2)'''\big)\varphi_B\right] v_2^2\\
&+\frac12\int (\chi_A^2)'\varphi_B f'(Q) v_2^2  
-\frac12\int   (\chi_A^2)'\varphi_B v_1^2   -\frac32\int   (\chi_A^2)'\varphi_B(\partial_x v_2)^2.
\end{aligned}
\end{equation}
To control the main part of the virial is necessary a lower bound for the potential $V(x)$. We have the following result:\begin{lem}\label{lem:bound_V}
	There are  $C>0$ and $B_0>0$ such that for all $B\geq B_0$, one has
	\begin{equation}\label{eq:bound_V}
	V(x)\geq V_0(x)- CB^{-1}, \ \ \mbox{where} \ \ V_0(x)=1-f'(Q).
	\end{equation}
\end{lem}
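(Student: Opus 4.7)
My plan is to split $V(x) - V_0(x)$ into the purely geometric part coming from $\zeta_B$ and the nonlinear part involving $\varphi_B$, and to bound each separately. Directly from the expression for $V$ displayed just before the lemma,
\[
V(x) - V_0(x) = \frac{\zeta_B''}{\zeta_B} - 2\frac{(\zeta_B')^2}{\zeta_B^2} - \partial_x(f'(Q))\frac{\varphi_B}{\zeta_B^2}.
\]
The two weight terms are routine, and all the real work sits in the last term.

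For the first two terms, I observe that the definition \eqref{eq:zetaB} is formally identical to \eqref{eq:bound_phiA} with $A$ replaced by $B$. Consequently, the chain-rule computation already carried out in Lemma \ref{lem:2p3} (specifically the bound \eqref{eq:zA''-zA2}) applies verbatim with $B$ in place of $A$ and produces a constant $C>0$, independent of $B$, such that
\[
\left|\frac{\zeta_B''}{\zeta_B} - 2\frac{(\zeta_B')^2}{\zeta_B^2}\right| \leq \frac{C}{B}.
\]

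The main obstacle is the nonlinear term $-\partial_x(f'(Q))\varphi_B/\zeta_B^2$, because $\varphi_B/\zeta_B^2$ is not at all small in $B^{-1}$: on the plateau $|x|\leq 1$ it already equals $x$, and for large $|x|$ one checks that $\varphi_B(x)/\zeta_B^2(x)$ grows like $Be^{2|x|/B}$, easily overwhelming any $B^{-1}$ factor. The way out is to argue by sign rather than by size. Since $f(s) = |s|^{p-1}s$ and $Q>0$, we have $f'(Q) = pQ^{p-1}$ and
\[
\partial_x(f'(Q)) = p(p-1)\, Q^{p-2}\, Q'.
\]
The explicit profile $Q$ is even, strictly positive, and strictly decreasing on $[0,\infty)$, so $\sgn(\partial_x(f'(Q))(x)) = -\sgn(x)$. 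On the other hand $\zeta_B^2$ is even and strictly positive, hence $\varphi_B(x) = \int_0^x \zeta_B^2(y)\,dy$ is odd with $\sgn(\varphi_B(x)) = \sgn(x)$. Therefore $\partial_x(f'(Q))\,\varphi_B \leq 0$ on all of $\R$, and dividing by the positive quantity $\zeta_B^2$ preserves the sign, giving
\[
-\partial_x(f'(Q))\,\frac{\varphi_B(x)}{\zeta_B^2(x)} \geq 0, \qquad x\in\R.
\]

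Putting the two bounds together, I obtain $V(x) - V_0(x) \geq -C/B$ for all $B$ larger than some $B_0$, which is exactly \eqref{eq:bound_V}. The only delicate point in the whole argument is recognizing that the potentially dangerous term in fact has a favorable sign, a cancellation that hinges on the monotonicity of $Q$ matching the parity of the weight $\varphi_B$; without this structural observation, the $B^{-1}$ bound would be out of reach.
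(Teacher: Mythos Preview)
Your proof is correct and follows essentially the same approach as the paper: both bound the geometric piece $\zeta_B''/\zeta_B - 2(\zeta_B'/\zeta_B)^2$ by $CB^{-1}$ via the scale-$B$ analogue of \eqref{eq:zA''-zA2}, and both dispose of the nonlinear term by the sign observation $\partial_x(f'(Q))\,\varphi_B \le 0$. Your parity/oddness formulation handles all $x\in\R$ at once, whereas the paper treats $x>0$ and then invokes symmetry, but the substance is identical.
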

\begin{proof}
	First, recalling  \eqref{eq:zA''-zA2} and changing the scale, we have
	\begin{equation}
	\begin{aligned}
	\left\vert \frac{\zeta_B''}{\zeta_B}-2\left(\dfrac{\zeta_B'}{\zeta_B}\right)^2\right\vert
	\lesssim \frac1B. 
	\end{aligned}
	\end{equation}
	Using that for $x\in [0,\infty)\mapsto \zeta_B(x)$ is a non-increasing function, we have for $x>0,$
	\[
	\dfrac{\varphi_B}{\zeta_B^2}=\dfrac{\int_{0}^x \zeta_B^2 }{\zeta_B^2} >0,
	\]
	and  $\partial_x (f'(Q))<0$ for $x>0$. Then,
	\begin{equation}
		\begin{aligned}
		V(x)
		\geq &~{} 1- C B^{-1}-f'(Q)+|\partial_x (f'(Q))x|\geq 1- CB^{-1} -f'(Q)=V_0(x)-CB^{-1}.
		\end{aligned}
	\end{equation}
The case $x\leq 0$ is similar. These estimates hold for any $x\in\R$. This concludes the proof.
\end{proof}

\noindent
{\bf First conclusion}. Using this lemma, and the above definition of $\tilde{J}_1$, we conclude
\begin{equation}\label{eq:estimates_J}
\frac{d}{dt}\J\leq -\frac12 \int  \left[z_1^2+(V_0-CB^{-1}) z_2^2 +3(\partial_x z_2)^2 \right] 
+\tilde{J_1}+J_4+J_5,
\end{equation}
where $J_4$ and $J_5$ are related with the nonlinear term in \eqref{eq:J'_v}.
To control the terms $\tilde{J_1}, J_4, J_5$, and the terms that will appear in the sections below, some technical estimates will be needed.

\subsection{First technical estimates}

For $\gamma>0$, let $(1-\gamma \partial_x^2)^{-1}$
	be the bounded operator from $L^2$ to $H^2$ defined by its Fourier transform as
	\[
	\widehat{((1-\gamma \partial_x^2)^{-1} g)}(\xi)=\frac{\widehat{g}(\xi)}{1+\gamma \xi^2},\quad \mbox{for any }  g\in L^2. 
	\]
We start with a basic but essential result, in the spirit of \cite{kink}.

\begin{lem}\label{lem:estimates_IOp}
	Let $f\in L^2(\R)$ and $0<\gamma<1$, we have the following estimates
	\begin{enumerate}[$(i)$]
		\item $\| (1-\gamma \partial_x^2)^{-1}f\|_{L^2(\R)}\leq \|f \|_{L^2(\R)}$,
		\item $\| (1-\gamma \partial_x^2)^{-1}\partial_x f\|_{L^2(\R)}\leq \gamma^{-1/2}\|f \|_{L^2(\R)}$,
		\item $\| (1-\gamma \partial_x^2)^{-1}f\|_{H^2(\R)}\leq \gamma^{-1}\|f \|_{L^2(\R)}$.
	\end{enumerate}
\end{lem}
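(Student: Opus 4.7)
The plan is to work on the Fourier side and reduce everything to pointwise bounds on the symbol $m(\xi)=1/(1+\gamma\xi^2)$, then use Plancherel's theorem.

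First, I would set up the uniform bounds I need. The operator $(1-\gamma\partial_x^2)^{-1}$ is the Fourier multiplier with symbol $m(\xi)=(1+\gamma\xi^2)^{-1}$, and clearly $0<m(\xi)\leq 1$ for all $\xi\in\R$. This immediately yields $(i)$: by Plancherel,
\begin{equation*}
\|(1-\gamma\partial_x^2)^{-1}f\|_{L^2}^2=\int \frac{|\widehat f(\xi)|^2}{(1+\gamma\xi^2)^2}\,d\xi\leq \int |\widehat f(\xi)|^2\,d\xi=\|f\|_{L^2}^2.
\end{equation*}

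For $(ii)$, the symbol of $(1-\gamma\partial_x^2)^{-1}\partial_x$ is $i\xi/(1+\gamma\xi^2)$. By the AM–GM inequality $1+\gamma\xi^2\geq 2\sqrt{\gamma}\,|\xi|$, so
\begin{equation*}
\left|\frac{\xi}{1+\gamma\xi^2}\right|\leq \frac{1}{2\sqrt{\gamma}}\leq \gamma^{-1/2},
\end{equation*}
which gives $(ii)$ after another application of Plancherel.

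For $(iii)$, I would estimate each of the three pieces $\|g\|_{L^2},\|\partial_x g\|_{L^2},\|\partial_x^2 g\|_{L^2}$ for $g=(1-\gamma\partial_x^2)^{-1}f$ separately. The first is controlled by $(i)$ and so by $\|f\|_{L^2}\leq \gamma^{-1}\|f\|_{L^2}$ since $\gamma<1$. The second is controlled by $(ii)$ and equals at worst $\gamma^{-1/2}\|f\|_{L^2}\leq \gamma^{-1}\|f\|_{L^2}$. For the third, the symbol $\xi^2/(1+\gamma\xi^2)$ is bounded by $\gamma^{-1}$ uniformly in $\xi$, hence $\|\partial_x^2 g\|_{L^2}\leq \gamma^{-1}\|f\|_{L^2}$. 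Combining, one obtains $\|g\|_{H^2}\leq C\gamma^{-1}\|f\|_{L^2}$, which is $(iii)$ up to a harmless absolute constant (or one can absorb the constant by adjusting the definition of the $H^2$-norm used throughout the paper). There is no real obstacle here — the argument is entirely routine Fourier analysis, the only subtlety being to remember $0<\gamma<1$ so that $\gamma^{-1/2}\leq \gamma^{-1}$ and the three contributions can be bundled into the single estimate stated.
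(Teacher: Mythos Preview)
Your proof is correct. The paper itself does not include a proof of this lemma, treating it as an elementary standard fact (``a basic but essential result, in the spirit of \cite{kink}''); your Plancherel argument with the pointwise symbol bounds $1/(1+\gamma\xi^2)\le 1$, $|\xi|/(1+\gamma\xi^2)\le (2\sqrt\gamma)^{-1}$, and $\xi^2/(1+\gamma\xi^2)\le \gamma^{-1}$ is precisely the routine verification one expects.
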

We also enunciate the following result that appears in \cite{kink,KMM}:
\begin{lem}\label{lem:cosh_kink}
	There exist $\gamma_1>0$ and $C>0$ such that for any $\gamma\in (0,\gamma_1)$, $0<K\leq 1$ and $g\in L^2$, the following estimates holds
	\begin{equation}\label{eq:lem_sec_Iop g}
	\left\| \sech\left( K x\right) (1-\gamma\partial_x^2)^{-1} g\right\|_{L^2}
	\leq C \left\| (1-\gamma\partial_x^2)^{-1}\left[ \sech\left( K x\right)  g\right]\right\|_{L^2},
	\end{equation}
	and 
	\[
	\left\| \cosh\left( Kx\right) (1-\gamma\partial_x^2)^{-1} \left[\sech\left( K x\right) g\right]\right\|_{L^2}
	\leq C \left\| (1-\gamma\partial_x^2)^{-1} g\right\|_{L^2}.
	\]
\end{lem}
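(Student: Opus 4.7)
My plan is to conjugate the resolvent $R_\gamma:=(1-\gamma\partial_x^2)^{-1}$ by the weight $\cosh(Kx)$ (respectively $\sech(Kx)$) and show that, after conjugation, one obtains $R_\gamma$ itself plus a first-order perturbation whose operator norm on $L^2$ tends to zero as $\gamma\to 0$, uniformly in $K\in(0,1]$. Both inequalities then follow by a straightforward absorption argument using only Lemma~\ref{lem:estimates_IOp}.

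For \eqref{eq:lem_sec_Iop g} I would set $u:=R_\gamma g$ and $v:=\sech(Kx)u$, so that $u=\cosh(Kx)v$. From the identity
\[
\sech(Kx)\,\partial_x^2[\cosh(Kx)v]=\partial_x^2 v+2K\tanh(Kx)\,\partial_x v+K^2 v,
\]
multiplying $(1-\gamma\partial_x^2)u=g$ by $\sech(Kx)$ yields the conjugated equation
\[
(1-\gamma\partial_x^2)v=\sech(Kx)\,g+2\gamma K\tanh(Kx)\,\partial_x v+\gamma K^2 v.
\]
Applying $R_\gamma$ and taking $L^2$-norms gives
\[
\|v\|_{L^2}\le \bigl\|R_\gamma[\sech(Kx)g]\bigr\|_{L^2}+2\gamma K\bigl\|R_\gamma[\tanh(Kx)\partial_x v]\bigr\|_{L^2}+\gamma K^2\|v\|_{L^2}.
\]
The third term is controlled directly by Lemma~\ref{lem:estimates_IOp}(i). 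For the middle term I would rewrite $\tanh(Kx)\partial_x v=\partial_x[\tanh(Kx)v]-K\sech^2(Kx)v$ and invoke (ii) and (i) of Lemma~\ref{lem:estimates_IOp} to obtain the bound $(\gamma^{-1/2}+K)\|v\|_{L^2}$. The total error coefficient in front of $\|v\|_{L^2}$ is thus $O(K\gamma^{1/2}+\gamma K^2)$, which is smaller than $1/2$ once $\gamma\le\gamma_1$ (a threshold independent of $K\le 1$), so it can be absorbed.

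The second estimate follows by the symmetric conjugation: setting $u:=R_\gamma[\sech(Kx)g]$, $w:=\cosh(Kx)u$, and using
\[
\cosh(Kx)\,\partial_x^2[\sech(Kx)w]=\partial_x^2 w-2K\tanh(Kx)\,\partial_x w+K^2\bigl(\tanh^2(Kx)-\sech^2(Kx)\bigr)w,
\]
one arrives at
\[
(1-\gamma\partial_x^2)w+2\gamma K\tanh(Kx)\,\partial_x w-\gamma K^2(\tanh^2-\sech^2)(Kx)\,w=g,
\]
and the same absorption scheme yields $\|w\|_{L^2}\le 2\|R_\gamma g\|_{L^2}$, i.e.\ the second inequality with $C=2$.

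The main (and only) subtlety is justifying the a priori finiteness of $\|w\|_{L^2}=\|\cosh(Kx)R_\gamma[\sech(Kx)g]\|_{L^2}$ needed to perform the absorption in the second estimate. I would handle this by approximation: for $g\in C_c^\infty(\R)$, the explicit resolvent kernel $\tfrac{1}{2\sqrt\gamma}e^{-|x|/\sqrt\gamma}$ shows that $R_\gamma[\sech(Kx)g]$ decays at the exponential rate $1/\sqrt\gamma$, so $\cosh(Kx)R_\gamma[\sech(Kx)g]\in L^2$ whenever $\gamma K^2<1$; the estimate then extends to general $g\in L^2$ by density, with $C$ independent of the approximation. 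Beyond this technicality, the proof is entirely algebraic and rests on Lemma~\ref{lem:estimates_IOp}.
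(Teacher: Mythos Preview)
Your argument is correct. The paper itself does not give a proof of this lemma but merely cites \cite{kink,KMM}; the conjugation-and-absorption scheme you propose is exactly the standard one used in those references, so your approach coincides with the intended proof. The only point worth noting is the a~priori finiteness of $\|w\|_{L^2}$ in the second estimate, which you handle correctly via density and the explicit decay of the resolvent kernel; everything else is straightforward bookkeeping with Lemma~\ref{lem:estimates_IOp}.
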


From this lemma, we obtain the following result.
\begin{cor}\label{cor:estimates_Sech_Iop_partial}
	For any $0<K\leq 1$ and $\gamma >0$ small enough, for any $f\in L^2$,
	\begin{equation}
	\| \sech(Kx)(1-\gamma \partial_x^2)^{-1}\partial_x f\|_{L^2} \lesssim \gamma^{-1/2} \| \sech(Kx) f \|_{L^2},
	\end{equation}
	where the implicit constant is independent of $\gamma$ and $K$.
\end{cor}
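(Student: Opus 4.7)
The plan is to combine Lemma \ref{lem:cosh_kink} (which lets the weight $\sech(Kx)$ be transferred inside the resolvent $(1-\gamma\partial_x^2)^{-1}$ at the cost of a universal constant) with Lemma \ref{lem:estimates_IOp}$(ii)$ (which provides the crucial $\gamma^{-1/2}$ gain when a derivative is paired with the resolvent), together with an elementary commutator between $\partial_x$ and the weight $\sech(Kx)$. Since the right-hand side of the corollary carries the weight \emph{inside} the $L^2$-norm, the main idea is to bring the outer weight past the resolvent, then absorb the derivative into the resolvent, paying only $\gamma^{-1/2}$.

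First, I would apply Lemma \ref{lem:cosh_kink} with $g=\partial_x f$ to obtain
\[
\| \sech(Kx)\,(1-\gamma\partial_x^2)^{-1}\partial_x f\|_{L^2} \lesssim \| (1-\gamma\partial_x^2)^{-1}[\sech(Kx)\partial_x f]\|_{L^2}.
\]
Next, I would commute $\partial_x$ through the weight using the product rule,
\[
\sech(Kx)\,\partial_x f \;=\; \partial_x\!\bigl[\sech(Kx)\,f\bigr] + K\,\tanh(Kx)\sech(Kx)\,f,
\]
so that the right-hand side of the previous display splits into two pieces. For the principal piece, Lemma \ref{lem:estimates_IOp}$(ii)$ applied to $\sech(Kx)f\in L^2$ yields
\[
\| (1-\gamma\partial_x^2)^{-1}\partial_x[\sech(Kx)f]\|_{L^2} \leq \gamma^{-1/2}\|\sech(Kx)f\|_{L^2},
\]
while for the lower-order piece, Lemma \ref{lem:estimates_IOp}$(i)$ combined with $|\tanh(Kx)|\leq 1$ gives
\[
\| (1-\gamma\partial_x^2)^{-1}[K\tanh(Kx)\sech(Kx)f]\|_{L^2} \leq K\,\|\sech(Kx)f\|_{L^2}.
\]

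Collecting the two contributions produces the bound $(\gamma^{-1/2}+K)\|\sech(Kx)f\|_{L^2}$, and since $0<K\leq 1$ while $\gamma$ is taken small enough that $\gamma^{-1/2}\geq 1$, the term $K$ is dominated by $\gamma^{-1/2}$, so the constant $\gamma^{-1/2}+K$ is absorbed into a uniform $\lesssim\gamma^{-1/2}$. I do not anticipate any essential obstacle: the only subtlety is that one must apply the transfer Lemma \ref{lem:cosh_kink} \emph{before} doing the commutator, because applying the commutator first would leave the outer $\sech(Kx)$ multiplying the resolvent and would not produce the desired weight on the right.
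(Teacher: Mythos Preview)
Your proposal is correct and follows essentially the same route as the paper's own proof: apply Lemma~\ref{lem:cosh_kink} to move the weight inside the resolvent, commute $\partial_x$ past $\sech(Kx)$, and then invoke Lemma~\ref{lem:estimates_IOp}$(ii)$ and $(i)$ on the two resulting pieces. Your explicit observation that $K\leq 1\leq \gamma^{-1/2}$ absorbs the lower-order term is the only detail the paper leaves implicit.
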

\begin{proof} 
	Using  \eqref{eq:lem_sec_Iop g} and rewriting, we have
	\[
	\begin{aligned}
	\| \sech(Kx)(1-\gamma \partial_x^2)^{-1}\partial_x f\|_{L^2} 
	\lesssim &	~{} \| (1-\gamma \partial_x^2)^{-1} \left [\sech(Kx)\partial_x f \right] \|_{L^2} \\
		\lesssim &~{}	\| (1-\gamma \partial_x^2)^{-1} \partial_x \left [\sech(Kx) f \right] \|_{L^2}\\
		&+\| (1-\gamma \partial_x^2)^{-1} (\partial_x \sech(Kx)) f  \|_{L^2}. 
	\end{aligned}
	\]
	The proof concludes applying Lemma \ref{lem:estimates_IOp}.
\end{proof}
Following the spirit of Lemma \ref{lem:cosh_kink}, we obtain

\begin{lem}\label{lem:estimates_Sech_Iop_Op}
	For any $0<K\leq 1$ and $\gamma >0$ small enough, for any $f\in L^2$,
	\begin{equation}
	\| \sech(Kx)(1-\gamma \partial_x^2)^{-1}(1-\partial_x^2)f\|_{L^2} \lesssim \gamma^{-1} \| \sech(Kx) f \|_{L^2},
	\end{equation}
	where the implicit constant is independent of $\gamma$ and $K$.
\end{lem}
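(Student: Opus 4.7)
The plan is to reduce the claim to Lemma \ref{lem:cosh_kink} and Lemma \ref{lem:estimates_IOp} via a purely algebraic manipulation of the two resolvents. The key observation is that $(1-\partial_x^2) = (1-\gamma\partial_x^2) + (\gamma-1)\partial_x^2$, and moreover that $(1-\gamma\partial_x^2)^{-1}\partial_x^2 = \gamma^{-1}\left[(1-\gamma\partial_x^2)^{-1}-I\right]$, as one sees at once on the Fourier side. Combining these two identities yields the operator identity
\begin{equation*}
(1-\gamma\partial_x^2)^{-1}(1-\partial_x^2) = \gamma^{-1} I + (1-\gamma^{-1})(1-\gamma\partial_x^2)^{-1}.
\end{equation*}

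With this in hand, the proof reduces to two terms. For the first one, $\gamma^{-1}\|\sech(Kx) f\|_{L^2}$ already has the desired form. For the second one, I would apply Lemma \ref{lem:cosh_kink} to exchange the weight $\sech(Kx)$ and the resolvent $(1-\gamma\partial_x^2)^{-1}$, then use Lemma \ref{lem:estimates_IOp}(i) (the $L^2$ boundedness of the resolvent) on $\sech(Kx) f$, giving
\begin{equation*}
\bigl\| \sech(Kx)(1-\gamma\partial_x^2)^{-1} f \bigr\|_{L^2} \lesssim \bigl\| (1-\gamma\partial_x^2)^{-1}[\sech(Kx) f] \bigr\|_{L^2} \leq \|\sech(Kx) f\|_{L^2}.
\end{equation*}
Since $|1-\gamma^{-1}|\lesssim \gamma^{-1}$ for $\gamma>0$ small, putting these together gives the bound with constant $\gamma^{-1}$, independent of $K\in(0,1]$.

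The only potential subtlety I would be careful about is the independence of the implicit constants from both $\gamma$ and $K$, which is already guaranteed by Lemma \ref{lem:cosh_kink} for $\gamma\in(0,\gamma_1)$; so the step is really routine once the identity above is written down. No substantial obstacle is expected. An alternative route of distributing $(1-\partial_x^2)f = f - \partial_x(\partial_x f)$ and invoking Corollary \ref{cor:estimates_Sech_Iop_partial} fails here because it would require controlling $\|\sech(Kx)\partial_x f\|_{L^2}$ by $\|\sech(Kx) f\|_{L^2}$, which of course is not available; this is precisely why the algebraic identity is preferable.
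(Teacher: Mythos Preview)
Your proposal is correct and is essentially the same as the paper's own proof: both derive the operator identity $(1-\gamma\partial_x^2)^{-1}(1-\partial_x^2)=\gamma^{-1}I+(1-\gamma^{-1})(1-\gamma\partial_x^2)^{-1}$ and then estimate the resolvent term via Lemma~\ref{lem:cosh_kink} and Lemma~\ref{lem:estimates_IOp}(i). The paper merely phrases this through the auxiliary functions $h=\sech(Kx)(1-\gamma\partial_x^2)^{-1}(1-\partial_x^2)f$ and $k=\sech(Kx)f$, but the argument is the same.
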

\begin{proof}
	Set $h=\sech\left( Kx\right) \IOpg \Op f$ and $k=\sech\left(Kx\right) f$. We have
	\begin{equation}
	\cosh \left( Kx\right) h =\IOpg \Op \left[ \cosh(Kx)k\right].
	\end{equation}
	Thus, we obtain
	\begin{equation*}
	\begin{aligned}
	\cosh \left( Kx\right) h 
	=&~{}\IOpg \left[ \cosh(Kx)k\right]-\partial_x^2\IOpg  \left[ \cosh(Kx)k\right]\\
	=&~{}\IOpg \left[ \cosh(Kx)k\right]+\gamma^{-1}( 1-\gamma\partial_x^2-1)\IOpg  \left[ \cosh(Kx)k\right]\\
	=&~{}\IOpg \left[ \cosh(Kx)k\right]+\gamma^{-1}\cosh(Kx)k-\gamma^{-1}\IOpg  \left[ \cosh(Kx)k\right].
	\end{aligned}
	\end{equation*}
Thus,
	\begin{equation*}
\begin{aligned}
\gamma h 
=&
(\gamma -1)\sech \left( Kx\right) \IOpg \left[ \cosh(Kx)k\right]+k,
\end{aligned}
\end{equation*}
using Lemma \ref{lem:cosh_kink} and dividing by $\gamma$, we obtain
	\begin{equation*}
		 \|h\|_{L^2} 
		\lesssim
		\gamma^{-1}\|k\|_{L^2}.
	\end{equation*}
	This concludes the proof.
\end{proof}

We need some additional auxiliary estimates to related the several variables defined.
\begin{lem}\label{lem:v_u}
One has:
	\begin{enumerate}[(a)]
		
		\item Estimates on $v_1$:
		\begin{equation}\label{eq:estimates_v1}
		\begin{gathered}
		\|v_1\|\lesssim \gamma^{-1} \|u_1 \|_{L^2},\\
		\|\partial_x v_1\|\lesssim \gamma^{-1/2} \| f'(Q)u_1 \|_{L^2}+\gamma^{-1} \|\partial_x u_1\|^2_{L^2} .
		\end{gathered}
				\end{equation} 
				
		\item Estimates on $v_2$:
			\begin{equation}\label{eq:estimates_v2}
		\begin{gathered}
		\|v_2\|_{L^2}\lesssim  \| u_2\|_{L^2}, \quad 
			\|\partial_x v_2\|_{L^2} \lesssim \gamma^{-1/2}\| u_2\|_{L^2},\\
				\|\partial_x^2 v_2\|_{L^2}\lesssim \gamma^{-1} \| u_2\|_{L^2}.
		\end{gathered}
	\end{equation}
	\end{enumerate}
\end{lem}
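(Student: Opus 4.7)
My plan is to reduce both estimates to the resolvent bounds in Lemma \ref{lem:estimates_IOp}, exploiting the facts that $\partial_x$ commutes with $(1-\gamma\partial_x^2)^{-1}$ and that $\LL$ can be split into a smooth multiplicative piece and the pure Helmholtz operator $1-\partial_x^2$. For $v_2 = (1-\gamma\partial_x^2)^{-1}u_2$, after commuting one writes $\partial_x^k v_2 = (1-\gamma\partial_x^2)^{-1}\partial_x^k u_2$ for $k=0,1$; parts $(i)$ and $(ii)$ of Lemma \ref{lem:estimates_IOp} applied to $u_2 \in L^2$ then give the first two bounds in \eqref{eq:estimates_v2}, while part $(iii)$ delivers the $H^2$ bound directly.

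\medskip

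For $v_1$ one cannot apply the resolvent estimates to $\LL u_1$ directly, since $\LL u_1 \notin L^2$ in general when $u_1\in H^1$. I would therefore split
\[
\LL u_1 = (1-\partial_x^2)u_1 - f'(Q)\,u_1.
\]
The multiplicative piece is harmless: since $f'(Q)=pQ^{p-1}$ and its derivatives are bounded, Lemma \ref{lem:estimates_IOp}$(i)$--$(ii)$ yield $\|(1-\gamma\partial_x^2)^{-1}(f'(Q) u_1)\|_{L^2}\lesssim \|u_1\|_{L^2}$ and $\|\partial_x (1-\gamma\partial_x^2)^{-1}(f'(Q) u_1)\|_{L^2}\lesssim \gamma^{-1/2}\|f'(Q) u_1\|_{L^2}$. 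For the Helmholtz piece, the key observation is the elementary operator identity
\[
(1-\gamma\partial_x^2)^{-1}(1-\partial_x^2) \;=\; \gamma^{-1}\,\mathrm{Id} + (1-\gamma^{-1})(1-\gamma\partial_x^2)^{-1},
\]
obtained by writing $1-\partial_x^2 = \gamma^{-1}(1-\gamma\partial_x^2) + (1-\gamma^{-1})$. Applying this to $u_1$ and using part $(i)$ gives $\|(1-\gamma\partial_x^2)^{-1}(1-\partial_x^2) u_1\|_{L^2}\lesssim \gamma^{-1}\|u_1\|_{L^2}$; commuting $\partial_x$ with the resolvent and applying the same identity to $u_1$ similarly produces $\|\partial_x (1-\gamma\partial_x^2)^{-1}(1-\partial_x^2) u_1\|_{L^2}\lesssim \gamma^{-1}\|\partial_x u_1\|_{L^2}$. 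Adding the two contributions delivers both bounds of \eqref{eq:estimates_v1}.

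\medskip

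I do not anticipate a genuine obstacle: the argument is purely algebraic once Lemma \ref{lem:estimates_IOp} is in place. The only subtle point is always to commute $\partial_x$ out of the resolvent before invoking bound $(i)$, rather than naively combining differentiation with $(ii)$, so that no extra factor of $\gamma^{-1/2}$ is paid. The $\gamma^{-1}$ loss appearing in the $v_1$ estimates is intrinsic: it reflects the regularity mismatch between $(u_1,u_2)\in H^1\times L^2$ and $(v_1,v_2)\in H^1\times H^2$ discussed in the commentary before this lemma, and is precisely what prevents the $\gamma$-dependent constants from being absorbed later in the virial analysis.
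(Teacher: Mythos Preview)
Your argument is correct and is exactly the approach the paper has in mind: the paper simply records that the lemma ``is a direct application of Lemma \ref{lem:estimates_IOp}'', and your splitting $\LL u_1=(1-\partial_x^2)u_1-f'(Q)u_1$ together with the operator identity $(1-\gamma\partial_x^2)^{-1}(1-\partial_x^2)=\gamma^{-1}\mathrm{Id}+(1-\gamma^{-1})(1-\gamma\partial_x^2)^{-1}$ is precisely the mechanism behind that application (the same identity is spelled out in the proof of Lemma \ref{lem:estimates_Sech_Iop_Op}). Note also that the exponent $2$ on $\|\partial_x u_1\|_{L^2}$ in the statement is a typo; your computation yields the intended bound $\gamma^{-1}\|\partial_x u_1\|_{L^2}$.
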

 The proof of the above results are a direct application of Lemma \ref{lem:estimates_IOp}.

%

\begin{lem}\label{lem:v_w} Let  $1\leq K\leq A$ fixed. Then
	
	\begin{enumerate}[(a)]
		
		\item Estimates on $v_1$:
		\begin{equation}\label{eq:estimates_v1_w1}
		\begin{gathered}
		\|\zeta_K v_1\|\lesssim \gamma^{-1} \|w_1 \|_{L^2},\\
		\|\zeta_K \partial_x v_1\|\lesssim \gamma^{-1} \|\partial_x w_1 \|_{L^2}+  \gamma^{-1} \| w_1 \|_{L^2}.
		\end{gathered}
		\end{equation} 
		
		\item Estimates on $v_2$:
		\begin{equation}\label{eq:estimates_v2_w2}
		\begin{gathered}
		\| \zeta_K v_2\|_{L^2}\lesssim  \| w_2\|_{L^2},\\
		\|\zeta_K \partial_x v_2\|_{L^2} \lesssim \gamma^{-1/2}\| w_2\|_{L^2},\\
		\|\zeta_K \partial_x^2 v_2\|_{L^2}\lesssim \gamma^{-1} \| w_2\|_{L^2}.
		\end{gathered}
		\end{equation}
	\end{enumerate}
\end{lem}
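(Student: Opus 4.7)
The strategy is to express $v_1, v_2$ in terms of $u_1, u_2$ via \eqref{eq:change_variable}, and then invoke the preceding operator estimates (Lemmas \ref{lem:estimates_IOp}, \ref{lem:cosh_kink}, \ref{lem:estimates_Sech_Iop_Op} and Corollary \ref{cor:estimates_Sech_Iop_partial}) together with pointwise comparisons relating $\zeta_K$, a suitable $\sech$ function, and $\zeta_A$. The crucial pointwise observation is that, for $1\leq K\leq A$,
\[
\zeta_K(x) \lesssim \sech(x/K) \lesssim \zeta_A(x),
\]
where the first inequality follows from the definition of $\zeta_K$ in \eqref{eq:zetaB} (both sides decay at rate $1/K$ at infinity) and the second uses $K\leq A$ to ensure $\sech(x/K)$ decays at least as fast as $\zeta_A$. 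Moreover $K\geq 1$ means $1/K\leq 1$, so Lemma \ref{lem:cosh_kink} and Corollary \ref{cor:estimates_Sech_Iop_partial} are applicable with their parameter taken to be $1/K$.

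For part (b), the bound on $\|\zeta_K v_2\|_{L^2}$ follows by dominating $\zeta_K\lesssim \sech(x/K)$, applying Lemma \ref{lem:cosh_kink} to move the sech inside the resolvent, using Lemma \ref{lem:estimates_IOp}$(i)$ to drop the resolvent, and finally bounding $\sech(x/K)\lesssim \zeta_A$ to recover $w_2$. The bound on $\|\zeta_K \partial_x v_2\|_{L^2}$ is identical except that Corollary \ref{cor:estimates_Sech_Iop_partial} replaces Lemma \ref{lem:estimates_IOp}$(i)$ and produces the $\gamma^{-1/2}$ factor. For $\|\zeta_K \partial_x^2 v_2\|_{L^2}$ I would use the algebraic identity
\[
(1-\gamma\partial_x^2)^{-1}\partial_x^2 = \gamma^{-1}\big[(1-\gamma\partial_x^2)^{-1} - \mathrm{Id}\big],
\]
which yields $\partial_x^2 v_2 = \gamma^{-1}(v_2 - u_2)$ and reduces the estimate to the previous bound on $\|\zeta_K v_2\|_{L^2}$ together with the trivial $\|\zeta_K u_2\|_{L^2}\leq \|\zeta_A u_2\|_{L^2}=\|w_2\|_{L^2}$.

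Part (a) is more delicate because $\LL$ contains a second derivative. I would decompose
\[
\LL u_1 = (1-\partial_x^2)u_1 - f'(Q)u_1,
\]
so that $v_1=(1-\gamma\partial_x^2)^{-1}(1-\partial_x^2)u_1-(1-\gamma\partial_x^2)^{-1}[f'(Q)u_1]$. The first summand is the principal one and is controlled through Lemma \ref{lem:estimates_Sech_Iop_Op}, producing the unavoidable $\gamma^{-1}$ loss together with a $\sech(x/K) u_1$ factor that collapses to $\|w_1\|_{L^2}$. The second summand benefits from the exponential decay of $f'(Q)=pQ^{p-1}$, so after Lemma \ref{lem:cosh_kink} and Lemma \ref{lem:estimates_IOp}$(i)$ it only contributes $\|w_1\|_{L^2}$. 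For $\|\zeta_K\partial_x v_1\|_{L^2}$ the same scheme applies after commuting $\partial_x$ inside: the principal piece delivers $\gamma^{-1}\|\sech(x/K)\partial_x u_1\|_{L^2}$, which I would convert into $\gamma^{-1}(\|\partial_x w_1\|_{L^2}+\|w_1\|_{L^2})$ by writing $\zeta_A\partial_x u_1=\partial_x w_1-\zeta_A' u_1$ and invoking $|\zeta_A'|\lesssim A^{-1}\zeta_A$ from \eqref{eq:zetaA'/zetaA}; the $f'(Q)$ piece contributes only $\gamma^{-1/2}\|w_1\|_{L^2}$ via Corollary \ref{cor:estimates_Sech_Iop_partial} and is absorbed.

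The main technical obstacle, and the reason for the precise range $1\leq K\leq A$, is the two-sided pointwise chain $\zeta_K\lesssim\sech(x/K)\lesssim\zeta_A$: the first inequality needs $K\geq 1$ for the $\sech$-parameter to be admissible in Lemma \ref{lem:cosh_kink}, while the second needs $K\leq A$ for the $\sech$ to dominate $\zeta_A$ at infinity. Once this bookkeeping is in place, the rest is a routine assembly of the operator lemmas with the decomposition of $\LL$ and the algebraic identity for $(1-\gamma\partial_x^2)^{-1}\partial_x^2$.
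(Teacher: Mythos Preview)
Your proposal is correct and follows essentially the same route as the paper's proof: decompose $\LL u_1=(1-\partial_x^2)u_1-f'(Q)u_1$, apply Lemma~\ref{lem:estimates_Sech_Iop_Op} to the principal piece and Lemma~\ref{lem:cosh_kink}/Corollary~\ref{cor:estimates_Sech_Iop_partial} to the lower-order pieces, and convert $\zeta_A\partial_x u_1$ into $\partial_x w_1-(\zeta_A'/\zeta_A)w_1$. The explicit pointwise chain $\zeta_K\lesssim\sech(\cdot/K)\lesssim\zeta_A$ that you isolate is exactly what the paper uses implicitly when it invokes those $\sech$-weighted lemmas together with ``\eqref{eq:bound_phiA} and $1\leq K\leq A$''; your algebraic identity $\partial_x^2 v_2=\gamma^{-1}(v_2-u_2)$ for the last estimate in~(b) is a clean variant of the same argument.
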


\begin{proof}
	Proof of  \eqref{eq:estimates_v1_w1}. $(i)$ Applying the definition of $v_1$ \eqref{eq:change_variable}, we have
	\[
	\begin{aligned}
	\| \zeta_K v_1\|_{L^2}
	\lesssim & ~{} \|\zeta_K  (1-\gamma\partial_x^2)^{-1} \left[(1-\partial_x^2)( u_1)-f'(Q) u_1\right]\|_{L^2}.
	\end{aligned}
	\]
	Using Lemma \ref{lem:estimates_Sech_Iop_Op} and Lemma \ref{lem:estimates_IOp}, \eqref{eq:bound_phiA} and $1\leq K\leq A$, we conclude
		\[
	\begin{aligned}
	\| \zeta_K v_1\|_{L^2}
	\lesssim &~{} \gamma^{-1}\|  \zeta_K u_1\|_{L^2}+\|\zeta_K f'(Q) u_1\|_{L^2}\\
	\lesssim &~{} \gamma^{-1}\|  \zeta_K\zeta_A^{-1 } w_1\|_{L^2}+\|\zeta_K \zeta_A^{-1 } f'(Q) w_1\|_{L^2}\lesssim  \gamma^{-1}\|  w_1\|_{L^2}.
	\end{aligned}
	\]
	 Proof of $(ii)$. First, by the definition of $w_1$ \eqref{eq:wi}, we get
	\begin{equation}\label{eq:Zk_u1x}
	\zeta_K \partial_x u_1=\zeta_K \zeta_A^{-1}\bigg(\partial_x w_1-\frac{\zeta_A'}{\zeta_A} w_1\bigg).
	\end{equation}
	 Then, by definition of $v_1$ in \eqref{eq:change_variable},
	\[
	\begin{aligned}
	\| \zeta_K \partial_x v_1\|_{L^2}
	\lesssim & ~{} \|\zeta_K  (1-\gamma\partial_x^2)^{-1} (1-\partial_x^2)\partial_x u_1\|_{L^2}+\|\zeta_K (1-\gamma \partial_x^2)^{-1}\partial_x [f'(Q) u_1]\|_{L^2}.
	\end{aligned}
	\]
	and using Lemma \ref{lem:estimates_Sech_Iop_Op}, 
	\[
	\begin{aligned}
	\| \zeta_K \partial_x v_1\|_{L^2}
	\lesssim &~{} \gamma^{-1}\|  \zeta_K \partial_x u_1\|_{L^2}	+\| \zeta_K f'(Q) u_1\|_{L^2}\\
		\lesssim & ~{}\gamma^{-1} (\|  \partial_x w_1\|_{L^2}+A^{-1}\|w_1\|_{L^2})	+\| w_1\|_{L^2}\\
				\lesssim & ~{}\gamma^{-1} (\|  \partial_x w_1\|_{L^2}+\|w_1\|_{L^2})	.
	\end{aligned}
	\]
	This ends the proof of \eqref{eq:estimates_v1_w1}.  Following the preceding steps for $v_2$, the proof concludes.
\end{proof}

Now we perform some technical estimates on the variable $z_i$.

\begin{cor}\label{cor:z_w} 
	One has:
	\begin{enumerate}[(a)]
		\item Estimates on $z_1$:
		\begin{equation}\label{eq:estimates_z1_w1}
		\begin{gathered}
		\| z_1\|\lesssim \gamma^{-1} \|w_1 \|_{L^2},\\
		\|\partial_x z_1\|\lesssim \gamma^{-1} \|\partial_x w_1 \|_{L^2}+  \gamma^{-1} \| w_1 \|_{L^2}.
		\end{gathered}
		\end{equation} 
		
		\item Estimates on $z_2$:
		\begin{equation}\label{eq:estimates_z2_w2}
		\begin{gathered}
		\|  z_2\|_{L^2}\lesssim  \| w_2\|_{L^2},\\
		\|\partial_x z_2\|_{L^2} \lesssim \gamma^{-1/2}\| w_2\|_{L^2},\\
		\| \partial_x^2 z_2\|_{L^2}\lesssim \gamma^{-1} \| w_2\|_{L^2}.
		\end{gathered}
		\end{equation}
	\end{enumerate}
\end{cor}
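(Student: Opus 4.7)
The statement is a corollary of Lemma \ref{lem:v_w}, so the plan is to reduce each inequality to that lemma applied with $K=B$ (permissible since $1\leq B\leq A$ by \eqref{eq:scales}), handling the extra factor $\chi_A$ and its derivatives by the pointwise bounds $|\chi_A^{(k)}|\lesssim A^{-k}\mathbf{1}_{|x|\leq 2A}$ and $|\zeta_B'/\zeta_B|\lesssim B^{-1}$, $|\zeta_B''/\zeta_B|\lesssim B^{-1}$. The first step is to note that $|\chi_A|\leq 1$ gives immediately, for any $g\in L^2$, $\|\chi_A\zeta_B g\|_{L^2}\leq \|\zeta_B g\|_{L^2}$. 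Applied to $g=v_1$ and $g=v_2$, this yields the zero-order estimates $\|z_1\|_{L^2}\lesssim\gamma^{-1}\|w_1\|_{L^2}$ and $\|z_2\|_{L^2}\lesssim\|w_2\|_{L^2}$ directly from Lemma \ref{lem:v_w}.

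For the first derivative terms, I expand by Leibniz:
\begin{equation*}
\partial_x z_i=\chi_A'\zeta_B v_i+\chi_A\zeta_B'v_i+\chi_A\zeta_B\,\partial_x v_i,\quad i=1,2.
\end{equation*}
The first two pieces are controlled by the zero-order estimates of Lemma \ref{lem:v_w} via
\begin{equation*}
\|\chi_A'\zeta_B v_i\|_{L^2}\lesssim A^{-1}\|\zeta_B v_i\|_{L^2},\qquad \|\chi_A\zeta_B'v_i\|_{L^2}\lesssim B^{-1}\|\zeta_B v_i\|_{L^2},
\end{equation*}
while the third piece is bounded by $\|\zeta_B\,\partial_x v_i\|_{L^2}$, which is the content of the second-order estimates in Lemma \ref{lem:v_w}. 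Since $A^{-1},B^{-1}\leq 1$, collecting these gives the claimed bounds on $\|\partial_x z_1\|_{L^2}$ and $\|\partial_x z_2\|_{L^2}$.

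Finally, for $\partial_x^2 z_2$ I again expand via Leibniz, obtaining six terms of the form $\chi_A^{(\alpha)}\zeta_B^{(\beta)}\partial_x^{2-\alpha-\beta}v_2$ with $\alpha+\beta\leq 2$. Each one is estimated using the bounds above on $\chi_A^{(\alpha)}$ and $\zeta_B^{(\beta)}/\zeta_B$, combined with the $L^2$ estimates $\|\zeta_B v_2\|_{L^2}\lesssim\|w_2\|_{L^2}$, $\|\zeta_B\partial_x v_2\|_{L^2}\lesssim\gamma^{-1/2}\|w_2\|_{L^2}$, and $\|\zeta_B\partial_x^2 v_2\|_{L^2}\lesssim\gamma^{-1}\|w_2\|_{L^2}$ from part (b) of Lemma \ref{lem:v_w}. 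The dominant contribution comes from $\chi_A\zeta_B\partial_x^2 v_2$ and yields the stated $\gamma^{-1}\|w_2\|_{L^2}$ bound; all other pieces carry at least one factor of $A^{-1}$ or $B^{-1}$ and are therefore absorbed. No serious obstacle is expected: this is a purely mechanical localization argument once Lemma \ref{lem:v_w} is in hand, the only care being to apply that lemma with $K=B$ rather than $K=A$ so that the weighted estimates are compatible with the localization scale of $z_i$.
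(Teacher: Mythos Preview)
Your proposal is correct and follows essentially the same approach as the paper: both use $|\chi_A|\le 1$ for the zero-order estimates, expand $\partial_x z_i$ via Leibniz and control each piece by Lemma~\ref{lem:v_w} with $K=B$, and then iterate for $\partial_x^2 z_2$. The paper actually only writes out the $z_1$ case in detail and says ``For $z_2$ we use the same strategy, and we skip the details,'' so your treatment of $\partial_x^2 z_2$ is more explicit than the original.
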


\begin{proof}
	Proof of  \eqref{eq:estimates_z1_w1}. For $(i)$, from definition of $z_1=\chi_A \zeta_B v_1$, we have
	\[
	\begin{aligned}
	\| z_1\|_{L^2}
	\lesssim\|\zeta_B v_1\|_{L^2},
	\end{aligned}
	\]
	and using Lemma \ref{lem:v_w}, we conclude
	\[
	\begin{aligned}
	\| z_1\|_{L^2} \lesssim  \gamma^{-1}\|  w_1\|_{L^2}.
	\end{aligned}
	\]
	For $(ii)$, derivating $z_1$, we obtain
	\[\begin{aligned}
	\partial_x z_1=&~{} \chi_A' \zeta_B v_1+\chi_A \zeta_B' v_1+\chi_A \zeta_B \partial_x v_1\\
	=&~{}\chi_A' \zeta_B v_1+\frac{\zeta_B'}{\zeta_B} z_1+\chi_A \zeta_B \partial_x v_1.
	\end{aligned}\]
	Then, by Lemma \ref{lem:v_w} we have
	\[
	\begin{aligned}
		\|\partial_x z_1\|
			\leq \gamma^{-1}\big(\left\|w_1\right\|_{L^2}+\|  \partial_x w_1\|_{L^2}\big).
	\end{aligned}
	\]
	For $z_2$ we use the same strategy, and we skip the details. This ends the proof.
\end{proof}

\begin{lem}\label{lem:sech_u_w} One has:
	\begin{enumerate}[(a)]
		\item Estimates on $u_1$:
		\begin{equation}\label{eq:estimates_sech_u1_w1}
		\begin{gathered}
		\left\|\sech^{1/2}(x) u_1\right\|_{L^2} \lesssim \|w_1 \|_{L^2},\\
		\left\|\sech^{1/2}(x) \partial_x u_1\right\|_{L^2}	\lesssim  \|\partial_x w_1 \|_{L^2}+ \| w_1 \|_{L^2}.
		\end{gathered}
		\end{equation} 
		
		\item Estimates on $u_2$:
		\begin{equation}\label{eq:sech_u2_w2}
		\begin{gathered}
		\left\| \sech^{1/2}(x) u_2\right\|_{L^2}\lesssim  \| w_2\|_{L^2}.
		\end{gathered}
		\end{equation}
	\end{enumerate}
\end{lem}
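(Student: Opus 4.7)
The proof should be a direct consequence of the relation $u_i = \zeta_A^{-1} w_i$ from \eqref{eq:wi}, together with the fact that $\sech^{1/2}(x) \zeta_A^{-1}(x)$ is uniformly bounded on $\R$ once $A$ is taken sufficiently large. I would begin by recalling from \eqref{eq:bound_phiA} that $\zeta_A(x) = \exp(-A^{-1}(1-\chi(x))|x|)$, so that $\zeta_A^{-1}(x) \leq e^{|x|/A}$ for every $x \in \R$. Combined with the elementary bound $\sech^{1/2}(x) \leq \sqrt{2}\, e^{-|x|/2}$, this gives
\[
\sech^{1/2}(x)\, \zeta_A^{-1}(x) \leq \sqrt{2}\, e^{-|x|(1/2 - 1/A)} \leq C,
\]
uniformly in $x$, provided $A \geq 4$ (which is ensured since $A = \delta^{-1}$ for $\delta$ small). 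The first bound in (a) and the bound in (b) follow immediately by writing
\[
\|\sech^{1/2}(x)\, u_i\|_{L^2} = \|\sech^{1/2}(x)\, \zeta_A^{-1}(x)\, w_i\|_{L^2} \lesssim \|w_i\|_{L^2}.
\]

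For the second estimate in (a), the plan is to differentiate the identity $u_1 = \zeta_A^{-1} w_1$ using Leibniz's rule, obtaining
\[
\partial_x u_1 = \zeta_A^{-1}\Bigl(\partial_x w_1 - \tfrac{\zeta_A'}{\zeta_A}\, w_1\Bigr),
\]
and then invoke the explicit expression \eqref{eq:zetaA'/zetaA} for $\zeta_A'/\zeta_A$, which gives the pointwise bound $|\zeta_A'/\zeta_A| \lesssim A^{-1} \leq 1$. Applying again the uniform bound on $\sech^{1/2}(x)\, \zeta_A^{-1}(x)$ yields
\[
\|\sech^{1/2}(x)\, \partial_x u_1\|_{L^2} \lesssim \|\partial_x w_1\|_{L^2} + A^{-1}\|w_1\|_{L^2} \lesssim \|\partial_x w_1\|_{L^2} + \|w_1\|_{L^2},
\]
which completes the proof.

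There is no real obstacle here; the lemma is a weighted-norm conversion that is forced by the construction $w_i = \zeta_A u_i$. The only point requiring mild care is the verification that $A$ is large enough to guarantee the boundedness of $\sech^{1/2}(x)\,\zeta_A^{-1}(x)$, which is automatic under the hypothesis $A = \delta^{-1}$ with $\delta$ small enough. I would note in passing that these estimates are essentially a localized and slightly weaker version of the identity $\int \sech(x)\, u_i^2 \lesssim \int w_i^2$ already used (in disguise) in \eqref{eq:N_0_bounded} and in Lemma \ref{lem:2p4}, so no new machinery is needed.
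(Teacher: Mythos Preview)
Your proposal is correct and follows essentially the same approach as the paper: both arguments write $u_i = \zeta_A^{-1} w_i$, use the uniform bound on $\sech^{1/2}(x)\,\zeta_A^{-1}(x)$ for $A$ large, and handle the derivative via $\zeta_A \partial_x u_1 = \partial_x w_1 - (\zeta_A'/\zeta_A) w_1$ together with $|\zeta_A'/\zeta_A|\lesssim A^{-1}$. If anything, your write-up is slightly more explicit about why $\sech^{1/2}(x)\,\zeta_A^{-1}(x)$ stays bounded.
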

\begin{proof}
	Proof of  \eqref{eq:estimates_sech_u1_w1} . Recalling definition of $w_i= \zeta_A u_i$ for $i=1,2.$. We have
	\[
	\begin{aligned}
	\| \sech^{1/2}(x) u_i\|_{L^2}
	\lesssim\|\sech^{1/2}(x)\zeta_A^{-1} w_i\|_{L^2}\leq  \|w_i\|_{L^2}.
	\end{aligned}
	\]
	Furthermore, derivating $w_1$, we have
	\[
	\zeta_A \partial_x u_1=\partial_x w_1-\frac{ \zeta_A'}{\zeta_A} w_1.
	\]
	Then, 
	\[
	\begin{aligned}
	\|\sech^{1/2}(x) \partial_x u_1\|_{L^2}
	=
	\left\|\sech^{1/2}(x)\zeta_A^{-1} \left(\partial_x w_1-\frac{ \zeta_A'}{\zeta_A} w_1\right)\right\|
	\leq  \|\partial_x w_1\|_{L^2}+A^{-1}\|w_1\|_{L^2}.
	\end{aligned}
	\]
This concludes the proof.
\end{proof}

\subsection{Controlling  error and nonlinear terms}

 By the definition of $\zeta_B$ and $\chi_A$ in \eqref{eq:zetaB} and \eqref{eq:psi_chiA}, it holds
\begin{equation}
\begin{aligned}\label{eq:bound_ZCV_B}
\zeta_B(x)\le e^{-\frac{|x|}{B}}, &\quad |\zeta_B'(x)|\lesssim \frac{1}{B} e^{-\frac{|x|}{B}}, \ \
|\varphi_B| \lesssim B,\\  |(\chi_{A}^2)'|\lesssim A^{-1},\ \ &|(\chi_{A}^2)''|\lesssim A^{-2},\ \ \ |(\chi_{A}^2)'''|\lesssim A^{-3}.
\end{aligned}
\end{equation}

\subsubsection{Control of $\tilde{J_1}$.}\label{control_tJ1}
Considering the following decomposition $\tilde{J_1}$:
\[
\begin{aligned}
\tilde{J_1}=&
-\frac12\int   (\chi_A^2)'\varphi_B [v_1^2+3(\partial_x v_2)^2 ]   
+\frac12\int  \big[(\chi_A^2)'f'(Q)  + \big((\chi_A^2)''' -(\chi_A^2)'\big)\big]  \varphi_Bv_2^2  \\
&-\frac32 \int   \left[ \big( \chi''_A \chi_A-(\chi_A^2)''\big) -(\chi_A^2)' \frac{\zeta_B'}{\zeta_B}\right] \zeta_B^2 v_2^2
=:H_1+H_2+H_3.
\end{aligned}
\]
For $H_1$ and $H_2$, using $|(\chi_A^2)'\varphi_B|\lesssim A^{-1}B$ and Remark \ref{rem:chiA_zetaA4}, we obtain
\begin{equation*}
\begin{aligned}
|H_1|
\lesssim   A^{-1}B (\|v_1 \|_{L^2(|x|\leq 2A)}^2+ \|\partial_x v_2\|_{L^2(|x|\leq 2A)}^2)
\lesssim A^{-1}B (\|\zeta_A^2 v_1 \|_{L^2}^2+ \|\zeta_A^2\partial_x v_2\|_{L^2}^2),
\end{aligned}
\end{equation*}
and
\[
\begin{aligned}
|H_2|\lesssim & ~{}   A^{-1}B \| v_2\|_{L^2(A \leq|x|\leq 2A)}^2
\lesssim  A^{-1}B \| v_2\|_{L^2(|x|\leq 2A)}^2 \lesssim A^{-1}B\|\zeta_A^2 v_2\|_{L^2}^2.
\end{aligned}
\]
For $H_3$, using
\eqref{eq:bound_ZCV_B}, we have
\[
\begin{aligned}
|H_3|
\leq &~{}
\frac32 \int   \left| \big( \chi''_A \chi_A-(\chi_A^2)''\big) -(\chi_A^2)' \frac{\zeta_B'}{\zeta_B}\right| \zeta_B^2 v_2^2\\
\lesssim &~{} 
(AB)^{-1}\|\zeta_B v_2\|_{L^2(|x|\leq 2A)}^2
\lesssim  (AB)^{-1} \|\zeta_B v_2\|_{L^2}^2.
\end{aligned}
\]
Finally,  we get
\begin{equation}\label{eq:estimates_tildeJ1}
\begin{aligned}
|\tilde{J_1}|
\lesssim & ~{} 
%
A^{-1}B\left( \|\zeta_A v_1\|_{L^2}^2
+  \|\zeta_A v_2 \|_{L^2}^2 
+  \| \zeta_A \partial_x v_2\|_{L^2}^2 \right)
,
\end{aligned}
\end{equation}
 since  $\zeta_B\lesssim \zeta_A$.
\subsubsection{Control of $J_4$.}\label{control_J4}
Recall that
\[
\begin{aligned}
J_4=
\gamma  \int  \psi_{A,B} v_2(1-\gamma\partial_x ^2)^{-1}\left[  2\partial_x (\partial_x(f'(Q))\partial_x v_2)- \partial_x^2(f'(Q))\partial_x v_2 \right].
\end{aligned}
\]
Using H\"older's inequality
\[
\begin{aligned}
|J_4|
\lesssim & ~{}  \gamma \|\psi_{A,B} v_2\|_{L^2}\underbrace{
	\|(1-\gamma\partial_x ^2)^{-1}[  2\partial_x (\partial_x(f'(Q)) \partial_x v_2)- \partial_x^2(f'(Q))\partial_x v_2]\|_{L^2}}_{J_4^1}.
\end{aligned}
\]
First we focus on $J_4^1$. Using \eqref{lem:estimates_IOp},
\[
J_4^1\lesssim \gamma^{-1/2}\|(\partial_x(f'(Q))\partial_x v_2\|_{L^2}+
\|\partial_x^2(f'(Q))\partial_x v_2 \|_{L^2}.
\]
Recall that $|\partial_x (f'(Q))|, |\partial_x^2 (f'(Q))| \sim Q^{p-2}|Q'| \lesssim e^{-(p-1)|x|}$.
Therefore, we are led to the estimate of
\[
\| e^{-(p-1)|x|} \partial_x v_2\|_{L^2}.
\]
Differentiating $z_2=\chi_A \zeta_B v_2$, we obtain
\[
\chi_A \zeta_B \partial_x v_2
=
\partial_x z_2
-\frac{\zeta_B'}{\zeta_B} z_2-\chi_A'\zeta_B v_2,
\]
we get
\[
\begin{aligned}
&~{} e^{-2(p-1)|x|}(\partial_x v_2)^2\\
&~{} = e^{-2(p-1)|x|} \chi_A^2(\partial_x v_2)^2+ e^{-2(p-1)|x|} (1-\chi_A^2)(\partial_x v_2)^2\\
&~{} \lesssim  e^{-(p-1)|x|}\left[(\partial_x z_2)^2
+\left(\frac{\zeta_B'}{\zeta_B}\right)^2 z_2^2+(\chi_A'\zeta_B v_2)^2\right]  
+ e^{-(p-1)A} e^{-(p-1)|x|}(\partial_x v_2)^2\\
&~{} \lesssim  e^{-(p-1)|x|}\left[(\partial_x z_2)^2
+ \frac1{B^2}z_2^2\right]+e^{-(p-1)|x|}(\chi_A'\zeta_B v_2)^2  
+ e^{-(p-1)A} e^{-(p-1)|x|} (\partial_x v_2)^2.
\end{aligned}
\]
Hence, 
\[
\begin{aligned}
\| e^{-|x|}\partial_x v_2\|_{L^2}
\lesssim & ~{}
\|\partial_x z_2\|_{L^2}
+ \|z_2\|_{L^2}
+e^{-(p-1)A} (  \| \zeta_B v_2\|_{L^2} + \|\zeta_B \partial_x v_2\|_{L^2}).
\end{aligned}
\]
By the above inequality, we have
\begin{equation}\label{eq:bound_f'x_v2x}
\begin{aligned}
J_4^1\lesssim & ~{} 
\gamma^{-1/2} \bigg(\| \partial_x z_2\|_{L^2}
+ \|z_2\|_{L^2}
   +e^{-(p-1)A}  (  \| \zeta_B v_2\|_{L^2} + \|\zeta_B \partial_x v_2\|_{L^2}) \bigg).
\end{aligned}
\end{equation}
Second, using $\psi_{A,B}=\chi_A^2\varphi_B$ in \eqref{eq:def_psiB}, and Remark \ref{rem:chiA_zetaA4}, one can see that
\[
\|\psi_{A,B}v_2\|_{L^2}
 \lesssim B \|\chi_A v_2\|_{L^2}
 \lesssim  B \| v_2\|_{L^2(|x|<2A)}
  \lesssim  B \| \zeta_A^2 v_2\|_{L^2}.
\]
We conclude
\begin{equation}\label{eq:estimates_J4}
\begin{aligned}
|J_4|
\lesssim & ~{} 
\gamma^{1/2} B \|\zeta_A^2 v_2\|_{L^2}
\bigg(
\| \partial_x z_2\|_{L^2}
+ \|z_2\|_{L^2}
+e^{-(p-1)A}   (  \|\zeta_{B} v_2\|_{L^2} + \|\zeta_B\partial_x v_2\|_{L^2})
\bigg).
\end{aligned}
\end{equation}

\subsubsection{Control of $J_5$.}
Recalling that $\psi_{A,B}=\chi_A^2\varphi_B$, using the H\"older inequality and Remark \ref{rem:chiA_zetaA4}, we get
\[
\begin{aligned}
|J_5|=& \left|\int  \psi_{A,B} H(x)v_1 \right|\lesssim \| \chi_A\varphi_B v_1\|_{L^2}
\|\chi_A (1-\gamma\partial_x ^2)^{-1} N^{\perp} \|_{L^2}\\
\lesssim & ~{} 
\| \chi_A\varphi_B v_1\|_{L^2}
\|\zeta_A^2 (1-\gamma\partial_x ^2)^{-1} N^{\perp} \|_{L^2}.
\end{aligned}
\]
By the definition of $N^{\perp}$ (see \eqref{eq:Nperp}), it follows that
\[
\begin{aligned}
\|\zeta_A^2 (1-\gamma\partial_x ^2)^{-1}( N^{\perp} )\|_{L^2}
\leq & ~{} \| \zeta_A^2 (1-\gamma\partial_x ^2)^{-1} N\|_{L^2}+|N_0|\| \zeta_A^2 (1-\gamma\partial_x ^2)^{-1} \partial_x^{-1} \phi_0 \|_{L^2}\\
\lesssim & ~{}  \|\zeta_A^2  (1-\gamma\partial_x ^2)^{-1} N\|_{L^2}+|N_0|,
\end{aligned}
\]
since $\partial_x^{-1} \phi_0 \in L^2$ and $0\leq \zeta_A \lesssim 1$.\\
Furthermore, by definition of $N$ in \eqref{eq:Nperp}, and using Corollary \ref{cor:estimates_Sech_Iop_partial}, \eqref{eq:taylorFull} and Lemma \ref{lem:estimates_IOp}, we have
\begin{equation}\label{eq:LambaN}
\begin{aligned}
&~{} \|\zeta_A^2 (1-\gamma\partial_x ^2)^{-1} N\|_{L^2}\\
&~{} \leq \gamma^{-1/2}\| \zeta_A^2 [f(Q)+f'(Q)(a_1\phi_0+u_1)-f(Q+a_1\phi_0+u_1)]\|_{L^2}\\
&~{} \leq \gamma^{-1/2}\| \zeta_A^2 [ a_1^2 f''(Q) \phi_0^2+f''(Q) u_1^2+|a_1|^{p} \phi_0^p+|u_1|^{p}] \|_{L^2}\\
&~{}\leq \gamma^{-1/2}\left( a_1^2 \|  f''(Q) \zeta_A^2\phi_0^2  \|_{L^2} + \| f''(Q) \zeta_A w_1^2 \|_{L^2} +|a_1|^{p} \| \zeta_A^2 \phi_0^p \|_{L^2}+ \|\zeta_A |u_1|^{p-1}w_1 \|_{L^2} \right)\\
 & ~{} \lesssim \gamma^{-1/2} \left(a_1^2+\| u_1\|_{L^\infty}\| f''(Q) w_1\|_{L^2}+|a_1|^{p}+  \|u_1\|_{L^\infty}^{p-1}\| w_1 \|_{L^2}\right)\\
 & ~{} \lesssim \gamma^{-1/2} \left(a_1^{2} +\| u_1\|_{L^\infty}\| w_1\|_{L^2}\right).
\end{aligned}
\end{equation}
Note that we have used that $p\geq 2$.  Since $|\chi_A\varphi_{B}|\lesssim B$, we have
\begin{equation}\label{eq:estimates_psiB_v1}
\| \chi_A\varphi_{B} v_1\|_{L^2}\lesssim B \|\chi_A v_1\|_{L^2} \lesssim B \|v_1\|_{L^2(|x|<2A)}\lesssim B \|\zeta_A^2 v_1\|_{L^2}.
\end{equation}
Finally, by \eqref{eq:N_0_bounded},  \eqref{eq:LambaN} and \eqref{eq:estimates_psiB_v1} , we conclude
\begin{equation}\label{eq:estimates_J5}
	|J_5|
		\lesssim
	B\gamma^{-1/2} \| \zeta_A^2 v_1\|_{L^2}
	\big(
	a_1^2+\|u_1\|_{L^\infty}\|w_1\|_{L^2}
	\big).
\end{equation}

%

\subsection{End of proof of Proposition \ref{prop:virial_J}}

From
 \eqref{eq:estimates_tildeJ1}, \eqref{eq:estimates_J4},  \eqref{eq:estimates_J5},
 and choosing  
 \begin{equation}\label{primera}
 0<\gamma = B^{-4},
 \end{equation}
 it follows
 \begin{equation}\label{eq:J1+J4+J5}
 \begin{aligned}
|\tilde{J_1}+J_4+J_5|
\lesssim & ~{}  A^{-1}B\left( \|\zeta_A v_1\|_{L^2}^2
 +  \|\zeta_A v_2 \|_{L^2}^2 
 +  \| \zeta_A \partial_x v_2\|_{L^2}^2 \right)\\
 &+\gamma^{1/2} B \|\zeta_A v_2\|_{L^2}
 \bigg(
 \| \partial_x z_2\|_{L^2}
 + \|z_2\|_{L^2}
 +e^{-(p-1)A}   (  \|\zeta_{B} v_2\|_{L^2} + \|\zeta_B\partial_x v_2\|_{L^2})
 \bigg)\\
& +B\gamma^{-1/2} \| \zeta_A v_1\|_{L^2}
 \bigg(
 a_1^2+\|u_1\|_{L^\infty}\|w_1\|_{L^2}
 \bigg).
  \end{aligned}
 \end{equation}
Applying Lemma \ref{lem:v_w}-\eqref{eq:estimates_v1_w1} and \eqref{eq:scales}, we obtain
  \begin{equation}\label{eq:J1+J4+J5_w}
 \begin{aligned}
 |\tilde{J_1}+J_4+J_5| \lesssim & ~{} 
B^{-1} \bigg(\| w_1\|_{L^2}^2
+\| w_2\|_{L^2}^2
+ \| z_2\|_{L^2}^2 + \| \partial_x z_2\|_{L^2}^2 \bigg)\\
&+ B^8 \bigg(a_1^{4} +\|u_1\|_{L^\infty}^2\|w_1\|_{L^2}^2\bigg).
 \end{aligned}
 \end{equation}
Choosing 
\begin{equation}\label{segunda}
B \leq \delta^{-1/8},
\end{equation}
(to be fixed later) and using \eqref{eq:ineq_hip}, we arrive to
\[
\begin{aligned}
	B^{8}(a_1^4+\|u_1\|_{L^\infty}^2\|w_1\|_{L^2}^2)
	\lesssim & ~{}  \delta^{-1}( a_1^{4}+\|u_1\|^2_{L^\infty}\|w_1\|_{L^2}^2)
	\lesssim |a_1|^3+\delta \|w_1\|_{L^2}^2.
\end{aligned}
\]
Then, using the above estimates, we obtain that the  error term and the associated to the nonlinear part are bounded as follows:
\begin{equation*}
	\begin{aligned}
	|	\tilde{J}_1+J_4+J_5|
		\lesssim & ~{} 
	B^{-1} \left(\| w_1\|_{L^2}^2
	+\| w_2\|_{L^2}^2
	+ \| z_2\|_{L^2}^2 + \| \partial_x z_2\|_{L^2}^2 \right)
	+ |a_1|^3.
	\end{aligned}
\end{equation*}
Finally, the virial estimate is concluded as follows: for some $C_2>0$ independent of $B$ large, 
\begin{equation}\label{eq:dJ_final}
\begin{aligned}
\frac{d}{dt}\J\leq & ~{}  -\frac12 \int  \left[z_1^2+(V_0(x)-C B^{-1}) z_2^2 +3(\partial_x z_2)^2 \right] \\
&+C B^{-1} \bigg(\| w_1\|_{L^2}^2
+\| w_2\|_{L^2}^2
+ \| z_2\|_{L^2}^2 + \| \partial_x z_2\|_{L^2}^2 \bigg)
+C  |a_1|^3.\\
\leq & ~{} -\frac12 \int  \left[z_1^2+(V_0(x)-C_2 B^{-1}) z_2^2 +2(\partial_x z_2)^2 \right] \\
&+C_2 B^{-1} \bigg(\| w_1\|_{L^2}^2
+\| w_2\|_{L^2}^2 \bigg)
+ C_2 |a_1|^3.
\end{aligned}
\end{equation}
This ends the proof of Proposition \ref{prop:virial_J}.

\section{Gain of derivatives via transfer estimates}\label{sec:4}

We must note that in \eqref{eq:dI_w} the last term is a localized one, which in the language of estimate \eqref{eq:dJ_final} will correspond to a term of type $\partial_x z_1$, not appearing in this last estimate. However, this new term will be well-defined by the regularity of the original variables  $(u_1,u_2)$. We think that this problem appears as a product of the lack of balance in the regularity of $(v_1,v_2)$ (see Subsection \ref{eq:change_variable}). Therefore, we need new estimates to control $\partial_x z_1$.  
\\

To solve this new problem, we will focus on a new virial obtained for a new system of equations involving the variables $\tilde{v}_i=\partial_x v_i$, for $i=1,2$. Formally taking derivatives in \eqref{eq:syst_v}, we have
\begin{equation}\label{eq:syst_vx}
\begin{cases}
\dot{\tv}_1=\LL(\partial_x \tv_2)
-\partial_x(f'(Q)) v_2+\tilde{G}(x), \qquad \tilde{G}(x)=\partial_x G(x),\\
\dot{\tv}_2= \partial_x \tv_1
+\tilde{H}(x),\qquad \tilde{H}(x)=\partial_x H(x),
\end{cases}
\end{equation}
where $G$ and $H$ are given in \eqref{eq:G_H}.\\
For this new system, we consider the virial
\begin{equation}\label{eq:new_virials}
\mathcal{M}(t)=\int \phi_{A,B} \tv_1 \tv_2= \int \phi_{A,B} \partial_x v_1 \partial_x  v_2.
\end{equation}
Later we will choose $\phi_{A,B}=\psi_{A,B}=\chi_A^2 \varphi_B $ (see \eqref{eq:def_psiB}). 

\subsection{A virial estimate related to $\M$}
\begin{lem}\label{lem:identity_dtM}
	Let $(v_1,v_2)\in H^1(\R)\times H^2(\R)$ a solution of \eqref{eq:syst_v}. Consider $\phi_{A,B}$ an odd smooth bounded function to be a choose later. Then
	\begin{equation}
	\begin{aligned}\label{eq:M'}
	\frac{d}{dt}\mathcal{M}(t)
	=&-\dfrac{1}{2}\int  \phi_{A,B}'  \left(( \partial_x v_1)^2+(\partial_x v_2)^2+3(\partial_x^2 v_2)^2\right) 
	+\dfrac{1}{2}\int  \phi_{A,B}''' (\partial_x v_2)^2 \\
	&-\dfrac{1}{2}\int \phi_{A,B} f'(Q)\partial_x ((\partial_x v_2)^2) 
	+\int  \phi_{A,B} \tilde{G}(x) \partial_x v_2+\int  \phi_{A,B} \tilde{H}(x) \partial_x v_1   .
	\end{aligned}
	\end{equation}
\end{lem}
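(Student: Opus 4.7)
The plan is to differentiate $\mathcal{M}(t)$ in time and then mirror, almost verbatim, the computation carried out in Section \ref{sec:3} for $\mathcal{J}(t)$, now with $(v_1,v_2)$ replaced by the pair of derivatives $(\tilde v_1,\tilde v_2)=(\partial_x v_1,\partial_x v_2)$ and the weight $\psi_{A,B}$ replaced by $\phi_{A,B}$. The time derivatives $\dot{\tilde v}_1$ and $\dot{\tilde v}_2$ are supplied by the derived system \eqref{eq:syst_vx}, so that
\[
\frac{d}{dt}\mathcal{M}(t)=\int\phi_{A,B}\bigl(\dot{\tilde v}_1\tilde v_2+\tilde v_1\dot{\tilde v}_2\bigr)
\]
naturally decomposes into three pieces: a transport-type term $\int\phi_{A,B}\tilde v_1\partial_x\tilde v_1$, the main $\mathcal{L}$-term $\int\phi_{A,B}\mathcal{L}(\partial_x\tilde v_2)\tilde v_2$, and the forcing contributions coming from $\tilde G$, $\tilde H$, and the commutator $-\partial_x(f'(Q))v_2$.

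The first piece is dispatched in one line by integration by parts, producing $-\tfrac12\int\phi_{A,B}'(\partial_x v_1)^2$, i.e.\ the $(\partial_x v_1)^2$ summand of \eqref{eq:M'}. The $\tilde G$ and $\tilde H$ contributions survive unchanged and give the last two terms on the right-hand side. The heart of the argument is the treatment of $\int\phi_{A,B}\mathcal{L}(\partial_x\tilde v_2)\tilde v_2$. My plan is to expand $\mathcal{L}=-\partial_x^2+V_0$ and integrate by parts three times, exactly as in the passage from \eqref{eq:I'_RHS} through \eqref{eq:uPLu} in the proof of Proposition \ref{prop:virial_I}: one integration by parts yields the $(\partial_x^2 v_2)^2$ contribution with coefficient $-\tfrac32\phi_{A,B}'$, a further pair produces the $(\partial_x v_2)^2$ and $\phi_{A,B}'''(\partial_x v_2)^2$ contributions, and the potential part collapses to $-\tfrac12\int\phi_{A,B}f'(Q)\partial_x((\partial_x v_2)^2)$.

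The only genuinely new bookkeeping issue compared with Proposition \ref{prop:virial_J} is the commutator $[\partial_x,\mathcal{L}]$, which is responsible for the extra term in the $\dot{\tilde v}_1$ equation of \eqref{eq:syst_vx}. Using $V_0'=-\partial_x(f'(Q))$ and the product rule, this term reorganizes either into the $f'(Q)\partial_x((\partial_x v_2)^2)$ summand already present in \eqref{eq:M'} or into an exponentially localized error of the same structural shape as the nonlinear terms treated in Subsections \ref{control_tJ1}--\ref{control_J4}; in either case it is not an obstruction at the level of the identity itself.

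I expect the main technical obstacle to lie not in \eqref{eq:M'} per se, which is a deterministic computation, but in the downstream choice $\phi_{A,B}=\psi_{A,B}=\chi_A^2\varphi_B$ and the sign analysis of the new terms $\int\phi_{A,B}'(\partial_x v_2)^2$ and $\int\phi_{A,B}f'(Q)\partial_x((\partial_x v_2)^2)$, which must be compatible with the potential estimate of Lemma \ref{lem:bound_V} and with the coupling to the scales $1\ll B\ll A$ used to produce \eqref{eq:M_intro}. Care with this weight choice, and with the $\phi_{A,B}'''$ contribution (which is the analogue of the $J_2$ error controlled in Subsection \ref{control_tJ1}), is what will turn the clean identity of Lemma \ref{lem:identity_dtM} into a usable bound on $(\partial_x z_1,\partial_x z_2,\partial_x^2 z_2)$.
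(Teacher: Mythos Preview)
Your outline correctly identifies that the computation for $\mathcal{M}$ mirrors the one for $\mathcal{J}$ with $(v_1,v_2)$ replaced by $(\partial_x v_1,\partial_x v_2)$, and your handling of the transport term, the $\mathcal{L}$-term, and the $\tilde G,\tilde H$ contributions is right. However, you have a genuine gap in your treatment of the commutator term $-\int\phi_{A,B}\,\partial_x(f'(Q))\,v_2\,\partial_x v_2$.

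You write that this term ``reorganizes either into the $f'(Q)\partial_x((\partial_x v_2)^2)$ summand already present in \eqref{eq:M'} or into an exponentially localized error''. Neither option is correct: the identity \eqref{eq:M'} is \emph{exact}, and there is no commutator contribution left over on the right-hand side. The mechanism is a parity cancellation that you have not mentioned. Recall that $u_2$ is odd, hence $v_2=(1-\gamma\partial_x^2)^{-1}u_2$ is odd and $\partial_x v_2$ is even, so $v_2\,\partial_x v_2$ is odd; $Q$ is even, so $\partial_x(f'(Q))$ is odd; and the lemma explicitly \emph{assumes} $\phi_{A,B}$ is odd. The integrand $\phi_{A,B}\,\partial_x(f'(Q))\,v_2\,\partial_x v_2$ is therefore odd and integrates to zero. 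This is precisely why the paper emphasizes, just after the statement, that ``we will benefit from a cancellation given by the parity of the data'': without it, the commutator term would survive and \eqref{eq:M'} would \emph{not} have the same structure as $\frac{d}{dt}\mathcal{J}$. Your proposal to absorb it as an ``exponentially localized error'' would yield only an approximate identity, not the clean one stated, and would complicate the downstream sign analysis you correctly flag as the next step.
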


The identity \eqref{eq:M'} is interesting because it has exactly the same structure that $\dfrac{d}{dt}\J$ in \eqref{eq:J'_v}. This holds despite the new derivative terms appearing in \eqref{eq:syst_vx}. To obtain this we will benefit from a cancellation given by the parity of the data.

\begin{proof}[Proof of Lemma \ref{lem:identity_dtM}]
	From \eqref{eq:syst_v}, \eqref{eq:syst_vx} and \eqref{eq:J'_v}, we have
	\begin{equation}\label{eq:M'_vx}
	\begin{aligned}
	\frac{d}{dt}\mathcal{M}(t)
	=&-\dfrac{1}{2}\int  \phi_{A,B}'  \left(\tv_1^2+\tv_2^2+3(\partial_x \tv_2)^2\right) 
	+\dfrac{1}{2}\int  \phi_{A,B}''' \tv_2^2 \\
	&-\dfrac{1}{2}\int \phi_{A,B} f'(Q)\partial_x (\tv_2^2) 
	+\int  \phi_{A,B}\big[ -\partial_x(f'(Q)) v_2+\tilde{G}(x)\big] \tv_2
	+\int  \phi_{A,B} \tilde{H}(x) \tv_1  .
	\end{aligned}
	\end{equation}
	Rewriting the above identity in term of the variables $(v_1,v_2)$, we have
	\begin{equation}\label{eq:M'_v}
	\begin{aligned}
	\frac{d}{dt}\mathcal{M}(t)
	=&-\dfrac{1}{2}\int  \phi_{A,B}'  \left(( \partial_x v_1)^2+(\partial_x v_2)^2+3(\partial_x^2 v_2)^2\right) 
	+\dfrac{1}{2}\int  \phi_{A,B}''' (\partial_x v_2)^2 \\
	&-\dfrac{1}{2}\int \phi_{A,B} f'(Q)\partial_x ((\partial_x v_2)^2) 
	- \int  \phi_{A,B} \partial_x (f'(Q))v_2 \partial_x v_2\\
	&+\int  \phi_{A,B} \tilde{G}(x) \partial_x v_2+\int  \phi_{A,B} \tilde{H}(x) \partial_x v_1  .
	\end{aligned}
	\end{equation}
	Noticing that $v_2 \partial_x v_2$, $\partial_x (f'(Q))$ and $\phi_{A,B}$ are odd functions (see \eqref{eq:change_variable} and \eqref{eq:u_linear}), we have 
	\[
	 \int  \phi_{A,B} \partial_x (f'(Q))v_2 \partial_x v_2=0.
	\]
	This ends the proof of Lemma \ref{lem:identity_dtM}.
\end{proof}

The following proposition connects two virial identities in the variable $(z_1,z_2)$. Recall that from \eqref{primera} and \eqref{segunda}, $\gamma=B^{-4}$, $B\leq \delta^{-1/8}$.

\begin{prop}\label{prop:ineq_dtM}
	There exist $C_3>0$ and $\delta_3>0$ such that  for any $0<\delta\leq\delta_3$, the following holds. Fix $B=\delta^{-1/19}\leq \delta^{-1/8}$. Assume that for all $t\geq 0$,  \eqref{eq:ineq_hip} holds. Then for all $t\geq 0$, 
	
	\begin{equation}\label{eq:dM_z}
	\begin{aligned}
	 \frac{d}{dt}\M 
	 \leq & ~{}  
	-\dfrac{1}{2}\int  \left( (\partial_x z_1)^2+\left(V_0(x) -  C_3B^{-1}\right) (\partial_x z_2)^2+2(\partial_x^2 z_2)^2\right)	\\
	&+ C_3\| z_2\|_{L^2}^2
	+ C_3 B^{-1}\| z_1\|_{L^2}^2\\
	&+C_3 B^{-1} \left( \| \partial_x w_1\|_{L^2}^2+\| w_1\|_{L^2}^2
	+ \| w_2\|_{L^2}^2 \right)+
	C_3|a_1|^3.
	\end{aligned}
	\end{equation}
	\end{prop}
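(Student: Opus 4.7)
My plan is to mimic the proof of Proposition~\ref{prop:virial_J} almost line by line, using Lemma~\ref{lem:identity_dtM} as the analogue of \eqref{eq:J'_v} but now with $(\partial_x v_1,\partial_x v_2)$ in place of $(v_1,v_2)$. I take $\phi_{A,B}=\psi_{A,B}=\chi_A^2\varphi_B$, so that the weight and its derivatives satisfy the same pointwise bounds \eqref{eq:psi_deriv}, \eqref{eq:bound_ZCV_B}. The identity in Lemma~\ref{lem:identity_dtM} then decomposes into a main quadratic part (analogous to $J_1+J_2+J_3$) and three error/nonlinear pieces $\tilde M_1,M_4,M_5$ (analogues of $\tilde J_1,J_4,J_5$). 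Crucially, the \emph{extra} term $\int\phi_{A,B}\partial_x(f'(Q))v_2\partial_xv_2$ coming from commuting $\partial_x$ with $\LL$ already vanishes by the even--odd parity argument given in the proof of Lemma~\ref{lem:identity_dtM}, so no new non-sign-definite term survives from this source.

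The first step is to rewrite the quadratic main part in terms of the localized derivative variables $\partial_x z_1,\partial_x z_2$. Since $\chi_A\zeta_B\partial_x v_i=\partial_x z_i-\tfrac{\zeta_B'}{\zeta_B}z_i-\chi_A'\zeta_B v_i$, the same computation that led in Section~\ref{sec:3} to $\tfrac12\int(z_1^2+V(x)z_2^2+3(\partial_xz_2)^2)$ now gives, after integrating by parts exactly as in \eqref{eq:chiAzetaB_v_ix},
\[
-\tfrac12\int\Bigl((\partial_x z_1)^2+V(x)(\partial_x z_2)^2+3(\partial_x^2 z_2)^2\Bigr)+(\text{commutator errors}),
\]
where $V$ is the same potential controlled by Lemma~\ref{lem:bound_V}, i.e.\ $V(x)\ge V_0(x)-CB^{-1}$. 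The commutator errors are quadratic in $z_2$ and $\partial_x z_2$ with coefficients of size $B^{-2}$ coming from $\zeta_B'/\zeta_B$ and $A^{-1}$ from $\chi_A'$; they fit into the $\|z_2\|_{L^2}^2$ and $B^{-1}(\partial_xz_2)^2$ terms in \eqref{eq:dM_z}. The appearance of the \emph{unweighted} $C_3\|z_2\|_{L^2}^2$ on the right-hand side, as opposed to the $B^{-1}$ factor carried by $\|z_1\|_{L^2}^2$, is precisely the price paid for differentiating: the commutator $[\partial_x,\chi_A\zeta_B]v_2$ is of order one in $v_2$, so cannot be absorbed at $B^{-1}$ scale.

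Next I would control the error terms. For $\tilde M_1$ (analogue of $\tilde J_1$), the weight-derivative estimates \eqref{eq:bound_ZCV_B} together with Remark~\ref{rem:chiA_zetaA4} give $|\tilde M_1|\lesssim A^{-1}B(\|\zeta_A\partial_x v_1\|_{L^2}^2+\|\zeta_A\partial_x v_2\|_{L^2}^2+\|\zeta_A\partial_x^2 v_2\|_{L^2}^2)$. Applying Lemma~\ref{lem:v_w}, each of these is bounded by $\gamma^{-2}$ times $\|w_i\|_{H^1}^2$ for the appropriate $i$, so with $\gamma=B^{-4}$ we get $|\tilde M_1|\lesssim A^{-1}B^9(\|\partial_x w_1\|_{L^2}^2+\|w_1\|_{L^2}^2+\|w_2\|_{L^2}^2)$, which is $\le B^{-1}$ times these quantities provided $A\gg B^{10}$, consistent with \eqref{eq:scales}. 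For $M_4=\int\psi_{A,B}\tilde G(x)\partial_x v_2$, one writes $\tilde G=\partial_x G$ and integrates by parts to put the derivative on $\psi_{A,B}\partial_x v_2$, producing a term of the same form as $J_4$ but with one more derivative on $v_2$; the same Corollary~\ref{cor:estimates_Sech_Iop_partial}--Lemma~\ref{lem:estimates_Sech_Iop_Op} machinery, plus the commutator identity for $\chi_A\zeta_B\partial_x^2 v_2$, yields $|M_4|\lesssim\gamma^{1/2}B(\|\zeta_A\partial_xv_2\|_{L^2})(\|\partial_x^2 z_2\|_{L^2}+\|\partial_x z_2\|_{L^2})$ modulo exponentially small tails, which again becomes $O(B^{-1})$ after using Lemma~\ref{lem:v_w} and $\gamma=B^{-4}$. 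Finally, for $M_5=\int\psi_{A,B}\tilde H(x)\partial_x v_1$, integration by parts gives $-\int(\psi_{A,B}'\partial_x v_1+\psi_{A,B}\partial_x^2 v_1)H$, but a cleaner route is to write $M_5=\int(\psi_{A,B}\partial_x v_1)\partial_x H$ and proceed exactly as for $J_5$; using the $\gamma^{-1/2}$ bound on $\|(1-\gamma\partial_x^2)^{-1}\partial_x N\|$ via Corollary~\ref{cor:estimates_Sech_Iop_partial} together with \eqref{eq:LambaN} produces $|M_5|\lesssim B\gamma^{-1}\|\zeta_A^2\partial_xv_1\|_{L^2}(a_1^2+\|u_1\|_{L^\infty}\|w_1\|_{L^2})$.

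Putting the three pieces together gives a bound with prefactor $B^{\kappa}$ (for some $\kappa\le 18$) multiplying the quadratic nonlinear term $\|u_1\|_{L^\infty}^2\|w_1\|_{L^2}^2$, and a $B^{\kappa}a_1^4$ term. Choosing $B=\delta^{-1/19}$ (rather than $\delta^{-1/8}$ as in Section~\ref{sec:3}) ensures $B^{\kappa}\delta\le \delta^{1/19}$ is small and $B^{\kappa}a_1^4\lesssim |a_1|^3$ via \eqref{eq:ineq_hip}. This gives the right-hand side in \eqref{eq:dM_z}. The main obstacle I anticipate is the bookkeeping in $M_4$: because $\tilde G$ involves $\partial_x^2(f'(Q))\partial_x v_2$ and $\partial_x(f'(Q))\partial_x^2 v_2$, one needs the localized control on $\partial_x^2 v_2$ (not just $\partial_x v_2$), and getting this through Lemma~\ref{lem:v_w} and the corresponding commutator identity without losing more than $\gamma^{-1}$ is the delicate balance that forces the smaller choice $B=\delta^{-1/19}$.
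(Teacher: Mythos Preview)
Your outline is correct and matches the paper's proof essentially step by step: the same choice $\phi_{A,B}=\psi_{A,B}$, the same rewriting via Claims~\ref{claim:P_CZ_vi_x}--\ref{claim:R_CZ_vi_xx} (packaged in the paper as Lemma~\ref{lem:virial_M}), the same Lemma~\ref{lem:v_w}-based control of the error and nonlinear terms $\tilde M_1,M_4,M_5$ (the paper's $\mathcal R_z+\mathcal R_v+\mathcal{DR}_v$ and $\mathcal M_1,\mathcal M_2$), and the same reason $B=\delta^{-1/19}$ is forced, namely the $\gamma^{-4}B^3=B^{19}$ prefactor arising in the bound for the $\tilde H$-term. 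One small correction: the unweighted $C_3\|z_2\|_{L^2}^2$ does \emph{not} come from the commutator $[\partial_x,\chi_A\zeta_B]$ (which is $O(B^{-1})$), but from the potential contribution $\frac{\varphi_B}{\zeta_B^2}\frac{\zeta_B'}{\zeta_B}\,\partial_x^2(f'(Q))$ produced when rewriting $\int\psi_{A,B}f'(Q)\partial_x((\partial_x v_2)^2)$ in the $z$-variables; since $|\varphi_B/\zeta_B^2|\lesssim B$ cancels the $B^{-1}$ from $\zeta_B'/\zeta_B$, this term is genuinely $O(1)$.
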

	
The proof of the above result requires some technical estimates. We first state them, to then prove Proposition \ref{prop:ineq_dtM} (Subsection \ref{ahorasi}). 	

\subsection{Second set of technical estimates} Now, we recall the following technical estimates on the variables $\zeta_B$ and other related error terms. These estimates are similar to the ones obtained in \eqref{eq:zA''-zA2}, therefore we only prove the new ones.
\begin{lem}
	Let $\zeta_B$ and $\chi$ be defined by \eqref{eq:zetaB} and \eqref{chichi}, respectively. Then
	\begin{equation}\label{eq:Z'B_Z''B}
	\begin{aligned}
	\frac{\zeta_B'}{\zeta_B}= -\frac1B[-\chi'(x)|x|+(1-\chi(x))\sgn(x)],\quad \frac{\zeta_B'' }{\zeta_B}= \left(\frac{\zeta_B'}{\zeta_B}\right)^2+\frac1B[\chi''(x)|x|+2\chi'(x)\sgn(x)],
	\end{aligned}
	\end{equation}
	and
	\begin{equation}\label{eq:ZB_3_4}
		\begin{aligned}
					\frac{\zeta_B'''}{\zeta_B}=&~{} 3\frac{\zeta_B''}{\zeta_B} \frac{\zeta_B'}{\zeta_B}
					-2 \left(\frac{\zeta_B'}{\zeta_B}\right)^3 
					+B^{-1}\left[ \chi'''(x)|x|+3\chi''(x)\sgn(x)\right] ,\\
				\frac{\zeta_B^{(4)}}{\zeta_B}=&~{}
				4\frac{\zeta_B'''}{\zeta_B} \frac{\zeta_B'}{\zeta_B}
				+3\left(\frac{\zeta_B''}{\zeta_B}\right)^2 
				-12 \frac{\zeta_B''}{\zeta_B} \left(\frac{\zeta_B'}{\zeta_B}\right)^2 
				+6\left(\frac{\zeta_B'}{\zeta_B}\right)^4 +\frac1B\left[ \chi^{(4)}(x)|x|+4\chi'''(x)\sgn(x)\right]
				 .
		\end{aligned}
	\end{equation}
\end{lem}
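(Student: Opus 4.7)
The identities are purely calculus statements about the weight $\zeta_B(x) = \exp\bigl(-B^{-1}(1-\chi(x))|x|\bigr)$, so the plan is to differentiate repeatedly and repackage the result. The first step is to set $g(x) := -B^{-1}(1-\chi(x))|x|$ so that $\zeta_B = e^{g}$, and then note a key regularity observation: since $\chi \equiv 1$ on $[-1,1]$, the factor $1-\chi(x)$ vanishes in a neighborhood of the origin. Consequently, every time $|x|$ gets differentiated to produce $\sgn(x)$, the derivative $\sgn'(x) = 2\delta_0(x)$ is multiplied by a factor that is identically zero near $0$, and all delta-contributions drop out. This lets me differentiate as if $|x|$ were smooth for $|x|\ge 1$ and take the result as a valid pointwise (in fact smooth) formula everywhere.

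The second step is the direct computation of $g', g'', g''', g^{(4)}$ using the Leibniz rule. A straightforward term-by-term differentiation of $g(x) = -B^{-1}(1-\chi(x))|x|$ gives
\[
g' = -\tfrac{1}{B}\bigl[-\chi'(x)|x|+(1-\chi(x))\sgn(x)\bigr],\qquad
g'' = \tfrac{1}{B}\bigl[\chi''(x)|x|+2\chi'(x)\sgn(x)\bigr],
\]
\[
g''' = \tfrac{1}{B}\bigl[\chi'''(x)|x|+3\chi''(x)\sgn(x)\bigr],\qquad
g^{(4)} = \tfrac{1}{B}\bigl[\chi^{(4)}(x)|x|+4\chi'''(x)\sgn(x)\bigr],
\]
where at each stage the remaining $(1-\chi)$-factor in front of a $\sgn'$ term is zero, and the surviving contributions from differentiating $|x|$ collect as a single $\sgn(x)$ term with multiplicity equal to the order of differentiation of the inner $|x|$ (giving the factors $2,3,4$ above).

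The third step converts these $g^{(k)}$ into the logarithmic derivatives of $\zeta_B$. Since $\zeta_B = e^g$, differentiation yields the Fa\`a di Bruno / Bell-polynomial identities
\[
\frac{\zeta_B'}{\zeta_B}=g',\qquad \frac{\zeta_B''}{\zeta_B}=g''+(g')^2,\qquad \frac{\zeta_B'''}{\zeta_B}=g'''+3g'g''+(g')^3,
\]
\[
\frac{\zeta_B^{(4)}}{\zeta_B}=g^{(4)}+4g'g'''+3(g'')^2+6(g')^2g''+(g')^4,
\]
which I would derive by an easy induction (write $\zeta_B^{(n+1)} = (\zeta_B^{(n)}/\zeta_B)' \zeta_B + (\zeta_B^{(n)}/\zeta_B)(\zeta_B'/\zeta_B)\zeta_B$). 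Formula \eqref{eq:Z'B_Z''B} is then immediate from the first two identities. For \eqref{eq:ZB_3_4}, I would invert the relations to write $g'' = \zeta_B''/\zeta_B - (\zeta_B'/\zeta_B)^2$, substitute into the expression for $\zeta_B'''/\zeta_B$, and observe that $3g'g''+(g')^3 = 3(\zeta_B''/\zeta_B)(\zeta_B'/\zeta_B) - 2(\zeta_B'/\zeta_B)^3$, leaving exactly the remainder $g''' = B^{-1}[\chi'''|x|+3\chi''\sgn(x)]$ as claimed. The fourth identity follows by the same bookkeeping: expanding $4(\zeta_B'''/\zeta_B)(\zeta_B'/\zeta_B)+3(\zeta_B''/\zeta_B)^2 - 12(\zeta_B''/\zeta_B)(\zeta_B'/\zeta_B)^2 + 6(\zeta_B'/\zeta_B)^4$ in terms of $g$-derivatives collapses to $4g'g'''+3(g'')^2+6(g')^2g''+(g')^4$, so the leftover is precisely $g^{(4)} = B^{-1}[\chi^{(4)}|x|+4\chi'''\sgn(x)]$.

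There is no real obstacle here beyond careful bookkeeping; the only subtle point to justify carefully is that the distributional derivative $\sgn'$ never contributes, which is guaranteed by the support property $1-\chi \equiv 0$ on $[-1,1]$. Once that is noted, the entire lemma reduces to algebraic matching of Bell polynomials with the explicit formulas for $g^{(k)}$.
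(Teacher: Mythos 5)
Your computation is correct: the logarithmic-derivative (Faà di Bruno) identities for $\zeta_B=e^g$ and the explicit formulas for $g',\dots,g^{(4)}$ check out, and the observation that every $\sgn'$-term is killed by a coefficient ($1-\chi$, $\chi'$, or $\chi''$) vanishing near the origin is exactly the right justification. The paper's proof is simply the word ``Direct,'' so your argument is the intended one, written out in full.
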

\begin{proof} Direct.
\end{proof}
	\begin{rem}\label{acotados}
		From the previous lemma we observe that
		\begin{equation*}
		\left| \frac{\zeta_B'}{\zeta_B}\right|\lesssim B^{-1} \textbf{1}_{\{|x|>1\}}(x),
		\end{equation*}
		\begin{equation*}
				\left| \frac{\zeta_B''}{\zeta_B}\right|
		\lesssim   B^{-2} \textbf{1}_{\{|x|>1\}}(x)+B^{-1} \textbf{1}_{\{1<|x|<2\}}(x)
				\lesssim   B^{-2}+B^{-1} \sech(x),
						\end{equation*}
		\begin{equation*}
		\begin{aligned}
			\left| \frac{\zeta_B'''}{\zeta_B}\right|
			\lesssim & ~{}  B^{-3}+B^{-1} \sech(x),\quad 
			\left| \frac{\zeta_B^{(4)}}{\zeta_B}\right|
			\lesssim   B^{-4}+B^{-1} \sech(x).
		\end{aligned}
	\end{equation*}
	In particular, for $A$ large enough, the following estimates hold:	 
	\begin{equation}\label{eq:C*z'/z}
	\begin{aligned}
			\left| \frac{\zeta_B'}{\zeta_B}\right|\lesssim  ~{}  B^{-1},	\quad 
			\left| \textbf{1}_{\{A<|x|<2A\}}\frac{\zeta_B''}{\zeta_B}\right|\lesssim   B^{-2},	\\
			\left| \textbf{1}_{\{A<|x|<2A\}} \frac{\zeta_B'''}{\zeta_B}\right|\lesssim    B^{-3},\quad
			\left| \textbf{1}_{\{A<|x|<2A\}} \frac{\zeta_B^{(4)}}{\zeta_B}\right|\lesssim   B^{-4}.
	\end{aligned}
	\end{equation}
	Finally,
		\begin{equation}\label{eq:z'/z}
	\begin{aligned}
	\left| \frac{\zeta_B''}{\zeta_B}\right| +
	\left| \frac{\zeta_B'''}{\zeta_B}\right|+
	\left|  \frac{\zeta_B^{(4)}}{\zeta_B}\right|\lesssim & ~{}  B^{-1}.
	\end{aligned}
	\end{equation}
	\end{rem}
	
These estimates will be useful in Claim \ref{claim:R_CZ_vi_xx}. Now we prove a formula for changing variables.

\begin{claim} \label{claim:P_CZ_vi_x}
	Let $P\in W^{1,\infty}(\R)$, $v_i$ be as in \eqref{eq:change_variable}, and $z_i$ be as in \eqref{eq:def_psiB}. Then 
		\begin{equation}\label{eq:claimP_CZ_B_vix_final}
	\begin{aligned}
	\int P(x)\chi_A^2\zeta_B^2(\partial_x v_i)^2 
	=&	\int P(x) (\partial_x z_i)^2 +
	\int    \left[ P'(x) \frac{\zeta_B'}{\zeta_B}+P(x) 
	\frac{\zeta_B''}{\zeta_B}\right] z_i^2\\
	&+\int  \mathcal{E}_1(P(x),x)\zeta_B^2 v_i^2,
	\end{aligned}
	\end{equation}
	where
	\begin{equation}\label{eq:claimE1}
	\begin{aligned}
	\mathcal{E}_1(P(x),x)
	= P(x)   \left[ \chi_A''\chi_A+(\chi_A^2)' \frac{\zeta_B'}{\zeta_B}\right] 
+ \frac12 P'(x)(\chi_A^2)' ,
	\end{aligned}
	\end{equation}
	and
	\begin{equation}\label{cota_final}
	|\mathcal{E}_1(P(x),x) |\lesssim 
	A^{-1}\|P' \|_{L^\infty} 
	+ (AB)^{-1} \|P\|_{L^\infty}.
	\end{equation}
\end{claim}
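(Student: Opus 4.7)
The plan is an algebraic reduction of $\chi_A^2\zeta_B^2(\partial_x v_i)^2$ to $(\partial_x z_i)^2$ modulo lower-order $v_i^2$ terms, followed by one integration by parts, and then pointwise $L^\infty$ estimates for the resulting coefficients. Writing $\omega:=\chi_A\zeta_B$ for brevity, the Leibniz rule applied to $z_i=\omega v_i$ gives $\partial_x z_i=\omega' v_i+\omega\partial_x v_i$, so squaring and rearranging yields
\[
\omega^2(\partial_x v_i)^2=(\partial_x z_i)^2-(\omega')^2 v_i^2-2\omega\omega' v_i\partial_x v_i.
\]
Multiplying by $P$, integrating, and using $2v_i\partial_x v_i=\partial_x(v_i^2)$ together with one integration by parts on the cross term (boundary terms vanish since $\omega$ is smooth with compact support), the cross term becomes $\int\partial_x(P\omega\omega')v_i^2=\int(P'\omega\omega'+P(\omega')^2+P\omega\omega'')v_i^2$, and the $(\omega')^2$ contributions cancel against $-\int P(\omega')^2 v_i^2$. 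This gives
\[
\int P\chi_A^2\zeta_B^2(\partial_x v_i)^2=\int P(\partial_x z_i)^2+\int\bigl(P'\omega\omega'+P\omega\omega''\bigr)v_i^2.
\]

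It remains to split the $v_i^2$ integrand into a $z_i^2$-piece and a $\zeta_B^2 v_i^2$-piece. Expanding $\omega'=\chi_A'\zeta_B+\chi_A\zeta_B'$ and $\omega''=\chi_A''\zeta_B+2\chi_A'\zeta_B'+\chi_A\zeta_B''$, and using $2\chi_A\chi_A'=(\chi_A^2)'$ together with the identity $\chi_A^2\zeta_B^2 v_i^2=z_i^2$, I would compute
\begin{align*}
P'\omega\omega' v_i^2 &= \tfrac12 P'(\chi_A^2)'\zeta_B^2 v_i^2+P'\tfrac{\zeta_B'}{\zeta_B}z_i^2,\\
P\omega\omega'' v_i^2 &= P\Bigl[\chi_A''\chi_A+(\chi_A^2)'\tfrac{\zeta_B'}{\zeta_B}\Bigr]\zeta_B^2 v_i^2+P\tfrac{\zeta_B''}{\zeta_B}z_i^2.
\end{align*}
Summing these recovers exactly the identity \eqref{eq:claimP_CZ_B_vix_final} with the error coefficient $\mathcal{E}_1(P,x)$ given by \eqref{eq:claimE1}.

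For the bound \eqref{cota_final}, I would invoke $|\chi_A''\chi_A|\lesssim A^{-2}$ and $|(\chi_A^2)'|\lesssim A^{-1}$ from \eqref{chichi}/\eqref{eq:bound_ZCV_B}, together with $|\zeta_B'/\zeta_B|\lesssim B^{-1}$ from Remark~\ref{acotados}/\eqref{eq:C*z'/z}. These yield
\[
|\mathcal{E}_1(P,x)|\lesssim \|P\|_{L^\infty}\bigl(A^{-2}+(AB)^{-1}\bigr)+\|P'\|_{L^\infty}A^{-1},
\]
and the scale hierarchy $B\ll A$ from \eqref{eq:scales} absorbs $A^{-2}\lesssim(AB)^{-1}$, giving the stated bound. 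The main technical nuisance is not any single step but the bookkeeping: one must arrange the cancellation $-P(\omega')^2+P(\omega')^2=0$ cleanly and correctly partition the remaining integrand so that every term containing a $\chi_A$-derivative (and hence an $A^{-1}$ gain) ends up in $\mathcal{E}_1(P,x)\zeta_B^2 v_i^2$, while pieces with only $\zeta_B$-derivatives sit in front of $z_i^2$.
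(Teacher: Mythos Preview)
Your proof is correct and follows essentially the same route as the paper: write $z_i=\omega v_i$ with $\omega=\chi_A\zeta_B$, expand $(\partial_x z_i)^2$, integrate by parts on the cross term, and then split the remaining $v_i^2$-integrand into a $z_i^2$-piece (pure $\zeta_B$-derivatives) and a $\zeta_B^2 v_i^2$-piece (terms carrying $\chi_A$-derivatives). Your organization of the $(\omega')^2$ cancellation is slightly cleaner than the paper's, but the computation and the estimate on $\mathcal{E}_1$ are identical in substance.
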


For the proof of these results, see Appendix \ref{ap:vi_x}.

\begin{rem}\label{rem:CZ_v2x}
	For $P=1$, we get
	\begin{equation}\label{eq:1CZ_B_vix}
	\begin{aligned}
	\int  \chi_A^2\zeta_B^2(\partial_x v_i)^2 
	=&	\int  (\partial_x z_i)^2 +
	\int     
	\frac{\zeta_B''}{\zeta_B} z_i^2
	+\int  \mathcal{E}_1(1,x)\zeta_B^2 v_i^2,
	\end{aligned}
	\end{equation}
	where
	\begin{equation}\label{eq:E1_1}
	\begin{aligned}
	\mathcal{E}_1(1,x)
	=\frac12  \chi_A''\chi_A+(\chi_A^2)' \frac{\zeta_B'}{\zeta_B}.
	\end{aligned}
	 \end{equation}
	Finally, one has the following estimate:
	\[
	\begin{aligned}
	\| \chi_A\zeta_B \partial_x v_i\| ^2
	\lesssim & ~{}  \| \partial_x z_i\|_{L^2}^2+B^{-1}\| z_i\|_{L^2}^2
	+ (AB)^{-1}\| w_i\|_{L^2}^2.
	\end{aligned}
	\]
\end{rem}

We need a second claim on the second derivative of $v_i$.

\begin{claim}\label{claim:R_CZ_vi_xx}
	Let  $R$ be a $W^{2,\infty}(\R)$ function, $v_i$ be as in \eqref{eq:change_variable}, and $z_i$ be as in \eqref{eq:def_psiB}. Then
	\begin{equation*}
	\begin{aligned}
	\int R(x) \chi_A^2\zeta_B^2 (\partial_x^2 v_i)^2
	=&
	\int R(x)(\partial_x^2 z_i)^2+\int \tilde{R}(x)z_i^2
	+\int P_R(x) (\partial_x z_i)^2\\ 
	&+
	\int    \left[ P_R'(x) \frac{\zeta_B'}{\zeta_B}+P_R(x) 
	\frac{\zeta_B''}{\zeta_B}\right] z_i^2 +\int \mathcal{E}_2(R(x),x) \zeta_B^2v_i^2\\
	&+\int  \mathcal{E}_1(P_R(x),x)\zeta_B^2 v_i^2
	+\int \mathcal{E}_3(R(x),x) \zeta_B^2(\partial_x v_i)^2,
	\end{aligned}
	\end{equation*}
	where
	\begin{equation}\label{eq:claimtildeR}	
	\begin{aligned}
	\tilde{R}(x)=&	  
-2R(x)\left[\frac{\zeta_B^{(4)}}{\zeta_B}+\frac{\zeta_B'''}{\zeta_B}\frac{\zeta_B'}{\zeta_B}\right]-2	R'(x)\frac{\zeta_B'''}{\zeta_B}
- R''(x)\frac{\zeta_B''}{\zeta_B},
	\end{aligned}
	\end{equation}
	\begin{equation}\label{eq:claimPR}
	P_R(x)=
	R(x) 
	\bigg[
	4  \frac{\zeta_B''}{ \zeta_B}  
	-2 \bigg(\frac{\zeta_B'}{\zeta_B} \bigg)^2\bigg]
	+2R'(x) \frac{\zeta_B'}{\zeta_B},
	\end{equation}
	$\mathcal{E}_1$ is defined in \eqref{eq:claimE1}, 
		\begin{equation}\label{eq:claimE2}	
	\begin{aligned}
	\mathcal{E}_2(R(x),x)
=&
-R(x)
\bigg(
\chi_A^{(4)} \chi_A 
+4\chi_A''' \chi_A \frac{\zeta_B'}{\zeta_B^2}
+6 \chi_A''\chi_A\frac{\zeta_B''}{\zeta_B}
+2(\chi_A^2)' \frac{\zeta_B'''}{\zeta_B}  
\bigg)\\
&
-R'(x)
\bigg(
2\chi_A''' \chi_A 
+6\chi_A'' \chi_A  \frac{\zeta_B'}{\zeta_B}
+6
\chi_A' \chi_A \frac{\zeta_B''}{\zeta_B}
\bigg) \\&
- R''(x)
\left(
\chi_A'' \chi_A +\frac12 (\chi_A^2)' \frac{\zeta_B'}{\zeta_B}
\right), 
	\end{aligned}
	\end{equation}
	and
	\begin{equation}\label{eq:claimE3}
	\begin{aligned}
	\mathcal{E}_3(R(x),x)
= R(x)\bigg[ 
4\chi_A'' \chi_A -2(\chi_A' )^2+ 2\frac{\zeta_B'}{\zeta_B}(\chi_A^2)'  
\bigg]
+R'(x)(\chi_A^2)' .
	\end{aligned}
	\end{equation}

Finally, $P_R$, $\mathcal{E}_2$ and $\mathcal{E}_3$ satisfy the following inequalities
	\begin{equation}\label{falta}
	\begin{aligned}
	|P_R| \lesssim & ~{}  		B^{-1}\|R' \|_{L^{\infty}}  + B^{-1}\| R\|_{L^\infty},   \\
	|P_R'| \lesssim & ~{}  	B^{-1}\|R'' \|_{L^{\infty}}	+B^{-1}\|R' \|_{L^{\infty}}  + B^{-1}\| R\|_{L^\infty},   \\
	|\mathcal{E}_2| \lesssim & ~{} (AB)^{-1}\|R'' \|_{L^{\infty}}+(AB^2)^{-1}\|R' \|_{L^{\infty}}+(AB^3)^{-1}\|R \|_{L^{\infty}},\\
	|\mathcal{E}_3| \lesssim & ~{}  A^{-1}\|R' \|_{L^{\infty}}+(AB)^{-1}\|R \|_{L^{\infty}}.
	\end{aligned}
	\end{equation}
\end{claim}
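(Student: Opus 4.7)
The plan is to handle the top-order derivative $\partial_x^2$ by the same transfer mechanism used in Claim~\ref{claim:P_CZ_vi_x}, but applied to one extra derivative. Differentiating $z_i=\chi_A\zeta_B v_i$ twice gives
\begin{equation*}
\chi_A\zeta_B\,\partial_x^2 v_i \;=\; \partial_x^2 z_i \;-\; 2(\chi_A\zeta_B)'\,\partial_x v_i \;-\; (\chi_A\zeta_B)''\,v_i.
\end{equation*}
Squaring, multiplying by $R$ and integrating produces the main term $\int R(\partial_x^2 z_i)^2$ together with three pure-square remainders (in $(\partial_x v_i)^2$, in $v_i^2$, and a mixed $v_i\,\partial_x v_i$ term) and two cross terms carrying $\partial_x^2 v_i$.

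\textbf{Reduction via integration by parts.} In the two cross terms involving $\partial_x^2 v_i$, integrate by parts to move the extra derivative onto the other factor; whenever possible use the product identities $v_i\,\partial_x v_i=\tfrac12\partial_x(v_i^2)$ and $\partial_x v_i\cdot\partial_x^2 v_i=\tfrac12\partial_x((\partial_x v_i)^2)$ to eliminate further cross terms and generate $R',R''$ in the coefficients. After all such IBPs the identity takes the intermediate form
\begin{equation*}
\int R\chi_A^2\zeta_B^2(\partial_x^2 v_i)^2
= \int R(\partial_x^2 z_i)^2
+ \int \Phi_R(x)\,\zeta_B^2(\partial_x v_i)^2
+ \int \Psi_R(x)\,\zeta_B^2 v_i^2,
\end{equation*}
where $\Phi_R,\Psi_R$ are explicit polynomials in $R,R',R''$ and in $\chi_A^{(j)},\zeta_B^{(k)}$. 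Now split each coefficient into a bulk part (carrying the factor $\chi_A^2$) plus a boundary part (carrying at least one derivative of $\chi_A$, hence supported in $[A,2A]$). The bulk parts are by construction $P_R(x)\chi_A^2$ and $\widetilde R(x)\chi_A^2$, with $P_R,\widetilde R$ as in \eqref{eq:claimPR}--\eqref{eq:claimtildeR}; the boundary parts match exactly $\mathcal{E}_3(R,x)$ and $\mathcal{E}_2(R,x)$ of \eqref{eq:claimE2}--\eqref{eq:claimE3}. The bulk $v_i^2$-piece is recast using $\chi_A^2\zeta_B^2 v_i^2 = z_i^2$, giving $\int \widetilde R\,z_i^2$.

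\textbf{Second application of Claim~\ref{claim:P_CZ_vi_x} and bounds.} Apply Claim~\ref{claim:P_CZ_vi_x} once more with $P=P_R$ to the bulk $(\partial_x v_i)^2$-integral: this delivers the three missing pieces of the statement, namely $\int P_R(\partial_x z_i)^2$, the secondary $z_i^2$-term with coefficient $P_R'\,\zeta_B'/\zeta_B+P_R\,\zeta_B''/\zeta_B$, and the error $\int \mathcal{E}_1(P_R,x)\,\zeta_B^2 v_i^2$. The pointwise bounds \eqref{falta} follow from Remark~\ref{acotados}: using $|\zeta_B^{(j)}/\zeta_B|\lesssim B^{-1}$ for $j=1,2$ one gets $|P_R|\lesssim B^{-1}(\|R\|_{L^\infty}+\|R'\|_{L^\infty})$, and analogously for $|P_R'|$; for $\mathcal{E}_2,\mathcal{E}_3$, supported in $[A,2A]$, the $A^{-1}$ factors come from $|\chi_A^{(k)}|\lesssim A^{-k}$ and the $B^{-j}$ factors from \eqref{eq:C*z'/z}.

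\textbf{Main obstacle.} The central difficulty is the algebraic bookkeeping: one must verify that after all IBPs the bulk $(\partial_x v_i)^2$- and $v_i^2$-coefficients collapse to \emph{exactly} $P_R$ and $\widetilde R$, and not a longer polynomial in logarithmic derivatives of $\zeta_B$. The decisive simplification comes from $\partial_x(\zeta_B'/\zeta_B)=\zeta_B''/\zeta_B-(\zeta_B'/\zeta_B)^2$ together with the identities \eqref{eq:ZB_3_4} relating $\zeta_B'''/\zeta_B$ and $\zeta_B^{(4)}/\zeta_B$ to lower-order combinations; without these cancellations the $z_i^2$-coefficient would display spurious monomials in $(\zeta_B'/\zeta_B)^3$ and $(\zeta_B''/\zeta_B)(\zeta_B'/\zeta_B)^2$ that must fuse into the compact form of $\widetilde R$ in \eqref{eq:claimtildeR}.
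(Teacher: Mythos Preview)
Your proposal is correct and follows essentially the same route as the paper: expand $(\chi_A\zeta_B\,\partial_x^2 v_i)^2$ from $\partial_x^2 z_i$, integrate by parts the cross terms using $v_i\partial_x v_i=\tfrac12\partial_x(v_i^2)$ and $\partial_x v_i\,\partial_x^2 v_i=\tfrac12\partial_x((\partial_x v_i)^2)$, split the resulting $(\partial_x v_i)^2$- and $v_i^2$-coefficients into bulk ($P_R\chi_A^2$, $\widetilde R\chi_A^2$) plus boundary ($\mathcal{E}_3$, $\mathcal{E}_2$) parts, and then feed the bulk $(\partial_x v_i)^2$-piece back through Claim~\ref{claim:P_CZ_vi_x} with $P=P_R$. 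The bounds~\eqref{falta} are indeed read off from Remark~\ref{acotados} and \eqref{eq:C*z'/z} exactly as you indicate.
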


For the proof of these results, see Appendix \ref{ap:vi_xx}.

\begin{rem}\label{rem:CZ_v2xx}
	For $R=1$,  we obtain
		\begin{equation}\label{eq:1_CZ_v2_xx}
	\begin{aligned}
	\int \chi_A^2\zeta_B^2 (\partial_x^2 v_i)^2
	=&
	\int (\partial_x^2 z_i)^2+\int \tilde{R}_1(x)z_i^2
	+\int P_1(x) (\partial_x z_i)^2 
	+
	\int    \left[ P_1'(x) \frac{\zeta_B'}{\zeta_B}+P_1(x) 
	\frac{\zeta_B''}{\zeta_B}\right] z_i^2 \\
	&+\int \mathcal{E}_2(1,x) \zeta_B^2v_i^2
	+\int  \mathcal{E}_1(P_1(x),x)\zeta_B^2 v_i^2
	+\int \mathcal{E}_3(1,x) \zeta_B^2(\partial_x v_i)^2,
	\end{aligned}
	\end{equation}
	where,
	\begin{equation}\label{eq:tildeR1_P1}	
		\tilde{R}_1(x)=	  
			-2\left[\frac{\zeta_B^{(4)}}{\zeta_B}+\frac{\zeta_B'''}{\zeta_B}\frac{\zeta_B'}{\zeta_B}\right]
	,\ \quad 
	P_1(x)=
	4  \frac{\zeta_B''}{ \zeta_B}  
	-2 \bigg(\frac{\zeta_B'}{\zeta_B} \bigg)^2,
		\end{equation}
	$\mathcal{E}_1$ is defined in \eqref{eq:claimE1}, 
		\begin{equation}\label{eq:claimE2_1}	
	\begin{aligned}
	\mathcal{E}_2(1,x)
=&
-
\bigg(
\chi_A^{(4)} \chi_A 
+4\chi_A''' \chi_A \frac{\zeta_B'}{\zeta_B^2}
+6 \chi_A''\chi_A\frac{\zeta_B''}{\zeta_B}
+2(\chi_A^2)' \frac{\zeta_B'''}{\zeta_B}  
\bigg),
	\end{aligned}
	\end{equation}
	and
	\begin{equation}\label{eq:claimE3_1}
	\begin{aligned}
	\mathcal{E}_3(1,x)
=
4\chi_A'' \chi_A -2(\chi_A' )^2+ 2\frac{\zeta_B'}{\zeta_B}(\chi_A^2)'  .
	\end{aligned}
	\end{equation}
	By Lemma \ref{lem:v_w}, we obtain the estimate:
	\[
	\begin{aligned}
	\| \chi_A\zeta_B \partial_x^2 v_i\| 
	\lesssim & ~{}  \| \partial_x^2 z_i\|_{L^2}^2+B^{-1}\| \partial_x z_i\|_{L^2}^2+B^{-1}\| z_i\|_{L^2}^2 +A^{-1}B^3 \| w_i\|_{L^2}^2.
	\end{aligned}
	\]
\end{rem}

\subsection{Start of proof of Proposition \ref{prop:ineq_dtM}}\label{ahorasi}
The proof of this result is based in the following computation:
\begin{lem}\label{lem:virial_M}
	Let $(v_1,v_2)\in H^1(\R)\times H^2(\R)$ a solution of \eqref{eq:syst_v}. Consider $\phi_{A,B}=\psi_{A,B}=\chi_A^2 \varphi_B $. Then
	\begin{equation}\label{dtM}
		\begin{aligned}
		\frac{d}{dt}\mathcal{M}
		=& 
		-\dfrac{1}{2}\int \left( (\partial_x z_1)^2+\left(V_0(x)-\frac{\varphi_B}{\zeta_B^2} \partial_x(f'(Q))\right) (\partial_x z_2)^2+3(\partial_x^2 z_2)^2	\right)\\
		&+ \frac12\int  \frac{\varphi_B}{\zeta_B^2} \frac{\zeta_B'}{\zeta_B} \partial_x^2(f'(Q))  z_2^2
		+\mathcal{R}_{z}(t)+\mathcal{R}_{v}(t)+\mathcal{DR}_{v}(t)\\
		&+\int  \phi_{A,B} \tilde{G}(x) \partial_x v_2
		+\int  \phi_{A,B} \tilde{H}(x) \partial_x v_1  ,
		\end{aligned}
	\end{equation}
	where $\mathcal{R}_{z}(t)$, $\mathcal{R}_{v}(t)$ and $\mathcal{DR}_{v}(t)$ are error terms that satisfy the following bounds
	\begin{equation}\label{eq:Rz+Rv+DRv_M}
		\begin{aligned}
		|\mathcal{R}_{z}(t)|+|\mathcal{R}_{v}(t)|+|\mathcal{DR}_{v}(t)| \lesssim & ~{}  
B^{-1} \big(\|  w_1\|^2_{L^2} + \|   \partial_x w_1\|_{L^2}^2+\| w_2\|^2_{L^2} \big)\\
&~{} + B^{-1}\big(\| z_1\|_{L^2}^2 +\| z_2\|_{L^2}^2+\| \partial_x z_2\|_{L^2}^2\big),
		\end{aligned}
	\end{equation}
valid for $B$ sufficiently large.	
	
\end{lem}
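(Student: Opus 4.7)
The plan is to mirror, essentially verbatim, the computation that led from \eqref{eq:J'_v} to \eqref{eq:estimates_J}, but now at one higher level of differentiation. The starting point is Lemma \ref{lem:identity_dtM} specialised to $\phi_{A,B}=\chi_A^2\varphi_B$: the right-hand side of \eqref{eq:M'} has the same structure as \eqref{eq:J'_v} with $(v_1,v_2)$ replaced by $(\partial_x v_1,\partial_x v_2)$, the crucial point being that the apparently extra term $\int \phi_{A,B} \partial_x (f'(Q)) v_2 \partial_x v_2$ coming from \eqref{eq:syst_vx} was already killed in the proof of Lemma \ref{lem:identity_dtM} by the odd-even parity cancellation. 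I would first expand $\phi_{A,B}'$ and $\phi_{A,B}'''$ via Leibniz exactly as in \eqref{eq:psi_deriv}, so that all the ``principal'' pieces are weighted by $\chi_A^2\zeta_B^2$ (or $\chi_A^2 (\zeta_B^2)', \chi_A^2(\zeta_B^2)''$) and all the remaining pieces carry at least one derivative of $\chi_A^2$ and are therefore supported on $A\le |x|\le 2A$.

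Next, I would convert the $\chi_A^2\zeta_B^2$-weighted terms into the $z_i$ variables by applying Claim \ref{claim:P_CZ_vi_x} with $P\equiv 1$ to the integrals $\int \chi_A^2\zeta_B^2(\partial_x v_i)^2$ ($i=1,2$), and Claim \ref{claim:R_CZ_vi_xx} with $R\equiv 1$ to $\int \chi_A^2\zeta_B^2(\partial_x^2 v_2)^2$. These produce the main quadratic form
\[
-\tfrac12\int\left[(\partial_x z_1)^2+(\partial_x z_2)^2+3(\partial_x^2 z_2)^2\right],
\]
together with lower-order remainders that sit in $z_i^2$, $(\partial_x z_i)^2$ (with $B^{-1}$-small coefficients because of Remark \ref{acotados}) or in $\zeta_B^2 v_i^2$ and $\zeta_B^2(\partial_x v_i)^2$ on the annulus $A\le|x|\le 2A$ (controlled by Lemma \ref{lem:v_w} at cost $B^{-1}$ after using $A\gg B^{10}$).

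For the term $-\tfrac12\int\phi_{A,B} f'(Q)\partial_x((\partial_x v_2)^2)$ I would integrate by parts to get $\tfrac12\int\partial_x(\phi_{A,B} f'(Q))(\partial_x v_2)^2$, and then split $\partial_x(\phi_{A,B} f'(Q))$ into $\phi_{A,B}' f'(Q)+\phi_{A,B}\partial_x(f'(Q))$. The first piece, after writing $\phi_{A,B}'=\chi_A^2\zeta_B^2+(\chi_A^2)'\varphi_B$, contributes the $-f'(Q)$ part of $V_0$ (via Claim \ref{claim:P_CZ_vi_x} with $P=f'(Q)$) plus an $A^{-1}$-error. For the second piece I rewrite $\chi_A^2\varphi_B=\chi_A^2\zeta_B^2\cdot(\varphi_B/\zeta_B^2)$ and apply Claim \ref{claim:P_CZ_vi_x} with the weight
\[
P(x)=\tfrac{\varphi_B}{\zeta_B^2}\partial_x(f'(Q)).
\]
The principal piece $\tfrac12\int P (\partial_x z_2)^2$ provides the $-\varphi_B/\zeta_B^2\,\partial_x(f'(Q))$ correction sitting inside the displayed potential. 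Expanding $P'=(1-2\tfrac{\zeta_B'}{\zeta_B}\tfrac{\varphi_B}{\zeta_B^2})\partial_x(f'(Q))+\tfrac{\varphi_B}{\zeta_B^2}\partial_x^2(f'(Q))$ in the $\tfrac12\int[P'\zeta_B'/\zeta_B+P\zeta_B''/\zeta_B]z_2^2$ contribution of the Claim isolates the advertised term $\tfrac12\int\tfrac{\varphi_B}{\zeta_B^2}\tfrac{\zeta_B'}{\zeta_B}\partial_x^2(f'(Q))z_2^2$; the remaining pieces are $O(B^{-1})$ times $\sech(x)z_2^2$ thanks to the exponential decay of $\partial_x f'(Q)$ and $\partial_x^2 f'(Q)$, and go into $\mathcal{R}_z$.

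The final step is the bookkeeping: everything that is not listed in the main display of \eqref{dtM} is sorted into $\mathcal{R}_z$ (terms quadratic in $z_i,\partial_x z_i$ with $O(B^{-1})$ coefficient), $\mathcal{R}_v$ (localized $\zeta_B^2 v_i^2$ terms on $A\le|x|\le 2A$, bounded by Lemma \ref{lem:v_w} and \eqref{eq:bound_ZCV_B}), and $\mathcal{DR}_v$ (localized $\zeta_B^2(\partial_x v_i)^2$ and $\zeta_B^2(\partial_x^2 v_2)^2$ terms, converted to $w_i$ and $\partial_x w_1$ via Lemma \ref{lem:v_w} and Remark \ref{rem:CZ_v2xx}). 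The anticipated main obstacle is precisely this accounting, in two respects: keeping track of the signs and exact weights so that the two displayed ``principal correction'' terms emerge with the right constants, and verifying that each error genuinely carries a factor $B^{-1}$ (or a product of $A^{-1}$ with a power of $B$ that is small under the scale separation $A\gg B^{10}$). The $\partial_x^2 v_2$ integral is the most delicate, since via Claim \ref{claim:R_CZ_vi_xx} it contributes error weights $P_R,\mathcal{E}_2,\mathcal{E}_3$; I would rely on the bounds \eqref{falta} together with \eqref{eq:C*z'/z}--\eqref{eq:z'/z} to absorb them into $B^{-1}$-small multiples of $\|z_2\|_{L^2}^2+\|\partial_x z_2\|_{L^2}^2+\|w_i\|_{L^2}^2+\|\partial_x w_1\|_{L^2}^2$, which is exactly the structure claimed in \eqref{eq:Rz+Rv+DRv_M}.
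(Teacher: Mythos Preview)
Your proposal is correct and follows essentially the same route as the paper: start from the identity \eqref{eq:M'} with $\phi_{A,B}=\psi_{A,B}$, expand $\psi_{A,B}',\psi_{A,B}'''$ via \eqref{eq:psi_deriv}, apply Claims \ref{claim:P_CZ_vi_x} and \ref{claim:R_CZ_vi_xx} with $P\equiv1$ and $R\equiv1$ respectively, handle the $f'(Q)$ term by integrating by parts and reapplying Claim \ref{claim:P_CZ_vi_x} with the weight $f'(Q)+\tfrac{\varphi_B}{\zeta_B^2}\partial_x(f'(Q))$ (your two separate applications with $P=f'(Q)$ and $P=\tfrac{\varphi_B}{\zeta_B^2}\partial_x(f'(Q))$ amount to the same thing by linearity), and then sort the remainders into $\mathcal{R}_z,\mathcal{R}_v,\mathcal{DR}_v$ using Remark \ref{acotados} and Lemma \ref{lem:v_w}. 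The only piece you leave slightly implicit is the $\psi_{A,B}'''$ contribution, which the paper treats by one more application of Claim \ref{claim:P_CZ_vi_x} with $P=(\zeta_B^2)''/\zeta_B^2$; this fits naturally into your scheme and is harmless.
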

\begin{proof}
	First, we recall that $z_i=\chi_A \zeta_B v_i$, and
	by \eqref{eq:psi_deriv} and Claim \ref{claim:P_CZ_vi_x}
	\begin{equation}\label{eq:m1}
	\begin{aligned}
	\int  \phi_{A,B}'  \left[( \partial_x v_1)^2+(\partial_x v_2)^2\right]
	=&
	\int  (\partial_x z_1)^2+(\partial_x z_2)^2
	+\int  \dfrac{\zeta_B''}{\zeta_B}\big(z_1^2+z_2^2\big)\\
	&+ \int  \mathcal{E}_1(1,x)\zeta_B^2 \big(v_1^2+v_2^2\big)
	+\int  (\chi_A^2)'\varphi_B  \left[( \partial_x v_1)^2+(\partial_x v_2)^2\right],
	\end{aligned}
	\end{equation}
	where $ \mathcal{E}_1(1,x)$ is given by  \eqref{eq:E1_1}. Now, using Remark \ref{rem:CZ_v2xx} \eqref{eq:1_CZ_v2_xx}, we get
	\begin{equation}\label{eq:m2}
	\begin{aligned}
	\int  \phi_{A,B}'  (\partial_x^2 v_2)^2  
	=&
	\int (\partial_x^2 z_2)^2
	+\int P_1(x) (\partial_x z_2)^2
	+
	\int    \left[\tilde{R}_1(x)+ P_1'(x) \frac{\zeta_B'}{\zeta_B}+P_1(x) 
	\frac{\zeta_B''}{\zeta_B}\right] z_2^2 \\
	&+\int \mathcal{E}_2(1,x) \zeta_B^2v_2^2
	+\int  \mathcal{E}_1(P_1(x),x)\zeta_B^2 v_2^2
	+\int \mathcal{E}_3(1,x) \zeta_B^2(\partial_x v_2)^2\\
	&+\int  (\chi_A^2)'\varphi_B (\partial_x^2 v_2)^2 ,
		\end{aligned}
	\end{equation}
		where $\tilde{R}_1(x), P_1 (x), \mathcal{E}_2(1,x), \mathcal{E}_3(1,x) $ are given by  \eqref{eq:tildeR1_P1}, \eqref{eq:claimE2_1},  \eqref{eq:claimE3_1}, and $ \mathcal{E}_1$ is gyven by \eqref{eq:claimE1}.\\
	Now, continuing with the second integral in the RHS of \eqref{eq:M'}, we have
	\begin{equation*}
	\begin{aligned}
	\int  \phi_{A,B}''' (\partial_x v_2)^2 
	= \int   \frac{(\zeta_B^2)''}{\zeta_B^2} \chi_A^2 \zeta_B^2(\partial_x v_2)^2 + \int  \left[6(\chi_A^2)' \frac{\zeta_B'}{\zeta_B}+3(\chi_A^2)'' +(\chi_A^2)'''\frac{\varphi_B}{\zeta_B^2} \right]\zeta_B^2 (\partial_x v_2)^2, 
	\end{aligned}
	\end{equation*}
	and using Claim  \ref{claim:P_CZ_vi_x},
	\begin{equation}\label{eq:m3}
	\begin{aligned}
	\int  \phi_{A,B}''' (\partial_x v_2)^2 
	=&
	\int   \frac{(\zeta_B^2)''}{\zeta_B^2}  (\partial_x z_2)^2 
	+ \int   \frac{(\zeta_B^2)''}{\zeta_B^2}   \frac{\zeta_B''}{\zeta_B}  z_2^2
	+ \int  \left( \frac{(\zeta_B^2)''}{\zeta_B^2} \right)'  \frac{\zeta_B'}{\zeta_B}  z_2^2\\
	&+\frac12 \int   \frac{(\zeta_B^2)''}{\zeta_B^2}  \bigg[\chi_A''\chi_A +2(\chi_A^2)' \frac{\zeta_B'}{\zeta_B}\bigg] \zeta_B^2 v_2^2
	+\frac12 \int  \left( \frac{(\zeta_B^2)''}{\zeta_B^2} \right)' (\chi_A^2)'  \zeta_B^2   v_2^2\\
	&+ \int  \left[ 6(\chi_A^2)' \frac{\zeta_B'}{\zeta_B}+3(\chi_A^2)'' +(\chi_A^2)'''\frac{\varphi_B}{\zeta_B^2} \right] \zeta_B^2 (\partial_x v_2)^2. 
	\end{aligned}
	\end{equation}
	For the third integral in the RHS of \eqref{eq:M'},  integrating by parts
	\begin{equation*}
	\begin{aligned}
	\int \phi_{A,B} f'(Q)\partial_x ((\partial_x v_2)^2) 
	=&-\int   \left(f'(Q)+\frac{\varphi_B}{\zeta_B^2}
	\partial_x(f'(Q))\right) \chi_A^2 \zeta_B^2 (\partial_x v_2)^2\\
	&-\int  (\chi_A^2)'\varphi_B f'(Q)(\partial_x v_2)^2.
	\end{aligned}
	\end{equation*}
	By the extended version of Claim \ref{claim:P_CZ_vi_x} and expanding the derivates in terms of $z_2$, we have
	\begin{equation}\label{eq:m4}
	\begin{aligned}
		\int & \phi_{A,B}  f'(Q)\partial_x ((\partial_x v_2)^2) \\
		=&
	- \int  \bigg[\left(f'(Q)+\frac{\varphi_B}{\zeta_B^2} \partial_x(f'(Q))\right)  \frac{\zeta_B''}{\zeta_B}  
	- \left(2\partial_x(f'(Q))-\frac{\varphi_B (\zeta_B^2)'}{\zeta_B^4} \partial_x(f'(Q)) +\frac{\varphi_B}{\zeta_B^2} \partial_x^2(f'(Q))\right)   \frac{\zeta_B'}{\zeta_B} \bigg] z_2^2\\
	&-\int  \left(f'(Q)+\frac{\varphi_B}{\zeta_B^2} \partial_x(f'(Q))\right)  (\partial_x z_2)^2 
	-\frac12 \int \left(f'(Q)+\frac{\varphi_B}{\zeta_B^2} \partial_x(f'(Q))\right)  \bigg[\chi_A''\chi_A +2(\chi_A^2)' \frac{\zeta_B'}{\zeta_B}\bigg] \zeta_B^2 v_2^2\\
	&-\frac12 \int  \partial_x \left(f'(Q)+\frac{\varphi_B}{\zeta_B^2} \partial_x(f'(Q))\right)  (\chi_A^2)'  \zeta_B^2  v_2^2
	-\int  (\chi_A^2)'\varphi_B f'(Q)(\partial_x v_2)^2.
	\end{aligned}
	\end{equation}
Collecting \eqref{eq:m1}, \eqref{eq:m2},\eqref{eq:m3} and \eqref{eq:m4}, we obtain 
	\begin{equation*}
	\begin{aligned}
	\frac{d}{dt}\mathcal{M}
	=& 
	-\dfrac{1}{2}\int  (\partial_x z_1)^2+\left(V_0(x)-\frac{\varphi_B}{\zeta_B^2} \partial_x(f'(Q))\right) (\partial_x z_2)^2+3(\partial_x^2 z_2)^2	\\
		&+ \frac12\int  
		\left[\frac{\varphi_B}{\zeta_B^2} \frac{\zeta_B'}{\zeta_B} \partial_x^2(f'(Q))  +\frac{\varphi_B}{\zeta_B^2} \frac{\zeta_B''}{\zeta_B}  \partial_x(f'(Q))  \right]z_2^2
	+\mathcal{R}_{z}(t)+\mathcal{R}_{v}(t)+\mathcal{DR}_{v}(t)\\
	&+\int  \phi_{A,B} \tilde{G}(x) \partial_x v_2
	+\int  \phi_{A,B} \tilde{H}(x) \partial_x v_1   ,
	\end{aligned}
	\end{equation*}
	where the error terms are the following: associated to $(z_1,z_2)$ is
	\begin{equation}\label{R_z}
	\begin{aligned}
	\mathcal{R}_{z}(t)
	=&
	-\frac12\int  \dfrac{\zeta_B''}{\zeta_B}\big(z_1^2+z_2^2\big)
	-\frac32
	\int    \left[\tilde{R}_1(x)+ P_1'(x) \frac{\zeta_B'}{\zeta_B}+P_1(x) 
	\frac{\zeta_B''}{\zeta_B}\right] z_2^2 \\
	&
	+ \frac12 \int  
	\left[
	 \frac{(\zeta_B^2)''}{\zeta_B^2}   \frac{\zeta_B''}{\zeta_B}  
	+  \left( \frac{(\zeta_B^2)''}{\zeta_B^2} \right)'  \frac{\zeta_B'}{\zeta_B}  
	\right]
	z_2^2\\
	&+ \frac12 \int  
	\bigg[
	f'(Q)
	\frac{\zeta_B''}{\zeta_B}  
	+ \left(2\partial_x(f'(Q))-2\frac{\varphi_B}{\zeta_B^2} \frac{ \zeta_B'}{\zeta_B} \partial_x(f'(Q))  \right) \frac{\zeta_B'}{\zeta_B} \bigg] z_2^2\\
	& +	\frac12 \int   \left(\frac{(\zeta_B^2)''}{\zeta_B^2}-3 P_1(x) \right)  (\partial_x z_2)^2 ,		
	\end{aligned}
	\end{equation}
	 associated to $(v_1,v_2)$ is
	\begin{equation}\label{Rv}
	\begin{aligned}
	\mathcal{R}_{v}(t)=
	&
	 -\frac12\int  \mathcal{E}_1(1,x)\zeta_B^2 \big(v_1^2+v_2^2\big)
	 	-\frac32\int  \bigg[\mathcal{E}_2(1,x) 
	 + \mathcal{E}_1(P_1(x),x)\bigg]\zeta_B^2 v_2^2\\
	&+\frac14 \int   \frac{(\zeta_B^2)''}{\zeta_B^2}  \bigg[\chi_A''\chi_A +2(\chi_A^2)' \frac{\zeta_B'}{\zeta_B}\bigg] \zeta_B^2 v_2^2
	+\frac14 \int  \left( \frac{(\zeta_B^2)''}{\zeta_B^2} \right)' (\chi_A^2)'  \zeta_B^2   v_2^2\\
	&+\frac14 \int \left(f'(Q)+\frac{\varphi_B}{\zeta_B^2} \partial_x(f'(Q))\right)  \bigg[\chi_A''\chi_A +2(\chi_A^2)' \frac{\zeta_B'}{\zeta_B}\bigg] \zeta_B^2 v_2^2\\
	&+\frac14 \int  \partial_x \left(f'(Q)+\frac{\varphi_B}{\zeta_B^2} \partial_x(f'(Q))\right)  (\chi_A^2)'  \zeta_B^2  v_2^2,
	\end{aligned}
	\end{equation}
	and  associated to $(\partial_x v_1, \partial_x v_2)$ is
	\begin{equation}\label{DRv}
	\begin{aligned}
	\mathcal{DR}_v(t)=&
	-\frac12\int  (\chi_A^2)'\varphi_B  \left[( \partial_x v_1)^2+(\partial_x v_2)^2+3(\partial_x^2 v_2)^2\right]
	-\frac32\int \mathcal{E}_3(1,x) \zeta_B^2(\partial_x v_2)^2\\
	&+ \frac12\int  \big[3(\chi_A^2)' (\zeta_B^2)'+3(\chi_A^2)'' \zeta_B^2+(\chi_A^2)'''\varphi_B +(\chi_A^2)'\varphi_B f'(Q)\big] (\partial_x v_2)^2.
	\end{aligned}
	\end{equation}
	We have obtained the identity \eqref{dtM}.
To conclude the proof of Lemma \ref{lem:virial_M}, we must estimate the error terms.

\subsection{Controlling error terms}
We consider  the following decomposition  for $\mathcal{R}_{z}(t)$ from \eqref{R_z},
\[
\mathcal{R}_{z}(t) =\mathcal{R}^{1}_{z}(t) + \mathcal{R}^{2}_{z}(t) +\mathcal{R}^{3}_{z}(t),
\]
where
\begin{equation*}
\begin{aligned}
\mathcal{R}^{1}_{z}(t)
=&~{}-\dfrac{1}{2}\int  \dfrac{\zeta_B''}{\zeta_B}\big(z_1^2+z_2^2\big)
+ \dfrac{1}{2}\int  \bigg[ \frac{(\zeta_B^2)''}{\zeta_B^2}  \frac{\zeta_B''}{\zeta_B}  
+ \left( \frac{(\zeta_B^2)''}{\zeta_B^2} \right)'  \frac{\zeta_B'}{\zeta_B} -3\tilde{R}_1(x)\bigg] z_2^2,\\
\mathcal{R}^2_{z}(t)=&~{}-\frac32\int  \left[P_1'(x)  \frac{\zeta_B'}{\zeta_B} +P_1(x) \frac{\zeta_B''}{\zeta_B} \right] z_2^2
+\dfrac{1}{2}\int  \left( \frac{(\zeta_B^2)''}{\zeta_B^2} -3P_1(x) \right) (\partial_x z_2)^2, \\
\mathcal{R}^3_{z}(t)=&~{}\dfrac{1}{2} \int \bigg[ 
f'(Q)
  \frac{\zeta_B''}{\zeta_B} 
 +\left(2\partial_x(f'(Q))-2\frac{\varphi_B }{\zeta_B^2}\frac{\zeta_B'}{\zeta_B} \partial_x(f'(Q)) \right)   \frac{\zeta_B'}{\zeta_B}
\bigg] z_2^2.
\end{aligned}
\end{equation*}
For $\mathcal{R}^1_{z}(t)$, recalling estimate \eqref{eq:z'/z} and $\tilde{R}_1$ (see \eqref{eq:tildeR1_P1}), and we obtain
\begin{equation} \label{eq:R1z}
\begin{aligned}
|\mathcal{R}^1_{z}(t)|
\leq  B^{-1}\| z_1\|_{L^2}^2 +B^{-1}\| z_2\|_{L^2}^2.
\end{aligned}
\end{equation}
For $\mathcal{R}^2_{z}(t)$, we  recall the form of  $P_1$ (see  \eqref{eq:tildeR1_P1}) 
and 
by  \eqref{eq:z'/z}, we conclude
\begin{equation} \label{eq:R2z}
\begin{aligned}
|\mathcal{R}^2_{z}(t)|
\lesssim  B^{-1}\| z_2\|_{L^2}^2 +B^{-1}\| \partial_x z_2\|_{L^2}^2.
\end{aligned}
\end{equation}
For $\mathcal{R}^3_{z}(t)$, first we note
\[\begin{aligned}
\bigg|\frac{\varphi_B}{\zeta_B^2} \partial_x(f'(Q)) \bigg| \lesssim B, 
\end{aligned}
\]
and by \eqref{eq:C*z'/z}, we obtain
\begin{equation} \label{eq:R3z}
|\mathcal{R}^3_{z}(t)| \lesssim  B^{-1} \| z_2\|_{L^2}^2.
\end{equation}
Collecting \eqref{eq:R1z},\eqref{eq:R2z} and \eqref{eq:R3z}, we have
\begin{equation}\label{eq:Bound_Rz}
|\mathcal{R}_z(t)|\leq B^{-1}\bigg( \|z_1\|_{L^2}^2+  \|z_2\|_{L^2}^2+ \|\partial_x z_2\|_{L^2}^2\bigg).
\end{equation}
For $\mathcal{R}_{v}(t)$, given by \eqref{Rv}, we consider the following decomposition
\begin{equation*}
\begin{aligned}
\mathcal{R}^1_{v}(t)=
	&~{}
-\frac12\int  \mathcal{E}_1(1,x)\zeta_B^2 \big(v_1^2+v_2^2\big)
-\frac32\int  \bigg[\mathcal{E}_2(1,x) 
+ \mathcal{E}_1(P_1(x),x)\bigg]\zeta_B^2 v_2^2\\
&+\frac14 \int   \frac{(\zeta_B^2)''}{\zeta_B^2}  \bigg[\chi_A''\chi_A +2(\chi_A^2)' \frac{\zeta_B'}{\zeta_B}\bigg] \zeta_B^2 v_2^2
+\frac14 \int  \left( \frac{(\zeta_B^2)''}{\zeta_B^2} \right)' (\chi_A^2)'  \zeta_B^2   v_2^2,\\
\mathcal{R}^2_{v}(t)=&~{} \frac14 \int 
\partial_x \left(f'(Q)+\frac{\varphi_B}{\zeta_B^2} \partial_x(f'(Q))\right)  (\chi_A^2)'  \zeta_B^2 v_2^2 \\
&+ 
\frac14 \int  \left(f'(Q)+\frac{\varphi_B}{\zeta_B^2} \partial_x(f'(Q))\right)  \bigg[\chi_A''\chi_A+2(\chi_A^2)' \frac{\zeta_B'}{\zeta_B}\bigg]
 \zeta_B^2 v_2^2.
\end{aligned}
\end{equation*}
We note that the terms $\mathcal{E}_{1}(P_1(x),x)$ and $\mathcal{E}_{2}(1,x)$  (see  \eqref{eq:claimE1}, \eqref{eq:tildeR1_P1} and \eqref{eq:claimE2_1}), by \eqref{eq:C*z'/z}, are bounded and satisfy the following estimates:
\[
|\mathcal{E}_{2}(1,x)|\lesssim  (AB^3)^{-1},  \ \mbox{and} \ \
|\mathcal{E}_{1}(P_1(x),x)| \lesssim (AB^3)^{-1},
\]
and for $\mathcal{E}_1(1,x)$ in \eqref{eq:E1_1},
\[
|\mathcal{E}_1(1,x)|\lesssim (AB)^{-1}.
\]
Then, we have
\[
|\mathcal{R}^1_{v}(t)|\lesssim (AB)^{-1} \left( \| \zeta_B v_1\|_{L^2}^2+\| \zeta_B v_2\|_{L^2}^2 \right).
\]
For $\mathcal{R}^2_{v}(t)$,  expanding the derivative and using \eqref{eq:bound_ZCV_B}, we obtain
\[
\begin{aligned}
\left|\mathcal{R}^2_{v}(t)\right|
\lesssim &~{} A^{-1} \int 
\left|\frac{\varphi_B}{\zeta_B^2} \partial_x^2(f'(Q))+2\left(1-\frac{\zeta_B' }{\zeta_B}\frac{\varphi_B }{\zeta_B^2}\right) \partial_x(f'(Q))\right|  \zeta_B^2 v_2^2 \\
&+ 
A^{-1} \int \left|f'(Q)+\frac{\varphi_B}{\zeta_B^2} \partial_x(f'(Q))\right| \zeta_B^2
v_2^2 ~
\lesssim ~   
A^{-1}B  \| \zeta_B v_2\|^2_{L^2}.
\end{aligned}
\]
Then,
\begin{equation}\label{eq:Bound_Rv}
|\mathcal{R}_{v}(t)| \lesssim A^{-1}B \left( \| \zeta_B v_1\|^2_{L^2} +\|\zeta_B  v_2\|^2_{L^2}\right).
\end{equation}
For $\mathcal{DR}_v$, given by \eqref{DRv}, computing directly and using Remark \ref{rem:chiA_zetaA4}, we have  
\begin{equation}\label{eq:Bound_DRv}
\begin{aligned}
|\mathcal{DR}_v(t)|\lesssim 
& BA^{-1}\left(\| \zeta_A^2 \partial_x v_1\|_{L^2}^2+\|\zeta_A^2\partial_x v_2\|^2_{L^2}
+\|\zeta_A^2\partial_x^2 v_2\|_{L^2}^2 \right).
\end{aligned}
\end{equation}
And, by \eqref{eq:Bound_Rz}, \eqref{eq:Bound_Rv} and \eqref{eq:Bound_DRv}, we obtain
\[
\begin{aligned}
& |\mathcal{R}_z(t) |+|\mathcal{R}_v (t)|+|\mathcal{DR}_v(t)|\\
 &~{}\lesssim
BA^{-1} \big( \|  \zeta_A^2 \partial_x v_1\|_{L^2}^2+\| \zeta_A^2\partial_x v_2\|^2_{L^2} +\| \zeta_A^2 \partial_x^2 v_2\|_{L^2}^2
+\|  \zeta_B v_1\|^2_{L^2} +\| \zeta_B v_2\|^2_{L^2}
 \big)\\
&  \quad  +B^{-1}\left(\| z_1\|_{L^2}^2 +\| z_2\|_{L^2}^2+\| \partial_x z_2\|_{L^2}^2\right) .
 \end{aligned}
\]
Using Lemma \ref{lem:v_w}, we conclude
\begin{equation}\label{eq:bound_errorM}
\begin{aligned}
|\mathcal{R}_z(t)|+|\mathcal{R}_v(t)|+|\mathcal{DR}_v(t)|
\lesssim &~{}
A^{-1}B^9 \big(\|  w_1\|^2_{L^2} + \|   \partial_x w_1\|_{L^2}^2+\| w_2\|^2_{L^2} \big)\\
&~{} + B^{-1}\big(\| z_1\|_{L^2}^2 +\| z_2\|_{L^2}^2+\| \partial_x z_2\|_{L^2}^2\big).
\end{aligned}
\end{equation}
This ends the proof of Lemma \ref{lem:virial_M}.
\end{proof}

\subsection{Controlling nonlinear terms} Recall \eqref{dtM}. We set
\[
 \mathcal{M}_2 =  \int \phi_{A,B} \tilde{G}(x) \partial_x v_2,\qquad 
	 \mathcal{M}_1 =\int \phi_{A,B} \tilde{H}(x) \partial_x v_1.
\]
These are the two remaining terms in \eqref{dtM} to be controlled.

\subsubsection{Control of $\mathcal{M}_2$}

Recalling that  $\tilde{G}(x)=\partial_x G(x)$ and $G$ is given by \eqref{eq:G_H}, we have
\begin{equation*}
\begin{aligned}
\mathcal{M}_2
=&-\gamma \int ((\chi_A^2)'\zeta_B^2+\chi_A^2(\zeta_B^2)') \partial_x v_2   (1-\gamma\partial_x ^2)^{-1} \left[   \partial_x^2(f'(Q))\partial_x v_2+2 \partial_x(f'(Q))\partial_x^2 v_2 \right]\\
&-\gamma \int \chi_A^2\varphi_B \partial_x^2 v_2 (1-\gamma\partial_x ^2)^{-1} \left[   \partial_x^2(f'(Q))\partial_x v_2+2 \partial_x(f'(Q))\partial_x^2 v_2 \right]\\
=: &~{} M_{21}+M_{22}.
\end{aligned}
\end{equation*}
First, we focus on $M_{21}$. Using Remark \ref{rem:CZ_v2x} and Lemma \ref{lem:v_w}, we have
\begin{equation*}
\begin{aligned}
	\| ((\chi_A^2)'\zeta_B^2+\chi_A^2(\zeta_B^2)') \partial_x v_2 \|_{L^2}
	\lesssim & ~{}  A^{-1}\|  \zeta_B^2 \partial_x v_2 \|_{L^2}+B^{-1}\|   \chi_A\zeta_B \partial_x v_2 \|_{L^2}\\
	\lesssim & ~{}  A^{-1} B^{2}\| w_2 \|_{L^2}+B^{-1}\left[  \| \partial_x z_2\|_{L^2}+B^{-1}\| z_2\|_{L^2}
	+ (AB)^{-1/2}\| w_2\|_{L^2}\right]\\
		\lesssim & ~{}  B^{-1}\big[  \| \partial_x z_2\|_{L^2}+\| z_2\|_{L^2} \big]+B^{-5}\| w_2 \|_{L^2},
\end{aligned}
\end{equation*}
and by \eqref{eq:bound_f'x_v2x}, we conclude
\begin{equation}\label{eq:M21}
\begin{aligned}
|M_{21}|
\lesssim & ~{} 
B^{-3}\bigg[  \| \partial_x z_2\|_{L^2}^2+\| z_2\|_{L^2}^2
+ \| w_2\|_{L^2}^2\bigg].
\end{aligned}
\end{equation}
Secondly, for $M_{22}$.   Set $\rho(x)=\sech(x/10)$,  making the following separation
\[
\begin{aligned}
|M_{22}|
\lesssim & ~{}  \gamma  \|\chi_A^2\varphi_B \rho_K(x)\partial_x^2 v_2\|_{L^2}   \left\| (\rho_K(x))^{-1}(1-\gamma\partial_x ^2)^{-1} \bigg[   \partial_x^2(f'(Q))\partial_x v_2+2 \partial_x(f'(Q))\partial_x^2 v_2\bigg]\right\|_{L^2}\\
\lesssim& \gamma B  \| \chi_A \zeta_B \partial_x^2 v_2\|_{L^2}  
\underbrace{\left\| (\rho_K(x))^{-1}(1-\gamma\partial_x ^2)^{-1} \bigg[   \partial_x^2(f'(Q))\partial_x v_2+2 \partial_x(f'(Q))\partial_x^2 v_2\bigg]\right\|_{L^2}}_{M_{23}}.
\end{aligned}
\]
Using Lemma \ref{lem:cosh_kink} in $M_{23}$, we obtain
\begin{equation}\label{eq:rhoK_G}
\begin{aligned}
M_{23}=&
	\left\| (\rho_K(x))^{-1}(1-\gamma\partial_x ^2)^{-1} \bigg[\rho_K(x) 
	\left\{
	  \partial_x^2(f'(Q))(\rho_K(x))^{-1}\partial_x v_2+2 \partial_x(f'(Q))(\rho_K(x))^{-1}\partial_x^2 v_2
	  \right\}\bigg]\right\|_{L^2}\\
	\leq & 
	\left\|(1-\gamma\partial_x ^2)^{-1} \bigg[(  \partial_x^2(f'(Q))(\rho_K(x))^{-1}\partial_x v_2+2 \partial_x(f'(Q))(\rho_K(x))^{-1}\partial_x^2 v_2)\bigg]\right\|_{L^2}\\
		\lesssim &
	\|(  \partial_x^2(f'(Q))(\rho_K(x))^{-1}\partial_x v_2\|_{L^2}+\| \partial_x(f'(Q)) (\rho_K(x))^{-1}\partial_x^2 v_2)\|_{L^2}.
\end{aligned}
\end{equation}

Since $\partial_x^2(f'(Q))\rho_K^{-1}\sim e^{-4|x|/5}$ and making the following decomposition, we have
\[\begin{aligned}
e^{-8|x|/5}(\partial_x v_2)^2=& e^{-8|x|/5}\zeta_B^{-2}(\chi_A\zeta_B \partial_x v_2)^2+e^{-8|x|/5}\zeta_A^{-2}(1-\chi_A^2)(\zeta_A \partial_x v_2)^2.
\end{aligned}\]

Since $e^{-4|x|/5}\zeta_B^{-1}\leq 1$ and $e^{-8A/5}\zeta_A^{-2}(A)\sim e^{-2A/5}$, using Remark \ref{rem:CZ_v2x} and Lemma \ref{lem:v_w}, we conclude
\[\begin{aligned}
\|e^{-4|x|/5}\partial_x v_2\|_{L^2}\lesssim & ~{}  \left[\|\partial_x z_2\|_{L^2}+B^{-1} \|z_2\|_{L^2}+A^{-1} \|\zeta_B v_2\|_{L^2}\right] +e^{-A/5}\|\zeta_A \partial_x v_2\|_{L^2}\\
\lesssim & ~{}  \|\partial_x z_2\|_{L^2}+B^{-1} \|z_2\|_{L^2}+A^{-1} \|w_2\|_{L^2}.
\end{aligned}
\]

And, for the second term in the RHS of \eqref{eq:rhoK_G}. We note $\partial_x (f'(Q))\rho^{-1}\sim e^{-4|x|/5}$ and repeating the decomposition, we have
 \[
\begin{aligned}
e^{-8|x|/5}(\partial_x^2 v_2)^2
=& e^{-8|x|/5}\zeta_B^{-2} \chi_A^2\zeta_B^{2}(\partial_x^2 v_2)^2+ e^{-8|x|/5}\zeta_A^{-2} (1-\chi_A^2)\zeta_A^{2} (\partial_x^2 v_2)^2.
\end{aligned}
\]
By a similar argument as before, $e^{-4|x|/5}\zeta_B^{-1}\leq 1$ and $e^{-8A/5}\zeta_A^{-2}(A)\sim e^{-2A/5}$, applying Remark \ref{rem:CZ_v2xx} and Lemma \ref{lem:v_w}, we have
\[
\begin{aligned}
\|e^{-4|x|/5} \partial_x^2 v_2\|_{L^2}
\lesssim &~{}  \|\chi_A\zeta_B \partial_x^2 v_2\|_{L^2}+ e^{-A/5}\| \zeta_A \partial_x^2 v_2\|_{L^2}\\
\lesssim &~{} \| \partial_x^2 z_2\|_{L^2}+B^{-1}\| \partial_x z_2\|_{L^2}+B^{-2}\| z_2\|_{L^2} +(A^{-1}B^3)^{1/2} \| w_2\|_{L^2}.
\end{aligned}
\]
Finally, for $M_{22}$ we have
\begin{equation}\label{eq:M22}
\begin{aligned}
|M_{22}|\lesssim
B^{-3}\left(\| \partial_x^2 z_2\|_{L^2}^2+\| \partial_x z_2\|_{L^2}^2+B^{-2}\| z_2\|_{L^2}^2 +A^{-1}B^3 \| w_2\|_{L^2}^2\right).
\end{aligned}
\end{equation}

Collecting  \eqref{eq:M21} and \eqref{eq:M22}, we conclude
\begin{equation}\label{eq:boundM2}
\begin{aligned}
\mathcal{M}_2
\lesssim & ~{}   
B^{-3}\bigg[  \| \partial_x^2 z_2\|_{L^2}^2+\| \partial_x z_2\|_{L^2}^2+\| z_2\|_{L^2}^2
+ \| w_2\|_{L^2}^2\bigg].
\end{aligned}
\end{equation}

\subsubsection{Control of $\mathcal{M}_1$.}

Recalling that $\tilde{H}=\partial_x H$, $H$ is given by \eqref{eq:G_H} and using Lemma \ref{lem:estimates_IOp}, we obtain
\[
\begin{aligned}
|\mathcal{M}_1|
\lesssim & ~{} 
\| \chi_A\varphi_B \partial_x v_1\|_{L^2}
\| \chi_A(1-\partial_x^2)^{-1}\partial_x N^{\perp} \|_{L^2}\\
\lesssim & ~{} 
\| \chi_A\varphi_B \partial_x v_1\|_{L^2}
\| \zeta_A^2(1-\partial_x^2)^{-1}\partial_x N^{\perp} \|_{L^2}.
\end{aligned}
\]
Now, by a similar computation that \eqref{eq:LambaN}, we have
\begin{equation}\label{eq:LambdaP_N}
\|\zeta_A^2(1-\gamma\partial_x ^2)^{-1} \partial_x ( N^{\perp} )\|_{L^2}
\leq \gamma^{-1}(a_1^2+\|u_1\|_{L^\infty}\|w_1\|_{L^2}).
\end{equation}
Then, by \eqref{eq:estimates_psiB_v1}, Lemma \ref{lem:v_w}, \eqref{eq:N_0_bounded} and the above estimates, we conclude
\begin{equation}\label{eq:boundM1}
\begin{aligned}
| \mathcal{M}_1|
\lesssim & ~{}   \gamma^{-2}B\|w_1\|_{L^2} (a_1^2+\|u_1\|_{L^\infty}\|w_1\|_{L^2})^2\\
\lesssim & ~{}  B^{-1}\|w_1\|_{L^2}^2+\gamma^{-4}B^3 (a_1^4+\|u_1\|_{L^\infty}^4\|w_1\|_{L^2}^2).
\end{aligned}
\end{equation}

\subsection{End of proof Proposition \ref{prop:ineq_dtM}}
Using a similar computation that Lemma \ref{lem:bound_V}, we are able to estimates $ \frac{d} {dt} \mathcal{M} $.  
 Set $B=\delta^{-1/19}$, and considering \eqref{eq:bound_errorM}, \eqref{eq:boundM1}, and \eqref{eq:boundM2}, we obtain
\begin{equation*}
\begin{aligned}
	\frac{d}{dt}\mathcal{M}
\leq & ~{}  
-\dfrac{1}{2}\int \left( (\partial_x z_1)^2+\left(V_0(x)-\frac{\varphi_B}{\zeta_B^2} \partial_x(f'(Q))\right) (\partial_x z_2)^2+2(\partial_x^2 z_2)^2	\right) \\
&+ \frac12\int 
\bigg( 
\frac{\varphi_B}{\zeta_B^2} \frac{\zeta_B'}{\zeta_B} \partial_x^2(f'(Q)) 
+\frac{\varphi_B}{\zeta_B^2} \frac{\zeta_B''}{\zeta_B}  \partial_x(f'(Q))  
\bigg)
 z_2^2\\
&+ C \max\{{\color{red}B^9A^{-1}},B^{-1},\delta\} \left( \| \partial_x w_1\|_{L^2}^2+\| w_1\|_{L^2}^2
+\| w_2\|_{L^2}^2 \right)\\
&+C B^{-1}\bigg[ \| \partial_x z_2\|_{L^2}^2+\| z_2\|_{L^2}^2+\| z_1\|_{L^2}^2\bigg]
+
C |a_1|^3.
\end{aligned}
\end{equation*}
Since 
\[
\frac{\varphi_B}{\zeta_B^2} \partial_x(f'(Q))<0,\ \mbox{and }\ 
 \quad \left| \frac{\varphi_B}{\zeta_B^2} \frac{\zeta_B'}{\zeta_B} \partial_x^2(f'(Q)) 
+\frac{\varphi_B}{\zeta_B^2} \frac{\zeta_B''}{\zeta_B}  \partial_x(f'(Q))   \right|\lesssim  1,
\]
we conclude
\begin{equation*}
\begin{aligned}
	\frac{d}{dt}\mathcal{M}
 \leq & ~{}  
 -\dfrac{1}{2}\int (\partial_x z_1)^2+\left(V_0(x) - CB^{-1}\right) (\partial_x z_2)^2+2(\partial_x^2 z_2)^2	\\
 &+ C \| z_2\|_{L^2}^2
 +C B^{-1}\| z_1\|_{L^2}^2\\
 &+C\max\{B^9A^{-1},B^{-1},\delta\} \left( \| \partial_x w_1\|_{L^2}^2+\| w_1\|_{L^2}^2
 +\| w_2\|_{L^2}^2 \right)+
C |a_1|^3.
\end{aligned}
\end{equation*}
Calling $C_3=C$, and using \eqref{eq:scales}, the proof is completed.

\section{A second transfer estimate}\label{sec:5}

The variation of the virial $\M$ involve the terms $\partial_x z_1$ and $\partial_x^2 z_2$, these terms do not appear in the variation of the  virial related to the dual problem.  Hence, we need to find a way to transfer information between  the terms $\partial_x z_1$ to $\partial_x^2 z_2$. The  virial $\mathcal N$, defined  as
\begin{equation}\label{eq:N_def}
\begin{gathered}
\mathcal{N}=\int \rho_{A,B} \tv_1 v_2=\int \rho_{A,B} \partial_x v_1 v_2,
\end{gathered}
\end{equation}
where $\rho_{A,B}$ is a well-chosen localized weight depending on $A$ and $B$, its variation will give us that relation.
A similar quantity was considered in \cite{kink}. Note that the virial $\mathcal{N}$ considers the dynamics in \eqref{eq:syst_v} and \eqref{eq:syst_vx}.

\subsection{A virial identity for $\N$}
\begin{lem}\label{lem:identity_dtN}
	Let $(v_1,v_2)\in H^1(\R)\times H^2(\R)$ a solution of \eqref{eq:syst_v}. Consider $ \rho_{A,B} $ an even smooth bounded function to be a choose later. Then
	\begin{equation}
	\begin{aligned}\label{eq:N'}
	\frac{d}{dt}\mathcal{N}=&~{} 
	2\int \rho_{A,B}''   (\partial_x v_2)^2
	-\int \rho_{A,B} \big[ (\partial_x^2  v_2)^2+ V_0(x) (\partial_x v_2)^2\big]
	+\int \rho_{A,B} (\partial_x v_1)^2\\
	&+\frac12 \int \partial_x^2 [\rho_{A,B} V_0(x)]v_2^2-\frac12 \int \rho_{A,B}^{(4)} v_2^2     
	+ \int \rho_{A,B} v_2 \tilde{G}(x)
	+  \int \rho_{A,B}  \partial_x v_1 H(x)  .
	\end{aligned}
	\end{equation}
\end{lem}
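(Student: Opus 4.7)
My plan is to verify the identity by direct time differentiation of $\mathcal{N}$ and iterated integration by parts, exploiting that $v_2\in H^2$, that $\rho_{A,B}$ is smooth and Schwartz-like in $x$ (so that all boundary terms at infinity vanish), and that $\rho_{A,B}$ is even while $V_0$ is even and $v_1,v_2$ are respectively even and odd. First I would write
\[
\frac{d}{dt}\mathcal{N}(t)=\int \rho_{A,B}\,\partial_x\dot v_1\cdot v_2+\int \rho_{A,B}\,\partial_x v_1\cdot\dot v_2,
\]
and substitute from \eqref{eq:syst_v}: $\dot v_2=\partial_x v_1+H(x)$ and $\partial_x\dot v_1=\partial_x\mathcal{L}(\partial_x v_2)+\tilde G(x)$. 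This immediately produces $\int \rho_{A,B}(\partial_x v_1)^2$ together with the two forcing terms $\int \rho_{A,B}\,v_2\,\tilde G$ and $\int \rho_{A,B}\,\partial_x v_1\,H$ appearing in \eqref{eq:N'}. It remains to analyze the transport contribution $\int \rho_{A,B}\,v_2\,\partial_x\mathcal{L}(\partial_x v_2)$, which I would split using $\partial_x\mathcal{L}(\partial_x v_2)=-\partial_x^4 v_2+\partial_x(V_0\,\partial_x v_2)$.

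For the fourth-order piece, I would integrate by parts four times. After two IBP, $-\int \rho_{A,B}\,v_2\,\partial_x^4 v_2=-\int\partial_x^2(\rho_{A,B} v_2)\,\partial_x^2 v_2$, and expanding by Leibniz produces three pieces: $-\int \rho_{A,B}(\partial_x^2 v_2)^2$, the cross term $-2\int \rho_{A,B}'\,\partial_x v_2\,\partial_x^2 v_2$ which becomes $+\int \rho_{A,B}''(\partial_x v_2)^2$ after writing $\partial_x v_2\,\partial_x^2 v_2=\tfrac12\partial_x((\partial_x v_2)^2)$, and $-\int \rho_{A,B}''\,v_2\,\partial_x^2 v_2$, which two further IBP transform into $\int \rho_{A,B}''(\partial_x v_2)^2-\tfrac12\int \rho_{A,B}^{(4)}\,v_2^2$. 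Assembling,
\[
-\int \rho_{A,B}\,v_2\,\partial_x^4 v_2=-\int \rho_{A,B}(\partial_x^2 v_2)^2+2\int \rho_{A,B}''(\partial_x v_2)^2-\tfrac12\int \rho_{A,B}^{(4)}\,v_2^2.
\]
For the potential piece, a single IBP gives $\int \rho_{A,B}\,v_2\,\partial_x(V_0\partial_x v_2)=-\int \rho_{A,B}V_0(\partial_x v_2)^2-\int \rho_{A,B}'V_0\,v_2\,\partial_x v_2$, and turning $v_2\,\partial_x v_2$ into a total derivative and integrating by parts once more yields an additional $v_2^2$-contribution. After regrouping this $v_2^2$-contribution with the one coming from the fourth-order block and rewriting via $\partial_x^2[\rho_{A,B}V_0]$, I arrive at the advertised combination $\tfrac12\int\partial_x^2[\rho_{A,B}V_0]\,v_2^2-\tfrac12\int\rho_{A,B}^{(4)}\,v_2^2$ together with the remaining $-\int\rho_{A,B}V_0(\partial_x v_2)^2$ term displayed in \eqref{eq:N'}.

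The main obstacle is bookkeeping: the four iterated integrations by parts acting on $\partial_x^4 v_2$ generate several cross-terms whose signs and numerical multiplicities must line up to give the precise pattern $-1,\,+2,\,-\tfrac12$ in front of $(\partial_x^2 v_2)^2$, $(\partial_x v_2)^2$ and $v_2^2$, and the $V_0$-related cross terms must be reassembled into the clean $\partial_x^2[\rho_{A,B}V_0]$ form rather than left as an unresolved Leibniz expansion. Evenness of $\rho_{A,B}$ together with the parities of $v_1,v_2,V_0$ (and hence of $V_0'$, $\tilde G$, $H$) guarantees that all boundary-type odd contributions vanish upon integration, so no parity-induced defect remains. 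No functional-analytic estimates are required at this step: the lemma is an algebraic identity, and the size estimates on the right-hand side of \eqref{eq:N'} are to be performed later, together with the choice of a suitable localized weight $\rho_{A,B}$ that ties the virial $\mathcal{N}$ to the transfer mechanism between $\partial_x z_1$ and $\partial_x^2 z_2$ needed to close Section~\ref{sec:5}.
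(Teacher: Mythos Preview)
Your overall strategy---differentiate $\mathcal N$ in time, substitute from \eqref{eq:syst_v}, and integrate by parts repeatedly---matches the paper's. Your treatment of the fourth-order block $-\int\rho_{A,B}\,v_2\,\partial_x^4 v_2$ is correct and gives exactly the pattern $-1,+2,-\tfrac12$ you describe.

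The gap lies in the potential piece. Your route yields
\[
\int \rho_{A,B}\,v_2\,\partial_x(V_0\partial_x v_2)=-\int \rho_{A,B} V_0(\partial_x v_2)^2+\tfrac12\int(\rho_{A,B}' V_0)'\,v_2^2,
\]
and you then assert that this ``regroups'' into $\tfrac12\int\partial_x^2[\rho_{A,B} V_0]\,v_2^2$. But $(\rho_{A,B}' V_0)'$ and $\partial_x^2[\rho_{A,B} V_0]$ differ by $(\rho_{A,B} V_0')'$, so your computation actually leaves behind the term $-\tfrac12\int(\rho_{A,B} V_0')'\,v_2^2$. This integrand is \emph{even} ($\rho_{A,B}$ even, $V_0'$ odd, hence $(\rho_{A,B}V_0')'$ even; $v_2^2$ even), so your blanket parity assertion does not dispose of it.

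The paper avoids this by organizing the computation through the derived system \eqref{eq:syst_vx} for $(\tilde v_1,\tilde v_2)=(\partial_x v_1,\partial_x v_2)$, in which the commutator contribution $-\partial_x(f'(Q))\,v_2$ is separated from $\mathcal L(\partial_x\tilde v_2)$ at the outset. The main block $\int\rho_{A,B}\,v_2\,\mathcal L(\partial_x\tilde v_2)$ then produces $\tfrac12\int\partial_x^2[\rho_{A,B}V_0]\,v_2^2$ directly via the single step $-\int\partial_x[\rho_{A,B}V_0]\,v_2\,\tilde v_2=\tfrac12\int\partial_x^2[\rho_{A,B}V_0]\,v_2^2$, while the separated commutator contributes $-\int\rho_{A,B}\,\partial_x(f'(Q))\,v_2^2$, whose integrand \emph{is} odd ($\rho_{A,B}v_2^2$ even, $\partial_x(f'(Q))$ odd) and therefore vanishes. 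The parity cancellation is genuine, but it acts on a term that only becomes visible once you split off $[\partial_x,\mathcal L]=V_0'$ \emph{before} integrating by parts in the $V_0$ block, not after.
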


\begin{proof}
	Computing the variation of $\mathcal{N}$, using \eqref{eq:syst_v} and \eqref{eq:syst_vx}, we obtain
	\[
	\begin{aligned}
	\frac{d}{dt}\mathcal{N}
	=&\int \rho_{A,B} v_2 \LL\partial_x  \tv_2
	+\int \rho_{A,B} \tv_1^2
	+ \int \rho_{A,B} v_2 (\tilde{G}(x)-\partial_x(f'(Q)) v_2)
	+  \int \rho_{A,B}  \tv_1 H(x).
	\end{aligned}
	\]
	Integrating by parts the first integral of the RHS $N_1$, we have
	\[
	\begin{aligned}
	\int \rho_{A,B} v_2 \LL\partial_x  \tv_2
	=&
	\int \rho_{A,B} v_2 (-\partial_x^3  \tv_2+V_0(x)
	\partial_x  \tv_2)\\
	=&
	-\int \partial_x^2(\rho_{A,B} v_2) \partial_x  \tv_2-\int 
	\left( \partial_x [\rho_{A,B} V_0(x)
	]v_2\tv_2  +\rho_{A,B} V_0(x)
	\tv_2^2  \right)  \\
	=&
	\int \rho_{A,B}''' v_2 \tv_2 +2\int \rho_{A,B}''   \tv_2^2-\int \rho_{A,B}  \left( (\partial_x  \tv_2)^2+V_0(x) \tv_2^2 \right)
	-\int \partial_x [\rho_{A,B} V_0(x)
	]v_2\tv_2.  
	\end{aligned}
	\]
	Then, we get
	\[
	\begin{aligned}
	\frac{d}{dt}\mathcal{N}
	=&~{} -\int \rho_{A,B}  \left( (\partial_x  \tv_2)^2+ V_0(x) \tv_2^2 \right)
	+\int \rho_{A,B} \tv_1^2+2\int \rho_{A,B}''   \tv_2^2
	-\int \partial_x [\rho_{A,B} V_0(x)]v_2\tv_2\\
	&+\int \rho_{A,B}''' v_2 \tv_2     
	+ \int \rho_{A,B} v_2 (\tilde{G}(x)-\partial_x(f'(Q)) v_2)
	+  \int \rho_{A,B}  \tv_1 H(x).
	\end{aligned}
	\]
	Rewriting the last expression in terms of $(v_1,v_2)$, we obtain
	\begin{equation*}
	\begin{aligned}
	\frac{d}{dt}\mathcal{N}=&~{} -\int \rho_{A,B}  \left( (\partial_x^2  v_2)^2+V_0(x) (\partial_x v_2)^2\right)
	+\int \rho_{A,B} (\partial_x v_1)^2+2\int \rho_{A,B}''   (\partial_x v_2)^2-\frac12 \int \rho_{A,B}^{(4)} v_2^2 \\
	&+\frac12 \int \partial_x^2 [\rho_{A,B} V_0(x)]v_2^2    
	+ \int \rho_{A,B} v_2( \tilde{G}(x)-\partial_x(f'(Q)) v_2)
	+  \int \rho_{A,B}  \partial_x v_1 H(x).
	\end{aligned}
	\end{equation*}
	The proof concludes from the fact that $\rho_{A,B} v_2^2$ is even and $\partial_x(f'(Q))$ is an odd function.
\end{proof}

Now we choose the weight function $\rho_{A,B}$. As in \cite{kink}, let
\begin{equation}\label{rho_AB}
\rho_{A,B}(x)=\chi_A^2 \zeta_B^2,
\end{equation}
with $\chi_A$ and $\zeta_B$ introduced in \eqref{eq:psi_chiA} and  \eqref{eq:zetaB}.

We will make the connection between \eqref{eq:N'} and the variables $(z_1,z_2)$, through the following result. Recall that from \eqref{primera} and Proposition \ref{prop:ineq_dtM}, $\gamma=B^{-4}$, $B=\delta^{-1/19}$.

\begin{prop}\label{prop:virial_N}
	Under \eqref{rho_AB}, the following holds. There exist $C_4$ and $\delta_4>0$ such that for $\gamma=B^{-4}$ and for any $0<\delta\leq\delta_4$, the following holds. Fix $B=\delta^{-1/5}$ holds. Assume that for all $t\geq 0$,  \eqref{eq:ineq_hip} holds. Then, for all $t\geq 0$, 	
	\begin{equation}\label{eq:dN_z}
	\begin{aligned}
	\frac{d}{dt}\N 
	 \ge &~{} 
	 \frac12	\int (\partial_x z_1)^2 -C_4 \int \left[ (\partial_x^2 z_2)^2  + (\partial_x z_2)^2 + z_2^2+z_1^2\right]
	 \\
	&~{} -C_4 \delta^{14/19} \bigg(\| w_1\|_{L^2}^2+\| w_2\|_{L^2}^2+|a_1|^3\bigg).
	\end{aligned}
	\end{equation}
\end{prop}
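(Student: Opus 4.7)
The plan is to start from the identity \eqref{eq:N'} of Lemma \ref{lem:identity_dtN} with the specific choice $\rho_{A,B}=\chi_A^2 \zeta_B^2$, and to transfer every quadratic form from the $(v_1,v_2)$-variables to the $(z_1,z_2)$-variables using the transfer Claims \ref{claim:P_CZ_vi_x} and \ref{claim:R_CZ_vi_xx} developed in Section \ref{sec:4}. The only term on the right-hand side of \eqref{eq:N'} with a favorable sign is $\int \rho_{A,B}(\partial_x v_1)^2$, which will be the source of the leading positive contribution $\tfrac{1}{2}\int(\partial_x z_1)^2$: applying Claim \ref{claim:P_CZ_vi_x} with $P=1$ identifies it as $\int (\partial_x z_1)^2 + \int (\zeta_B''/\zeta_B)\,z_1^2 + \int \mathcal{E}_1(1,x)\,\zeta_B^2 v_1^2$, where by Remark \ref{acotados} and \eqref{cota_final} combined with Lemma \ref{lem:v_w} the two error terms are of order $B^{-1}\|z_1\|_{L^2}^2$ and $(AB)^{-1}\|w_1\|_{L^2}^2$ respectively. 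Both fit inside the right-hand side of \eqref{eq:dN_z}, and absorbing them into a fraction of the good term explains the $\tfrac{1}{2}$ factor in the statement.

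I would then handle the remaining linear terms of \eqref{eq:N'} one by one. The contribution $-\int \rho_{A,B}(\partial_x^2 v_2)^2$ is converted by Claim \ref{claim:R_CZ_vi_xx} with $R=1$ into $-\int (\partial_x^2 z_2)^2$ plus lower-order $(\partial_x z_2)^2$, $z_2^2$ and harmless $v_2^2$ errors controlled by \eqref{falta}; similarly $-\int \rho_{A,B}\,V_0\,(\partial_x v_2)^2$ is treated by Claim \ref{claim:P_CZ_vi_x} with $P=V_0 \in W^{1,\infty}$, yielding $-\int V_0(\partial_x z_2)^2$ and manageable remainders. The localized correction $2\int \rho_{A,B}''(\partial_x v_2)^2$ is bounded through the pointwise estimates \eqref{eq:z'/z} on derivatives of $\rho_{A,B}$ (of size $O(B^{-1}+A^{-1})$ on the essential support) followed again by Claim \ref{claim:P_CZ_vi_x}, producing only $\int(\partial_x z_2)^2$-type terms. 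Finally, the pieces $\tfrac{1}{2}\int \partial_x^2[\rho_{A,B}V_0]\,v_2^2$ and $-\tfrac{1}{2}\int \rho_{A,B}^{(4)}\,v_2^2$ are expanded via Leibniz and the derivative estimates of Remark \ref{acotados}, and are absorbed into $\|z_2\|_{L^2}^2$ plus negligible $w_2$ errors.

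The hard part is the nonlinear pairing $\int \rho_{A,B}\,\partial_x v_1\,H(x)$ with $H=(1-\gamma\partial_x^2)^{-1}N^\perp$. By Cauchy--Schwarz it is bounded by $\|\chi_A\zeta_B\,\partial_x v_1\|_{L^2}\cdot \|\chi_A\zeta_B\,H\|_{L^2}$. Lemma \ref{lem:v_w} provides the first factor with an unavoidable loss $\gamma^{-1}=B^{4}$, which must be balanced against the second factor; following the strategy of \eqref{eq:LambaN} via Lemma \ref{lem:estimates_IOp} one has $\|\chi_A\zeta_B\,H\|_{L^2}\lesssim \gamma^{-1/2}(a_1^2+\|u_1\|_{L^\infty}\|w_1\|_{L^2})$. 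After Young's inequality and the smallness $|a_1|,\|u_1\|_{L^\infty}\lesssim \delta$ from \eqref{eq:ineq_hip}, the net contribution takes the form $B^{-1}(\|w_1\|_{L^2}^2+\|\partial_x w_1\|_{L^2}^2)$ plus a polynomial factor in $B$ times $(a_1^4+\delta^{2}\|w_1\|_{L^2}^2)$; with $\gamma=B^{-4}$ and $B=\delta^{-1/5}$ this polynomial factor, together with $a_1^{4}=|a_1|^3\cdot|a_1|\lesssim \delta\,|a_1|^3$, collapses into the claimed $\delta^{14/19}$ residual. The other nonlinear term $\int \rho_{A,B}\,v_2\,\tilde{G}$ is easier because $v_2\in H^2$ absorbs the derivative of $G$ as in Subsection \ref{control_J4}, contributing only terms already present on the right-hand side of \eqref{eq:dN_z}.

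Collecting all pieces, choosing $A\geq B^{10}$ with $B=\delta^{-1/5}$ and $\gamma=B^{-4}$, and discarding half of the leading good term to absorb the various $B^{-1}$-small remainders, \eqref{eq:dN_z} follows with some fixed $C_4>0$. The main obstacle throughout is precisely the balancing in the $H$-nonlinear term described above: it is this balance that forces $B$ to be smaller (and $\gamma$ thus larger) than in Proposition \ref{prop:ineq_dtM}, and it dictates the specific exponent $14/19$ appearing in the statement.
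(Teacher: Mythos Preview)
Your treatment of the linear terms is essentially what the paper does: starting from \eqref{eq:N'}, applying Claims \ref{claim:P_CZ_vi_x} and \ref{claim:R_CZ_vi_xx} to pass to the variables $(z_1,z_2)$, and collecting the remainders into error blocks controlled by Remark~\ref{acotados} and Lemma~\ref{lem:v_w}. That part is fine.

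The genuine gap is in your handling of the nonlinear term $\mathcal N_1=\int \rho_{A,B}\,\partial_x v_1\,H$. You propose to estimate the factor $\|\chi_A\zeta_B\,\partial_x v_1\|_{L^2}$ by Lemma~\ref{lem:v_w}, accepting an ``unavoidable'' loss $\gamma^{-1}=B^{4}$ in terms of $\|\partial_x w_1\|_{L^2}+\|w_1\|_{L^2}$. This does not close: it produces a $\|\partial_x w_1\|_{L^2}^2$ contribution, which is \emph{absent} from the right-hand side of \eqref{eq:dN_z}, and the resulting Young balancing cannot simultaneously make that term small and deliver the exponent $\delta^{14/19}$ on the $|a_1|^3$ piece. (With $\gamma^{-3/2}=B^6$ in front, any choice of the Young parameter either leaves a large $\|\partial_x w_1\|_{L^2}^2$ or a coefficient on $|a_1|^3$ strictly larger than $\delta^{14/19}$.)

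The paper avoids this loss altogether by the algebraic identity
\[
(\chi_A\zeta_B)^2\,\partial_x v_1=\chi_A\zeta_B\,\partial_x z_1-(\chi_A\zeta_B)'\,z_1,
\]
which rewrites $\mathcal N_1=\int[\chi_A\zeta_B\,\partial_x z_1-(\chi_A\zeta_B)'\,z_1]\,H$ directly in the $z$-variables, with no $\gamma^{-1}$ penalty. After Young with weight $B^{-1}$ one obtains $B^{-1}(\|\partial_x z_1\|_{L^2}^2+\|z_1\|_{L^2}^2)$, the first of which is absorbed into the good term $\int(\partial_x z_1)^2$, plus a residual $B^{5}\delta\,(|a_1|^3+\|w_1\|_{L^2}^2)$. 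With $B=\delta^{-1/19}$ (the value used in the proof and in Proposition~\ref{prop:virial_I+J+M+N}; the $B=\delta^{-1/5}$ in the statement is a misprint), this residual is exactly $\delta^{14/19}(|a_1|^3+\|w_1\|_{L^2}^2)$. So the point you called ``the main obstacle'' is resolved not by balancing, but by never incurring the $\gamma^{-1}$ loss in the first place; your remark that $B$ must be taken smaller here than in Proposition~\ref{prop:ineq_dtM} is therefore also off --- the same $B=\delta^{-1/19}$ works throughout.
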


\subsection{Start of proof of Proposition \ref{prop:virial_N}} The proof of this result is based in the following result, which relates Lemma \ref{lem:identity_dtN} and the variables $z_i$.

\begin{lem}\label{lem:dtN}
	Let $(v_1,v_2)\in H^1(\R)\times H^2(\R)$ a solution of \eqref{eq:syst_v}. Consider $ \rho_{A,B}=\chi_A^2 \zeta_B^2 $, then
		\begin{equation}\label{dtN}
	\begin{aligned}
	\frac{d}{dt}\mathcal{N}=& 
	-\int \left[ (\partial_x^2 z_2)^2  + \left(1-f'(Q)\right) (\partial_x z_2)^2 +\frac12 	  \partial_x^2 (f'(Q)) z_2^2\right]
	+ 	\int (\partial_x z_1)^2 \\
	&+\mathcal{RZ}(t)+\mathcal{RV}(t)+\mathcal{RDV}(t)
	+ \int \rho_{A,B} v_2 \tilde{G}(x)
	+  \int \rho_{A,B}  \partial_x v_1 H(x),
	\end{aligned}
	\end{equation}
	where $\mathcal{RZ}(t)$,  $\mathcal{RV}(t)$ and $\mathcal{RDV}(t)$ are error term that satisfy the following estimates
		\begin{equation*}
	\begin{aligned}
	|\mathcal{RZ}(t)|\lesssim & ~{}  B^{-2}\| z_1\|_{L^2}^2+B^{-1}\| z_2\|_{L^2}^2+B^{-2}\| \partial_x z_2\|_{L^2}^2,\\
	|\mathcal{RV}(t)|
	\lesssim & ~{}  (AB)^{-1}\gamma^{-2 }\| w_1\|_{L^2}^2+ A^{-1}\| w_2\|_{L^2}^2,\\
	|\mathcal{RDV}(t)|
	\lesssim & ~{} (AB)^{-1} \gamma^{-1}\| w_2\|_{L^2}^2.
	\end{aligned}
	\end{equation*}
\end{lem}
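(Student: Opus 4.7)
The starting point is the identity \eqref{eq:N'} obtained in Lemma \ref{lem:identity_dtN}. With the choice $\rho_{A,B}=\chi_A^2\zeta_B^2$, we have to convert each of the four ``main'' integrals on the right-hand side of \eqref{eq:N'}, namely
\[
\int \rho_{A,B} (\partial_x v_1)^2,\quad \int \rho_{A,B} V_0(x) (\partial_x v_2)^2,\quad \int \rho_{A,B} (\partial_x^2 v_2)^2,\quad \frac12\int \partial_x^2[\rho_{A,B} V_0(x)] v_2^2,
\]
into integrals in the $(z_1,z_2)$ variables, plus controlled error terms. The remaining integrals $2\int \rho_{A,B}''(\partial_x v_2)^2$ and $-\frac12\int \rho_{A,B}^{(4)} v_2^2$ will be pure error terms.

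The plan is to apply Claim \ref{claim:P_CZ_vi_x} with $P=1$ (respectively $P=V_0$) to the first (resp.\ second) integral and Claim \ref{claim:R_CZ_vi_xx} with $R=1$ to the third integral. This directly produces the main terms $\int (\partial_x z_1)^2$, $-\int (1-f'(Q))(\partial_x z_2)^2$, and $-\int (\partial_x^2 z_2)^2$ appearing in \eqref{dtN}, together with a collection of remainders of three distinct types: (a) terms with prefactors $\zeta_B''/\zeta_B$, $\zeta_B'/\zeta_B$, $P_1$, $\tilde R_1$ multiplying $z_1^2$, $z_2^2$, $(\partial_x z_2)^2$, which by Remark \ref{acotados} contribute $O(B^{-1})$ to the $z$-variables and thus go into $\mathcal{RZ}(t)$; (b) terms with cut-off derivatives $(\chi_A^2)^{(k)}$ supported on $\{A\le|x|\le 2A\}$ multiplied by $\zeta_B^2 v_i^2$, which go into $\mathcal{RV}(t)$ after using Lemma \ref{lem:v_w} and Remark \ref{rem:chiA_zetaA4} to pass from $\zeta_B v_i$ to $w_i$ (picking up the factor $\gamma^{-2}$ in the $v_1$ case); (c) terms involving $(\chi_A^2)^{(k)}\zeta_B^2 (\partial_x v_2)^2$, which go into $\mathcal{RDV}(t)$ after an application of Remark \ref{rem:CZ_v2x} and Lemma \ref{lem:v_w} (picking up a $\gamma^{-1}$ from $\|\zeta_B\partial_x v_2\|_{L^2}$).

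The fourth integral is the most delicate: expanding $\partial_x^2[\rho_{A,B} V_0]$ by Leibniz gives $\rho_{A,B} \partial_x^2 V_0 + 2\rho_{A,B}'\partial_x V_0 + \rho_{A,B}'' V_0$. Using $V_0=1-f'(Q)$ and the fact that $\chi_A=1$ on the support of $\partial_x^k(f'(Q))$ for $A$ large enough, one identifies the main contribution $-\tfrac12\int \partial_x^2(f'(Q)) \chi_A^2\zeta_B^2 v_2^2 = -\tfrac12\int \partial_x^2(f'(Q)) z_2^2$, the sign matching the $+\tfrac12\partial_x^2(f'(Q))z_2^2$ in \eqref{dtN} since the bracket in \eqref{dtN} is preceded by a minus sign. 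The remaining pieces of $\partial_x^2[\rho_{A,B} V_0]$ carry at least one derivative falling on $\rho_{A,B}=\chi_A^2\zeta_B^2$, producing either factors $\zeta_B'/\zeta_B$, $\zeta_B''/\zeta_B$ (contributing $O(B^{-1})$ to $\mathcal{RZ}$) or factors supported on $\{A\le|x|\le 2A\}$ times the rapidly decaying $V_0-1$ or its derivatives, which are exponentially small and hence negligible. The terms $2\int \rho_{A,B}''(\partial_x v_2)^2$ and $-\tfrac12\int \rho_{A,B}^{(4)} v_2^2$ decompose in the same way: the parts with all derivatives on $\zeta_B$ give $O(B^{-2})$ contributions to $\mathcal{RZ}$ (after applying Remark \ref{rem:CZ_v2x} for the first and using $\int (\zeta_B^2)^{(4)} v_2^2/\zeta_B^2\cdot z_2^2$-type manipulations for the second), while the parts where derivatives fall on $\chi_A$ are supported on $\{A\le |x|\le 2A\}$ and contribute to $\mathcal{RV}$ and $\mathcal{RDV}$ with an extra $A^{-1}$ gain.

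Collecting the three error categories and invoking Lemma \ref{lem:v_w} with $\gamma=B^{-4}$ yields the stated bounds
\[
|\mathcal{RZ}(t)|\lesssim B^{-2}\|z_1\|_{L^2}^2+B^{-1}\|z_2\|_{L^2}^2+B^{-2}\|\partial_x z_2\|_{L^2}^2,
\]
\[
|\mathcal{RV}(t)|\lesssim (AB)^{-1}\gamma^{-2}\|w_1\|_{L^2}^2+A^{-1}\|w_2\|_{L^2}^2,\qquad |\mathcal{RDV}(t)|\lesssim (AB)^{-1}\gamma^{-1}\|w_2\|_{L^2}^2.
\]
The main obstacle, beyond bookkeeping, will be the careful tracking of the worst-decay remainders coming from $\partial_x^2[\rho_{A,B} V_0(x)]v_2^2$, making sure that all terms where $V_0$ contributes a constant (rather than a decaying $f'(Q)$) are absorbed through derivatives of $\chi_A^2\zeta_B^2$ into one of the three error reservoirs without spoiling the $B^{-1}$ gain needed in $\mathcal{RZ}$.
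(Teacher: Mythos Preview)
Your proposal is correct and follows essentially the same strategy as the paper: start from identity \eqref{eq:N'}, apply Claim \ref{claim:P_CZ_vi_x} (with $P=1$ and $P=V_0$) and Claim \ref{claim:R_CZ_vi_xx} (with $R=1$) to convert the $v_i$-integrals into $z_i$-integrals plus remainders, then sort the remainders into the three bins $\mathcal{RZ}$, $\mathcal{RV}$, $\mathcal{RDV}$ according to whether they carry factors $\zeta_B^{(k)}/\zeta_B$, cut-off derivatives $(\chi_A^2)^{(k)}$ on $v_i$, or cut-off derivatives on $\partial_x v_i$, and finally invoke Remark \ref{acotados} and Lemma \ref{lem:v_w}. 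The only minor imprecision is that for the term $2\int\rho_{A,B}''(\partial_x v_2)^2$ the paper applies Claim \ref{claim:P_CZ_vi_x} with $P=(\zeta_B^2)''/\zeta_B^2$ rather than Remark \ref{rem:CZ_v2x}; also, your phrase ``$\chi_A=1$ on the support of $\partial_x^k(f'(Q))$'' should read ``$\partial_x^k(f'(Q))$ is exponentially small on $\{\chi_A\neq 1\}$''.
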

\begin{proof}
	First, we consider the following decomposition from \eqref{eq:N'}:
	\begin{equation}\label{eq:decomp_N'}
	\begin{aligned}
	\frac{d}{dt}\mathcal{N}=&~{} 
	\int \rho_{A,B} (\partial_x v_1)^2
	-\int \rho_{A,B}  (\partial_x^2  v_2)^2-\int \rho_{A,B} V_0(x) (\partial_x v_2)^2
	+2\int \rho_{A,B}''   (\partial_x v_2)^2
	\\
	&+\frac12 \int \partial_x^2 [\rho_{A,B} V_0(x)]v_2^2-\frac12 \int \rho_{A,B}^{(4)} v_2^2     
	+  \int \rho_{A,B}  \partial_x v_1 H(x)+ \int \rho_{A,B} v_2 \tilde{G}(x)	\\
	=:&~{} (N_1+N_2+N_3+N_4)+(N_5+N_6)+(\mathcal{N}_1+\mathcal{N}_2).
	\end{aligned}
	\end{equation}
	Secondly, from the definition of $\rho_{A,B}$ \eqref{rho_AB}
	\begin{equation}\label{eq:rho_der}
	\begin{aligned}
			\rho_{A,B}'=&(\chi_A^2)' \zeta_B^2+\chi_A^2 (\zeta_B^2)',\\
		\rho_{A,B}''=&~{} (\chi_A^2)'' \zeta_B^2+2(\chi_A^2)' (\zeta_B^2)'+\chi_A^2 (\zeta_B^2)'',\\
		\rho_{A,B}'''=&~{}(\chi_A^2)''' \zeta_B^2+3(\chi_A^2) ''(\zeta_B^2)'+3(\chi_A^2)' (\zeta_B^2)''+\chi_A^2 (\zeta_B^2)''',\\
		\rho_{A,B}^{(4)}=&~{}
		(\chi_A^2)^{(4)} \zeta_B^2+4(\chi_A^2)'''(\zeta_B^2)'
		+6(\chi_A^2)'' (\zeta_B^2)''+4(\chi_A^2)' (\zeta_B^2)'''+\chi_A^2 (\zeta_B^2)^{(4)} .
	\end{aligned}
	\end{equation}
	For $N_3$, applying Claim \ref{claim:P_CZ_vi_x} with $i=2$ and $P(x)=V_0(x)$, we have
	\begin{equation}\label{eq:n3}
	\begin{aligned}
	-N_3=&
	\int V_0(x) (\partial_x z_2)^2 +
	\int   \left[ V_{0}(x)
	\frac{\zeta_B''}{\zeta_B} +\partial_x(V_0(x)) \frac{\zeta_B'}{\zeta_B}\right] z_2^2
	+\int \mathcal{E}_1( V_0(x),x)\zeta_B^2 v_2^2,
	\end{aligned}
	\end{equation}
	where $\mathcal{E}_1$ is given by \eqref{eq:claimE1}
	%
	For $N_4$, by \eqref{eq:rho_der}, we have
		\begin{equation}
	\begin{aligned}
	\frac12 N_4
	=& \int [(\chi_A^2)'' \zeta_B^2+(\chi_A^2)'(\zeta_B^2)']   (\partial_x v_2)^2
	+\int  \frac{(\zeta_B^2)''}{\zeta_B^2}   \chi_A^2 \zeta_B^2(\partial_x v_2)^2,
	\end{aligned}
	\end{equation}
	and using  Claim \ref{claim:P_CZ_vi_x}, with $i=2$ and $P(x)=(\zeta_B^2)''/\zeta_B^2$, we get
		\begin{equation}\label{eq:n4}
	\begin{aligned}
	\frac12 N_4
=& \int \frac{(\zeta_B^2)''}{\zeta_B^2}   (\partial_x z_2)^2 +
\int   \left[  \left(\frac{(\zeta_B^2)''}{\zeta_B^2}  \right)' \frac{\zeta_B'}{\zeta_B}+
\frac{(\zeta_B^2)''}{\zeta_B^2}   
\frac{\zeta_B''}{\zeta_B}\right] z_2^2
+\int \mathcal{E}_1\left( \frac{(\zeta_B^2)''}{\zeta_B^2}  ,x\right)\zeta_B^2 v_2^2\\
&+\int [(\chi_A^2)'' +2(\chi_A^2)' \frac{\zeta_B'}{\zeta_B} ]  \zeta_B^2(\partial_x v_2)^2.
	\end{aligned}
	\end{equation}
	Now, for $N_5$,  expanding the derivative, replacing  \eqref{eq:rho_der} and using definition of $z_2$, we have
	\begin{equation}\label{eq:n5}
	\begin{aligned}
	N_5
	=& -	\int  \left[ -\partial_x^2(V_{0}(x))   - \frac{(\zeta_B^2)''}{\zeta_B^2}  V_{0}(x)
	-2\frac{(\zeta_B^2)'}{\zeta_B^2} \partial_x(V_{0}(x)) \right] z_2^2\\
	&
	 +\int \bigg(\left[ 2(\chi_A^2)' \frac{\zeta_B'}{\zeta_B}+(\chi_A^2)''\right] V_{0}(x)+2(\chi_A^2)'  \partial_x(V_{0}(x))  \bigg) \zeta_B^2 v_2^2. 
	\end{aligned}
	\end{equation}
	%
	Finally, for $N_6$, reeplacing \eqref{eq:rho_der}, we have
	\begin{equation}\label{eq:n6}
	\begin{aligned}
	N_6
	=&
	\int \frac{(\zeta_B^2)^{(4)}}{\zeta_B^2} z_2^2
	+\int \left[ 4(\chi_A^2)'\frac{(\zeta_B^2)'''}{\zeta_B^2}+ 6(\chi_A^2)''\frac{(\zeta_B^2)''}{\zeta_B^2}+ 8(\chi_A^2)''' \frac{\zeta_B'}{\zeta_B}+ (\chi_A^2)^{(4)}  \right] \zeta_B^2v_2^2   . 
	\end{aligned}
	\end{equation}
	Therefore, collecting  \eqref{eq:1_CZ_v2_xx}, \eqref{eq:1CZ_B_vix}, \eqref{eq:n3}, \eqref{eq:n4}, \eqref{eq:n5} and \eqref{eq:n6} (and also for $N_1$ and $N_2$ we use the relations in Remarks \ref{rem:CZ_v2x}  and \ref{rem:CZ_v2xx}), we conclude
	\begin{equation*}
	\begin{aligned}
	\frac{d}{dt}\mathcal{N}=& 
	-\int \left[ (\partial_x^2 z_2)^2  + V_{0}(x) (\partial_x z_2)^2 +\frac12 	  \partial_x^2 (f'(Q)) z_2^2\right]
	+ 	\int (\partial_x z_1)^2 \\
	&+\mathcal{RZ}(t)+\mathcal{RV}(t)+\mathcal{RDV}(t)
	+ \int \rho_{A,B} v_2 \tilde{G}(x)
	+  \int \rho_{A,B}  \partial_x v_1 H(x),
	\end{aligned}
	\end{equation*}
	where the error term related to $z=(z_1,z_2)$ is
	\begin{equation*}
	\begin{aligned}
	\mathcal{RZ}(t)=&
		\int   \frac{\zeta_B''}{\zeta_B} z_1^2-
	\int   \left[ V_0(x)
	\frac{\zeta_B''}{\zeta_B} +\partial_x(V_{0}(x)) \frac{\zeta_B'}{\zeta_B}\right] z_2^2\\
		&-	\frac12\int  \left[   - \frac{(\zeta_B^2)''}{\zeta_B^2} V_{0}(x)
	-2\frac{(\zeta_B^2)'}{\zeta_B^2} \partial_x(V_{0}(x)) + \frac{(\zeta_B^2)^{(4)}}{\zeta_B^2} \right] z_2^2\\
	&
		-
		\int   \left[\tilde{R}_1(x)+ P_1'(x) \frac{\zeta_B'}{\zeta_B}+P_1(x) 
		\frac{\zeta_B''}{\zeta_B}\right] z_2^2 \\
	&+
	2\int   \left[  \left(\frac{(\zeta_B^2)''}{\zeta_B^2}  \right)' \frac{\zeta_B'}{\zeta_B}+
	\frac{(\zeta_B^2)''}{\zeta_B^2}   
	\frac{\zeta_B''}{\zeta_B}\right] z_2^2
	+\int \bigg[2\frac{(\zeta_B^2)''}{\zeta_B^2} -P_1(x)  \bigg](\partial_x z_2)^2,
			\end{aligned}
	\end{equation*}
	the related to $(v_1,v_2)$ is
	\begin{equation*}
	\begin{aligned}
	\mathcal{RV}(t)
	=&
	\int \mathcal{E}_1(1,x)\zeta_B^2 v_1^2\\
	&
	-\int \bigg[\mathcal{E}_1( V_{0}(x),x)+\mathcal{E}_2(1,x)+ \mathcal{E}_1(P_1(x),x)-2\mathcal{E}_1\left( \frac{(\zeta_B^2)''}{\zeta_B^2}  ,x\right)\bigg]\zeta_B^2 v_2^2\\
	& +\frac12\int 
	\bigg[\left( 2(\chi_A^2)' \frac{\zeta_B'}{\zeta_B}+(\chi_A^2)''\right) V_{0}(x)
	+2 (\chi_A^2)'  \partial_x(V_{0}(x)) 
	 \bigg]\zeta_B^2 v_2^2\\
	&-\frac12\int \left[ 4(\chi_A^2)'\frac{(\zeta_B^2)'''}{\zeta_B^2}+ 6(\chi_A^2)''\frac{(\zeta_B^2)''}{\zeta_B^2}+ 8(\chi_A^2)''' \frac{\zeta_B'}{\zeta_B}+ (\chi_A^2)^{(4)}  \right] \zeta_B^2v_2^2,  
			\end{aligned}
	\end{equation*}
	and the related to $\partial_x v_2$ is
	\begin{equation*}
	\begin{aligned}
	\mathcal{RDV}(t)=&~{} 2\int \bigg[(\chi_A^2)'' +(\chi_A^2)'\frac{\zeta_B'}{\zeta_B} - \frac12  \mathcal{E}_3(1,x)\bigg]  \zeta_B^2 (\partial_x v_2)^2.
	\end{aligned}
	\end{equation*}
	 It is clear, from \eqref{eq:z'/z}, that the error terms satisfies the following estimates
	\begin{equation}\label{eq:bound_RZ}
	\begin{aligned}
	|\mathcal{RZ}(t)|\lesssim & ~{}  B^{-1}\bigg(\| z_1\|_{L^2}^2+\| z_2\|_{L^2}^2+\| \partial_x z_2\|_{L^2}^2\bigg),
	\end{aligned}
	\end{equation}
	and
	\begin{equation*}
	\begin{aligned}
	|\mathcal{RV}(t)|\lesssim & ~{}  (AB)^{-1}\| \zeta_B v_1\|_{L^2}^2+ A^{-1}\| \zeta_B v_2\|_{L^2}^2,\\
	|\mathcal{RDV}(t)|\lesssim & ~{} (AB)^{-1} \| \zeta_B \partial_x v_2\|_{L^2}^2.
	\end{aligned}
	\end{equation*}
	Recalling that $\gamma=B^{-4}$ and applying Lemma \ref{lem:v_w} , we conclude
	\begin{equation}\label{eq:bound_RV_RDV}
	\begin{aligned}
	|\mathcal{RV}(t)|
	\lesssim & ~{}  A^{-1}B^{7}\| w_1\|_{L^2}^2+ A^{-1}\| w_2\|_{L^2}^2,\\
	|\mathcal{RDV}(t)|
	\lesssim & ~{} A^{-1} B^{7}\| w_2\|_{L^2}^2.
	\end{aligned}
	\end{equation}
	This concludes the proof of the Lemma \ref{lem:dtN}.
\end{proof}

\subsection{Control of nonlinear terms} The nonlinear terms in \eqref{dtN} are denoted
\[
\mathcal{N}_2 = \int \rho_{A,B} v_2 \tilde{G}(x), \qquad 
	\mathcal{N}_1=  \int \rho_{A,B}  \partial_x v_1 H(x).
\]

\noindent
{\bf Control of $\mathcal{N}_2$.}
Recalling  that  $\tilde{G}=\partial_x G$ and  $G$ is given by \eqref{eq:G_H}, using definition of $z_2$ and \eqref{rho_AB}, we have
\[
\begin{aligned}
|\mathcal{N}_2|
=& \left|\int ((\chi_A \zeta_B)' z_2+\chi_A \zeta_B \partial_x z_2)  G(x)\right| \lesssim   \gamma(\|z_2 \|_{L^2}+\|\partial_x z_2 \|_{L^2}) 
\|  G\|_{L^2}.
\end{aligned}
\]
By Cauchy-Schwarz inequality, \eqref{eq:bound_f'x_v2x} and Lemma \ref{lem:v_w},  we conclude
\begin{equation}\label{eq:N2}
\begin{aligned}
|\mathcal{N}_2|
\lesssim & ~{} \gamma^{1/2}(\|z_2 \|_{L^2}+\|\partial_x z_2 \|_{L^2})\bigg(\| \partial_x z_2\|_{L^2}
+ \|z_2\|_{L^2} +e^{-(p-1)A}  (  \| \zeta_B v_2\|_{L^2} + \|\zeta_B \partial_x v_2\|_{L^2}) \bigg) \\
\lesssim & ~{} \gamma^{1/2} \bigg(\| \partial_x z_2\|_{L^2}^2
+ \|z_2\|_{L^2}^2 +e^{-2(p-1)A}  (  \| \zeta_B v_2\|_{L^2}^2 + \|\zeta_B \partial_x v_2\|_{L^2}^2) \bigg) \\
\lesssim & ~{} 
\gamma^{1/2} \bigg(\| \partial_x z_2\|_{L^2}^2
+ \|z_2\|_{L^2}^2 +e^{-2(p-1)A}\gamma^{-1}  \| w_2\|_{L^2}^2 \bigg).
\end{aligned}
\end{equation}

\medskip

\noindent
{\bf Control of $\mathcal{N}_1$.} We observe that
\[
	(\chi_A \zeta_B)^2 \partial_x v_1=	\chi_A \zeta_B\partial_x z_1 -(\chi_A \zeta_B)' z_1,
\]
then, we have
\[
\begin{aligned}
\mathcal{N}_1=&  \int [\chi_A \zeta_B\partial_x z_1 -(\chi_A \zeta_B)' z_1] H(x).
\end{aligned}
\]
Recalling that $H(x)$ is given by \eqref{eq:G_H}. Moreover,  using \eqref{eq:LambaN}, \eqref{eq:N_0_bounded} and \eqref{eq:ineq_hip} we have
\begin{equation}\label{eq:N1}
\begin{aligned}
|\mathcal{N}_1|
 \lesssim & ~{}  B^{-1}\left( \|  \partial_x z_1\|_{L^2}^2 +\| z_1\|_{L^2}^2\right)
+ B^5\left(a_1^4+\| u_1\|_{L^\infty}^2\| w_1\|_{L^2}^2\right) \\
 \lesssim & ~{}  B^{-1}\left( \|  \partial_x z_1\|_{L^2}^2 +\| z_1\|_{L^2}^2\right)
+ B^5 \delta \left(|a_1|^3+\| w_1\|_{L^2}^2\right).
\end{aligned}
\end{equation}
\subsection{End of proof of  Proposition \ref{prop:virial_N}}
Since $\gamma=B^{-4}$ and $B=\delta^{-1/19}$, collecting \eqref{eq:bound_RZ}, \eqref{eq:bound_RV_RDV}, \eqref{eq:N2} and \eqref{eq:N1}, we obtain for some $C_4>0$ fixed in \eqref{dtN}
	\begin{equation*}
	\begin{aligned}
	\frac{d}{dt}\mathcal{N} (t) \geq &~{} 	\int (\partial_x z_1)^2 -\int \left[ (\partial_x^2 z_2)^2  + \left(1-f'(Q)\right) (\partial_x z_2)^2 +\frac12 	  \partial_x^2 (f'(Q)) z_2^2\right]
	 \\
	&- |\mathcal{RZ}(t)|-|\mathcal{RV}(t)|-|\mathcal{RDV}(t)| -|\mathcal{N}_1| -|\mathcal{N}_2|\\
 \ge & ~{}
 \frac12	\int (\partial_x z_1)^2 -C_4\int \left[ (\partial_x^2 z_2)^2  + (\partial_x z_2)^2 + z_2^2+z_1^2\right] \\
& ~{}-C_4\max\{A^{-1} B^7,\delta^{14/19}\}\bigg(\| w_1\|_{L^2}^2+\| w_2\|_{L^2}^2 + |a_1|^3\bigg).
	\end{aligned}
	\end{equation*}
	Using \eqref{eq:scales}, $A^{-1} B^7\ll B^{-3} \ll \delta^{14/19}$. This ends the proof.

\section{Proof of Theorem \ref{thm1}}\label{sec:6}

Before starting the proof of Theorem \ref{thm1}, we need a coercivity result to deal with the term
\[
\int \sech(x)w_1^2
\]
that appears in the virial estimates of $\I$ (see \eqref{eq:dI_w}). We will decompose this term in terms of the variables $(w_1,w_2)$ and $(z_1,z_2)$. The last ones involve the variables $(v_1,v_2)$; then we should be able to reconstruct the operator $\LL$ from our computations.

\subsection{Coercivity} 

We shall prove a coercivity result adapted to the orthogonality conditions  $\Jap{u}{Q'}=\Jap{u}{\LL(\phi_0)}=0$ in \eqref{eq:orthogonal_condition}, where $\phi_0$ was introduced in \eqref{eq:phi0}. The idea is to follow the strategy used in \cite{weinstein_modulation} and \cite{CMPS_zakharov2d}. 
Recently, in \cite{KMM} the operator $\LL$ was appeared in a similar setting. It has a unique negative single eigenvalue  $\tau_0=-(p+1)(p+3)/4$,  associated to an $L^2$ eigenfunction denoted $Y_0$.

\medskip

Our first result is a coercivity property for $\LL$ whenever the first eigenfunction $Y_0$ is changed by 
$\LL(\phi_0)$.

\begin{lem}[Coercivity lemma]\label{lem:coercivity}
	Consider the bilinear form
	\[
	H(u,v)=\Jap{\LL (u)}{v}=\int (\partial_x u \partial_x v+uv-f'(Q)uv) .
	\]
	Then, there exists $\lambda>0$ such that
	\begin{equation}\label{eq:coercivity_V}
H(v,v)\geq \lambda \|v\|_{H^1}^2,
	\end{equation}
	for all $v \in H^1(\R)$ satisfying $\Jap{v}{Q'}=\Jap{v}{\LL(\phi_0)}=0$.
\end{lem}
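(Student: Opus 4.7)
The plan is to reduce the non-standard orthogonality $\langle v,\LL(\phi_0)\rangle=0$ to the spectral orthogonality $\langle v,Y_0\rangle=0$, for which Weinstein's classical coercivity for $\LL$ is available. More precisely, combining the spectral picture recalled just before the lemma (one negative eigenvalue $\tau_0$ with eigenfunction $Y_0$, kernel spanned by $Q'$, continuous spectrum in $[1,\infty)$) with the standard Weinstein/GSS argument, I would first record the known $H^1$-coercivity
\[
H(w,w)\geq \lambda_0 \|w\|_{H^1}^2 \qquad \text{for every } w\in H^1(\R)\ \text{with}\ \langle w,Q'\rangle=\langle w,Y_0\rangle=0.
\]
The passage from the easier $L^2$-coercivity to the $H^1$-version is a standard convex-combination trick using $H(w,w)\geq \|w\|_{H^1}^2-\|f'(Q)\|_{L^\infty}\|w\|_{L^2}^2$.

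Given now $v\in H^1$ satisfying the hypotheses of the lemma, I would decompose
\[
v=\alpha Y_0+w,\qquad \langle w,Y_0\rangle=0.
\]
Since $Y_0$ and $Q'$ are eigenfunctions of the self-adjoint operator $\LL$ associated with distinct eigenvalues $\tau_0\neq 0$, one has $\langle Y_0,Q'\rangle=0$, and hence $\langle w,Q'\rangle=\langle v,Q'\rangle=0$. The second orthogonality becomes
\[
0=\langle v,\LL(\phi_0)\rangle=\alpha\,\langle \LL(Y_0),\phi_0\rangle+\langle w,\LL(\phi_0)\rangle=\alpha\,\tau_0\langle Y_0,\phi_0\rangle+\langle w,\LL(\phi_0)\rangle.
\]
The key non-degeneracy one needs is $\langle Y_0,\phi_0\rangle\neq 0$; this should be read off from the explicit construction of $\phi_0$ in Appendix~\ref{A} together with the positivity of the ground state $Y_0$. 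Assuming this, Cauchy--Schwarz (and the fact that $\LL(\phi_0)\in L^2$ by the exponential decay of $\phi_0$ in \eqref{eq:phi0}) gives the quantitative bound
\[
|\alpha|\leq C_0 \|w\|_{L^2}.
\]

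Finally I would expand
\[
H(v,v)=\alpha^2 H(Y_0,Y_0)+2\alpha H(Y_0,w)+H(w,w)=\alpha^2\tau_0\|Y_0\|_{L^2}^2+H(w,w),
\]
using $H(Y_0,w)=\tau_0\langle Y_0,w\rangle=0$. The negative piece $\alpha^2\tau_0\|Y_0\|_{L^2}^2$ is controlled by $\|w\|_{L^2}^2$ thanks to the bound on $\alpha$, and the term $H(w,w)\geq \lambda_0\|w\|_{H^1}^2$ is handled by the standard coercivity. Splitting $H(w,w)$ into a small $L^2$-piece (used to absorb $\tau_0\alpha^2\|Y_0\|^2$) and a leftover $H^1$-piece produces $H(v,v)\geq \lambda \|w\|_{H^1}^2$ for some $\lambda>0$, and since $\|v\|_{H^1}^2\leq (1+C_0^2\|Y_0\|_{H^1}^2)\|w\|_{H^1}^2$, the desired estimate \eqref{eq:coercivity_V} follows after shrinking $\lambda$.

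The only non-routine step, and thus the main obstacle, is the verification that $\langle Y_0,\phi_0\rangle\neq 0$: it is the single place where specific information on $\phi_0$ beyond its mere exponential decay is required, and it is what makes the unconventional orthogonality condition $\langle v,\LL(\phi_0)\rangle=0$ an admissible replacement for the spectral one $\langle v,Y_0\rangle=0$.
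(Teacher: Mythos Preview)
Your decomposition idea is natural, but the argument has a genuine gap at the absorption step. From $|\alpha|\leq C_0\|w\|_{L^2}$ and $H(w,w)\geq \lambda_0\|w\|_{H^1}^2$ you obtain
\[
H(v,v)\;\geq\; \lambda_0\|w\|_{H^1}^2 - C_0^2|\tau_0|\,\|Y_0\|_{L^2}^2\,\|w\|_{L^2}^2,
\]
and there is no reason whatsoever for $C_0^2|\tau_0|\,\|Y_0\|_{L^2}^2<\lambda_0$: the constant $C_0$ depends on $\|\LL\phi_0\|_{L^2}/\big(|\tau_0|\,|\langle Y_0,\phi_0\rangle|\big)$, which can be large. ``Splitting $H(w,w)$ into a small $L^2$-piece'' does not help---you can only extract at most $\lambda_0\|w\|_{L^2}^2$, not an arbitrarily large multiple. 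So the quantitative route does not close.

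A second, independent issue is that the non-degeneracy you rely on, $\langle Y_0,\phi_0\rangle\neq 0$, is neither proved in the paper nor ``readable'' from Appendix~\ref{A}: no explicit formula for $\phi_0$ is given there, and positivity of $Y_0$ alone says nothing since $\phi_0$ (an eigenfunction of the non-self-adjoint operator $-\partial_x^2\LL$) need not have a sign. The paper establishes a \emph{different} non-degeneracy, namely $\langle Q,\phi_0\rangle\neq 0$ (Lemma~\ref{no_soy_orto}), and its proof (Appendix~\ref{ap:proof_coercivity}) is organised around that fact rather than yours: one argues by contradiction that $H$ is nonnegative on $\{\langle v,\LL\phi_0\rangle=0\}$. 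If some $u$ in that hyperplane had $H(u,u)<0$, then using $H(\phi_0,\phi_0)=-\nu_0^2<0$ together with the key identity $H(\phi_0,u)=\langle \LL\phi_0,u\rangle=0$, the form $H$ would be negative definite on $\mathrm{span}(\phi_0,u)$. Since $\langle Q,\phi_0\rangle\neq 0$, this two-dimensional space meets $\{Q\}^\perp$ nontrivially, contradicting Weinstein's $\inf_{\langle v,Q\rangle=0}H(v,v)=0$. This contradiction route completely bypasses the absorption problem that breaks your argument.
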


\begin{proof}
See Appendix \ref{ap:proof_coercivity}.
\end{proof}

We will need a weighted version of the previous result. See e.g. C\^ote-Mu\~noz-Pilod-Simpson \cite{CMPS_zakharov2d} for a very similar proof of this result. 

\begin{lem}[Coercivity with weight function]\label{lem:coerc_weight}
	Consider the bilinear form
	\[
	H_{\phi_\ell} (u,v)=\Jap{\sqrt{\phi_\ell} \LL(u)}{\sqrt{\phi_\ell} v}=\int \phi_\ell (\partial_x u \partial_x v+uv-f'(Q)uv) .
	\]
	for $\phi_\ell$ smooth and bounded and such that $0<\phi_\ell'\leq C\ell \phi_\ell$, where $C$ is independent from $\ell$.
	Then, there exists $\lambda>0$ independent of $\ell$ small such that
	\[
	H_{\phi_\ell} (v,v)\geq \lambda \int \phi_\ell ((\partial_x v)^2+v^2) ,
	\]
	for all $v \in H^1(\R)$ satisfying $\Jap{v}{Q'}=\Jap{v}{\LL(\phi_0)}=0$, and provided $\ell$ is taken small enough.
\end{lem}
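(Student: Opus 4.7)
The plan is to reduce the weighted coercivity to the unweighted one in Lemma~\ref{lem:coercivity} by the substitution $w = \sqrt{\phi_\ell}\, v$. Writing $v = \phi_\ell^{-1/2} w$, expanding $\partial_x v$, and integrating by parts, one obtains
\begin{align*}
\int \phi_\ell (\partial_x v)^2 = \int (\partial_x w)^2 + \frac{1}{2}\int \frac{\phi_\ell''}{\phi_\ell} w^2 - \frac{1}{4}\int \frac{(\phi_\ell')^2}{\phi_\ell^2} w^2,
\end{align*}
while $\int \phi_\ell v^2 = \|w\|_{L^2}^2$ and $\int \phi_\ell f'(Q) v^2 = \int f'(Q) w^2$. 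Under the slow-variation hypothesis $\phi_\ell'/\phi_\ell = O(\ell)$ --- and its analogue for higher derivatives, satisfied in the concrete application --- this yields $H_{\phi_\ell}(v,v) = H(w,w) + O(\ell)\|w\|_{H^1}^2$ and $\int \phi_\ell[(\partial_x v)^2 + v^2] = (1+O(\ell))\|w\|_{H^1}^2$.

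The next step is to correct the orthogonalities for $w$, which need not satisfy them a priori. Since $Q'$ is odd and $\LL\phi_0$ is even, these two vectors are $L^2$-orthogonal, so I can write uniquely $w = \tilde w + \alpha Q' + \beta \LL\phi_0$ with $\Jap{\tilde w}{Q'} = \Jap{\tilde w}{\LL\phi_0} = 0$. The crucial claim is $|\alpha| + |\beta| \lesssim \ell \|w\|_{L^2}$. Using $\Jap{v}{Q'} = 0$, one rewrites
\begin{align*}
\Jap{w}{Q'} = \int \big(\sqrt{\phi_\ell(x)} - \sqrt{\phi_\ell(0)}\big) v(x) Q'(x)\, dx,
\end{align*}
and the pointwise bound $|\sqrt{\phi_\ell(x)} - \sqrt{\phi_\ell(0)}| \lesssim \ell |x|\sqrt{\phi_\ell(x)}$, a consequence of integrating the log-derivative estimate, together with the exponential decay of $xQ'(x)$, gives $|\Jap{w}{Q'}| \lesssim \ell \|w\|_{L^2}$. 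The same argument controls $\beta$ using the exponential decay of $\LL\phi_0$.

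Finally, applying Lemma~\ref{lem:coercivity} to $\tilde w$, one has $H(\tilde w,\tilde w) \geq \lambda_0 \|\tilde w\|_{H^1}^2$. Since $\LL Q' = 0$, so $H(\,\cdot\,,Q') \equiv 0$, expanding gives
\begin{align*}
H(w,w) = H(\tilde w,\tilde w) + 2\beta H(\tilde w, \LL\phi_0) + \beta^2 H(\LL\phi_0,\LL\phi_0),
\end{align*}
and the last two terms are controlled by $C\ell\|\tilde w\|_{H^1}\|w\|_{L^2} + C\ell^2\|w\|_{L^2}^2$ via Step~2. Combined with $\|w\|_{H^1}^2 \leq 2\|\tilde w\|_{H^1}^2 + C\ell^2 \|w\|_{L^2}^2$, this yields $H(w,w) \geq \lambda_1 \|w\|_{H^1}^2$ for $\ell$ small, and then $H_{\phi_\ell}(v,v) \geq \lambda \int \phi_\ell[(\partial_x v)^2+v^2]$. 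The principal difficulty will be Step~2: extracting the extra factor of $\ell$ in the orthogonality defects rests crucially on the slow variation of $\phi_\ell$ together with the exponential localization of $Q'$ and $\LL\phi_0$, and requires the constant in the pointwise estimate $|\sqrt{\phi_\ell(x)} - \sqrt{\phi_\ell(0)}| \lesssim \ell|x|\sqrt{\phi_\ell(x)}$ to be independent of $\ell$.
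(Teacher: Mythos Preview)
Your proof is correct and follows precisely the standard localization argument that the paper defers to \cite{CMPS_zakharov2d}: substitute $w=\sqrt{\phi_\ell}\,v$, compare $H_{\phi_\ell}(v,v)$ to $H(w,w)$ up to $O(\ell)$ errors, correct the orthogonality defects of $w$ (which are $O(\ell)\|w\|_{L^2}$ thanks to the exponential decay of $Q'$ and $\LL\phi_0$), and apply the unweighted Lemma~\ref{lem:coercivity}. Your observation that one also needs $|\phi_\ell''|\lesssim \ell^2\phi_\ell$ --- not stated in the lemma but true for the weight $\sech^2(\ell x)$ actually used --- is well taken.
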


The key element of the proof of Theorem \ref{thm1} is the following transfer estimate.

\begin{lem}\label{cor:sech_u1_proof2}
	Let $u_1$ be even and satisfying \eqref{eq:orthogonal_condition2}, $(w_1,w_2)$ be as in \eqref{eq:wi}, and $(z_1,z_2)$ as in \eqref{eq:def_psiB}. Then, for any $B$ large enough, it holds
	\begin{equation}\label{eq:w_sech}
	\begin{aligned}
	\int \sech ( x)  u_1^2
\lesssim & ~{}  B^{-1/2} \left( \| w_1\|_{L^2}^2 +\|\partial_x w_1\|_{L^2}^2 \right)
+B^{1/2} \|z_1\|_{L^2}^2 +\gamma\| \partial_x z_1\|_{L^2}^2.
	\end{aligned}
	\end{equation}
\end{lem}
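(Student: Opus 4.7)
The strategy is to combine a weighted coercivity argument with the identity $\mathcal{L}(u_1) = (1-\gamma\partial_x^2)v_1$ and then apply Cauchy--Schwarz together with a balanced Young's inequality, which is where the $B^{\pm 1/2}$ trade-off will appear. I would use Lemma~\ref{lem:coerc_weight} with the weight $\phi(x) = \sech(\ell_0 x)$ for a fixed small $\ell_0 > 0$. Since $u_1$ is even, $\langle u_1, Q'\rangle = 0$ holds automatically, while $\langle u_1, \mathcal{L}(\phi_0)\rangle = 0$ is among the assumed orthogonalities, so Lemma~\ref{lem:coerc_weight} applies; and $\sech(x) \leq \phi(x)$ for all $x \in \mathbb{R}$. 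I would then start from
\[
\int \sech(x) u_1^2 \;\leq\; \int \phi\, u_1^2 \;\lesssim\; \int \phi\, u_1\, \mathcal{L}(u_1) \;=\; \int \phi\, u_1 v_1 \;+\; \gamma\!\int \phi'\, u_1 \partial_x v_1 \;+\; \gamma\!\int \phi\, \partial_x u_1\, \partial_x v_1,
\]
where the last identity uses $\mathcal{L}(u_1) = v_1 - \gamma\partial_x^2 v_1$ and one integration by parts.

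Next I would estimate each piece by Cauchy--Schwarz and transfer the $\phi$-weighted $L^2$-norms from $(u_1, \partial_x u_1)$ to $(w_1, \partial_x w_1)$ and from $(v_1, \partial_x v_1)$ to $(z_1, \partial_x z_1)$. Following the same computation as in Lemma~\ref{lem:sech_u_w}, and using that $\phi(x)/\zeta_A^2(x) \lesssim e^{-|x|(\ell_0 - 2/A)}$ is uniformly bounded for $A$ large, one obtains $\|\phi^{1/2}u_1\|_{L^2} \lesssim \|w_1\|_{L^2}$ and $\|\phi^{1/2}\partial_x u_1\|_{L^2} \lesssim \|\partial_x w_1\|_{L^2} + \|w_1\|_{L^2}$. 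On the $v$-side, on $\{\chi_A = 1\}$ we have $v_1 = z_1/\zeta_B$, and since $\phi(x)\zeta_B^{-2}(x) \lesssim e^{-|x|(\ell_0 - 2/B)}$ is integrable for $B > 2/\ell_0$, we get $\|\phi^{1/2}v_1\|_{L^2} \lesssim \|z_1\|_{L^2}$; the $|x|>A$ tail contributes at most $e^{-\ell_0 A}\|v_1\|_{L^2}^2 \lesssim e^{-\ell_0 A}\gamma^{-2}\|u_1\|_{L^2}^2$, negligible in our scaling $A = \delta^{-1}$. Analogously, $\|\phi^{1/2}\partial_x v_1\|_{L^2} \lesssim \|\partial_x z_1\|_{L^2} + \|z_1\|_{L^2}$.

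For the principal term, Young's inequality with the balance parameter $\epsilon = B^{-1/2}$ produces
\[
\left|\int \phi\, u_1 v_1\right| \;\leq\; \|\phi^{1/2} u_1\|\,\|\phi^{1/2} v_1\| \;\lesssim\; \|w_1\|\,\|z_1\| \;\leq\; B^{-1/2}\|w_1\|_{L^2}^2 + B^{1/2}\|z_1\|_{L^2}^2,
\]
which is the source of the $B^{\pm 1/2}$ factors. For the two $\gamma$-terms, using $|\phi'|\leq\ell_0\phi$ together with an \emph{unbalanced} Young's inequality,
\[
\gamma\bigl(\ell_0\|\phi^{1/2}u_1\|+\|\phi^{1/2}\partial_x u_1\|\bigr)\,\|\phi^{1/2}\partial_x v_1\| \;\lesssim\; \gamma\bigl(\|w_1\|^2+\|\partial_x w_1\|^2\bigr)+\gamma\bigl(\|z_1\|^2+\|\partial_x z_1\|^2\bigr),
\]
and since $\gamma = B^{-4} \ll B^{-1/2}$, the $u$-side pieces are absorbed into $B^{-1/2}(\|w_1\|^2+\|\partial_x w_1\|^2)$, while $\gamma\|z_1\|^2 \leq B^{1/2}\|z_1\|^2$ and $\gamma\|\partial_x z_1\|^2$ match the remaining two terms in the target right-hand side.

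The most delicate point is verifying that the coercivity weight $\phi=\sech(\ell_0 x)$, whose decay scale $1/\ell_0$ is independent of $B$, is nonetheless compatible with the $B$-dependent localization of $z_1$ through $\zeta_B$. This works precisely because of the scale hierarchy $A \gg B \gg 1/\ell_0$ built into~\eqref{eq:scales}: the exponential decay of $\phi$ dominates the slower growth of both $\zeta_A^{-2}$ and $\zeta_B^{-2}$, so all weighted $L^2$-transfers hold with constants independent of $A,B,\delta$, and the far tail $|x|>A$ is exponentially small compared to any power of $\gamma^{-1}$.
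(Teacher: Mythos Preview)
Your approach is essentially the same as the paper's: weighted coercivity (Lemma~\ref{lem:coerc_weight}) with a $\sech$-type weight, the identity $\mathcal{L}u_1=(1-\gamma\partial_x^2)v_1$, one integration by parts, and Young's inequality with balance parameter $\epsilon=B^{-1/2}$; the paper uses $\sech^2(\ell x)$ with $2/B<\ell<\min\{1/2,\sqrt{\lambda}/4\}$, which amounts to the same constraint $B>2/\ell_0$ you impose. One small correction: for the $|x|>A$ tail you should use the \emph{localized} bound $\|\zeta_A v_1\|_{L^2}\lesssim\gamma^{-1}\|w_1\|_{L^2}$ from Lemma~\ref{lem:v_w} (writing $\phi v_1^2=(\phi\zeta_A^{-2})(\zeta_A v_1)^2$) rather than the global $\|v_1\|\lesssim\gamma^{-1}\|u_1\|$, so that the tail contributes $\gamma^{-2}e^{-c\ell_0 A}\|w_1\|^2\ll B^{-1/2}\|w_1\|^2$ and matches the stated right-hand side --- this is exactly what the paper does.
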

\begin{proof}
	Set $\frac2B<\ell<\min\{\frac12, \frac14\sqrt{\lambda}\} \leq \frac12$. We note that
	\[
	\int  \sech(x) u_1^2\lesssim \int  \sech^2\left(\ell x\right)u_1^2.
	\]
	Now, we focus on the term on the RHS of the last equation. 
	 Applying Lemma \ref{lem:coerc_weight} for $\phi=\sech^2\left(\ell x \right)$, since $|\phi'|\leq C\ell \phi$.   We obtain
	\begin{equation*}
	\begin{aligned}
	\int  \sech^2\left(\ell x \right)  u_1^2
	\leq&  \int  \sech^2\left(\ell x \right) \left[ u_1^2 +(\partial_x  u_1)^2\right]\\
	\leq& ~{}\frac1\lambda \int  \sech^2\left(\ell x \right) \left[ u_1^2 +(\partial_x  u_1)^2-f'(Q) u_1^2\right].
	\end{aligned}
	\end{equation*}
	Now, integrating by parts
	\begin{equation*}
	\begin{aligned}
		\int  \sech ( \ell x) (\partial_x  u_1)^2
					=& -\int  \sech^2 ( \ell x)u_1\partial_x^2 u_1+\frac12\int ( \sech^2 ( \ell x))'' u_1^2,
	\end{aligned}
	\end{equation*}
	and by
	\[
	|( \sech^2 ( \ell x))''|\leq \ell^2 \sech^2 ( \ell x). 
	\]
	Choosing $\ell $ small enough ($0< \ell \leq \frac{\sqrt{\lambda}}{4}$), we obtain
		\begin{equation*}
		\begin{aligned}
			\int  \sech^2 ( \ell x)u_1^2
			\lesssim & ~{}  
			 \int  \sech^2 \left( \ell x\right)\LL( u_1) u_1.
		\end{aligned}
	\end{equation*}
	Now, using definition of $v_1$, we obtain	
	\begin{equation}\label{eq:ineq_Pv1}
	\begin{aligned}
	\int  \sech^2 \left( \ell x\right)\LL( u_1) u_1
	\lesssim & ~{}  
	\int  \sech^2 \left( \ell x\right) u_1 v_1-\gamma\int  \sech^2 \left( \ell x\right) u_1 \partial_x^2 v_1.
	\end{aligned}
	\end{equation}
	For the first integral in RHS of  \eqref{eq:ineq_Pv1}, using definition of $z_1$ and $w_1$, one can see that
	\begin{equation}\label{eq:CA3_sec}
	\begin{aligned}
			\int \sech^2 \left( \ell x\right)  u_1 v_1
=&\int \chi_A^3 \sech^2 \left( \ell x\right)  u_1 v_1+	\int (1-\chi_A^3) \sech^2 \left( \ell x\right)  u_1 v_1\\
=&\int \chi_A^2 \sech^2 \left( \ell x\right)(\zeta_A \zeta_B)^{-1}  w_1 z_1
+	\int (1-\chi_A^3) \sech^2 \left( \ell x\right)\zeta_A^{-2}  w_1 (\zeta_A v_1)\\
\lesssim&~{} \max_{|x|<2A}\{\sech^2 \left( \ell x\right)(\zeta_A \zeta_B)^{-1}\}  \|w_1\|_{L^2} \|z_1\|_{L^2}
+	\max_{|x|>A}\{ \sech^2 \left( \ell x\right)\zeta_A^{-2} \} \|w_1\|_{L^2} \| \zeta_A v_1\|_{L^2}\\
\lesssim& ~{}\max_{|x|<2A}\{\sech^2 \left( \ell x\right)(\zeta_A \zeta_B)^{-1}\}  \|w_1\|_{L^2} \|z_1\|_{L^2}
+	\gamma^{-1} \max_{|x|>A}\{ \sech^2 \left( \ell x\right)\zeta_A^{-2} \} \|w_1\|_{L^2}^2\\
\lesssim& ~{} \epsilon \|w_1\|_{L^2}^2+\epsilon^{-1} \|z_1\|_{L^2}^2
+	\gamma^{-1} e^{-\frac{A}{4B}} \|w_1\|_{L^2}^2.
	\end{aligned}
	\end{equation}
	Note that the last inequality holds if $2B^{-1}<\ell$.\\
	Now,  for  the second integral on the RHS of  \eqref{eq:ineq_Pv1}, integrating by parts we obtain the following expression 
	\begin{equation}\label{eq:decomposition_part_v1}
	\begin{aligned}
	\int \partial_x \big[ \sech^2 &\left( \ell x\right)  u_1\big] \partial_x v_1\\
	=& ~
	\int \big[ 
	(\sech^2 \left( \ell x\right))'  u_1
	+ \sech^2 \left( \ell x\right)  \partial_x u_1
	\big] \partial_x v_1\\
	=& ~
	\int  (\sech^2 \left( \ell x\right))' \chi_A^2 u_1 \partial_x v_1+\int (1-\chi_A^2) (\sech^2 \left( \ell x\right))'  u_1 \partial_x v_1\\
	&+ \int \sech^2 \left( \ell x\right) \chi_A^2 \partial_x u_1 \partial_x v_1+\int  (1- \chi_A^2)\sech^2 \left( \ell x\right) \partial_x u_1 \partial_x v_1.
	\end{aligned}
	\end{equation}
	Using the following decomposition and by H\"older inequality, we get
	\[
	\begin{aligned}
	\left| \int  (\sech^2 \left( \ell x\right))' u_1 \partial_x v_1\right|
	\lesssim&~{} \left| \int  (\sech^2 \left( \ell x\right))' \chi_A^3 u_1 \partial_x v_1 \right| + \left| \int  (\sech^2 \left( \ell x\right))' (1-\chi_A^3) u_1 \partial_x v_1 \right|\\
	\lesssim&~{} \left| \int  (\sech^2 \left( \ell x\right))' \chi_A^3 u_1 \partial_x v_1 \right| + \left| \int  (\sech^2 \left( \ell x\right))'\zeta_A^{-2} (1-\chi_A^3) w_1  (\zeta_A\partial_x v_1) \right|\\
	\lesssim&~{}  \ell \|\chi_A u_1\|_{L^2} \|  \zeta_B \chi_A^2 \partial_x v_1 \|_{L^2} + \ell \max_{|x|>A}\{\sech^2(\ell x)\zeta_A^{-2}\}
	\|  w_1\|_{L^2}  \|\zeta_A \partial_x v_1\|_{L^2}.
	\end{aligned}
	\]
	Furthermore, by the definition of $z_1$, we can check
	\begin{equation}\label{eq:C^3Zv1_x}
	\begin{aligned}
			\chi_A^2 \zeta_B \partial_x v_1= \chi_A\partial_x z_1-\chi_A\frac{\zeta_B'}{\zeta_B} z_1- \chi_A' z_1;
	\end{aligned}
	\end{equation}
	and by Lemma \ref{lem:v_w} and Remark \ref{rem:chiA_zetaA4}, we obtain
		\begin{equation}\label{eq:coer_1}
	\begin{aligned}
	\left| \int  (\sech^2 \left( \ell x\right))' u_1 \partial_x v_1\right|
	\lesssim&~{}  \ell \| w_1\|_{L^2} (\|  \partial_x z_1 \|_{L^2}
	+B^{-1}\| z_1 \|_{L^2}) \\
	&+ \ell\gamma^{-1} \max_{|x|>A}\{\sech^2(\ell x)\zeta_A^{-2}\}
	(\|  \partial_x w_1\|_{L^2}^2+\|  w_1\|_{L^2}^2).
	\end{aligned}
	\end{equation}
	In similar way, we obtain
	\[
	\begin{aligned}
	\left| \int  \sech^2 \left( \ell x\right) \partial_x u_1 \partial_x v_1\right|
	\lesssim&~{} \left| \int  \sech^2 \left( \ell x\right) \chi_A^3 \partial_x u_1 \partial_x v_1 \right| + \left| \int  \sech^2 \left( \ell x\right) (1-\chi_A^3) \partial_x u_1 \partial_x v_1 \right|\\
	\lesssim&~{}   \|\chi_A \partial_x u_1\|_{L^2} \|  \zeta_B \chi_A^2 \partial_x v_1 \|_{L^2} 
	+  \max_{|x|>A}\{\sech^2(\ell x)\zeta_A^{-2}\}
	\|  \zeta_A \partial_x u_1\|_{L^2}  \|\zeta_A \partial_x v_1\|_{L^2}.
	\end{aligned}
	\]
	By \eqref{eq:C^3Zv1_x}, Lemma \ref{lem:v_w} and Remark \ref{rem:chiA_zetaA4}, we get
		\[
	\begin{aligned}
	\left| \int  \sech^2 \left( \ell x\right) \partial_x u_1 \partial_x v_1\right|
	\lesssim&~{}  
	 \|\zeta_A \partial_x u_1\|_{L^2} (\| \partial_x z_1 \|_{L^2} +\| z_1 \|_{L^2})
	+ \gamma^{-1} \max_{|x|>A}\{\sech^2(\ell x)\zeta_A^{-2}\}
	\|  \zeta_A \partial_x u_1\|_{L^2}^2  .
	\end{aligned}
	\]
	We conclude using \eqref{eq:Zk_u1x} with $K=A$, we have
	\begin{equation}\label{eq:coer_2}
	\begin{aligned}
	\left| \int  \sech^2 \left( \ell x\right) \partial_x u_1 \partial_x v_1\right|
	\lesssim&~{}  
	( \| w_1\|_{L^2}+\|\partial_x w_1\|_{L^2} ) (\| \partial_x z_1 \|_{L^2} +\| z_1 \|_{L^2})\\
	&+ \gamma^{-1} \max_{|x|>A}\{\sech^2(\ell x)\zeta_A^{-2}\}
	(\| w_1\|_{L^2}^2+\|\partial_x w_1\|_{L^2}^2 ) .
	\end{aligned}
	\end{equation}
	Collecting \eqref{eq:CA3_sec}, \eqref{eq:coer_1}, \eqref{eq:coer_2} and by Cauchy-Schwarz inequality, we obtain
	\[
	\begin{aligned}
	\int \sech ( x)  u_1^2
	\lesssim & ~{}  \epsilon \| w_1\|_{L^2}+\frac1\epsilon \| z_1\|_{L^2}\\
	&+\gamma \left(\| w_1\|_{L^2}^2+\|\partial_x w_1\|_{L^2}^2 \right)+\gamma\left(\| \partial_x z_1\|_{L^2}^2+B^{-2}\| z_1\|_{L^2}^2\right)\\
	\lesssim & ~{} \max\{ \epsilon,A^{-1},\gamma\} \| w_1\|_{L^2}
	+\gamma \|\partial_x w_1\|_{L^2}^2 \\
	&+\max \{\epsilon^{-1},B^{-2} \} \|z_1\|_{L^2}^2 
	+\gamma\| \partial_x z_1\|_{L^2}^2.
	\end{aligned}
	\]
	Finally, choosing $\epsilon=B^{-1/2}$, we conclude
	\[
	\begin{aligned}
	\int \sech ( x)  u_1^2
		\lesssim & ~{}  B^{-1/2} \left( \| w_1\|_{L^2} +\|\partial_x w_1\|_{L^2}^2 \right)+B^{1/2} \|z_1\|_{L^2}^2 +\gamma\| \partial_x z_1\|_{L^2}^2.
	\end{aligned}
	\]
	This ends the proof of Lemma \ref{cor:sech_u1_proof2}.
\end{proof}

We will need a third coercivity estimate, related to the function $z_2$ in \eqref{eq:def_psiB}. 

\begin{lem}\label{coercividad_final}
Recall $\mathcal L = -\partial_x^2 + V_0(x)$, with $V_0$ defined in \eqref{eq:LL}. Assume that $\int Q \phi_0 \neq 0$. Then there exists $m_0>0$ fixed such that 
\[
\langle \mathcal L (u),u\rangle \geq m_0 \|u\|_{H^1}^2 - \frac1{m_0} \left|\langle u, \partial_x^{-1}\phi_0\rangle  \right|^2,
\]
for any $u\in H^1(\mathbb R)$ odd.
\end{lem}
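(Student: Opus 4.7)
\medskip
\noindent\textit{Proof plan for Lemma~\ref{coercividad_final}.}

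\smallskip
The plan is to combine the parity structure with the already-established Lemma~\ref{lem:coercivity} in order to reduce to a single orthogonality, and then absorb the missing direction using the hypothesis $\int Q\phi_0\neq 0$. First, I will use that $\phi_0$ is even (see Appendix~\ref{A} and the statement around \eqref{eq:phi0}), hence $\LL(\phi_0)$ is even as well. Therefore, whenever $u\in H^1(\mathbb R)$ is odd, the orthogonality $\Jap{u}{\LL(\phi_0)}=0$ holds automatically. Consequently, Lemma~\ref{lem:coercivity} reduces to the following statement for odd data: there exists $\lambda>0$ such that
\[
\Jap{\LL(u^\perp)}{u^\perp}\ \geq\ \lambda\,\|u^\perp\|_{H^1}^2,\qquad \text{for all odd } u^\perp\in H^1 \text{ with } \Jap{u^\perp}{Q'}=0.
\]

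\smallskip
Next, for a general odd $u\in H^1$, I would decompose along the kernel direction $Q'$ (which is odd and satisfies $\LL(Q')=0$ by differentiating \eqref{eq:soliton_eq}):
\[
u \ =\ \alpha\, Q' + u^\perp,\qquad \alpha=\frac{\Jap{u}{Q'}}{\|Q'\|_{L^2}^2},\qquad \Jap{u^\perp}{Q'}=0.
\]
Since $\LL(Q')=0$ and $\Jap{\LL(Q')}{u^\perp}=\Jap{Q'}{\LL(u^\perp)}$ vanishes upon pairing with $u^\perp\perp Q'$ only after we also use self-adjointness on odd $H^1$ functions, one obtains $\Jap{\LL(u)}{u}=\Jap{\LL(u^\perp)}{u^\perp}\geq \lambda\|u^\perp\|_{H^1}^2$. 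Combined with $\|u\|_{H^1}^2\leq 2\|u^\perp\|_{H^1}^2+2\alpha^2\|Q'\|_{H^1}^2$, this already gives
\[
\Jap{\LL(u)}{u}\ \geq\ \tfrac{\lambda}{2}\,\|u\|_{H^1}^2 - C\,\alpha^2,
\]
for some constant $C$ depending only on $Q$. It remains to bound $\alpha^2$ by the penalty term $|\Jap{u}{\partial_x^{-1}\phi_0}|^2$ plus a small fraction of $\Jap{\LL(u)}{u}$.

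\smallskip
For this last step, the key computation is the integration by parts
\[
\Jap{Q'}{\partial_x^{-1}\phi_0}\ =\ -\Jap{Q}{\phi_0},
\]
which is justified by the exponential decay of $Q$ and of $\partial_x^{-1}\phi_0\in L^2$ recalled in \eqref{eq:phi0}. By the standing hypothesis $\int Q\phi_0\neq 0$, this scalar is nonzero, hence
\[
\alpha\ =\ \frac{1}{\Jap{Q'}{\partial_x^{-1}\phi_0}}\Bigl(\Jap{u}{\partial_x^{-1}\phi_0}-\Jap{u^\perp}{\partial_x^{-1}\phi_0}\Bigr),
\]
so that by Cauchy--Schwarz and $\partial_x^{-1}\phi_0\in L^2$ (see Appendix~\ref{A}),
\[
\alpha^2\ \lesssim\ \bigl|\Jap{u}{\partial_x^{-1}\phi_0}\bigr|^2+\|u^\perp\|_{L^2}^2\ \lesssim\ \bigl|\Jap{u}{\partial_x^{-1}\phi_0}\bigr|^2+\tfrac{1}{\lambda}\Jap{\LL(u)}{u}.
\]
Plugging this into the previous bound and absorbing the $\Jap{\LL(u)}{u}$ contribution on the right-hand side (after scaling) yields the desired inequality with some $m_0>0$ fixed.

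\smallskip
The main (and essentially only) obstacle I anticipate is to justify cleanly that $\Jap{Q'}{\partial_x^{-1}\phi_0}\neq 0$ under the stated assumption, since this requires a careful treatment of the antiderivative $\partial_x^{-1}\phi_0$ (including that the integration constant is fixed so that $\partial_x^{-1}\phi_0\in L^2$, as in Appendix~\ref{A}) and a decay-based integration by parts with no boundary contribution. Once this is in hand, the rest of the argument is a standard Weinstein-type modulation/decomposition, and the parity restriction $u$ odd is exactly what makes the negative eigenvalue $-\nu_0^2$ of $-\partial_x^2\LL$ (whose eigenfunction $\phi_0$ is even) invisible to the quadratic form.
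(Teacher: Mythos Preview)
Your argument is correct. Both your proof and the paper's hinge on the same key identity $\Jap{Q'}{\partial_x^{-1}\phi_0}=-\Jap{Q}{\phi_0}\neq 0$, but the logical flow differs: you explicitly decompose $u=\alpha Q'+u^\perp$, apply the already-proved Lemma~\ref{lem:coercivity} to $u^\perp$ (using that the orthogonality $\Jap{u}{\LL\phi_0}=0$ is automatic by parity), and then recover $\alpha$ from the penalty term via the nonvanishing pairing; the paper instead argues directly that $\LL\geq 0$ on odd functions (since the negative eigenfunction $Y_0$ of $\LL$ is even), reduces to showing coercivity under the single constraint $\Jap{u}{\partial_x^{-1}\phi_0}=0$, and then uses that this constraint is transversal to the kernel $\hbox{span}\{Q'\}$ via a compactness-type argument. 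Your approach is more quantitative and reuses Lemma~\ref{lem:coercivity} as a black box, while the paper's is shorter but leaves the passage from $L^2$ to $H^1$ coercivity and the compactness step implicit. One small wording issue: in your final comment, the ``negative eigenvalue made invisible by parity'' is that of $\LL$ (with even eigenfunction $Y_0$), not of $-\partial_x^2\LL$; this does not affect your argument, since you never use that fact directly.
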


\begin{proof}
Since $u$ is odd, one clearly has $\langle \mathcal L (u),u\rangle \geq 0$. Since $\ker \mathcal L = \hbox{span}\{Q'\}$, we only need to check that 
\begin{equation}\label{lindo}
\langle \mathcal L (u),u\rangle \geq m_0 \|u\|_{L^2}^2,
\end{equation}
for any $u\in H^1(\mathbb R)$ odd, and provided $\langle u, \partial_x^{-1}\phi_0\rangle =0$. First of all, it is not difficult to check that for some $m_0>0$,
\[
\langle \mathcal L (u),u\rangle \geq m_0 \left( \|u\|_{L^2}^2- \|Q'\|_{L^2}^{-2}\langle u, Q'\rangle^2\right).
\]
Assume that $\|u\|_{L^2}=1$. The term on the right hand side is zero only if $u$ is parallel to $Q'$, which is not possible since $\int Q \phi_0 \neq 0$. Therefore, after rescaling, \eqref{lindo} is proved.
\end{proof}

\begin{rem}\label{ojala_porfin}
Lemma \ref{coercividad_final} will be used in the following way: from \eqref{eq:orthogonal_condition} we have $\langle u_2, \partial_x^{-1}\phi_0\rangle =0$, and from \eqref{eq:change_variable}, we have
\[
\langle v_2, (1-\gamma \partial_x^2)\partial_x^{-1}\phi_0 \rangle =0.
\]
Using \eqref{eq:def_psiB} and \eqref{eq:wi}, and the exponential decay of $\partial_x^{-1} \phi_0$ we obtain
\[
\begin{aligned}
|\langle z_2  , \partial_x^{-1}\phi_0 \rangle| \leq &~{} |\langle z_2  , (1-\gamma \partial_x^2)\partial_x^{-1}\phi_0 \rangle| +\gamma |\langle z_2  ,  \partial_x\phi_0 \rangle|\\
\lesssim &~{} |\langle v_2\chi_A \zeta_B  , (1-\gamma \partial_x^2)\partial_x^{-1}\phi_0 \rangle| + \gamma \|z_2\|_{L^2}\\
 \lesssim &~{} |\langle v_2  ,(1-\chi_A \zeta_B) (1-\gamma \partial_x^2)\partial_x^{-1}\phi_0 \rangle| + \gamma \|z_2\|_{L^2}\\
  \lesssim &~{} |\langle u_2 \zeta_A  ,  \zeta_A^{-1}(1-\gamma \partial_x^2)^{-1}(1-\chi_A \zeta_B) (1-\gamma \partial_x^2)\partial_x^{-1}\phi_0 \rangle| + \gamma \|z_2\|_{L^2}\\
 \lesssim &~{} e^{-\frac12 B}\|w_2\|_{L^2} + \gamma \|z_2\|_{L^2}.
\end{aligned}
\]
\end{rem}
Finally, we prove that
\begin{lem}\label{no_soy_orto}
$\int Q \phi_0 \neq 0$.
\end{lem}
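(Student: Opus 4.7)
The plan is to argue by contradiction, combining an algebraic identity for $\int Q\phi_0$ with the spectral structure of $\LL$ restricted to the even subspace.

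First, I would establish the identity
\[
\nu_0^2 \int Q\,\phi_0 \;=\; p(p-1) \int Q^{p-2}(Q')^2\,\phi_0.
\]
Starting from the eigenvalue equation $\partial_x^2 \LL \phi_0 = \nu_0^2\phi_0$ from Appendix \ref{A}, multiply by $Q$ and integrate by parts twice; all boundary terms vanish by the exponential decay of $Q, Q'$ and of $\LL \phi_0, \partial_x \LL \phi_0$. Self-adjointness of $\LL$ then rewrites the right-hand side as $\int \LL(Q'')\,\phi_0$. Differentiating the kernel relation $\LL Q' = 0$ yields the commutator identity $\LL(Q'') = -V_0' Q' = p(p-1)\,Q^{p-2}(Q')^2$, where $V_0 = 1 - pQ^{p-1}$. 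Note that $Q^{p-2}(Q')^2$ is even and strictly positive away from the origin.

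Next, I would use the spectral picture of $\LL$ on the even subspace of $H^1(\R)$. As a Schr\"odinger operator with attractive potential, $\LL$ has exactly one simple negative eigenvalue $-\mu_0 < 0$ with strictly positive even ground state $\xi_0 > 0$ (the kernel element $Q'$ is odd, hence invisible on the even subspace), and it is strictly positive on the $L^2$-orthogonal complement of $\xi_0$ within the even subspace. The energy identity
\[
\Jap{\LL\phi_0}{\phi_0} \;=\; \nu_0^2 \Jap{\partial_x^{-2}\phi_0}{\phi_0} \;=\; -\nu_0^2 \|\partial_x^{-1}\phi_0\|_{L^2}^2 \;=\; -\nu_0^2 \;<\; 0,
\]
using the normalization in \eqref{eq:phi0}, forces the component $\alpha := \Jap{\phi_0}{\xi_0}/\|\xi_0\|_{L^2}^2$ of $\phi_0$ along $\xi_0$ to be nonzero. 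Since $Q > 0$ and $\xi_0 > 0$, we also have $\int Q\xi_0 > 0$.

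Now suppose for contradiction that $\int Q\phi_0 = 0$. From the identity of the first step, $\int Q^{p-2}(Q')^2 \phi_0 = 0$ as well. Writing $\phi_0 = \alpha\xi_0 + \phi_0^\perp$ with $\phi_0^\perp$ in the positive spectral subspace of $\LL$ restricted to even functions, the hypothesis pins down $\int Q\phi_0^\perp = -\alpha \int Q\xi_0 \neq 0$, and similarly $\int Q^{p-2}(Q')^2 \phi_0^\perp = -\alpha \int Q^{p-2}(Q')^2 \xi_0 \neq 0$. To close the contradiction I plan to produce additional independent scalar identities by exploiting the Pego--Weinstein direction $R = \tfrac{Q}{p-1} + \tfrac{xQ'}{2}$, which satisfies $\LL R = -Q$ (and arises as $\partial_{c^2} Q_c|_{c=0}$ for the soliton family), and by iterating the Step~1 scheme with test functions such as $Q^p$, for which an analogous computation gives $\LL(Q^p) = -p(p-1)Q^{p-2}(Q')^2 - (p-1)Q^p$. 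Each new identity constrains the spectral content of $\phi_0^\perp$ against that of $\alpha \xi_0$.

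The main obstacle is to upgrade these scalar orthogonalities into a genuine contradiction with $\alpha \neq 0$. This seems to require either a finite-dimensional coercivity argument on the even spectral subspace of $\LL$ to show that the family $\{Q, Q^{p-2}(Q')^2, R, \ldots\}$ is rich enough to exclude a nontrivial cancellation with $\xi_0$, or a direct appeal to the Pego--Weinstein nondegeneracy of the unstable linear mode underlying the construction of $\phi_0$ in Appendix \ref{A}; I expect this last step to be the delicate part of the proof.
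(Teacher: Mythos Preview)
Your proposal has a genuine gap that you yourself flag: after deriving the identity $\nu_0^2\int Q\phi_0 = p(p-1)\int Q^{p-2}(Q')^2\phi_0$ and decomposing $\phi_0=\alpha\xi_0+\phi_0^\perp$ along the ground state of $\LL$, you are left with a handful of scalar constraints on $\phi_0^\perp$ but no mechanism to force a contradiction with $\alpha\neq 0$. Stacking further moment identities (against $Q^p$, $R$, etc.) does not obviously terminate, and invoking ``Pego--Weinstein nondegeneracy'' is precisely what the lemma is meant to establish, so that route is circular. In short, the argument is set up but never closes.

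The paper's proof avoids all of this by working with the spectral structure of $-\PL$ rather than of $\LL$. Since $-\PL$ has $\phi_0$ as its unique negative direction (Appendix~\ref{A}), the hypothesis $\int Q\phi_0=0$ forces the single scalar inequality $\langle \partial_x^2\LL Q, Q\rangle\leq 0$. This quantity does not involve $\phi_0$ at all and can be computed explicitly from the soliton profile: $\LL Q=-(p-1)Q^p$, so $\langle \partial_x^2\LL Q, Q\rangle=-(p-1)\int Q^pQ''=-(p-1)\int(Q^{p+1}-Q^{2p})$, and the Pohozaev-type identities $Q''=Q-Q^p$, $(Q')^2=Q^2-\tfrac{2}{p+1}Q^{p+1}$ give $\int Q^{p+1}=\tfrac{3p+1}{(p+1)^2}\int Q^{2p}$, whence $\langle \partial_x^2\LL Q, Q\rangle=\tfrac{p(p-1)^2}{(p+1)^2}\int Q^{2p}>0$, a contradiction. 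The key idea you are missing is to test $Q$ against the quadratic form of $-\PL$ itself, which converts the assumed orthogonality into a clean sign obstruction on a quantity that is purely algebraic in $Q$.
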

\begin{proof}
If $\int Q \phi_0 = 0$, from \eqref{eq:phi0} one has $\langle \partial_x^2 \mathcal L Q, Q\rangle \leq 0.$ However
\[
0 \geq \langle \partial_x^2 \mathcal L Q, Q\rangle= -(p-1)\langle  Q^p, Q''\rangle =  -(p-1)\langle  Q^p, Q -Q^p\rangle = -(p-1)\int_\R (Q^{p+1} -Q^{2p}) . 
\]
Finally, from the equation $Q''=Q-Q^p$ and multiplying by $Q^p$ and integrating by parts, we get
\[
-p\int Q^{p-1} Q'^2 = \int Q^{p+1} - \int Q^{2p}. 
\]
Finally, using that $Q'^2 = Q^2 - \frac{2}{p+1} Q^{p+1}$, we get $\int Q^{p+1} = \frac{3p+1}{(p+1)^2} \int Q^{2p}$, and replacing,
\[
0 \geq \langle \partial_x^2 \mathcal L Q, Q\rangle= -(p-1)\int_\R (Q^{p+1} -Q^{2p}) = \frac{p(p-1)^2}{(p+1)^2} \int_\R Q^{2p}>0, 
\]
a contradiction.
\end{proof}

Now we are ready to conclude the proof of Theorem \ref{thm1}. 

\subsection{Proof of Theorem \ref{thm1}}

Recalling that the constants $\gamma$, $C_i$ and $\delta_i>0$ for $i=1,\ldots, 4$ were defined in Propositions \ref{prop:virial_I} \ref{prop:virial_J}, \ref{prop:ineq_dtM}, \ref{prop:virial_N}.

\begin{prop}\label{prop:virial_I+J+M+N}
	There exist $C_5$ and $0<\delta_5 \leq \min\{\delta_1,\delta_2,\delta_3,\delta_4\}$ such that for any $0<\delta\leq \delta_5$, the following holds. Fix $A=\delta^{-1}$, $B=\delta^{-1/19}$ and $\gamma=B^{-4}$. Assume that for all $t\geq0$, \eqref{eq:ineq_hip} holds.
	Let
	\begin{equation*}
	\mathcal{H}=\mathcal{J}+ 16 C_2 B^{-1} \mathcal{I}+B^{-1} \mathcal{M}-16 B^{-5} C_1C_2\mathcal N.
	\end{equation*}
	Then, for all $t\geq 0$,
	\begin{equation}\label{eq:dtH_inequality}
	\begin{aligned}
	\dfrac{d}{dt}\mathcal{H}(t)
			\leq		
	&-C_2 B^{-1} \left(\|w_1\|_{L^2}^2+\|\partial_x w_1\|_{L^2}^2 
	+\| w_2\|_{L^2}^2  \right)+ C_5 |a_1|^3.
	\end{aligned}
	\end{equation}
\end{prop}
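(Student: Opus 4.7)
The plan is to substitute the four virial inequalities from Propositions 2.1, 3.1, 4.1, 5.1 into $\dfrac{d}{dt}\mathcal{H}$, and then dispose of the remaining bad terms using the transfer estimate of Lemma 6.3 and the coercivity Lemma 6.4 (combined with the orthogonality bound of Remark 6.5). The main negative reservoir for the target estimate $-C_2 B^{-1}(T_1+T_2+T_3)$, where $T_1=\|w_1\|_{L^2}^2$, $T_2=\|\partial_x w_1\|_{L^2}^2$, $T_3=\|w_2\|_{L^2}^2$, comes entirely from the term $16 C_2 B^{-1}\,\mathcal{I}$, which yields $-8 C_2 B^{-1}(T_1 + 2T_2 + T_3)$; the corresponding contributions to the $T_i$ from $\mathcal{J}$, $B^{-1}\mathcal{M}$ and $-16 B^{-5} C_1 C_2 \mathcal{N}$ are all of size $B^{-3/2}$ or $\delta^{14/19}$, hence negligible.

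Next I would dispose of the lone awkward term $16 C_1 C_2 B^{-1}\int \sech(x/2) w_1^2$ coming from Proposition 2.1. Applying Lemma 6.3 (whose proof adapts verbatim to the scale $\sech(x/2)$ since $\sech(x/2)\lesssim \sech^2(\ell x)$ for $\ell \le 1/4$) and using $w_1^2 \le u_1^2$, one picks up contributions of order $B^{-3/2}(T_1+T_2) + B^{-1/2}\|z_1\|_{L^2}^2 + B^{-5}\|\partial_x z_1\|_{L^2}^2$. The $\|z_1\|_{L^2}^2$ piece is absorbed into $-\tfrac12\|z_1\|_{L^2}^2$ from Proposition 3.1, valid for $B$ large. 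The delicate point is the $\|\partial_x z_1\|_{L^2}^2$ piece: the coefficient $-16 B^{-5} C_1 C_2$ on $\mathcal{N}$ is dictated precisely by the need to match this contribution through Proposition 5.1, which produces $-8 B^{-5} C_1 C_2 \|\partial_x z_1\|_{L^2}^2$; the remaining $+8 B^{-5} C_1 C_2 \|\partial_x z_1\|_{L^2}^2$ is then trivially dominated by $-\tfrac12 B^{-1}\|\partial_x z_1\|_{L^2}^2$ arising from $B^{-1}\mathcal{M}$.

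The sign-indefinite potential terms $-\tfrac12\int V_0 z_2^2$ in Proposition 3.1 and $-\tfrac12 B^{-1}\int V_0 (\partial_x z_2)^2$ in Proposition 4.1 are handled by Lemma 6.4. Since $z_2$ is odd and by Remark 6.5 satisfies $|\langle z_2, \partial_x^{-1}\phi_0\rangle| \lesssim e^{-B/2}\|w_2\|_{L^2} + \gamma \|z_2\|_{L^2}$, rewriting $-\int(\partial_x z_2)^2 - \tfrac12\int V_0 z_2^2 = -\tfrac12\int(\partial_x z_2)^2 - \tfrac12\langle\mathcal{L}(z_2),z_2\rangle$ and applying Lemma 6.4 yields net coercivity $-\tfrac{m_0}{4}\|z_2\|_{H^1}^2 + C e^{-B}\|w_2\|_{L^2}^2$, valid for $B$ large since $\gamma^2 = B^{-8} \ll m_0$. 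The bad $V_0(\partial_x z_2)^2$ in $\mathcal{M}$ is then bounded crudely by $\|V_0\|_\infty B^{-1}\|\partial_x z_2\|_{L^2}^2$ and absorbed into this reservoir, while $-B^{-1}\|\partial_x^2 z_2\|_{L^2}^2$ from $\mathcal{M}$ dominates $+16 B^{-5} C_1 C_2 C_4 \|\partial_x^2 z_2\|_{L^2}^2$ coming from $\mathcal{N}$. All remainders of the form $|a_1|^k$ (with $|a_1|^4 \le \delta|a_1|^3$ by \eqref{eq:ineq_hip}) are collected into a single $C_5 |a_1|^3$ term.

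The principal obstacle is the bookkeeping: one must check simultaneously that each of the five localized quantities $\|z_1\|_{L^2}^2$, $\|\partial_x z_1\|_{L^2}^2$, $\|z_2\|_{L^2}^2$, $\|\partial_x z_2\|_{L^2}^2$, $\|\partial_x^2 z_2\|_{L^2}^2$ appears with a net non-positive coefficient in the full sum, while the leading reservoir on $T_1+T_2+T_3$ survives all perturbations. The weights $(16 C_2 B^{-1},\, B^{-1},\, 16 B^{-5} C_1 C_2)$ are essentially forced by the cancellation required on the $\|\partial_x z_1\|_{L^2}^2$ line; any other scaling would break the absorption. Since the scaling $A=\delta^{-1}$, $B=\delta^{-1/19}$, $\gamma=B^{-4}$ already fixed in Propositions 2.1--5.1 satisfies $1 \ll B \ll B^{10} \ll A$, all smallness requirements can be met simultaneously by taking $\delta \le \delta_5$ small enough, from which \eqref{eq:dtH_inequality} follows.
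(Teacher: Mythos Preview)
Your proposal is correct and follows essentially the same route as the paper: combine the four virial estimates with Lemma~6.3 and the coercivity of Lemma~6.4/Remark~6.5, then check that every $z$-norm carries a net nonpositive coefficient while the $w$-reservoir from $16C_2B^{-1}\mathcal I$ survives. The only organizational difference is in the $\partial_x z_1$ bookkeeping: the paper first substitutes Proposition~5.1 \emph{inside} the bound for $\tfrac{d}{dt}\mathcal I$ (replacing $C_1\gamma\|\partial_x z_1\|_{L^2}^2$ by $B^{-4}C_1\,\tfrac{d}{dt}\mathcal N$ plus errors) and then moves the resulting total derivative to the left, which is what fixes the coefficient $16B^{-5}C_1C_2$ on $\mathcal N$. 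Your framing---computing the net $\partial_x z_1$ coefficient and absorbing the leftover $+8B^{-5}C_1C_2$ into the $-\tfrac12 B^{-1}$ from $\mathcal M$---is an equivalent accounting, though your claim that the $\mathcal N$ coefficient is ``dictated precisely'' by this match is slightly misleading: since $B^{-5}\ll B^{-1}$, the $\mathcal M$ contribution alone would already swallow the full $+16C_1C_2B^{-5}$, so the specific coefficient on $\mathcal N$ reflects the paper's organizational choice rather than a forced cancellation.
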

\begin{proof}
	First,  from \eqref{eq:dI_w} and  \eqref{eq:w_sech} we obtain for some $C_1>0$ fixed, 
	\[\begin{aligned}
	\dfrac{d}{dt}\I
	\leq&-\dfrac{1}{2}  \left[ \|w_2\|_{L^2}^2 +2\|\partial_x w_1\|_{L^2}^2
	+\frac12 \|w_1^2\|_{L^2} \right]   \\
	&+C_1 a_1^4 
	+  C_1B^{-1/2} \left( \| w_1\|_{L^2}^2 +\|\partial_x w_1\|_{L^2}^2 \right)
	+C_1 B^{1/2} \|z_1\|_{L^2}^2 +C_1\gamma\| \partial_x z_1\|_{L^2}^2.
		\end{aligned}
	\]
Using \eqref{eq:dN_z} and $\gamma=B^{-4}$, we get
	\[\begin{aligned}
	\dfrac{d}{dt}\I	\leq&-\dfrac{1}{4} \left( \|w_2 \|_{L^2}^2+2\|\partial_x w_1 \|_{L^2}^2+\frac12\|w_1 \|_{L^2}^2\right)
	+C_1|a_1|^3 	+B^{-4} C_1	\frac{d}{dt}\mathcal{N}(t) \\
	&
	+B^{1/2} C_1 \|z_1\|_{L^2}^2 + B^{-4} C_1 \left[ \|\partial_x^2 z_2\|_{L^2}^2  + \|\partial_x z_2\|_{L^2}^2 + \|z_2\|_{L^2}^2+\|z_1\|_{L^2}^2\right].
	\end{aligned}
	\]
	Secondly, for $\frac{d}{dt}\mathcal J$, using \eqref{eq:dJ},  Lemma \ref{coercividad_final}, and Remark \ref{ojala_porfin},
	\begin{equation*}
	\begin{aligned}
	\dfrac{d}{dt}\J
		&\leq - \frac14 m_0 \bigg(\|z_1\|_{L^2}^2+ \|\partial_x z_2\|_{L^2}^2+  \|z_2\|_{L^2}^2\bigg) 
	+C_2 B^{-1} \bigg(\| w_1\|_{L^2}^2
	+\| w_2\|_{L^2}^2 \bigg)
	+ C_2 |a_1|^3.
	\end{aligned}
	\end{equation*}
	We conclude that 
	\[
	\begin{aligned}
	\dfrac{d}{dt}\J + 16 C_2 B^{-1} \dfrac{d}{dt}\I \leq &~{} - \frac18 m_0 \bigg(\|z_1\|_{L^2}^2+ \|\partial_x z_2\|_{L^2}^2+  \|z_2\|_{L^2}^2\bigg) \\
	 &~{} - 4 C_2 B^{-1} \left( \|w_2 \|_{L^2}^2+2\|\partial_x w_1 \|_{L^2}^2+\frac12\|w_1 \|_{L^2}^2\right) \\
	 &~{} + 16 B^{-5} C_1 C_2 \|\partial_x^2 z_2\|_{L^2}^2 +2C_2 |a_1|^3 +16 B^{-5} C_1C_2	\frac{d}{dt}\mathcal{N}(t)  .
	\end{aligned}
	\]
	Thirdly,  using \eqref{eq:dM_z} for $\frac{d}{dt} \mathcal M$,
	\begin{equation*}
	\begin{aligned}
	\frac{d}{dt}\M
	\lesssim & ~{}  
	-\dfrac{1}{2}\left( \| \partial_x z_1\|_{L^2}^2+ \| \partial_x^2 z_2\|_{L^2}^2 \right) +C_3 	\|\partial_x z_2 \|_{L^2}^2
	+ \frac12C_3 \| z_2\|_{L^2}^2
	+C_3 B^{-1}\| z_1\|_{L^2}^2\\
	&+C_3  B^{-1} \left( \| \partial_x w_1\|_{L^2}^2+\| w_1\|_{L^2}^2
	+\| w_2\|_{L^2}^2 \right)+
	C_3 |a_1|^3.
	\end{aligned}
	\end{equation*}
	Therefore,
	\[
	\begin{aligned}
	&~{} \dfrac{d}{dt} \left( \J + 16 C_2 B^{-1} \I  +B^{-1} \M -16 B^{-5} C_1C_2	\mathcal{N}(t) \right) \\
	&~{} \quad \leq - \frac1{16} m_0 \bigg(\|z_1\|_{L^2}^2+ \|\partial_x z_2\|_{L^2}^2+  \|z_2\|_{L^2}^2\bigg)-\dfrac{1}{4}B^{-1}\left( \| \partial_x z_1\|_{L^2}^2+ \| \partial_x^2 z_2\|_{L^2}^2 \right)\\
	&~{} \qquad  -  C_2 B^{-1} \left( \|w_2 \|_{L^2}^2+ \|\partial_x w_1 \|_{L^2}^2 + \|w_1 \|_{L^2}^2 \right)+ 3C_2 |a_1|^3 .
	\end{aligned}
	\]
	Setting $\mathcal{H}=\mathcal{J}+ 16 C_2 B^{-1} \mathcal{I}+B^{-1} \mathcal{M}-16 B^{-5} C_1C_2\mathcal N$, and $C_5=3C_2$, we obtain the desired property.
\end{proof}
We define now
\[
\B=b_{+}^2-b_{-}^2,
\]
where $b_{+},b_{-}$ are given in \eqref{eq:b}.

\begin{lem}\label{lem:b}
There exist $C_6$ and $\delta_6>0$, such that for any $0<\delta\leq \delta_6$, the following holds. Assume that for all $t\geq0$ \eqref{eq:ineq_hip} holds. Then, for all $t\geq 0$,
\begin{equation}\label{eq:dtb}
	|\dot{b}_+ -\nu_0 b_+|+|\dot{b}_- +\nu_0 b_-|\leq C_6 \left(b_+^2+b^2_- +
\left\|\sech^{1/2}(x/2) w_1 \right\|_{L^2}^2\right),
\end{equation}
and
\begin{equation}\label{eq:dtb^2}
	\left|\frac{d}{dt}b_+^2 -2\nu_0 b_+^2\right|+\left|\frac{d}{dt}b_-^2 + 2\nu_0 b_-^2\right|\leq C_6 \left(b_+^2+b^2_- +\left\|\sech^{1/2}(x/2)w_1\right\|_{L^2}^2\right)^{3/2}.
\end{equation}
In particular,
\begin{equation}\label{eq:dtB}
	\frac{d}{dt}\B \geq 
	\frac{\nu_0}{2} (a_1^2+ a_2^2)- C_6 \left\|\sech^{1/2}(x/2)  w_1 \right\|_{L^2}^2.
\end{equation}
\end{lem}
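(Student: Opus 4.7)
The plan is to read off the scalar dynamics of $b_\pm$ directly from \eqref{eq:motion} and then estimate the source term $N_0$ by the already-established bound \eqref{eq:N_0_bounded}. From the definitions \eqref{eq:b} and system \eqref{eq:motion} one has the clean identities
\[
\dot b_+ - \nu_0 b_+ = \frac{N_0}{2\nu_0}, \qquad \dot b_- + \nu_0 b_- = -\frac{N_0}{2\nu_0}.
\]
Using \eqref{eq:N_0_bounded} and the parity-preserving bound $a_1^2 \lesssim b_+^2 + b_-^2$ (indeed $a_1=b_++b_-$), inequality \eqref{eq:dtb} follows immediately, with the constant $C_6$ absorbing the numerical constants from \eqref{eq:N_0_bounded} and $\nu_0$.

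For \eqref{eq:dtb^2} I would write $\tfrac{d}{dt}b_\pm^2 \mp 2\nu_0 b_\pm^2 = \pm \tfrac{b_\pm}{\nu_0} N_0$. Setting $X := b_+^2+b_-^2+\|\sech^{1/2}(x/2)w_1\|_{L^2}^2$, one has $|b_\pm|\lesssim X^{1/2}$ and $|N_0|\lesssim X$ (from \eqref{eq:N_0_bounded} together with the above observation on $a_1^2$), so $|b_\pm||N_0|\lesssim X^{3/2}$, which gives \eqref{eq:dtb^2}.

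For \eqref{eq:dtB}, differentiating $\mathcal B = b_+^2-b_-^2$ and combining both estimates in \eqref{eq:dtb^2} yields
\[
\frac{d}{dt}\mathcal B \;\ge\; 2\nu_0 (b_+^2+b_-^2) - C\, X^{3/2} \;=\; \nu_0 (a_1^2+a_2^2) - C\, X^{3/2},
\]
where we used the identity $b_+^2+b_-^2=\tfrac12(a_1^2+a_2^2)$. The only remaining point is that the cubic correction $X^{3/2}$ has to be absorbed into the two terms on the right-hand side of \eqref{eq:dtB}. Here the smallness hypothesis \eqref{eq:ineq_hip} is used: it gives $X^{1/2}\lesssim \delta$, hence $X^{3/2}\lesssim \delta X$. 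Choosing $\delta_6$ small enough so that $C\delta_6\le \nu_0/2$, the piece $C\delta(b_+^2+b_-^2)=\tfrac{C\delta}{2}(a_1^2+a_2^2)$ is absorbed into $\nu_0(a_1^2+a_2^2)$, leaving a remainder of at least $\tfrac{\nu_0}{2}(a_1^2+a_2^2)$, while the piece $C\delta \|\sech^{1/2}(x/2)w_1\|_{L^2}^2$ is trivially absorbed into the $-C_6\|\sech^{1/2}(x/2)w_1\|_{L^2}^2$ term of \eqref{eq:dtB}. This finishes the three claimed bounds.

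The steps are all algebraic once \eqref{eq:N_0_bounded} and \eqref{eq:motion} are in hand; no new virial or coercivity input is needed. The only mild subtlety is the absorption argument in the last paragraph, where one must track that the small factor $\delta$ comes from the global smallness hypothesis \eqref{eq:ineq_hip}, not from any of the parameters $A,B,\gamma$ chosen in the virial estimates.
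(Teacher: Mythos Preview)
Your proof is correct and follows essentially the same approach as the paper: bound $|N_0|$ via \eqref{eq:N_0_bounded} and $a_1=b_++b_-$, plug into the scalar equations \eqref{eq:motion}, and for \eqref{eq:dtB} use $b_+^2+b_-^2=\tfrac12(a_1^2+a_2^2)$ together with the smallness \eqref{eq:ineq_hip} to absorb the $X^{3/2}$ error. The paper's proof is only a terse three lines; your write-up is simply a more explicit version of the same argument.
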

\begin{proof}
From \eqref{eq:N_0_bounded} and \eqref{eq:b}, it holds
\[
|N_0|\lesssim b_+^2+b_-^2+\int \sech\left( \frac{x}{2}\right) w_1^2.
\]
From \eqref{eq:motion} we conclude the estimates \eqref{eq:dtb} and \eqref{eq:dtb^2}. 
Finally, \eqref{eq:dtB} follows directly from \eqref{eq:dtb^2} and taking $\delta_6>0$ small enough.
\end{proof}

Combining \eqref{eq:dtH_inequality} and \eqref{eq:dtB}, it holds
\begin{equation}\label{eq:dtB-dtH}
\frac{d}{dt} \left(  \mathcal{B}- 2B\frac{C_6}{C_2} \mathcal{H} \right) 
\geq \frac{\nu_0}{4}(a_1^2+a_2^2)+C_6 \left(\| w_2\|_{L^2}^2  +\|\partial_x w_1\|_{L^2}^2 
+\|w_1\|_{L^2}^2\right) .
\end{equation}
By the choice of $A=\delta^{-1}$, the bound $|\varphi_A|\lesssim A$, \eqref{eq:I} and \eqref{eq:ineq_hip}, we have for all $t\geq 0$
\[
|\I| \lesssim A \| u_1\|_{L^2} \| u_2\|_{L^2}\lesssim \delta.
\]
Analogously, using Lemma \ref{lem:v_u}, we have
\[
\begin{aligned}
|\J| \lesssim &~{} B \| v_1\|_{L^2} \| v_2\|_{L^2}  \lesssim B \gamma^{-1}\| u_1\|_{L^2} \| u_2\|_{L^2}\\
\lesssim &~{} B^{5}\| u_1\|_{L^2} \| u_2\|_{L^2} \lesssim B^5\delta^{2}\lesssim \delta,
\end{aligned}
\]
\[
\begin{aligned}
|\M| \lesssim &~{} B \| \partial_x v_1\|_{L^2} \| \partial_x v_2\|_{L^2}  \lesssim B\gamma^{-3/2} \| u_1\|_{L^2} \| u_2\|_{L^2}\\
\lesssim &~{} B^{7} \| u_1\|_{L^2} \| u_2\|_{L^2}
 \lesssim B^{7}\delta^{2}\lesssim \delta,
 \end{aligned}
\]
and
\[
|\N| \lesssim  \| \partial_x v_1\|_{L^2} \| v_2\|_{L^2}  \lesssim \gamma^{-1} \| u_1\|_{H^1} \| u_2\|_{L^2} \lesssim B^{4} \| u_1\|_{H^1} \| u_2\|_{L^2} \lesssim B^{4}\delta^{2}\lesssim \delta.
\]
Then, we have
\[
|\mathcal{H}|\leq \delta.
\]
Estimate $|\mathcal{B}|\leq \delta^2$ is also clear from \eqref{eq:ineq_hip}.
Therefore, integrating estimates \eqref{eq:dtB-dtH} on $[0,t]$ and passing the limit as $t\to \infty$, we have
\begin{equation*}
\int_{0}^\infty \left[a_1^2+a_2^2+\| w_2\|_{L^2}^2  +\|\partial_x w_1\|_{L^2}^2 
+\|w_1\|_{L^2}^2  \right]dt\lesssim \delta.
\end{equation*}
By Lemma \ref{lem:sech_u_w}
 one can see
\begin{equation}\label{eq:int_a1_a2_norm}
\int_{0}^\infty \left(a_1^2+a_2^2
+ \int (u_1^2+(\partial_x u_1)^2+u_2^2 )\sech(x) \right) dt\leq \delta.
\end{equation}
Using the above equation, we will conclude the proof of Theorem \ref{thm1}.

Let
\begin{equation*}
\begin{gathered}
\mathcal{K}(t)=\int \sech(x) u_1^2+\int \sech(x) (\IOpg \partial_x u_2)^2=:\mathcal{K}_1(t)+\mathcal{K}_2(t).
\end{gathered}
\end{equation*}
For $\mathcal{K}_1$, using \eqref{eq:u_linear} and integrating by parts, we have
\[
\begin{aligned}
\frac{d \mathcal{K}_1}{dt}
=&~{} 2\int \sech(x)(u_1 \dot{u_1})
= 2\int \sech(x)(u_1 \partial_x u_2)
= -2\int (\sech'(x)u_1+\sech(x)\partial_x u_1) u_2.
\end{aligned}
\]
Then,
\[
\begin{aligned}
\left|\frac{d}{dt}\mathcal{K}_1(t)\right|
\leq& \int \sech(x)(u_1^2+(\partial_x u_1)^2+ u_2^2).
\end{aligned}
\]
For $\mathcal{K}_2$, passing to the variables $(v_1,v_2)$ (see \eqref{eq:change_variable}) 
\begin{equation*}
\begin{gathered}
\mathcal{K}_2=\int \sech(x) ( \partial_x v_2)^2,
\end{gathered}
\end{equation*}
 and using \eqref{eq:syst_v}, we get
\[
\frac{d}{dt}{\mathcal{K}}_2
= 2\int \sech(x)\partial_x v_2   \partial_x^2  v_1+2\int \sech(x)\partial_x v_2  \partial_x H(x)
=: K_{21}+K_{22}.
\]
Integrating by parts in $K_{21}$, we have
\[
\begin{aligned}
K_{21}
= -2\int (\sech'(x) \partial_x v_2+\sech(x)  \partial_x^2 v_2 ) \partial_x v_1,
\end{aligned}
\]
besides using Cauchy-Schwarz inequality and Lemma \ref{lem:v_u}, we obtain
\[
\begin{aligned}
|K_{21}|
\lesssim & ~{}\int  \sech (x)( (\partial_x v_2)^2+(\partial_x^2 v_2)^2+ ( \partial_x v_1)^2)\\
\lesssim &~{} \gamma^{-2} \int  \sech (x)( u_2^2+ ( \partial_x u_1)^2).
\end{aligned}
\]
For $K_{22}$, we use Cauchy-Schwartz inequality, Corollary \ref{cor:estimates_Sech_Iop_partial} and a similar computation of  \eqref{eq:LambdaP_N}, then
\[
\begin{aligned}
|K_{22}| \lesssim & \int \sech(x)[ (\IOpg \partial_x u_2)^2 + (\IOpg \partial_x N^{\perp})^2]\\
 \lesssim & \int \sech(x)[\gamma^{-1} u_2^2 + (\IOpg \partial_x N^{\perp})^2]
 \lesssim_{\gamma}   a_1^2+  \int \sech(x)[ u_2^2 + u_1^2].
\end{aligned}
\]
Then, we conclude
\[
\begin{aligned}
\left|\frac{d}{dt}{\mathcal{K}}_2(t)\right|
\lesssim_{\gamma}& ~{} a_1^2+ \int  \sech(x)(u_1^2+(\partial_x u_1)^2+ u_2^2).
\end{aligned}
\]
By \eqref{eq:int_a1_a2_norm}, there exists and increasing sequence $t_n\to \infty$ such that
\[
\lim_{n\to \infty} \left[a_1^2(t_n)+a_2^2(t_n)+\mathcal{K}_1(t_n)+\mathcal{K}_2(t_n)\right]=0.
\]
For $t\geq 0$, integrating on $[t,t_n]$, and passing to the limit as $n\to \infty$, we obtain
\[
\mathcal{K}(t)\lesssim \int_{t}^\infty \left[a_1^2+\int \sech(x)(u_1^2+(\partial_x u_1)^2+ u_2^2)\right]dt.
\]
By \eqref{eq:int_a1_a2_norm}, we deduce 
\[
\lim_{t\to\infty} \mathcal{K}(t)=0.
\]
Finally, by \eqref{eq:motion}  and \eqref{eq:N_0_bounded}, we get
\[
\left|\frac{d}{dt}(a_1^2)\right|+\left|\frac{d}{dt}(a_2^2)\right|\lesssim a_1^2+a_2^2+\int \sech(x) u_1^2 .
\]
In a similar way  as above, integrating on $[t,t_n]$ and taking $n\to\infty$, we conclude
\[
a_1^2(t)+a_2^2(t)\lesssim \int_{t}^{\infty} \left[a_1^2+a_2^2+\int u_1^2 \sech(x)\right]dt,
\]
which proves $\lim_{t\to\infty} |a_1(t)|+|a_2(t)|=0$.
By the decomposition of solution \eqref{eq:decomposition} this implies \eqref{eq:local_stability}. This ends the proof of Theorem \ref{thm1}.

\begin{rem}
We have not being able to describe the asymptotic behavior of $(\partial_x u_1)^2$ and $u_2^2$, due to the fact that we are working in the energy space, and any variation of the virial that involves these terms is not well-defined.
In fact, the regularity considered for the variation of $\mathcal{K}_1$ and $\mathcal{K}_2$ is sharp, in the sense that we do not have a gap where to include terms with higher-order derivatives. For example, 
%
		for
		\begin{equation*}
		\begin{gathered}
		\mathcal{K}_3=\int \sech(x) u_2^2,
		\end{gathered}
		\end{equation*}
		its variation  is 
		\[
		\frac{d}{dt}{\mathcal{K}}_3
		= 2\int \sech(x)u_2 (\partial_x \LL u_1+N^{\perp}).
		\]
		One can see that $\LL u_1\in H^{-1}$ and $u_2\in L^2$. Then, the last estimate may not be well-defined.
\end{rem}

\section{Proof of Theorem \ref{thm2}}\label{proof_TH2}

Now we construct initial data for which Theorem \ref{thm1} remains valid. We follow the ideas in \cite{KMM}, with some particular differences in some estimates.

\subsection{Conservation of Energy}
Using \eqref{eq:energy}, \eqref{eq:decomposition},  \eqref{eq:LL}, and by the orthogonality condition \eqref{eq:orthogonal_condition}, we have
\[
\begin{aligned}
	2\big[E(u,v)&-E(Q,0) \big]
	= \int \big[v^2 +u^2 +(\partial_x u)^2-2F(u)\big]- 2E(Q,0)\\
	=&~{} a_2^2 \int \nu_0^2 (\partial_{x}^{-1}\phi_{0})^2
	+a_1^2\int ((\partial_x \phi_{0})^2+V_{0}(x)\phi^2_0)+ \int f'(Q)(a_1\phi_0+u_1)^2\\
	&+\int ((\partial_x u_1)^2+V_{0}(x)u_1^2+u_{2}^2)
	-2\int \big( F(u)-F(Q)-f(Q)(a_1\phi_0+u_1)  \big) \\
	=&~{} a_2^2  \nu_0^2 \|\partial_{x}^{-1}\phi_{0}\|_{L^2}^2
	+a_1^2 \Jap{\LL(\phi_0)}{\phi_0}
	+\Jap{\LL(u_1)}{u_1} +\|u_2\|_{L^2}^2\\
	&-2\int \Big( F(u)-F(Q)-f(Q)(a_1\phi_0+u_1)-f'(Q)\frac{(a_1\phi_0+u_1)^2}{2} \Big).
\end{aligned}
\]
Using \eqref{eq:phi0}, we get
\[
\Jap{\LL(\phi_0)}{\phi_0}=\Jap{\nu_0^2 \partial_x^{-2} \phi_0}{\phi_0}=-\nu_0^2\Jap{\partial_x^{-1}\phi_0}{\partial_x^{-1}\phi_0}=-\nu_0^2,
\]
and, by \eqref{eq:b}, we obtain the identity
\begin{equation}\label{eq:E(u,v)-E(Q,0)}
\begin{aligned}
	2\big[E(u,v)-E(Q,0) \big]
	=& -4\nu_0^2b_{+} b_{-}
	+\Jap{\LL(u_1)}{u_1} +\|u_2\|_{L^2}^2\\
	&-2\int \left( F(u)-F(Q)-f(Q)(a_1\phi_0+u_1)-f'(Q)\frac{(a_1\phi_0+u_1)^2}{2} \right).
\end{aligned}
\end{equation}
Let $\delta_0$ be defined  by
\[
\delta_0^2= b_+^2(0)+b_-^2(0)+\|u_1(0)\|_{H^1}^2+\| u_2(0)\|_{L^2}^2.
\]
Considering  \eqref{eq:E(u,v)-E(Q,0)} at  $t=0$ follows $|2\big[E(u,v)-E(Q,0) \big]|\lesssim \delta_0^2$. Besides,  by the conservation of energy, estimate \eqref{eq:E(u,v)-E(Q,0)} at some $t>0$ gives
\[
| -4\nu_0^2b_{+} b_{-}
+\Jap{\LL(u_1)}{u_1} +\|u_2\|_{L^2}^2\\
-
O(|b_{+}|^3+|b_{-}|^3+\|u_1\|_{H^1}^3)
|
\lesssim \delta_0^2.
\]
Considering the orthogonality condition $\Jap{u_1}{Q'}=\Jap{u_1}{\LL(\phi_0)}=0$,  the parity of $u_1$, and using the Lemma \ref{lem:coercivity}, it follows that for some $\lambda \in (0,1)$, 
\[
\Jap{\LL(u_1)}{u_1}\geq \lambda \|u_1\|_{H^1}^2.
\]
Due to $\| u_1\|_{H^1}+\|u_2\|_{L^2}+|b_{+}|+|b_{-}|\lesssim \delta_0 $, the following estimate holds
\begin{equation}\label{eq:energy_estimate}
\| u_1\|_{H^1}^2+\| u_2\|_{L^2}^2\lesssim |b_+|^2+|b_-|^2+\delta_0^2.
\end{equation}

\subsection{Construction of the graph}

We will construct initial data that directs to global solutions close to the ground state $Q$. To accomplish this objective, we use the energy estimate \eqref{eq:energy_estimate}, Lemma \ref{lem:b} and a standard contradiction argument. 
 
Let $\boldsymbol{\epsilon}=(\epsilon_1, \epsilon_2)\in \mathcal{A}_0$. Let $\boldsymbol{Z_+}$ be as in \eqref{eq:Y_Z}. Then, the condition $\Jap{\boldsymbol{\epsilon} }{\boldsymbol{Z}_{+}}=0$ rewrites
\[
\Jap{\epsilon_1}{\partial_x^{-2}\phi_0}+\Jap{\epsilon_2}{ \nu_0^{-1}\partial_x^{-1}\phi_0}=0.
\]
Define $b_{-}(0)$ and $(u_1(0),u_2(0))$ such that
\[
b_{-}(0)=-\Jap{\epsilon_1}{\partial_x^{-2}\phi_0}=\Jap{\epsilon_2}{ \nu_0^{-1}\partial_x^{-1}\phi_0},
\]
and
\[
\epsilon_1=b_{-}(0) \phi_0+u_1(0)
, \ \ \ 
\epsilon_2=-b_{-}(0) \nu_0 \partial_x^{-1}\phi_0+u_2(0).
\]
Then, it holds
\[
\Jap{u_1(0)}{\partial_x^{-2}\phi_0}
=\Jap{u_2(0)}{\partial_x^{-1}\phi_0}=0.
\]
From \eqref{eq:M_variedad} and \eqref{eq:A0_variedad}, we observe that the initial condition in Theorem \ref{thm2} holds the following decomposition:
\[
\boldsymbol{\phi}_0=\boldsymbol{\phi}(0)
=(Q,0)+(u_1,u_2)(0)+b_{-}(0)\boldsymbol{Y}_{-}+h(\boldsymbol{\epsilon})\boldsymbol{Y}_{+}.
\]
We will prove that there is a function $h(\boldsymbol{\epsilon})$ such that the corresponding solution $\boldsymbol{\phi}$ is global and satisfies \eqref{eq:condition_close}. We show that at least $h(\boldsymbol{\epsilon})=b_{+}(0)$ satisfies this statement.

Let $\delta_0>0$ small enough and $K>1$ large enough to be chosen. Following the scheme of \cite{KMM}, we introduce the following bootstrap estimates
\begin{gather}
\| u_1\|_{H^1}\leq K^2 \delta_0 \quad \mbox{ and }\quad  \|u_2\|_{L^2}\leq K^2 \delta_0, \label{eq:boots_u1_u2}\\
|b_{-}|\leq K\delta_0,\hfill\label{eq:boots_b-}\\
|b_{+}|\leq K^5\delta_0^2.\label{eq:boots_b+}
\end{gather}
Given any $(u_1(0),u_2(0))$ and $b_{-}(0)$ such that
\begin{equation}\label{eq:bound_initial_data}
\| u_1(0)\|_{H^1}\leq \delta_0,\quad  \| u_2(0)\|_{L^2}\leq \delta_0,\quad  |b_{-}(0)|\leq \delta_0  
\end{equation}
and $b_{+}(0)$ satisfying
\[
|b_{+}(0)|\leq K^5 \delta_0.
\]
Let
\[
T=\sup\{t\geq 0 \quad \mbox{ such that }\quad \eqref{eq:boots_u1_u2},\eqref{eq:boots_b-},\eqref{eq:boots_b+} \quad \mbox{hold on } [0,t]\}.
\]
Since $K>1$ follow that $T$ is well-defined in $[0,+\infty]$. Our aim is to prove that there exists at least a value of $b_+(0)\in [-K^5\delta^2, K^5\delta_0^2]$ such that $T=\infty$. To prove this we argue by contradiction: we assume that for all values of $b_{+}(0)\in [-K^5\delta^2, K^5\delta_0^2]$, one has $T<\infty$.

\medskip

The first step is improve the estimates  \eqref{eq:boots_u1_u2}. By \eqref{eq:boots_u1_u2}, we have
\begin{equation}\label{eq:to_improve}
\|u_1\|_{H^1}^2+\|u_2\|_{L^2}^2\leq 2K^4\delta_0^2
\end{equation}
Otherwise,  using the energy estimates \eqref{eq:energy_estimate}  it holds
\[
\|u_1\|_{H^1}^2+\|u_2\|_{L^2}^2\leq C_8 (K^2\delta_0^2+K^{10}\delta^4+\delta_0^2),
\]
for some constant $C_8>0$.  Thus, using the smallness of $\delta_0$ and largeness of $K$, 
it holds
\begin{equation}\label{eq:const_1}
C_8 \leq \frac14 K^4 , \quad C_8 K^{10}\delta_0^2\leq \frac14 K^4, \quad 1\leq \frac14 K^4,
\end{equation}
and we obtain 
\[
\|u_1\|_{H^1}^2+\|u_2\|_{L^2}^2\leq \frac34 K^4\delta_0^2,
\]
that it is a clear improve of the inequality \eqref{eq:to_improve}.

The second step is control $b_{-}$. Using \eqref{eq:dtb^2}, \eqref{eq:boots_u1_u2}, \eqref{eq:boots_b-} and \eqref{eq:boots_b+}, we have
\[
\left| \frac{d}{dt}(e^{2\nu_0t} b^2_{-})\right|
\leq C_9(K^{15}\delta_0^6+K^6\delta_0^3)e^{2\nu_0 t } ,
\]
for some constant $C_9>0$.  Therefore, by integration on  $[0,t]$ and using \eqref{eq:bound_initial_data}, we obtain
\[
b_{-}^2\leq \frac{C_9}{2\nu_0} (K^{15}\delta_0^6+K^6\delta_0^3)+\delta_0^2.
\]
Under the constraints 
\begin{equation}\label{eq:const_2}
\frac{C_9}{2\nu_0} K^{15}\delta_0^4\leq \frac14 K^4,\quad \frac{C_9}{2\nu_0} K^6\delta_0\leq \frac14 K^4,\quad 1\leq \frac14 K^4,
\end{equation}
we get
\[
b_{-}^2\leq \frac34 K^2 \delta_0^2,
\]
 that is an improvement of \eqref{eq:boots_b-}.
 
By the improved estimates \eqref{eq:boots_u1_u2} and \eqref{eq:boots_b-}, and a continuity argument, we observe that if $T<+\infty$, then $|b_{+}(T)|=K^5\delta_0^2$.

The third step is to  analyze the growth of $b_{+}$.  If $t\in[0,T]$ is such that $|b_{+}(t)|=K^5\delta^2_0$, then follows from \eqref{eq:dtb} that
\[
\begin{aligned}
\dfrac{d}{dt}b_{+}^2
\geq & ~{} 2\nu_0 b_{+}^2
-2C_4 |b_{+}|(b_{+}^2+b_{-}^2+\| u_1\|_{H^1}^2)\\
\geq & ~{}
2\nu_0 b_{+}^2
-2C_4 |b_{+} | (b_{+}^2+K^2\delta_0^2+K^4\delta_0^2)\\
\geq & ~{}
2\nu_0 K^{10} \delta_0^4
-C_{10} ( K^{15} \delta_0^6+K^9\delta_0^4),
\end{aligned}
\]
for some constant $C_{10}>0.$ Under the constraints 
\begin{equation}\label{eq:const_3}
C_{10}  K^{15} \delta_0^2\leq \frac12 \nu_0 K^{10}
,\quad
C_{10} K^9 \leq \frac12\nu_0 K^{10},
\end{equation}
the following  inequality holds
\[
\dfrac{d}{dt} b_{+}^2\geq \nu_0 K^{10}\delta_0^4>0.
\]
By standard arguments, such transversality condition implies that $T$ is the first time for which $|b_{+}(t)|=K^5 \delta_0^2$ and moreover that $T$ is continuous in the variable $b_{+}(0)$.
The image of the continuous map
\[
b_{+}(0)\in [-K^5 \delta_0^2,K^5 \delta_0^2] \longmapsto  b_{+}(T)\in \{-K^5 \delta_0^2,K^5 \delta_0^2\}
\]
is exactly $\{-K^5 \delta_0^2,K^5 \delta_0^2\}$ which is a contradiction. 
We conclude that there exists at least one value of $b_{+}(0)\in (-K^5 \delta_0^2,K^5 \delta_0^2)$ such that $T=\infty$, when constraints in \eqref{eq:const_1}, \eqref{eq:const_2}, \eqref{eq:const_3} are fulfilled.
Finally,  to satisfy the conditions \eqref{eq:const_1}, \eqref{eq:const_2}, \eqref{eq:const_3} it is sufficient first to fix $K>0$ large enough, depending only on $C_8$, $C_9$, $C_{10}$, and then to choose $\delta_0>0$ small enough.

 \subsection{Uniqueness and Lipschitz regularity}
 
To finish the proof of Theorem 2, we will prove the following proposition that implies the uniqueness of the choice of $h(\boldsymbol{\epsilon})=b_{+}(0)$, for a given $\boldsymbol{\epsilon} \in \mathcal{A}_0$, as well the Lipschitz regularity of the graph $\mathcal{M}$ (see \eqref{eq:M_variedad})
 
 \begin{prop}
 There exist $C,\delta>0$ such if  $\boldsymbol{\phi}$ and $\widetilde{\boldsymbol{\phi}}$ are two even-odd solution of \eqref{eq:gGB} satisfying 
 \begin{equation}
 \mbox{for all } t\geq 0, \|\boldsymbol{\phi}(t)-(Q,0) \|_{H^1\times L^2}<\delta,  \quad \|\widetilde{\boldsymbol{\phi}}(t)-(Q,0) \|_{H^1\times L^2}<\delta
 \end{equation}
 then, decomposing
 \begin{equation}
 \boldsymbol{\phi}(0)=(Q,0)+\boldsymbol{\epsilon}+b_{+}(0)\boldsymbol{Y}_{+},
 \quad 
 \widetilde{ \boldsymbol{\phi}}(0)=(Q,0)+\tilde{\boldsymbol{\epsilon}}+\tilde{b_{+}}(0)\boldsymbol{Y}_{+}
 \end{equation}
 with $\Jap{\boldsymbol{\epsilon}}{\boldsymbol{Z}_{+}}=\Jap{\tilde{\boldsymbol{\epsilon}}}{\boldsymbol{Z}_{+}}=0$, it holds
 \begin{equation}\label{eq:lip_b+}
 |b_{+}(0)-\tilde{b}_{+}(0)|\leq C\delta^{1/2} \| \epsilon-\tilde{\epsilon} \|_{H^1\times L^2}.
 \end{equation}
 \end{prop}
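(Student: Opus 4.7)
The plan is to study the difference of the two solutions through difference variables, and to leverage the exponentially unstable character of the $b_+$--direction. Decompose $\boldsymbol\phi$ and $\widetilde{\boldsymbol\phi}$ as in Subsection \ref{sub:2.1}, producing $(a_i, b_\pm, u_i)$ and $(\tilde a_i, \tilde b_\pm, \tilde u_i)$, and set $\check a_i = a_i - \tilde a_i$, $\check b_\pm = b_\pm - \tilde b_\pm$, $\check u_i = u_i - \tilde u_i$, and $\check w_i = \zeta_A\check u_i$. From the definitions of $b_-(0)$ and $u(0)$ in terms of $\boldsymbol\epsilon$ in the construction of Theorem \ref{thm2}, $\|\check u(0)\|_{H^1\times L^2} + |\check b_-(0)| \lesssim \|\check{\boldsymbol\epsilon}\|_{H^1\times L^2}$. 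The orthogonalities \eqref{eq:orthogonal_condition} are linear, so they pass to $(\check u_1,\check u_2)$. Subtracting the systems \eqref{eq:u_linear} and \eqref{eq:motion}, the differences satisfy the same linear system with source terms $\check N^\perp, \check N_0$ obtained by differencing the nonlinearity. By Taylor expansion and the a priori smallness $\|u\|+\|\tilde u\|\lesssim \delta$, each such source is linear in the difference variables with a prefactor of size $\delta$; in particular, the differenced analogue of Lemma \ref{lem:b} reads
\begin{equation*}
|\dot{\check b}_+ - \nu_0 \check b_+| + |\dot{\check b}_- + \nu_0 \check b_-| \;\lesssim\; \delta\bigl(|\check b_+| + |\check b_-| + \|\sech^{1/2}(x/2)\check w_1\|_{L^2}\bigr).
\end{equation*}

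Next, apply the virial machinery of Sections \ref{sec:2}--\ref{sec:5} verbatim to $(\check u_1,\check u_2)$, with the same weights, scales $A=\delta^{-1}$, $B=\delta^{-1/19}$, $\gamma=B^{-4}$, and transformed variables $\check v_i,\check z_i$. The coercivity results of Lemmas \ref{lem:coercivity}, \ref{lem:coerc_weight} and \ref{coercividad_final} apply to $\check u_1$ and $\check z_2$ thanks to the preserved orthogonality. The nonlinear error terms estimated in Subsections \ref{control_tJ1}--\ref{control_J4} and throughout Sections \ref{sec:3}--\ref{sec:5} are quadratic in the original variables, so their differences become linear in the difference variables with a $\delta$--prefactor that the existing choice of $A,B,\gamma$ absorbs. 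This yields the analogue of Proposition \ref{prop:virial_I+J+M+N} for the functionals $\check{\mathcal I},\check{\mathcal J},\check{\mathcal M},\check{\mathcal N}$ and, combined with the differenced version of \eqref{eq:dtB}, the analogue of \eqref{eq:dtB-dtH}. Integrating on $[0,\infty)$ and bounding the boundary values of the resulting functional by $C(\|\check{\boldsymbol\epsilon}\|^2 + \check b_+(0)^2)$ (with $C$ depending on $A,B$ but independent of $\delta_0$) produces the key integrability
\begin{equation*}
\int_0^\infty \!\biggl(\check a_1^2 + \check a_2^2 + \|\check w_1\|_{L^2}^2 + \|\partial_x\check w_1\|_{L^2}^2 + \|\check w_2\|_{L^2}^2\biggr)dt \;\lesssim\; \|\check{\boldsymbol\epsilon}\|^2 + \check b_+(0)^2.
\end{equation*}

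Finally, close via the unstable mode. Multiplying $\dot{\check b}_+ - \nu_0 \check b_+ = \check N_0/(2\nu_0)$ by $e^{-\nu_0 t}$ and integrating on $[0,T]$, the boundary term at $T$ vanishes as $T\to\infty$ because $|\check b_+(T)|\lesssim \delta$ is uniformly bounded, yielding
\begin{equation*}
\check b_+(0) \;=\; -\frac{1}{2\nu_0}\int_0^\infty e^{-\nu_0 s}\check N_0(s)\,ds.
\end{equation*}
Combining the pointwise bound on $\check N_0$ from the first paragraph with Cauchy--Schwarz in $s$,
\begin{equation*}
|\check b_+(0)|^2 \;\lesssim\; \delta^2 \int_0^\infty e^{-2\nu_0 s}\bigl(\check b_+^2+\check b_-^2+\|\sech^{1/2}\check w_1\|_{L^2}^2\bigr)ds \;\lesssim\; \delta\bigl(\|\check{\boldsymbol\epsilon}\|^2 + \check b_+(0)^2\bigr),
\end{equation*}
where the second inequality uses the integrability of the previous paragraph together with the uniform bound $|\check b_\pm(t)|\lesssim \delta$. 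Taking $\delta$ small enough absorbs the $\check b_+(0)^2$ term on the right and yields \eqref{eq:lip_b+}. The main obstacle is the careful bookkeeping of the virial machinery applied to the difference: each nonlinear estimate of Sections \ref{sec:2}--\ref{sec:5} must be revisited to check that, after differencing, it retains the same $A,B,\gamma$--dependence and keeps a factor of $\delta$ small enough to be absorbed into the coercive linear terms of Propositions \ref{prop:virial_I}, \ref{prop:virial_J}, \ref{prop:ineq_dtM} and \ref{prop:virial_N}.
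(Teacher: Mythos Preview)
Your approach has a genuine gap in the integrability step. You claim that integrating the differenced analogue of \eqref{eq:dtB-dtH} yields
\[
\int_0^\infty\bigl(\check a_1^2+\check a_2^2+\|\check w_1\|_{L^2}^2+\|\partial_x\check w_1\|_{L^2}^2+\|\check w_2\|_{L^2}^2\bigr)\,dt \;\lesssim\; \|\check{\boldsymbol\epsilon}\|^2+\check b_+(0)^2,
\]
but the boundary contributions of $\check{\mathcal H}$ do not close this way. For instance $|\check{\mathcal I}(t)|\lesssim A\|\check u_1(t)\|_{L^2}\|\check u_2(t)\|_{L^2}$ with $A=\delta^{-1}$, and the only global-in-time control you have on $\|\check u_i(t)\|$ is the crude $\lesssim\delta$ coming from each solution separately. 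This produces a boundary term of order $B\delta$ (or worse, depending on which functional), \emph{not} of order $\|\check{\boldsymbol\epsilon}\|^2+\check b_+(0)^2$. Since $\|\check{\boldsymbol\epsilon}\|$ can be arbitrarily small compared to $\delta$, the resulting inequality is useless for Lipschitz control. To make your scheme work you would first need a global bound $\|\check u(t)\|_{H^1\times L^2}^2\lesssim\|\check{\boldsymbol\epsilon}\|^2+\check b_+(0)^2$, and establishing that is essentially the whole problem.

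The paper bypasses the virial machinery entirely and does exactly that via a differenced energy. Setting $\beta_c=\langle\mathcal L\check u_1,\check u_1\rangle+\|\check u_2\|_{L^2}^2$ (coercive by Lemma \ref{lem:coercivity}) and $\beta_\pm=\check b_\pm^2$, one computes directly from \eqref{eq:u_linear}--\eqref{eq:motion} and the pointwise bound $\|\check N\|_{L^2}\lesssim\delta(|\check a_1|+\|\check u_1\|_{H^1})$ that
\[
|\dot\beta_c|+|\dot\beta_+-2\nu_0\beta_+|+|\dot\beta_-+2\nu_0\beta_-|\;\le\;K\delta\,(\beta_c+\beta_++\beta_-).
\]
A short bootstrap then finishes: assuming $K\delta(\beta_c(0)+\beta_+(0)+\beta_-(0))<\tfrac{\nu_0}{10}\beta_+(0)$ one shows the bootstrap invariant $K\delta(\beta_c+\beta_++\beta_-)\le\nu_0\beta_+$ persists for all time, forcing $\beta_+$ to grow exponentially and contradicting the a priori bound $|\check b_+|\lesssim\delta$. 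Hence the assumed inequality must fail, which is precisely \eqref{eq:lip_b+}. No localized virials, no transformed problem, no scales $A,B$ are needed.
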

 \begin{proof}
 Let  $\boldsymbol{\phi}$ and $\tilde{\boldsymbol{\phi}}$ solutions of \eqref{eq:gGB} likes in the Subsection \ref{sub:2.1}, i.e.,  satisfies the decomposition \eqref{eq:decomposition} and the smallest condition \eqref{eq:ineq_hip}. Then,
 \begin{equation}\label{eq:intial_lips}
 \| u_1\|_{H^1}+ \| \tilde{u}_1\|_{H^1}+
  \| u_2\|_{L^2}+ \| \tilde{u}_2\|_{L^2}+
   | b_{\pm}|+ | \tilde{b}_{\pm}|
   \leq C_0 \delta.
 \end{equation}
Let
 	\begin{equation}\label{eq:lips}
	\begin{aligned}
 	\lips{a}{1},\quad 
 	\lips{a}{2},\quad
 	\lips{b}{+},\quad
 	\lips{b}{-},\quad
 	\lips{u}{1},\\
 	\lips{u}{2},\quad
 	\lips{N}{},\quad
	\ch{N}^{\perp}=N^{\perp}-\widetilde{N}^{\perp},\quad 
 	\lips{N}{0}.\quad \quad 
 	\end{aligned}
	\end{equation}
 Then, by \eqref{eq:motion} and \eqref{eq:u_linear}, $(\check{u}_1,\check{u}_2)$ and $(\check{b}_+,\check{b}_-)$ satisfy the following equations:
 \begin{equation}\label{eq:syst_chu}
 \left\{ \begin{aligned}                                                            
 \dot{\ch{u}}_1&= \partial_x \ch  u_2 \\
 \dot{\ch  u}_2&=\partial_x \LL(\ch  u_1)+\ch N^{\perp}
 \end{aligned}\right.
\quad  \mbox{ and }\quad
 \left\{
 \begin{aligned}
 \dot{\ch{b}}_{+}&= \nu_0 \ch b_{+}+\dfrac{\ch N_0}{2\nu_0} \\
 \dot{\ch b}_{-}&=-\nu_0 \ch b_{-}-\dfrac{\ch  N_0}{2\nu_0}.
 \end{aligned}\right.
 \end{equation}
 Furthermore, let
  \[
 \beta_{+}=~{}\ch{b}_{+}^2,\quad
  \beta_{-}=~{}\ch{b}_{-}^2,\quad
   \beta_{c}=\Jap{\LL \ch u_1}{\ch u_1}+\Jap{\ch u_2}{\ch u_2}.
 \]
Computing the variation of $\beta_c$, we obtain
\[
\begin{aligned}
  \dot{ \beta_{c}}
    =&~{} 2~{} \Jap{\ch N^{\perp}}{\ch u_2}.
\end{aligned}
\] 
 Now, recalling \eqref{eq:Nperp} and \eqref{eq:N_0_bounded}, we get
 \[
 \begin{aligned}
\check{N}
=&\ (Q'-\tilde{a}_1 \partial_x \phi_0-\partial_x \tilde{u}_1)  \bigg[f'(Q)+f''(Q)(a_1\phi_0+u_1)-f'(Q+a_1\phi_0+u_1)\\
& \qquad \qquad \qquad \qquad\qquad  - \Big( f'(Q)+f''(Q)(\tilde{a}_1\phi_0+\tilde{u}_1)-f'(Q+\tilde{a}_1\phi_0+\tilde{u}_1)\Big)\bigg]\\
&+(\ch{a}_1 \partial_x \phi_0+\partial_x \ch{u}_1)(f'(Q)-f'(Q+a_1\phi_0+u_1))\\
&+(\ch{a}_1\phi_0+\ch{u}_1)f''(Q)(\tilde{a}_1 \partial_x \phi_0+\partial_x \tilde{u}_1) .
 \end{aligned}
 \]
  By Taylor expansion, for any $v, \tilde{v}$, it holds 
 \[
 \begin{gathered}
 \big|f'(Q+v)-f'(Q)-f''(Q)v-(f'(Q+\tilde{v})-f'(Q)-f''(Q)\tilde{v})\big|\\
 \lesssim |v-\tilde{v}|(|v|+|\tilde{v}|)(Q^{p-3}+|v|^{p-3}+|\tilde{v}|^{p-3})
 \lesssim |v-\tilde{v}|(|v|+|\tilde{v}|)
 \end{gathered}
 \]
 Then,
  \[
 \begin{aligned}
|\check{N}|
\lesssim &~{} |\ch{a}_1\phi_0+\ch{u}_1|
|Q'-\tilde{a}_1 \partial_x \phi_0-\partial_x \tilde{u}_1|  
\big(
|\tilde{a}_1\phi_0+\tilde{u}_1|+|a_1\phi_0+u_1|
\big)\\%
&+f''(Q)|\ch{a}_1 \partial_x \phi_0+\partial_x \ch{u}_1|
|a_1\phi_0+u_1|
+f''(Q) |\ch{a}_1\phi_0+\ch{u}_1| 
|\tilde{a}_1 \partial_x \phi_0+\partial_x \tilde{u}_1| .
 \end{aligned}
 \]
%
Then, using Sobolev embbeding,  $L^2$- norm of $\ch{N}$ is bounded by
  \begin{equation}\label{eq:bound_ch_N}
 \begin{aligned}
\|\check{N}\|_{L^2}
\lesssim &~{} \|\ch{a}_1\phi_0+\ch{u}_1\|_{L^{\infty}}
\|Q'-\tilde{a}_1 \partial_x \phi_0-\partial_x \tilde{u}_1\|_{L^2}  
\big(
\|\tilde{a}_1\phi_0+\tilde{u}_1\|_{L^{\infty}}+\| a_1\phi_0+u_1\|_{L^{\infty}}
\big)\\%
&+\|\ch{a}_1 \partial_x \phi_0+\partial_x \ch{u}_1\|_{L^2}
\|a_1\phi_0+u_1\|_{L^{\infty}}
+ \|\ch{a}_1\phi_0+\ch{u}_1\|_{L^{\infty}} 
\|\tilde{a}_1 \partial_x \phi_0+\partial_x \tilde{u}_1\|_{L^2} \\
\lesssim &~{} (|\ch{a}_1|+\|\ch{u}_1\|_{H^{1}})
\|Q'-\tilde{a}_1 \partial_x \phi_0-\partial_x \tilde{u}_1\|_{L^2}  
\big(
|\tilde{a}_1|+\|\tilde{u}_1\|_{H^{1}}+| a_1|+\|u_1\|_{H^{1}}
\big)\\%
&+(|\ch{a}_1|+\|\ch{u}_1\|_{H^1})
(|a_1|+\|u_1\|_{H^{1}})
+ (|\ch{a}_1|+\|\ch{u}_1\|_{H^{1}} )
(|\tilde{a}_1| +\|\tilde{u}_1\|_{H^1} )\\
\lesssim &~{} (|\ch{a}_1|+\|\ch{u}_1\|_{H^{1}})
\big[
|a_1|+|\tilde{a}_1|+\|u_1\|_{H^{1}} +\|\tilde{u}_1\|_{H^1} \big].
 \end{aligned}
 \end{equation}
Then, by \eqref{eq:syst_chu}, \eqref{eq:bound_ch_N}, and using
$
|\ch{N}_0|
\lesssim \|\ch{N} \|_{L^2} \|\partial_x^{-1} \phi_0 \|_{L^2},
$
 we get
\begin{equation}\label{eq:b_variation}
|\dot{\beta}_{c}|+|\dot{\beta}_{+}-2\nu_0\beta_{+}|+|\dot{\beta}_{-}+2\nu_0\beta_{-}|
\leq K\delta (\beta_c+\beta_{+}+\beta_{-}) \mbox{ for some } K>0.
\end{equation}
In order to obtain a contradiction, assume that the following holds
\begin{equation}\label{eq:initial_data_lipsc}
0<K\delta  (\beta_{c}(0)+\beta_{+}(0)+\beta_{-}(0))<\frac{\nu_0}{10}\beta_{+}(0).
\end{equation}
Now, we consider the following bootstrap estimate
\begin{equation}\label{eq:boots_lipsc}
K\delta (\beta_{c}+\beta_{+}+\beta_{-})
\leq \nu_0 \beta_{+}.
\end{equation}
and let
\[
T=\sup\left\{ t>0 \mbox{ such that } \eqref{eq:boots_lipsc} \mbox{ holds } \right\}>0.
\]
From \eqref{eq:b_variation} and \eqref{eq:boots_lipsc}, it holds
\begin{equation}\label{eq:boun_bellow_dtbeta+}
\nu_0  \beta_{+}\leq 2\nu_0 \beta_{+}-K\delta (\beta_c+\beta_{+}+\beta_{-})\leq \dot{\beta}_{+}, \  \mbox{for } t \in [0,T].
\end{equation}
Then,  $\beta_{+}$ is positive and increasing function on $[0,T]$.\\
Now, by \eqref{eq:b_variation} and \eqref{eq:boots_lipsc}, we get
\[
\dot{ \beta_{c}}\leq \nu_0 \beta_{+}\leq \dot{\beta}_{+}
\]
integrating and using that $\beta_+(0)>0$ , we obtain
\[
\beta_{c}(t)\leq \beta_{c}(0)+\beta_{+}(t)-\beta_{+}(0)\leq \beta_{c}(0)+\beta_{+}(t).
\]
Furtheremore, by \eqref{eq:initial_data_lipsc} and for $\delta$ small enough, we get
\[
K\delta \beta_{c}(t)
\leq K\delta (\beta_c(0)+\beta_{+}(t))\leq \frac{\nu_0}{10}\beta_{+}(0)+K\delta \beta_{+}(t)
\leq \frac{\nu_0}{5}\beta_{+}(t).
\] 
For $\beta_{-}$, using \eqref{eq:b_variation} and \eqref{eq:boots_lipsc}, we get
\[
\dot{\beta_{-}}\leq -2\nu_0
\beta_{-}+\nu_0\beta_{+},
\]
integrating and using  \eqref{eq:initial_data_lipsc}, we have
\[
\beta_{-}(t)\leq e^{-2\nu_0 t}\beta_{-}(0)+\nu_0 \beta_{+}e^{-2\nu_0 t}\int_{0}^{t} e^{2\nu_0 s}ds
\leq \beta_{-}(0)+\frac{1}{2}\beta_{+}(t).
\]
For $\delta$ small enough, we get
\[
K\delta \beta_{-}(t)
\leq K\delta (\beta_{-}(0)+\beta_{+}(t))
\leq \frac{\nu_0}{10}\beta_{+}(0)+ K\delta \beta_{+}(t)
\leq \frac{\nu_0}{5}\beta_{+}(t).
\]
For $\beta_{+}$, it is clear that holds $K\delta \leq \frac{\nu_0}{5}$ for $\delta$ small enough.\\
We have proved that, for all $t\in [0,T]$,
\[
K\delta (\beta_{c}(t)+\beta_{+}(t)+\beta_{-}(t))\leq \frac{3}{5}\nu_0 \beta_{+}(t).
\]
By a continuity argument, we get that $T=\infty$. However, by the exponential growth \eqref{eq:boun_bellow_dtbeta+} and $\beta_{+}(0)>0$, we obtain a contradiction with \eqref{eq:intial_lips} on $|b_{+}|$.

 Since it holds
\[
\boldsymbol{\epsilon}=\boldsymbol{u}(0)+ b_{-}(0) \boldsymbol{Y}_{-},\qquad 
\tilde{\boldsymbol{\epsilon}}=\tilde{\boldsymbol{u}}(0)+ \tilde{b}_{-}(0) \boldsymbol{Y}_{-},
\]
with $\Jap{\boldsymbol{u}(0)}{\boldsymbol{Y}_{-}}=\Jap{\tilde{\boldsymbol{u}}(0)}{\boldsymbol{Y}_{-}}=0$, and estimates \eqref{eq:initial_data_lipsc} is contradicted, we have proved \eqref{eq:lip_b+}.
 \end{proof}

\appendix

\section{Linear spectral theory for $-\partial_x^2\mathcal L$}\label{A}

In this section we describe the spectral properties of the operator $-\partial_x^2\mathcal L$, where $\mathcal L$ is introduced in \eqref{eq:LL}. Notice that this last operator has been widely studied (see \cite{Martel-Merle1,Martel-Merle2}). For the study of the operator $-\partial_x^2\mathcal L$ we shall start with the following result.

\begin{lem}\label{lem:LL}
Let $p>1$. The operator $\LL$ defined in \eqref{eq:LL} satisfies the following properties.
\begin{enumerate}
	\item \label{CS_LL} The continuum spectrum of $\LL$ is $[1,\infty)$.
	\item The kernel of $\LL$ is only spanned by the function $Q'$.
	\item \label{ker_LL} The generalized kernel of $\LL$ is given by $\hbox{span}\left\{Q'(x), Q'(x)\displaystyle \int_{\epsilon}^{x} (Q'(r))^{-2}dr \right\}$, for any $x\geq \epsilon >0$ or $x\leq \epsilon<0$.
\end{enumerate}
\end{lem}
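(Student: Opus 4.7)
The plan is to address the three claims in order, each leveraging classical one-dimensional Schr\"odinger theory applied to $\mathcal{L} = -\partial_x^2 + V_0(x)$ with $V_0(x) = 1 - f'(Q(x))$.

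For part (\ref{CS_LL}), I would observe that $V_0(x) \to 1$ as $|x|\to\infty$ exponentially fast, since $f'(Q) = p Q^{p-1}$ inherits the exponential decay of $Q$. Writing $\mathcal{L} = (-\partial_x^2 + 1) + W(x)$ with $W(x) = -f'(Q(x))$, the multiplication operator $W$ is $(-\partial_x^2+1)$-compact (it is bounded and has exponentially decaying kernel, hence yields a compact operator from $H^2$ to $L^2$ by Rellich-type arguments). Weyl's theorem on the invariance of the essential spectrum under relatively compact perturbations then gives $\sigma_{\mathrm{ess}}(\mathcal{L}) = \sigma_{\mathrm{ess}}(-\partial_x^2+1) = [1,\infty)$, and since $\mathcal{L}$ is self-adjoint on $H^2(\R)$ and bounded below, the rest of the spectrum below $1$ consists of discrete eigenvalues of finite multiplicity.

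For part (2), differentiating the soliton equation \eqref{eq:soliton_eq} gives $\mathcal{L}(Q') = 0$, so $Q' \in \ker\mathcal{L}$. To see this is the whole $L^2$-kernel, I would invoke Sturm oscillation theory: since $Q$ is even, positive, and strictly decreasing on $(0,\infty)$, the function $Q'$ is odd and vanishes exactly once (at $x=0$), which identifies it as the second eigenfunction of $\mathcal{L}$ (the ground state being even and without zeros, with negative eigenvalue). Hence $0$ is a simple eigenvalue and $\ker\mathcal{L}\cap L^2 = \operatorname{span}\{Q'\}$.

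For part (\ref{ker_LL}), the statement refers to the full two-dimensional classical solution space of the ODE $\mathcal{L}u = 0$, not only the $L^2$-kernel. Having $Q'$ as one solution, I would construct a second linearly independent solution by Abel's reduction of order: setting $v(x) = Q'(x)\int_{\epsilon}^{x} (Q'(r))^{-2}\, dr$ on any interval not containing $0$, a direct computation shows $\mathcal{L}v = 0$ and that the Wronskian $W(Q',v) \equiv 1$, so $\{Q',v\}$ spans the fundamental system on each of the half-lines $\{x > 0\}$ and $\{x < 0\}$. The restriction to $x \geq \epsilon > 0$ (or $x \leq \epsilon < 0$) is needed because $Q'(0) = 0$ makes the integrand singular there.

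The main obstacle is the careful implementation of the Sturm oscillation argument in part (2), which underlies the claim that there are no further $L^2$ solutions of $\mathcal{L}u = 0$; equivalently, one must check that the second solution $v$ produced in part (\ref{ker_LL}) grows (at least polynomially, via the integral, times the decay of $Q'$) to exit $L^2$. Verifying this growth explicitly using the asymptotic behavior $Q'(x) \sim -c\, e^{-|x|}$ as $|x|\to\infty$ closes the loop between parts (2) and (\ref{ker_LL}).
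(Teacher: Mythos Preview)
Your proposal is correct and gives the standard argument for these classical facts. The paper itself does not supply a proof of this lemma: it introduces the operator by noting it ``has been extensively studied'' (citing \cite{S_NLS}) and in the appendix merely states the result, referring to \cite{Martel-Merle1,Martel-Merle2} for background, before moving on to the analysis of $-\partial_x^2\mathcal{L}$. Your interpretation of ``generalized kernel'' as the full two-dimensional fundamental system of the ODE $\mathcal{L}u=0$ (rather than a functional-analytic generalized eigenspace) is the intended one, consistent with how the paper uses the same phrase for the four-dimensional solution space of $-\partial_x^2\mathcal{L}u=0$ in the next lemma; your reduction-of-order construction and the growth check on the second solution are exactly what is needed.
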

In what follows, and with a slight abuse of notation, we will write 
\[
 \int_0^{x} (Q'(r))^{-2}dr
\]
instead of $ \int_{\epsilon}^{x} (Q'(r))^{-2}dr$; but it is understood that the zero limit of integration corresponds to any $\varepsilon$ sufficiently close to zero.

\medskip
 
An important remark is the following:
\begin{rem}
	Note that
	\begin{equation}
	\LL(fg)=g\LL(f)-2f'g'-fg''. \label{eq:LL(fg)}
	\end{equation}
	This property will be useful in the following computations.
\end{rem}
%

Now, we study the properties of the operator $-\PL$.

\begin{rem}\label{DD}
	A direct analysis shows that the null space of $\partial_x^{2}\LL_0=\partial^4_x-\partial_{x}^2$ is spanned  by functions of the type
	\[
	 e^{ x},~e^{-x}, ~1,~ x, \ \ \ \mbox{as } \ \ x\to \infty.
	\]
	Note that this set is linearly independent and among these four functions there is only one $L^2$ integrable in the semi-infinite line $[0,\infty)$. Therefore, since $\PL$ is a compact perturbation of the scalar operator $\partial_x^{2}\LL_0$, the null space of $\PL|_{H^4(\R)}$ is spanned by at most one $L^2$-function.
\end{rem}

\begin{lem}\label{lem:PL}
Let $p>1$. The operators $-\PL$ satisfy the following properties.
\begin{enumerate}
\item \label{CS_PL} The continuum spectrum of $-\PL$ is $[0,\infty)$.
\item \label{ker_PL} The generalized kernel  of $-\PL$ is spanned by
\begin{equation}\label{PL_kernel}
\begin{aligned}
&~{} \hbox{span} \left\{\left[-\dfrac{2}{p-1}+-\dfrac{p+1}{p-1}Q'(x) \displaystyle\int_{0}^x Q^{-1}(r)dr \right],  \right. \\
&~{} \quad \qquad \left. Q'(x)\displaystyle
\int_{0}^x\dfrac{\left(sQ(s)-\int_{-\infty}^{s}Q(y)dy\right)}{(Q')^{2}(s)}ds, ~ Q'(x)~ , ~ Q'(x)\displaystyle \int_{0}^{x} (Q'(r))^{-2}dr\right\}.
\end{aligned}
\end{equation}
\end{enumerate}
\end{lem}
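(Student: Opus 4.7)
For part \eqref{CS_PL}, I will invoke Weyl's theorem on the invariance of the essential spectrum. Writing
\[
-\PL \;=\; (\partial_x^4 - \partial_x^2) \;+\; \partial_x^2\bigl(pQ^{p-1}\cdot\bigr),
\]
the unperturbed operator $\partial_x^4 - \partial_x^2$ is self-adjoint on $L^2(\R)$ with Fourier symbol $\xi^4 + \xi^2 \in [0,\infty)$, so its spectrum is exactly $[0,\infty)$. The perturbation $\partial_x^2(pQ^{p-1}\cdot)$ is relatively compact with respect to the free operator because $Q^{p-1}$ decays exponentially (multiplication by $Q^{p-1}$ is compact from $H^2$ into $L^2$, and composition with $\partial_x^2$ is absorbed into the domain $H^4$). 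Weyl's theorem then yields $\sigma_{\mathrm{ess}}(-\PL)=[0,\infty)$.

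For part \eqref{ker_PL}, the key observation is that $-\PL u = 0$ iff $\partial_x^2(\LL u)=0$, i.e.\ $\LL u = c_0 + c_1 x$ for some $c_0,c_1 \in \R$. Since $-\PL$ is a fourth-order linear ODE operator, its distributional kernel is exactly four-dimensional, and decomposes as
\[
\ker \LL \;\oplus\; \R\cdot u_{(1)} \;\oplus\; \R\cdot u_{(x)},
\]
where $u_{(1)},u_{(x)}$ are any particular solutions of $\LL u = 1$ and $\LL u = \pm x$. Lemma \ref{lem:LL} provides $\ker\LL = \mathrm{span}\{Q',\ Q'(x)\int_0^x (Q'(r))^{-2}\,dr\}$, giving the last two generators in \eqref{PL_kernel}. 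It remains to identify the two inhomogeneous particular solutions and to check linear independence.

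Both particular solutions will be produced via the product identity \eqref{eq:LL(fg)} applied to $f=Q'$ (so $\LL f=0$), combined with the soliton identities $Q''=Q-Q^p$ and $(Q')^2=Q^2-\tfrac{2}{p+1}Q^{p+1}$. First, for $g(x)=\int_0^x Q^{-1}$, direct substitution yields
\[
\LL\!\left(Q'\!\int_0^x Q^{-1}\right) \;=\; -\frac{2Q''}{Q}+\frac{(Q')^2}{Q^2} \;=\; -1+\frac{2p}{p+1}\,Q^{p-1};
\]
together with $\LL(1) = 1-pQ^{p-1}$, the unique combination that kills the $Q^{p-1}$ coefficient and leaves $1$ on the right-hand side is precisely $u_{(1)} = -\tfrac{2}{p-1} - \tfrac{p+1}{p-1}Q'\int_0^x Q^{-1}$. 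Second, for $g'(s) = N(s)/(Q'(s))^2$ with $N(s) = sQ(s) - \int_{-\infty}^s Q$, one obtains $N'(x)=xQ'(x)$ and hence $g''(x) = x/Q'(x) - 2NQ''/(Q')^3$; plugging into \eqref{eq:LL(fg)} makes the two $NQ''/(Q')^2$ terms cancel, leaving $\LL(u_{(x)}) = -x$. Linear independence of the four generators is then immediate: applying $\LL$ to any vanishing combination $\sum \alpha_i u_i =0$ reduces to $\alpha_3\cdot 1 + \alpha_4 \cdot(-x) = 0$, forcing $\alpha_3=\alpha_4=0$, after which Lemma \ref{lem:LL} handles the two homogeneous solutions.

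The main technical subtlety is the bookkeeping in the second verification: the unusual lower limit $-\infty$ in $N(s)$ is chosen so that $g'$ remains controllable as $s\to -\infty$ (where $(Q')^{-2}$ grows exponentially), but any other constant choice would modify $u_{(x)}$ only by a multiple of $Q'\int_0^x(Q')^{-2}\,dr \in\ker\LL$, leaving the four-dimensional description in \eqref{PL_kernel} unchanged.
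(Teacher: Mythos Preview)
Your proposal is correct and follows essentially the same route as the paper. For part \eqref{CS_PL} you are more explicit (Weyl's theorem for the essential spectrum) where the paper simply asserts the result; for part \eqref{ker_PL} both arguments reduce $-\PL u=0$ to $\LL u = c_0+c_1 x$ and build the same four generators, the only cosmetic difference being that the paper derives $u_4$ by reduction of order (solving for $\psi'$ in $u_4=Q'\psi$) whereas you verify the same formula directly via the product identity \eqref{eq:LL(fg)}, and you add an explicit linear-independence check that the paper leaves implicit.
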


\begin{proof}
The proof of \eqref{CS_PL} follows directly from the form of the operator.\\

\noindent
Proof of \eqref{ker_PL}. Clearly 
\[
u_1(x):= Q'(x) , \qquad u_2(x):= Q'(x)\displaystyle \int_{0}^{x} (Q'(r))^{-2}dr,
\]
are solutions to $-\PL (u)=0$. Notice that if $-\PL(u)=0$ is equivalent to $\LL(u)=ax+b$ with $a,b\in \R$. Then we should solve this equation.
First, we consider the case $a=0$. Without loss of generality, we consider $b=1$. One has  $\LL(1)=1-pQ^{p-1}$.
Computing,
	\[
		\begin{aligned}
		 \LL\left(Q'\int_{0}^x Q^n\right)
		 &=\LL(Q')\int_{0}^x Q^n-Q^n(2Q'')-nQ^{n-1}(Q')^2\\
		 &=-2Q^n(Q-Q^p)-nQ^{n-1}\left(Q^2-\dfrac{2}{p+1}Q^{p+1}\right)\\
		 &=Q^{n+1}(-2-n)+\left( \dfrac{2n+2p+2}{p+1}\right)Q^{p+n}.
		\end{aligned}
	\]
If $n=-1$, we have
	\[
	\LL\left(Q'\int_{0}^x Q^{-1}\right)=-1+\dfrac{2p}{p+1}Q^{p-1}.
	\]
Set $u_3(x)=-\dfrac{2}{p-1}\left[1+\dfrac{p+1}{2}Q'(x) \displaystyle\int_{0}^x Q^{-1}(r)dr \right]$.
We observe that $\LL\left(u_3(x)\right)=1.$ Therefore, up to the generalized kernel of $\LL$, $u_3$ solves the equation $\LL(u_3)=1$.

\medskip

Now, without loss of generality, we consider $a=1$ and $b=0$, then we must solve $\LL(u_4)=x$. Using the method of reduction of order with an unknown function $\psi$, consider $u_4=Q'\psi$. Using \eqref{eq:LL(fg)}, we have
\begin{equation*}
\LL(Q' \psi)=-2Q''\psi'-Q'\psi''=x.
\end{equation*}
We obtain that the solution of this equation is
\[\begin{aligned}
u_4(x)
&=Q'(x)\displaystyle
\int_{0}^x(Q')^{-2}(s)\left(\int _{0}^sQ'(y)y dy\right)ds-{\color{red}\left(\int_{-\infty}^{0}Q(y)dy\right)Q'(x)\int_{0}^x (Q')^{-2}(s)ds}\\
&=Q'(x)\displaystyle
\int_{0}^x(Q')^{-2}(s)\left(\int _{0}^s yQ'(y) dy-\int_{-\infty}^{0}Q(y)dy\right)ds\\
&=Q'(x)\displaystyle
\int_{0}^x(Q')^{-2}(s)\left(sQ(s)-\int_{-\infty}^{s}Q(y)dy\right)ds.
\end{aligned}\]
We finally conclude that the fundamental set of solutions for $\PL(u)=0$ is given by
\[
\left\{u_1(x), u_2(x),u_{3}(x),u_4(x) \right\}.
\]
This ends the proof.
\end{proof}

\begin{cor}
	There is, up to constant, only one   solution of $-\PL(u)=0$ in $L^2(\R)$.
\end{cor}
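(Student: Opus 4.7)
The strategy is to exploit the explicit basis of the generalized kernel given by Lemma \ref{lem:PL}, namely
\[
u_1(x)=Q'(x),\qquad u_2(x)=Q'(x)\int_0^x (Q')^{-2}(r)\,dr,
\]
\[
u_3(x)=-\tfrac{2}{p-1}\Bigl[1+\tfrac{p+1}{2}Q'(x)\int_0^x Q^{-1}(r)\,dr\Bigr],\qquad u_4(x)=Q'(x)\int_0^x (Q')^{-2}(s)\Bigl(sQ(s)-\int_{-\infty}^s Q(y)\,dy\Bigr)ds,
\]
and to identify which linear combinations lie in $L^2(\mathbb R)$ by comparing leading asymptotics at $\pm\infty$. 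The starting point is the classical exponential decay $Q(x)\sim c_0\,e^{-|x|}$ and $Q'(x)\sim \mp c_0 e^{-|x|}$ as $x\to\pm\infty$, which already shows $u_1\in L^2(\mathbb R)$.

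\smallskip

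First, I would analyze $u_2$: since $(Q'(r))^{-2}\sim c_0^{-2}e^{2|r|}$, the inner integral is $\sim \frac{1}{2}c_0^{-2}e^{2|x|}$, so $u_2(x)\sim \mp \frac{1}{2}c_0^{-1}e^{|x|}$, which is not in $L^2$. For $u_3$, combining $Q^{-1}(r)\sim c_0^{-1}e^{|r|}$ with $Q'(x)\sim \mp c_0 e^{-|x|}$, one checks that $Q'(x)\int_0^x Q^{-1}$ tends to a finite nonzero constant as $|x|\to\infty$; the outer constant $-\tfrac{2}{p-1}$ does not cancel this in general, so $u_3$ tends to nonzero limits $\ell_\pm$ at $\pm\infty$, and in particular $u_3\notin L^2$. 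For $u_4$, note that $sQ(s)\to 0$ and $\int_{-\infty}^s Q\to \|Q\|_{L^1}>0$ as $s\to+\infty$; therefore the integrand behaves like $-\|Q\|_{L^1}(Q')^{-2}(s)\sim -\|Q\|_{L^1}c_0^{-2}e^{2s}$, so the inner integral grows like $e^{2x}$ and $u_4(x)\sim C_+ e^{x}$ as $x\to +\infty$ (a symmetric calculation, using $\int_{-\infty}^s Q\to 0$ and $sQ(s)\to 0$, gives $u_4(x)\sim C_- e^{|x|}$ as $x\to-\infty$, with constants $C_\pm$ that can be computed explicitly).

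\smallskip

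Next, a general element of the generalized kernel is $u=\sum_{j=1}^4\alpha_j u_j$. The preceding expansions produce four independent linear conditions on $(\alpha_2,\alpha_3,\alpha_4)$: cancellation of the $e^{|x|}$-growth as $x\to+\infty$ and as $x\to-\infty$ (which involves $\alpha_2$ and $\alpha_4$, yielding a $2\times 2$ invertible system by inspection of the coefficients), and cancellation of the nonzero constant limits $\ell_\pm$ of $\alpha_3 u_3$ at $\pm\infty$ (forcing $\alpha_3=0$). These conditions together force $\alpha_2=\alpha_3=\alpha_4=0$, leaving only $\alpha_1 u_1=\alpha_1 Q'\in L^2$, which is the claim.

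\smallskip

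The only delicate point is the asymptotic analysis of $u_4$, where a careful Taylor/integration-by-parts argument is needed to identify the leading growth: one must verify that the cancellation $sQ(s)-\int_{-\infty}^s Q(y)\,dy$ does \emph{not} kill the exponential factor $(Q')^{-2}(s)$, because the bracket tends to the finite nonzero limit $\|Q\|_{L^1}$. Once this is checked, the $L^2$-obstruction is immediate, and the conclusion follows from linear independence of the four asymptotic regimes $\{e^{|x|}\text{ at }+\infty, e^{|x|}\text{ at }-\infty, \text{constant at }\pm\infty\}$ against $u_1$'s exponential decay.
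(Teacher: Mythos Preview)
Your overall strategy---compute the asymptotics of the four basis elements $u_1,\ldots,u_4$ at $\pm\infty$ and read off which linear combinations lie in $L^2$---is essentially what the paper does (via Remark~\ref{DD} together with the explicit limits in Claim~\ref{END}). There is, however, a genuine error in your asymptotic for $u_4$ as $x\to-\infty$. You yourself note that both $sQ(s)\to 0$ and $\int_{-\infty}^s Q\to 0$ there; hence the bracket $sQ(s)-\int_{-\infty}^s Q(y)\,dy$ does \emph{not} tend to a nonzero constant, so your argument for exponential growth (which at $+\infty$ relied precisely on the bracket tending to $-\|Q\|_{L^1}\neq 0$) breaks down. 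In fact the bracket behaves like $c_0(s-1)e^{s}$ as $s\to-\infty$, the integrand like $c_0^{-1}(s-1)e^{-s}$, and after integrating and multiplying by $Q'(x)\sim c_0 e^{-|x|}$ one finds $u_4(x)\sim -x$: \emph{linear} growth, not exponential. This is exactly what the paper obtains in Claim~\ref{END} via L'H\^opital, $u_4(x)\sim x/(pQ^{p-1}(x)-1)$.

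Consequently your ``$2\times 2$ invertible system'' for $(\alpha_2,\alpha_4)$ is not what you claim: at $-\infty$ only $u_2$ carries the $e^{|x|}$ profile, so that condition reads simply $\alpha_2=0$. The good news is that the corrected argument is actually shorter: $\alpha_2=0$ from the exponential blow-up of $u_2$ at $-\infty$; then $\alpha_4=0$ from the exponential blow-up of $u_4$ at $+\infty$; then $\alpha_3=0$ from the nonzero limit $u_3(\pm\infty)=1$ (recall $u_3$ is even). Equivalently, as the paper observes in Remark~\ref{DD}, the fundamental solutions of the limiting operator $\partial_x^4-\partial_x^2$ behave like $e^{x},e^{-x},1,x$, and only one of these is square-integrable on each half-line, which already caps the $L^2$ kernel at dimension one.
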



Now, we focus on describing the eigenfunctions and negative eigenvalues of operator $-\PL$. This analysis will be the main ingredient to describe the stability of the soliton.  Our first result establishes the parity of eigenfunctions associated to nonzero eigenvalues.

\begin{lem}\label{lem:WD}
	If $\phi_0\in H^4(\R)$ is an eigenfunction associated to an eigenvalue $\lambda_0\neq 0$ of the operator $-\PL$, then $\partial^{-1}_{x}\phi_0\in H^3$ and and $\partial_x^{-2} \phi_0 \in H^2$, i.e., are well-defined. Furthermore, if $\phi_0$ is an even function then $\partial_x^{-1}\phi_0$ is an odd function and
	\[
	\int_{0}^{\infty} \phi_{0}(y)dy=0.
	\]

\end{lem}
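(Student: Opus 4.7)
The plan is to invert the eigenvalue equation directly. Since $\lambda_0 \neq 0$, from $-\PL(\phi_0) = \lambda_0 \phi_0$ I would define
\[
\partial_x^{-2}\phi_0 := -\lambda_0^{-1}\LL(\phi_0), \qquad \partial_x^{-1}\phi_0 := -\lambda_0^{-1}\partial_x\LL(\phi_0),
\]
and then argue both that these candidates lie in the stated Sobolev spaces and that they coincide with the natural antiderivatives of $\phi_0$. Because $\phi_0 \in H^4(\R)$ and the potential $V_0 = 1-pQ^{p-1}$ is smooth with all derivatives bounded (as $Q \in H^\infty$), one directly checks $\LL(\phi_0) = -\partial_x^2 \phi_0 + V_0 \phi_0 \in H^2$, hence $\partial_x \LL(\phi_0) \in H^1 \subset L^2$. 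Once $\partial_x^{-1}\phi_0 \in L^2$ is established, combining this with $\partial_x(\partial_x^{-1}\phi_0) = \phi_0 \in H^4$ upgrades the regularity to $\partial_x^{-1}\phi_0 \in H^5 \subset H^3$, and similarly $\partial_x^{-2}\phi_0 \in H^2$.

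To justify that the formal inverse agrees with the standard antiderivative, I would use a uniqueness argument: if $F_1,F_2\in L^2(\R)$ both solve $\partial_x^2 F_i = \phi_0$, then $F_1 - F_2$ is a polynomial of degree at most one, and the only such polynomial in $L^2$ is zero. Thus the above formal definition is the unique $L^2$ primitive, and in particular the relation $\partial_x(\partial_x^{-1}\phi_0) = \phi_0$ together with $\partial_x^{-1}\phi_0\in L^2$ forces $\int_\R \phi_0 = 0$ and
\[
\partial_x^{-1}\phi_0(x) = \Int{-\infty}^x \phi_0(y)\,dy.
\]
This is the one place where the assumption $\lambda_0 \neq 0$ is essential, since for $\lambda_0=0$ one would have to deal with the genuine generalized kernel described in Lemma \ref{lem:PL}.

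For the parity statement, I would use that $V_0$ is even (as $Q$ is even), so $\LL$ preserves parity. Assuming $\phi_0$ is even, $\LL(\phi_0)$ is even, which makes $\partial_x^{-2}\phi_0 = -\lambda_0^{-1}\LL(\phi_0)$ even and $\partial_x^{-1}\phi_0 = -\lambda_0^{-1}\partial_x\LL(\phi_0)$ odd. Evaluating the oddness at $x=0$ gives $\partial_x^{-1}\phi_0(0) = 0$, i.e.\ $\int_{-\infty}^0 \phi_0(y)\,dy = 0$, and evenness of $\phi_0$ then yields $\int_0^{\infty}\phi_0(y)\,dy = 0$.

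The main (and only) subtlety is the identification of $-\lambda_0^{-1}\partial_x\LL(\phi_0)$ with the pointwise integral $\int_{-\infty}^x\phi_0$; everything else — Sobolev regularity, parity transfer through $\LL$, and the vanishing of the half-line integral — follows mechanically once this identification is made.
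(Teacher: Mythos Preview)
Your proof is correct and takes a genuinely different route from the paper. The paper works on the Fourier side: from $(\xi^4+\xi^2)\widehat{\phi_0}-\xi^2 p\widehat{Q^{p-1}\phi_0}=\lambda_0\widehat{\phi_0}$ it computes $\lim_{\xi\to 0}\xi^{-1}\widehat{\phi_0}(\xi)=0$ and $\lim_{\xi\to 0}\xi^{-2}\widehat{\phi_0}(\xi)$ finite, from which it reads off $\int\phi_0=0$ and $\int\partial_x^{-1}\phi_0=0$; for the parity statement it integrates the eigenvalue equation from $0$ to $x$ and uses that the odd quantities $\partial_x\phi_0,\partial_x^3\phi_0,\partial_x(Q^{p-1}\phi_0)$ vanish at the origin. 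Your approach is more algebraic: you simply observe that the eigenvalue equation itself exhibits $-\lambda_0^{-1}\LL(\phi_0)$ as an $H^2$ second antiderivative, and then invoke uniqueness of $L^2$ primitives. This is cleaner and in fact yields sharper regularity ($\partial_x^{-1}\phi_0\in H^5$) without appealing to the exponential decay of $\phi_0$, which the paper's argument tacitly invokes before it has been established. Your parity argument via ``$\LL$ preserves parity'' is likewise shorter than the paper's integration-from-zero computation. Both approaches are valid; yours is arguably the more efficient one here.
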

\begin{proof}
	We have
	\[
	-\PL \phi_0=\lambda_0\phi_0,\ \mbox{ with }\ \lambda_0\neq 0,
	\]
	this is equivalent to
	\begin{equation}
	    \partial_{x}^4\phi_0-\partial_x^2\phi_0+\partial^2_x(pQ^{p-1}\phi_0)=\lambda_0 \phi_0. \label{eq:LP=lambda}
	\end{equation}
	Applying Fourier transform, we have
	\[
	\xi^4 \widehat{\phi_0}+\xi^2 \widehat{\phi_0}-\xi^2 p(\widehat{Q^{p-1} \phi_0})=\lambda_0 \widehat{\phi_0}.
	\]
    From this identity, and the fact that $\phi_0\in H^4(\R)$, we observe that
    \[
    \begin{aligned}
    \lim_{\xi\to 0} \widehat{\phi_0}(\xi)
      &=\lambda_{0}^{-1} \lim_{\xi\to0} \left(\xi^4 \widehat{\phi_0}+\xi^2 \widehat{\phi_0}-\xi^2 \widehat{pQ^{p-1} \phi_0}\right)=0.
    \end{aligned}
    \]
    Also
    \[
    \begin{aligned}
    \lim_{\xi\to 0} \xi^{-1} \widehat{\phi_0}(\xi)
      &=\lambda_{0}^{-1}\lim_{\xi\to0} \left(\xi^3 \widehat{\phi_0}+\xi \widehat{\phi_0}-\xi \widehat{pQ^{p-1} \phi_0}\right)=0,\\
    \lim_{\xi\to 0} \xi^{-2} \widehat{\phi_0}(\xi)
      &=\lambda_{0}^{-1} \lim_{\xi\to0} \left(\xi^2 \widehat{\phi_0}+ \widehat{\phi_0}- \widehat{pQ^{p-1} \phi_0}\right)
      =-p\lambda_{0}^{-1}\widehat{Q^{p-1}\phi_0}(0).
    \end{aligned}
    \]
	Then, we obtain that
	\[
	\int \phi_0(x) =0,\ \ \ \int \int_{-\infty}^{x}\phi_0(s)ds =0.
	\]
	Also, we know that $\widehat{Q^{p-1}\phi_0}$ is well defined (the Fourier transform is an homeomorphism from $L^2$ into $L^2$). Then $\partial_x^{-1} \phi_0$ and $\partial_x^{-2} \phi_0$ are well-defined, and exponentially decreasing, provided $\phi_0$ and its derivatives are also exponentially decreasing.\\

	Now, suppose that $\phi_0$ is an \textbf{even} function. Integrating between 0 and $x$ in \eqref{eq:LP=lambda}, we obtain
	\[
	(\partial_{x}^3\phi_0-\partial_x\phi_0+\partial_x(pQ^{p-1}\phi_0))(x)-
	[\partial_{x}^3\phi_0-\partial_x\phi_0+\partial_x(pQ^{p-1}\phi_0)]\vert_{x=0}
	=\lambda_0 \int_{0}^x\phi_0.
	\]
	Since  $Q^{p-1}$ is an even function and $\partial_x^3\phi_0$, $\partial_x\phi_0$ and $\partial_x(Q^{p-1}\phi_0)$ are odd functions, satisfying $\partial_x^3\phi_0(0)=\partial_x\phi_0 (0)=\partial_x(Q^{p-1}\phi_0)(0)=0$, we conclude
		\[
		\partial_{x}\LL(\phi_0)(x)=(\partial_{x}^3\phi_0-\partial_x\phi_0+\partial_x(pQ^{p-1}\phi_0))(x)=\lambda_0 \int_{0}^x\phi_0(y)dy.
		\]
	Now, given that $\phi_0\in H^4(\R)$, one has $\partial_x^3\phi_0(x), \partial_x\phi_0(x), \partial_x(Q^{p-1}\phi_0)(x)\to 0$ as $x\to \pm \infty$. We conclude
	\[
	\int_{0}^{x} \phi_0(y)dy=-\int_{0}^{-x}\phi_{0}(y)dy\ \  \mbox{  and  }\ \  \int_{0}^{\infty} \phi_0(y)dy=0.
	\]
	This proves the oddness of $\partial_x^{-1}\phi_0$ and concludes the proof.
\end{proof}
We  observe that $-\PL$ is not a self-adjoint operator. In fact, if $\varphi, \psi \in H^4(\R)$, 
\[
\left\langle -\PL\varphi,\psi\right\rangle 
=\Jap{\varphi}{-\PL(\psi)}+\Jap{\varphi}{f'(Q)\partial_x^{2}\psi-\partial_x^{2}(f'(Q)\psi)},
\]
since the operators $\partial^2_x$ and $\LL$ do not commute. For this reason, we need to consider this operator in an appropriate sense. A way to face this problem is to consider the following result.

\begin{lem}
The operator $-\PL$ has only real eigenvalues.
\end{lem}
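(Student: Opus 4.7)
The plan is to reduce the question to a self-adjointness property of $\mathcal L$ by integrating twice. Let me sketch the steps.

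First I would let $\lambda\in\mathbb C$ be any eigenvalue of $-\PL$ with eigenfunction $\phi\in H^4(\R)$, $\phi\not\equiv 0$. The case $\lambda=0$ is trivially real, so I focus on $\lambda\neq 0$. The key preliminary is that Lemma \ref{lem:WD}, whose first part is pure Fourier analysis and makes no use of parity, still applies and yields $\partial_x^{-1}\phi\in H^3(\R)$ and $\partial_x^{-2}\phi\in H^2(\R)$ (and in particular both are in $L^2$). This uses only the cancellations $\widehat\phi(0)=(\partial_\xi\widehat\phi)(0)=0$ coming from the equation itself, not from evenness.

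Next I would rewrite the eigenvalue equation $-\partial_x^2\LL(\phi)=\lambda\phi$ by applying $\partial_x^{-2}$ to both sides, which is legitimate since $\phi$ and $\int_{-\infty}^x\phi$ have mean zero. This gives the cleaner identity $\LL(\phi)=-\lambda\,\partial_x^{-2}\phi$ in $L^2$. Now I take the $L^2$ inner product against $\phi$ (with the convention $\langle f,g\rangle=\int f\bar g$), obtaining
\begin{equation*}
\langle \LL\phi,\phi\rangle = -\lambda\,\langle \partial_x^{-2}\phi,\phi\rangle.
\end{equation*}
Two integrations by parts (together with the $L^2$ decay of $\partial_x^{-1}\phi$) give
\begin{equation*}
\langle \partial_x^{-2}\phi,\phi\rangle = -\|\partial_x^{-1}\phi\|_{L^2}^2,
\end{equation*}
which is a \emph{strictly negative} real number, because $\partial_x^{-1}\phi\equiv 0$ would force $\phi\equiv 0$.

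It then only remains to show $\langle \LL\phi,\phi\rangle\in\R$. Since $\LL=-\partial_x^2+V_0$ has real-valued coefficients and is symmetric on real-valued test functions, splitting $\phi=\phi_r+i\phi_i$ and expanding yields
\begin{equation*}
\langle \LL\phi,\phi\rangle=\langle \LL\phi_r,\phi_r\rangle+\langle \LL\phi_i,\phi_i\rangle+i\bigl(\langle \LL\phi_i,\phi_r\rangle-\langle \LL\phi_r,\phi_i\rangle\bigr),
\end{equation*}
and the imaginary part vanishes by symmetry of $\LL$. Combining these facts,
\begin{equation*}
\lambda=\frac{\langle \LL\phi,\phi\rangle}{\|\partial_x^{-1}\phi\|_{L^2}^2}\in\R,
\end{equation*}
as desired. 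The only nontrivial point, and the one I would check carefully, is the justification of integration by parts and of $\partial_x^{-1}\phi\not\equiv 0$; both follow from Lemma \ref{lem:WD} plus the non-triviality of $\phi$, so no serious obstacle is expected.
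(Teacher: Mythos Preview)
Your proof is correct and is essentially the same argument as the paper's, only unpacked: the paper writes $\psi_0=\partial_x^{-1}\phi$ and observes that $\psi_0$ is an eigenfunction of the symmetric operator $-\partial_x\LL\partial_x$, whereas you apply $\partial_x^{-2}$, pair with $\phi$, and read off the Rayleigh quotient $\lambda=\langle\LL\phi,\phi\rangle/\|\partial_x^{-1}\phi\|_{L^2}^2$ directly. Both routes rest on the same two ingredients---Lemma~\ref{lem:WD} to make the antiderivatives $L^2$, and symmetry of $\LL$---and are equivalent computations.
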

\begin{proof}
Given $\varphi_0\in H^{4}(\R)$ eigenfunction of the operator $-\PL$ with eigenvalue $\lambda_0\in \C$, we consider $\varphi_0=\partial_x \psi_0$ or $\psi_0=\partial_x^{-1}\varphi_0$. We know that this function is well defined by Lemma \ref{lem:WD}. Now, we have
	\[
	-\PL(\partial_{x}\psi_0)=-\PL(\varphi_0)=\lambda_0 \varphi_0=\lambda_0 \partial_x \psi_0.
	\]
Integrating, we obtain
	\[
	-\partial_x\LL(\partial_{x}\psi_0)=\lambda_0 \psi_0.
	\]
We can easily check that the operator $-\partial_x\LL\partial_x$ is self-adjoint with eigenvalue $\lambda_0$ and eigenfunction $\psi_0$. We conclude that $\lambda_0$ is real, hence the eigenvalues of $-\PL$ are real.
\end{proof}

Therefore, the operator $-\PL$ has a similar structure of a self-adjoint operator. This fact allows to follow the strategy of Greenberg and Maddocks-Sachs \cite{Greenberg,MS} for counting the negatives eigenvalues of this operator. \\

The most important property about $-\PL$ is that it possesses only one negative eigenvalue.

\begin{thm}
The operator $-\PL$ has a unique negative eigenvalue $-\nu_0^2<0 $ of multiplicity one. The associated eigenfunction $\phi_0$ satisfies the exponential decay in \eqref{eq:phi0}, along with its derivatives.
\end{thm}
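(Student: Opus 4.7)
The plan is to turn the non-self-adjoint spectral problem for $-\PL$ into a self-adjoint one via the conjugation $\psi := \partial_x^{-1}\phi_0$, which is well-defined in $L^2$ for any nonzero-eigenvalue eigenfunction by Lemma~\ref{lem:WD}. Integrating $-\PL\phi_0 = \lambda \phi_0$ once in $x$ yields
\[
\mathcal{A}\psi := -\partial_x \LL \partial_x \psi = \lambda \psi,
\]
and $\mathcal{A}$ is symmetric on $L^2(\R)$ with form domain $H^1(\R)$ and quadratic form $\langle \mathcal{A}\psi,\psi\rangle = \langle \LL\partial_x\psi,\partial_x\psi\rangle$. Counting and locating negative eigenvalues of $-\PL$ (in the appropriate subspace) thus reduces to doing the same for $\mathcal{A}$.

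To produce one negative eigenvalue, I would exploit $\LL Q = (1-p)Q^p$, which gives $\langle \LL Q,Q\rangle = (1-p)\int Q^{p+1} < 0$ for $p>1$. Since $Q$ has nonzero mean, $Q\notin \partial_x H^1(\R)$; I would replace it by $Q - \eta_L$, where $\eta_L$ is a smooth bump of mass $\int Q$ supported on $[L,2L]$ with $L$ large. The function $Q-\eta_L$ is mean-zero and has antiderivative in $H^1(\R)$, and a direct estimate using the exponential decay of $Q$ shows $\langle \LL(Q-\eta_L), Q-\eta_L\rangle$ stays strictly negative for $L$ large. The associated $\psi_L\in H^1(\R)$ then satisfies $\langle \mathcal{A}\psi_L,\psi_L\rangle<0$, so $\mathcal{A}$---and hence $-\PL$---has at least one negative eigenvalue.

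Simplicity is a Morse-index comparison. The Schr\"odinger operator $\LL=-\partial_x^2+V_0$ with $V_0=1-pQ^{p-1}$ is a classical ground-state linearization whose Morse index is exactly one \cite{S_NLS}. If $\mathcal{A}$ admitted two negative eigenvalues, the corresponding two-dimensional subspace $W\subset H^1(\R)$ would map, under the injective operator $\partial_x: H^1(\R)\to L^2(\R)$, to a two-dimensional subspace $\partial_x W\subset L^2(\R)$ on which $\LL$ is negative definite---a contradiction. This pins the negative eigenvalue to be simple, and we label it $-\nu_0^2$.

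For the exponential decay, I would rewrite the equation as $\LL \phi_0 = -\nu_0^2 \partial_x^{-2}\phi_0$, a second-order elliptic problem with exponentially localized right-hand side, and combine standard Agmon-type estimates for $\LL$ (using $V_0\to 1$ at infinity) with a bootstrap argument on the derivatives of $\phi_0$. The main obstacle is the sharp decay rate $e^{-1^{-}|x|}$ claimed in \eqref{eq:phi0}: the asymptotic characteristic equation $\mu^4-\mu^2+\nu_0^2=0$ has two admissible decaying rates $\mu_\pm=\sqrt{(1\pm\sqrt{1-4\nu_0^2})/2}$, and the slow mode $\mu_-\approx\nu_0$ must be excluded to obtain a rate just below $1$. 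Ruling it out requires using the integrability conditions $\int \phi_0 =0$ and $\int \partial_x^{-1}\phi_0 = 0$ inherited from Lemma~\ref{lem:WD}, together with a shooting/matching argument for the fourth-order ODE across the support of $V_0 - 1$; I expect this to be the most delicate step. The decay of $\partial_x^{-1}\phi_0$, $\partial_x^{-2}\phi_0$, and the derivatives of $\phi_0$ then follow by bootstrap from the equation, and the normalization $\langle \partial_x^{-1}\phi_0,\partial_x^{-1}\phi_0\rangle = 1$ is obtained by simple rescaling.
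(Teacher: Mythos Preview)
Your argument for existence and simplicity is correct and takes a genuinely different route from the paper. The paper invokes oscillation theory for fourth-order problems in the style of Greenberg and Maddocks--Sachs \cite{Greenberg,MS}: among the four fundamental solutions of $-\PL u=0$ computed in Lemma~\ref{lem:PL} and Claim~\ref{END}, only $Q'$ decays as $x\to-\infty$, and since $Q'$ has exactly one zero, the operator has exactly one negative eigenvalue. Your approach instead works variationally on the conjugated self-adjoint operator $\mathcal{A}=-\partial_x\LL\partial_x$: the mean-corrected test function $\partial_x^{-1}(Q-\eta_L)$ makes the form negative (existence), and the injection $\partial_x:H^1\to L^2$ carries any two-dimensional negative subspace for $\mathcal{A}$ to one for $\LL$, contradicting the classical fact that $\LL$ has Morse index one (simplicity). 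Your route is more self-contained, needing only the well-known index of $\LL$ rather than fourth-order oscillation machinery, while the paper's route is shorter once that machinery is granted; both rely on the same $\phi\leftrightarrow\partial_x^{-1}\phi$ correspondence of Lemma~\ref{lem:WD}.

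On the decay, you are more explicit than the paper (which just points to the constant-coefficient limit in Remark~\ref{DD}) in isolating the real obstruction: the asymptotic characteristic equation $\mu^4-\mu^2+\nu_0^2=0$ has a slow decaying root $\mu_-$, and the rate $e^{-1^{-}|x|}$ in \eqref{eq:phi0} requires excluding it. One caution: the two integrability conditions you cite from Lemma~\ref{lem:WD} are automatic for \emph{any} $L^2$ combination of the two decaying modes, so by themselves they do not rule out the slow mode; a genuine matching argument across the support of $f'(Q)$, or independent information on the size of $\nu_0$, is what is actually needed. Neither the paper nor your sketch fully closes this point.
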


This is just a consequence of the fact that the only solution of $-\PL(u)=0$ converging to zero at $-\infty$ is $Q'(x)$, see Claim \ref{END}. This function has a unique zero. The exponential decay is just consequence of Remark \ref{DD}.

\begin{cor}\label{cor:even_eigenfunction}
    Given $\phi_0$ eigenfunction associated to  the unique negative eigenvalue $-\nu_0^2$, then $\phi_0$ is an even function and $\partial_x^{-1}\phi_{0}$ is an odd function.
\end{cor}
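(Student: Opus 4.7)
My plan is to combine a symmetry argument with the simplicity of the negative eigenvalue, and then exclude the odd case via the zero-counting information already used to prove the preceding theorem.

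\medskip

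First, I would note that since $Q$ is the even, positive ground state of \eqref{eq:soliton_eq}, the potential $V_0(x)=1-pQ^{p-1}(x)$ appearing in $\LL$ is an even function of $x$, so that the operator $-\PL$ commutes with the reflection $R\colon u(x)\mapsto u(-x)$. The eigenspace associated to $-\nu_0^2$ is one-dimensional by the preceding theorem, so it is invariant under $R$; since $R^2=I$, the eigenfunction $\phi_0$ must satisfy $R\phi_0=\pm\phi_0$, i.e., $\phi_0$ is either even or odd.

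\medskip

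Next, I would rule out the odd case. The argument sketched after the statement of the theorem locates the negative eigenvalue of $-\PL$ by Sturm--Liouville--type oscillation theory for fourth order operators (Greenberg, Maddocks--Sachs): the negative eigenvalues of $-\PL$ are counted by the zeros of the unique solution of $-\PL(u)=0$ that decays at $-\infty$, namely $Q'$, and the eigenfunctions associated to these negative eigenvalues inherit nodal properties from that oscillation theory. In particular, the eigenfunction associated to the single negative eigenvalue $-\nu_0^2$ is the ``ground state'' and, as in the classical second-order case, has no zeros on $\R$. This also comes out transparently from the self-adjoint reformulation $-\partial_x\LL\partial_x\psi_0=-\nu_0^2\psi_0$ with $\psi_0=\partial_x^{-1}\phi_0$ (well defined by Lemma~\ref{lem:WD}), for which the ground-state eigenfunction is sign-definite. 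An odd function on $\R$ must vanish at $x=0$, contradicting this non-vanishing; hence $\phi_0$ is even.

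\medskip

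Finally, once evenness of $\phi_0$ is established, the conclusion that $\partial_x^{-1}\phi_0$ is odd is exactly the content of the last part of Lemma~\ref{lem:WD}, which I would simply invoke. The exponential decay asserted in \eqref{eq:phi0} transfers to $\partial_x^{-1}\phi_0$ thanks to the cancellation $\int_0^{\infty}\phi_0=0$ proven there.

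\medskip

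The main obstacle is the third paragraph: justifying rigorously that the ground-state eigenfunction of $-\PL$ (or equivalently of $-\partial_x\LL\partial_x$) cannot vanish on $\R$. For a generic fourth-order operator this is nontrivial, but in the present setting it is precisely the nodal information delivered by the Greenberg/Maddocks--Sachs count that underlies the preceding theorem, so I would only need to quote that theory rather than redevelop it.
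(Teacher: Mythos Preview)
Your reflection-symmetry step is correct and is exactly what the paper does: since $-\PL$ commutes with $x\mapsto -x$ and the eigenspace at $-\nu_0^2$ is one-dimensional, $\phi_0$ must be either even or odd. The paper's own proof in fact stops here and simply asserts $\phi_0(-x)=\phi_0(x)$ from simplicity, so you are right to flag that the odd alternative still needs to be excluded.

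The gap is in how you exclude it. Neither the Greenberg/Maddocks--Sachs count nor the self-adjoint reformulation yields the nodal statement you invoke. The oscillation argument behind the preceding theorem says only that the number of negative eigenvalues of $-\PL$ equals the number of zeros of the decaying solution $Q'$; it gives no nodal information about the eigenfunction $\phi_0$ itself. For fourth-order self-adjoint operators such as $-\partial_x\LL\partial_x$, ground states are \emph{not} sign-definite in general, and even if $\psi_0=\partial_x^{-1}\phi_0$ were sign-definite this would say nothing about zeros of $\phi_0=\psi_0'$. So ``quoting that theory'' does not close the gap. A clean fix, using only facts already recorded in the paper: from $\LL\phi_0=\nu_0^2\,\partial_x^{-2}\phi_0$ one has $\langle\LL\phi_0,\phi_0\rangle=-\nu_0^2\|\partial_x^{-1}\phi_0\|_{L^2}^2<0$; but the unique negative direction $Y_0$ of $\LL$ is even, so $\LL\ge 0$ on odd functions, which would force $\langle\LL\phi_0,\phi_0\rangle\ge 0$ if $\phi_0$ were odd. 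This contradiction rules out the odd case, and then Lemma~\ref{lem:WD} finishes exactly as you say.
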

\begin{proof}
Consider the function $\psi(x)=\phi_0(-x)$, we have
\[
-\PL(\psi)=\partial_{x}^4(\psi)-\partial_{x}^2 \psi +\partial_{x}^2(pQ^{p-1}\psi).
\]
Notice that $Q^{p-1}, \partial_{x}^2(Q^{p-1})$ are even functions and $\partial_{x}(Q^{p-1})$ is an odd function, also \smallskip
\begin{equation*}
    \begin{aligned}
        \partial_{x}^2(pQ^{p-1}\psi)(x)
        = \partial_{x}^2(pQ^{p-1}(x)\phi_0(-x))
        &= \partial_{x}^2(pQ^{p-1}\phi_0)(-x).
    \end{aligned}
    \smallskip
\end{equation*}
Then, we observe that
\begin{equation*}
    -\PL(\psi)
    =-\PL(\phi_0)(-x)=-\nu_0^2 \phi_{0}(-x)=-\nu_0^2 \psi(x).
\end{equation*}
Finally, since $\lambda_0$ is the unique negative eigenvalue of multiplicity one, we conclude that $\phi_0(x)=\psi(x)=\phi_0(-x)$, i.e., $\phi_0$ is an even function. Finally, by Lemma \ref{lem:WD} we  know $\partial^{-1}_x\phi_0$ is an odd function.
\end{proof}

\subsection{Asymptotic behavior of fundamental solutions of $-\PL(u)=0$}\label{Ap:limits}

The following computations are direct, but we include them by the sake of completeness. They are just simple applications of L'{}H\^opital's rule. 

\begin{claim}\label{END}
The functions $u_1, u_2, u_3$ and $u_4$ found in Lemma \ref{lem:LL} and \ref{lem:PL} satisfy
\[
\lim_{x\to-\infty} u_1 (x) =0, \quad  \lim_{x\to-\infty} u_2(x) = +\infty, \quad  \lim_{x\to-\infty} u_3(x) =1, \quad \lim_{x\to-\infty} u_4(x) =-\infty.
\]
\end{claim}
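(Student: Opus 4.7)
\textbf{Proof proposal for Claim \ref{END}.} Each of the four limits reduces to a combination of the well-known exponential behaviour of $Q$ at $-\infty$, namely $Q(x)\sim c_0 e^{x}$ and $Q'(x)\sim c_0 e^{x}$ as $x\to -\infty$ (with $c_0>0$, because $Q$ is even and peaks at $0$), together with the two algebraic identities
\[
Q''=Q-Q^{p}, \qquad (Q')^{2}=Q^{2}-\tfrac{2}{p+1}Q^{p+1},
\]
the second one being the first integral of the ODE for $Q$. The limit for $u_1=Q'$ is then immediate from the exponential decay. The remaining three are indeterminate forms of type $0\cdot\infty$ that will be handled by L'Hopital's rule applied to the ratio $\int_{0}^{x}(\cdot)\,/\,(1/Q'(x))$, after rewriting; the singularity of the integrands at $r=0$ is harmless because the convention $\int_{0}^{x}=\int_{\epsilon}^{x}$ fixed in the discussion of Lemma~\ref{lem:PL} places $\epsilon$ on the same side of $0$ as $x$, so no singular integration is performed.

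For $u_2=Q'(x)\int_{0}^{x}(Q'(r))^{-2}\,dr$ I would write the product as $\int_{0}^{x}(Q')^{-2}/(1/Q')$ and differentiate top and bottom, getting
\[
\frac{(Q')^{-2}(x)}{-Q''(x)/(Q'(x))^{2}}=-\frac{1}{Q''(x)}.
\]
Since $Q''=Q-Q^{p}\to 0$ as $x\to-\infty$ and $Q''>0$ there (because $0<Q<1$ near $-\infty$), one concludes a divergent limit. For $u_3$ the same manoeuvre on $Q'(x)\int_{0}^{x}Q^{-1}(r)\,dr$ yields
\[
\frac{Q^{-1}(x)}{-Q''(x)/(Q'(x))^{2}}=-\frac{(Q')^{2}}{Q\,Q''}=-\frac{Q-\tfrac{2}{p+1}Q^{p}}{Q-Q^{p}}\xrightarrow[x\to-\infty]{}-1,
\]
where I used the two identities above and $Q^{p-1}\to 0$. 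Substituting in the definition of $u_3$ gives the clean limit
\[
-\tfrac{2}{p-1}-\tfrac{p+1}{p-1}\cdot(-1)=\tfrac{(p+1)-2}{p-1}=1.
\]

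The computation for $u_4$ is the one I expect to be most delicate, and it is the place where the precise normalisation $\int_{-\infty}^{s}Q(y)\,dy$ inside the integrand pays off. Applying L'Hopital to $Q'(x)\int_{0}^{x}(Q')^{-2}(s)\bigl(sQ(s)-\int_{-\infty}^{s}Q\bigr)ds$ gives, after cancelling $(Q')^{2}$,
\[
-\frac{\,x\,Q(x)-\int_{-\infty}^{x}Q(y)\,dy\,}{Q''(x)}.
\]
The key point is that both $xQ(x)$ and $\int_{-\infty}^{x}Q$ are of order $e^{x}$ as $x\to-\infty$, so one has to extract the correct $xe^{x}$-type leading order from the numerator: a direct expansion yields $xQ(x)-\int_{-\infty}^{x}Q\sim c_0\,e^{x}(x-1)$, whereas $Q''\sim c_0 e^{x}$. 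The ratio therefore grows linearly in $|x|$, producing the divergent limit claimed. This is the only step where I have to be careful with the subleading behaviour of $Q$, and in particular to verify that the correction terms from $Q-c_0 e^{x}$ do not interfere — a standard bootstrap on the ODE $Q''=Q-Q^{p}$ gives $Q(x)=c_0 e^{x}+O(e^{px})$ with $p>1$, which is more than sufficient.
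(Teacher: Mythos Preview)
Your proposal is correct and follows essentially the same L'H\^opital route as the paper. The only difference is in the handling of $u_4$: after the first L'H\^opital step you invoke the explicit asymptotic $Q=c_0e^{x}+O(e^{px})$, whereas the paper simply applies L'H\^opital once more to $\bigl(xQ-\int_{-\infty}^{x}Q\bigr)/(-Q'')$, obtaining $x/(pQ^{p-1}-1)$ directly and avoiding any expansion.
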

\begin{proof} One has
\begin{enumerate}
	\item 
		\begin{eqnarray*}
  		 \lim_{x\to-\infty} u_1 (x)
  		 &=& \lim_{x\to-\infty} Q'(x)=0.
		\end{eqnarray*}

	\item Second,
		\begin{eqnarray*}
		 \lim_{x\to-\infty} u_2(x)
		 &=& \lim_{x\to-\infty} Q'(x)\int_{0}^x (Q'(r))^{-2}dr = \lim_{x\to-\infty} \dfrac{\int_{0}^x (Q'(r))^{-2}dr}{(Q'(x))^{-1}}\\
		 &=& \lim_{x\to-\infty} \dfrac{ (Q'(x))^{-2}}{-(Q'(x))^{-2}Q''(x)}= \lim_{x\to-\infty} \dfrac{1}{-Q''(x)}= \lim_{x\to-\infty} \dfrac{1}{Q(x)(Q^{p-1}(x)-1)}
		 = +\infty.
		\end{eqnarray*}

	\item Third,
	\[
		\begin{aligned}
		 & \lim_{x\to-\infty} u_3(x) = \lim_{x\to-\infty} -\dfrac{2}{p-1}\left[1+\dfrac{p+1}{2}Q' \displaystyle\int_{0}^x Q^{-1}(r)dr \right]\\
		 &\quad = -\dfrac{2}{p-1}- \dfrac{p+1}{p-1} \lim_{x\to -\infty} Q'(x) \displaystyle\int_{0}^x Q^{-1}(r)dr = -\dfrac{2}{p-1}- \dfrac{p+1}{p-1} \lim_{x\to -\infty}  \dfrac{\displaystyle\int_{0}^x Q^{-1}(r)dr}{(Q'(x))^{-1}}\\
		 &\quad = -\dfrac{2}{p-1}- \dfrac{p+1}{p-1} \lim_{x\to -\infty}  \dfrac{ Q^{-1}(x)}{-(Q'(x))^{-2}Q''(x)}= -\dfrac{2}{p-1}+ \dfrac{p+1}{p-1} \lim_{x\to -\infty}  \dfrac{ Q^{-1}(x)(Q^2-\frac{2}{p+1}Q^{p+1})}{Q-Q^{p}}\\
		 &\quad = -\dfrac{2}{p-1}+ \dfrac{p+1}{p-1} \lim_{x\to -\infty}  \dfrac{ (1-\frac{2}{p+1}Q^{p-1})}{1-Q^{p-1}}
		 = 1.
		\end{aligned}
		\]

	\item Finally,
		\begin{eqnarray*}
		 & & \lim_{x\to-\infty} u_4(x)= \lim_{x\to-\infty}  Q'(x)\displaystyle
\int_{0}^x(Q')^{-2}\left(sQ(s)-\int_{-\infty}^{s}Q\right)ds\\
		&=&  \lim_{x\to-\infty}  \dfrac{\displaystyle
		\int_{0}^x(Q')^{-2}\left(sQ(s)-\int_{-\infty}^{s}Q\right)ds}{(Q'(x))^{-1}}= \lim_{x\to-\infty}  \dfrac{\displaystyle(Q'(x))^{-2}\left(xQ(x)-\int_{-\infty}^{x}Q\right)}{-(Q'(x))^{-2}Q''(x)}\\
		&=&\lim_{x\to-\infty}  \dfrac{\displaystyle\left(xQ(x)-\int_{-\infty}^{x}Q\right)}{-Q''(x)} =  \lim_{x\to-\infty}  \dfrac{\displaystyle\left(xQ(x)-\int_{-\infty}^{x}Q\right)}{Q(x)(Q^{p-1}(x)-1)}= \lim_{x\to-\infty}  \dfrac{\left(xQ'(x)+Q(x)-Q(x)\right)}{Q'(x)(pQ^{p-1}(x)-1)}\\
		&=& \lim_{x\to-\infty}  \dfrac{x}{pQ^{p-1}(x)-1}=-\infty.
		\end{eqnarray*}
\end{enumerate}
\end{proof}

	\section{Proof of Claims \ref{claim:P_CZ_vi_x} and \ref{claim:R_CZ_vi_xx}}
	\subsection{Relation between $\partial_x z_i$  and $\partial_x v_i$} \label{ap:vi_x} We prove Claim \ref{claim:P_CZ_vi_x}. First, recall that $z_i=\chi_A \zeta_B v_i$ and
	\begin{equation}
	\begin{aligned}
	\partial_x z_i = (\chi_A \zeta_B)' v_i+\chi_A \zeta_B \partial_x v_i.
	\end{aligned}
	\end{equation}
	Then
	\begin{equation}\label{eq:AA}
	\begin{aligned}
	(\partial_x z_i)^2 &= ((\chi_A \zeta_B)' v_i)^2+2(\chi_A \zeta_B)'\chi_A \zeta_B v_i \partial_x v_i+(\chi_A \zeta_B \partial_x v_i)^2.
	\end{aligned}
	\end{equation}
	For a function $P(x)\in C^{1}(\R)$, we consider
	\[
		\int P(x)\chi_A^2\zeta_B^2(\partial_x v_i)^2 .
	\]
	Using \eqref{eq:AA}, we obtain
	\begin{equation}\label{eq:R_chiBzetaB vi_x}
	\begin{aligned}
	\int P(x)\chi_A^2\zeta_B^2(\partial_x v_i)^2 
	=& 
	\int P(x) (\partial_x z_i)^2 
	- \int P(x) [(\chi_A \zeta_B)']^2 v_i^2
	-\frac12 \int P(x) ((\chi_A \zeta_B)^2)' \partial_x (v_i^2)\\
	=& 
	\int P(x) (\partial_x z_i)^2 
	- \int P(x) [(\chi_A \zeta_B)']^2 v_i^2
	+\frac12 \int [P(x) ((\chi_A \zeta_B)^2)']' v_i^2.
	\end{aligned}
	\end{equation}
	Now
	\[
	\begin{aligned}
	P(x) [(\chi_A \zeta_B)']^2
		=& 	P(x)\zeta_B^2 \bigg[(\chi_A' )^2+(\chi_A^2)'\frac{\zeta_B'}{\zeta_B}\bigg]+P(x)(\chi_A \zeta_B)^2\left( \frac{\zeta_B'}{\zeta_B}\right)^2.
	\end{aligned}
	\]
	Then, we have
	\[
	\begin{aligned}
	\int P(x) [(\chi_A \zeta_B)']^2 v_i^2
	=\int P(x)\left(\frac{\zeta_B'}{\zeta_B}\right)^2 z_i^2 
	+\int P(x)  \bigg[(\chi_A' )^2+(\chi_A^2)' \frac{\zeta_B'}{\zeta_B}\bigg] \zeta_B^2 v_i^2.
	\end{aligned}
	\]
	As for the third integral in the RHS of \eqref{eq:R_chiBzetaB vi_x}, we have
	\[
	\begin{aligned}
	\ [P(x) (\chi_A^2 \zeta_B^2)']'
	=&~{}P'(x) (\chi_A^2 \zeta_B^2)'+P(x) (\chi_A^2 \zeta_B^2)''\\
	=&~{}P'(x)\zeta_B^2\left [(\chi_A^2)'+2\chi_A^2\left( \frac{\zeta_B'}{\zeta_B}\right)\right]\\
	&+P(x)  \zeta_B^2 \left[ (\chi_A^2)''+4(\chi_A^2)' \frac{\zeta_B'}{\zeta_B}+2\chi_A^2
	\left[ \left(\frac{\zeta_B'}{\zeta_B}\right)^2+\frac{\zeta_B''}{\zeta_B}\right]\right]\\
		=&~{}  
	 2\chi_A^2\zeta_B^2 \left[ P'(x) \frac{\zeta_B'}{\zeta_B}+P(x) 
	\left[ \left(\frac{\zeta_B'}{\zeta_B}\right)^2+\frac{\zeta_B''}{\zeta_B}\right]\right]\\
	&~{} +P(x)  \zeta_B^2 \left[ (\chi_A^2)''+4(\chi_A^2)' \frac{\zeta_B'}{\zeta_B}\right]
	+P'(x)\zeta_B^2(\chi_A^2)'.
	\end{aligned}
	\]
	Then, we obtain
	\[
	\begin{aligned}
	\int [P(x) ((\chi_A \zeta_B)^2)']' v_i^2
	=&~{} 2\int   \left[ P'(x) \frac{\zeta_B'}{\zeta_B}+P(x) 
	\left[ \left(\frac{\zeta_B'}{\zeta_B}\right)^2+\frac{\zeta_B''}{\zeta_B}\right]\right]z_i^2\\
	&+\int P(x)   \left[ (\chi_A^2)''+4(\chi_A^2)' \frac{\zeta_B'}{\zeta_B}\right] \zeta_B^2 v_i^2
	+\int P'(x)(\chi_A^2)' \zeta_B^2 v_i^2.
	\end{aligned}
	\]
	We conclude in \eqref{eq:R_chiBzetaB vi_x}:
		\begin{equation}\label{eq:P_CZ_B_vix_final}
	\begin{aligned}
	\int P(x)\chi_A^2\zeta_B^2(\partial_x v_i)^2 =&~{} 
	\int P(x) (\partial_x z_i)^2 
	- \int P(x) [(\chi_A \zeta_B)']^2 v_i^2
	+\frac12 \int [P(x) ((\chi_A \zeta_B)^2)']' v_i^2\\
	=&~{} \int P(x) (\partial_x z_i)^2 -\int P(x)\left(\frac{\zeta_B'}{\zeta_B}\right)^2 z_i^2 
	- \int P(x)  \bigg[(\chi_A' )^2+(\chi_A^2)' \frac{\zeta_B'}{\zeta_B}\bigg] \zeta_B^2 v_i^2  \\
	&~{}  + \int   \left[ P'(x) \frac{\zeta_B'}{\zeta_B}+P(x) 
	\left[ \left(\frac{\zeta_B'}{\zeta_B}\right)^2+\frac{\zeta_B''}{\zeta_B}\right]\right]z_i^2\\
	&+ \frac12 \int P(x)   \left[ (\chi_A^2)''+4(\chi_A^2)' \frac{\zeta_B'}{\zeta_B}\right] \zeta_B^2 v_i^2
	+ \frac12 \int P'(x)(\chi_A^2)' \zeta_B^2 v_i^2\\
		=&~{}	\int P(x) (\partial_x z_i)^2 +
		\int   \left[ P'(x) \frac{\zeta_B'}{\zeta_B}+P(x) 
		\frac{\zeta_B''}{\zeta_B}\right] z_i^2\\
		&~{} +\int \mathcal{E}_1(P(x),x)\zeta_B^2 v_i^2,
	\end{aligned}
	\end{equation}
	where
	\begin{equation}\label{eq:E1}
	\mathcal{E}_1(P(x),x)
	= P(x)   \left[ \chi_A''\chi_A+(\chi_A^2)' \frac{\zeta_B'}{\zeta_B}\right] 
	+ \frac12 P'(x)(\chi_A^2)' .
	\end{equation}
	Finally, \eqref{cota_final} follows directly from the definition of $\mathcal{E}_1(P(x),x)$ and Remark \ref{acotados} replacing $B$ by $A$. This ends the proof of Claim \ref{claim:P_CZ_vi_x}.	
	
	\subsection{Relation between $\partial_x^2 z_i$  and $\partial_x^2 v_i$} \label{ap:vi_xx}
	
	Now we prove  Claim \ref{claim:R_CZ_vi_xx}. The following relation is obtained from $z_i$ in \eqref{eq:def_psiB}:
					\[
					\begin{aligned}
					\partial_x^2 z_i =&~{} (\chi_A \zeta_B)'' v_i + 2(\chi_A \zeta_B)' \partial_x v_i + \chi_A \zeta_B \partial_x^2v_i,\\
					(\partial_x^2 z_i)^2=&~{}
					[(\chi_A \zeta_B)'' v_i]^2 + 4[(\chi_A \zeta_B)' \partial_x v_i ]^2+ [\chi_A \zeta_B \partial_x^2v_i]^2\\
					&+4 (\chi_A \zeta_B)'' v_i (\chi_A \zeta_B)' \partial_x v_i +4(\chi_A \zeta_B)' \partial_x v_i \chi_A \zeta_B \partial_x^2v_i+2 (\chi_A \zeta_B)'' v_i  \chi_A \zeta_B \partial_x^2v_i\\
					= &~{} [(\chi_A \zeta_B)'' v_i]^2 + 4[(\chi_A \zeta_B)' \partial_x v_i ]^2+ [\chi_A \zeta_B \partial_x^2v_i]^2\\
					&+2 (\chi_A \zeta_B)''  (\chi_A \zeta_B)' \partial_x( v_i^2) +[\chi_A^2 \zeta_B ^2]'  \partial_x[(\partial_x v_i)^2]+2 (\chi_A \zeta_B)''  \chi_A \zeta_B v_i  \partial_x^2v_i.
					\end{aligned}
					\]
					Then,
					\begin{equation}
	\begin{aligned}
	(\chi_A \zeta_B \partial_x^2 v_i)^2
	=&~{} (\partial_x^2 z_i)^2
	 - ((\chi_A \zeta_B)'' v_i)^2
	 - 4((\chi_A \zeta_B)' \partial_x v_i)^2\\
	&-2(\chi_A \zeta_B)'' (\chi_A \zeta_B)' \partial_x (v_i^2)
	-2(\chi_A \zeta_B)'' \chi_A \zeta_B v_i  \partial_x^2 v_i\\
	&-[\chi_A^2 \zeta_B ^2]'  \partial_x[(\partial_x v_i)^2]. 
	\end{aligned}
	\end{equation}
	{
			\color{black}
	Now,
	\begin{equation}
	\begin{aligned}
	& \int  R(x)(\chi_A \zeta_B \partial_x^2 v_i)^2\\
	&= \int R(x)(\partial_x^2 z_i)^2 +\int R(x) \left( -[(\chi_A \zeta_B)'' ]^2 v_i^2 - 4[(\chi_A \zeta_B)']^2 (\partial_x v_i )^2 \right)\\
					&\quad +\int R(x)  \left[ {}~ -( [(\chi_A \zeta_B)']^2)' \partial_x( v_i^2) -[(\chi_A \zeta_B)^2]'  \partial_x[(\partial_x v_i)^2]-2 (\chi_A \zeta_B)''  \chi_A \zeta_B v_i  \partial_x^2v_i \right]\\
	&= \int R(x)(\partial_x^2 z_i)^2 +\int R(x) ( -[(\chi_A \zeta_B)'' ]^2 v_i^2 - 4[(\chi_A \zeta_B)']^2 (\partial_x v_i )^2)\\
					&\quad +\int   \partial_x [ R(x)( [(\chi_A \zeta_B)']^2)' ] v_i^2 
					+\int \partial_x[ R(x)[(\chi_A \zeta_B)^2]']  (\partial_x v_i)^2
					-\int 2 R(x)(\chi_A \zeta_B)''  \chi_A \zeta_B v_i  \partial_x^2v_i.
	\end{aligned}
	\end{equation}
	Since
	\[
	\begin{aligned}
	-\int 2 R(x)(\chi_A \zeta_B)''  \chi_A \zeta_B v_i  \partial_x^2v_i
	=& - \int  \partial_x^2[R(x)(\chi_A \zeta_B)''  \chi_A \zeta_B ]  v_i^2+  2 \int R(x)(\chi_A \zeta_B)''  \chi_A \zeta_B  (\partial_x v_i)^2,
	\end{aligned}
	\]
	we get
	\begin{equation}
	\begin{aligned}
	& \int  R(x)(\chi_A \zeta_B \partial_x^2 v_i)^2\\
	&= \int R(x)(\partial_x^2 z_i)^2 +\int R(x) ( -[(\chi_A \zeta_B)'' ]^2 v_i^2 - 4[(\chi_A \zeta_B)']^2 (\partial_x v_i )^2)\\
					& \quad +\int   \partial_x [ R(x)( [(\chi_A \zeta_B)']^2)' ] v_i^2 
					+\int \partial_x[ R(x)[(\chi_A \zeta_B)^2]']  (\partial_x v_i)^2\\
					& \quad - \int  \partial_x^2[R(x)(\chi_A \zeta_B)''  \chi_A \zeta_B ]  v_i^2+  2 \int R(x)(\chi_A \zeta_B)''  \chi_A \zeta_B  (\partial_x v_i)^2\\
	&= \int R(x)(\partial_x^2 z_i)^2
	+\int [-\partial_x^2[R(x)(\chi_A \zeta_B)''  \chi_A \zeta_B ] +\partial_x [ R(x)( [(\chi_A \zeta_B)']^2)' ] -R(x)[(\chi_A \zeta_B)'' ]^2] v_i^2\\
	&\quad +\int \Big[ 2 R(x)(\chi_A \zeta_B)''  \chi_A \zeta_B+ \partial_x[R(x)[(\chi_A \zeta_B)^2]']  - 4R(x) [(\chi_A \zeta_B)']^2 \Big] (\partial_x v_i)^2.
	\end{aligned}
	\end{equation}
		}
	Now we perform the following splitting: 
		\begin{equation}
	\begin{aligned}
	\int  R(x)(\chi_A \zeta_B \partial_x^2 v_i)^2
	=& \int R(x)(\partial_x^2 z_i)^2\\
	& +\int \bigg[\partial_x [R(x)([(\chi_A \zeta_B)']^2)' ] -  R(x)((\chi_A \zeta_B)'')^2 
	- \partial_x^2[R(x)(\chi_A \zeta_B)'' \chi_A \zeta_B]\bigg] v_i^2\\
	&+\int \bigg[\partial_x [R(x)(\chi_A^2 \zeta_B^2)'] 
	+2  R(x)(\chi_A \zeta_B)'' \chi_A \zeta_B   
	-4 R(x)((\chi_A \zeta_B)' )^2\bigg](\partial_x v_i)^2\\
	=: &~{} R_1+R_2+R_3.
	\end{aligned}
	\end{equation}
	Firstly, we will focus on $R_2$. The term that accompanies to $v_i^2$, holds the following decomposition
	\begin{equation}\label{eq:R2_term}	
	\begin{aligned}
	\partial_x& [R(x)([(\chi_A \zeta_B)']^2)' ] -  R(x)((\chi_A \zeta_B)'')^2 
	- \partial_x^2[R(x)(\chi_A \zeta_B)'' \chi_A \zeta_B]\\
		=&	 \chi_A^2\zeta_B^2 \tilde{R}(x)+\mathcal{E}_2  (R(x),x)\zeta_B^2,
\end{aligned}
\end{equation}
where
	\begin{equation}\label{eq:tildeR}	
\begin{aligned}
\tilde{R}(x)=&	  
-2R(x)\left[\frac{\zeta_B^{(4)}}{\zeta_B}+\frac{\zeta_B'''}{\zeta_B}\frac{\zeta_B'}{\zeta_B}\right]-2	R'(x)\frac{\zeta_B'''}{\zeta_B}
- R''(x)\frac{\zeta_B''}{\zeta_B},
\end{aligned}
\end{equation}
and
	\begin{equation}\label{eq:E2}	
\begin{aligned}
\mathcal{E}_2(R(x),x)
=&
-R(x)
\bigg(
\chi_A^{(4)} \chi_A 
+4\chi_A''' \chi_A \frac{\zeta_B'}{\zeta_B^2}
+6\chi_A'' \chi_A   \frac{\zeta_B''}{\zeta_B}
+2(\chi_A^2)' \frac{\zeta_B'''}{\zeta_B}  
\bigg)\\
&
-R'(x)
\bigg(
2\chi_A''' \chi_A 
+6\chi_A'' \chi_A  \frac{\zeta_B'}{\zeta_B}
+6
\chi_A' \chi_A \frac{\zeta_B''}{\zeta_B}
\bigg) \\
&- R''(x)
\left(
\chi_A'' \chi_A +\frac12 [\chi_A^2]' \frac{\zeta_B'}{\zeta_B}
\right) .
\end{aligned}
\end{equation}
Rewriting $R_2$, we obtain
	\begin{equation}\label{eq:R2}
	\begin{aligned}
	R_2=&\int \tilde{R}(x)z_i^2
	+\int \mathcal{E}_2(R(x),x) \zeta_B^2 v_i^2.
	\end{aligned}
	\end{equation}
	Secondly, for $R_3$, the term that accompanies to $(\partial_x v_i)^2$ satisfies the following decomposition
		\begin{equation}\label{eq:R3}
	\begin{aligned}
	\partial_x  [R(x)(\chi_A^2 \zeta_B^2)'] 
	+2&  R(x)(\chi_A \zeta_B)'' \chi_A \zeta_B 
	-4 R(x)((\chi_A \zeta_B)' )^2\\
	=& P_R(x) \chi_A^2\zeta_B^2 + \mathcal{E}_3(R(x),x)\zeta_B^2,
	\end{aligned}
	\end{equation}
	where
	\begin{equation}\label{eq:PR}
P_R(x)=
R(x) 
\bigg[
4  \frac{\zeta_B''}{ \zeta_B}  
-2 \bigg(\frac{\zeta_B'}{\zeta_B} \bigg)^2\bigg]
+2R'(x) \frac{\zeta_B'}{\zeta_B},
			\end{equation}
and
	\begin{equation}\label{eq:E3}
	\begin{aligned}
		\mathcal{E}_3(R(x),x)
=&R(x)\bigg[ 
 4\chi_A'' \chi_A -2(\chi_A' )^2+ 2\frac{\zeta_B'}{\zeta_B}(\chi_A^2)'  
  \bigg]
+R'(x)(\chi_A^2)' .
		\end{aligned}
	\end{equation}
	Now, by Claim \ref{claim:P_CZ_vi_x}, we have
\begin{equation}\label{eq:R3b}
\begin{aligned}
\int   P_R(x) \chi_A^2 \zeta_B^2(\partial_x v_i)^2
=& 
\int P_R(x) (\partial_x z_i)^2 +
\int   \left[ P_R'(x) \frac{\zeta_B'}{\zeta_B}+P_R(x) 
\frac{\zeta_B''}{\zeta_B}\right] z_i^2\\
&+\frac12\int  \mathcal{E}_1(P_R(x),x)\zeta_B^2 v_i^2,
\end{aligned}
\end{equation}
where $\mathcal{E}_1$ is given by \eqref{eq:E1}.
Finally, we obtain that $R_3$ has the following decomposition
\begin{equation}\label{eq:R3a}
\begin{aligned}
R_3
=&
\int P_R(x) (\partial_x z_i)^2 
+\int   \left[ P_R'(x) \frac{\zeta_B'}{\zeta_B}+P_R(x) \frac{\zeta_B''}{\zeta_B}\right] z_i^2\\
&+\frac12\int \mathcal{E}_1(P_R(x),x)\zeta_B^2 v_i^2
+\int \mathcal{E}_3(R(x),x) \zeta_B^2(\partial_x v_i)^2.
\end{aligned}
\end{equation}
Collecting $R_1$, \eqref{eq:R2} and \eqref{eq:R3a}, we obtain
\begin{equation}
\begin{aligned}
\int R(x) \chi_A^2\zeta_B^2 (\partial_x^2 v_i)^2
=&
 \int R(x)(\partial_x^2 z_i)^2+\int \tilde{R}(x)z_i^2
+\int \mathcal{E}_2(R(x),x) \zeta_B^2  v_i^2\\
&+\int P_R(x) (\partial_x z_i)^2 +
\int   \left[ P_R'(x) \frac{\zeta_B'}{\zeta_B}+P_R(x) 
\frac{\zeta_B''}{\zeta_B}\right] z_i^2\\
&+\frac12\int \mathcal{E}_1(P_R(x),x)\zeta_B^2 v_i^2
+\int \mathcal{E}_3(R(x),x) \zeta_B^2(\partial_x v_i)^2,
\end{aligned}
\end{equation}
	where $\mathcal{E}_1,\mathcal{E}_2,\mathcal{E}_3$ and $P_R$ are given in  \eqref{eq:E1}, \eqref{eq:E2}, \eqref{eq:E3} and  \eqref{eq:PR}, respectively. Finally, the proof of \eqref{falta} is direct. This concludes the proof of the Claim \ref{claim:R_CZ_vi_xx}.
	
	\bigskip

\section{Proof of Lemma \ref{lem:coercivity}}\label{ap:proof_coercivity}

\begin{proof}
	We claim that for all $v\in H^1(\R)$ that satisfies $\Jap{\LL \phi_0}{v}=0$, one has
	\[
	\Jap{\LL v}{v}\geq 0.
	\]
	Then the conclusion is evident since $\Jap{Q'}{v}=0$. Suppose that for some nonzero $u\in H^1(\R)$ with $\Jap{\LL \phi_0}{u}=0$, we have $\Jap{\LL u }{u}<0$.
	Then, since $\phi_0$ satisfies \eqref{eq:phi0}, 
	\[
	\begin{aligned}
	\Jap{\LL \phi_0}{\phi_0}&
	=\nu_0^{2}\Jap{\partial_x^{-2} \phi_0}{\phi_0}
	=\nu_0^{2}\Jap{\partial_x^{-2}\phi_0}{\partial_x \partial_x^{-1}\phi_0}
	=-\nu_0^{2}\|\partial_x^{-1}\phi_0\|_{L^2}^2<0.
	\end{aligned}
	\]
	Then we observe that the quadratic form  $(\LL \cdot,\cdot )$ is negative definite in $\hbox{span}(\phi_0, u)$. Since $\langle \phi_0,Q\rangle \neq 0$ (see Lemma \ref{no_soy_orto}), there exists $u_0\in span(\phi_0, u)$ such that $u_0\perp Q$ and $\Jap{\LL u_0}{u_0}<0$.
	This is a contradiction with the result
	\[
	\inf_{\Jap{v}{Q}=0} \Jap{\LL v}{v}=0.
	\]
	(See Proposition 2.9 in \cite{weinstein_modulation} for more details.)
\end{proof}



\begin{thebibliography}{99}


	\bibitem{ACKM} M. A. Alejo, F. Cortez, C. Kwak and C. Mu\~noz, \emph{On the dynamics of zero-speed solutions for Camassa-Holm type equations}, International Mathematics Research Notices, , rnz038, \url{https://doi.org/10.1093/imrn/rnz038}.

	\bibitem{AM} M. A. Alejo, and C. Mu\~noz, \emph{Almost sharp nonlinear scattering in one-dimensional Born-Infeld equations arising in nonlinear Electrodynamics}, Proc. AMS 146 (2018), no. 5, 2225--2237.

\bibitem{BZ} L.V. Bogdanov and V.E. Zakharov, \emph{The Boussinesq equation revisited}, Physica D: Nonlinear Phenomena
165 (2002), 137--162.

\bibitem{BCS1} J. L. Bona, M. Chen, and J.-C. Saut, \emph{Boussinesq equations and other systems for small-amplitude long waves in nonlinear dispersive media. I: Derivation and linear theory}, J. Nonlinear. Sci. Vol. 12: pp. 283--318 (2002).
\bibitem{BCS2} J. L. Bona, M. Chen, and J.-C. Saut, \emph{Boussinesq equations and other systems for small-amplitude long waves in nonlinear dispersive media. II: The nonlinear theory}, Nonlinearity 17 (2004) 925--952.

\bibitem{Bona-Sachs} J. L. Bona, and R. L. Sachs, \emph{Global existence of smooth solutions and stability of solitary waves for a generalized Boussinesq equation.} Comm. Math. Phys. 118 (1988), no. 1, 15--29.

\bibitem{Bou1} J. Boussinesq, \emph{Th\'eorie des ondes et des remous qui se propagent le long d'un canal rectangulaire horizontal
en communiquant au liquide contenu dans ce canal des vitesses sensiblement pareilles de la surface au fond},
J. Math. Pure Appl. (2) 17 (1872), 55--108.

\bibitem{S_NLS} S.--M. Chang, S. Gustafson, K. Nakanishi, and Tai--Peng Tsai, \emph{Spectra of linearized operators for NLS solitary waves}, SIAM J. Math. Anal., 39(4), 1070--1111.


\bibitem{CL} C. Charlier and J. Lennells, \emph{The "good" Boussinesq equation: A Riemann-Hilbert approach}, \url{https://arxiv.org/abs/2003.02777}.
	
	\bibitem{CMPS_zakharov2d} R. C\^ote, C. Mu\~noz, D. Pilod, and G. Simpson, \emph{Asymptotic stability of high-dimensional Zakharov--Kuznetsov solitons},  Arch Rational Mech Anal 220, 639--710 (2016). 
	
	\bibitem{FST} G.E. Fal'{}kovich, M.D. Spector, S.K. Turitsyn, \emph{Destruction of stationary solutions and collapse in the nonlinear string equation}, Physics Letters A,  99, Issues 6-7, (1983), 271--274.
	

%
%




%


%



\bibitem{Greenberg} 	L. Greenberg,  \emph{An oscillation method for fourth-order, selfadjoint, two-point boundary value problems with nonlinear eigenvalues}, SIAM Journal on Mathematical Analysis 1991 22:4, 1021--1042.

\bibitem{GSS_stability} M. Grillakis, J. Shatah and W. Strauss, \emph{Stability theory of solitary waves in the presence of symmetry I}, Journal of Functional Analysis, 74, 160-197 (1987).

\bibitem{Notes_linares} F. Linares, \emph{Notes on Boussinesq Equation}, available at \url{http://preprint.impa.br/FullText/Linares__Fri_Dec_23_09_48_59_BRDT_2005/beq.pdf}, 71pp. (2005).

%
%
%
%

\bibitem{KZ} V.K. Kalantarov, O.A. Ladyzhenskaya , \emph{The occurrence of collapse for quasilinear equations of parabolic and hyperbolic types}, J. Sov. Math.10, 53--70 (1978).

\bibitem{kishimoto} N. Kishimoto, \emph{Sharp local well-posedness for the "good" Boussinesq equation}, J. Differential Equations,  254 (2013), 2393--2433.

\bibitem{KMM2017} M. Kowalczyk, Y. Martel, and C. Mu\~noz, \emph{Kink dynamics in the $ \phi^4$ model: Asymptotic stability for odd perturbations in the energy space}, J. Amer. Math. Soc. 30 (2017), 769-798.

\bibitem{KMM} M. Kowalczyk, Y. Martel, and C. Mu\~noz, \emph{Soliton dynamics for the 1D NLKG equation with symmetry and in the absence of internal modes}, to appear in J. Eur. Math. Soc. 

\bibitem{kink} M. Kowalczyk, Y. Martel, C. Mu\~noz, and Hanne Van Den Bosch, \emph{A sufficient condition for asymptotic stability of kinks in general (1+1)-scalar field models}, \url{https://arxiv.org/abs/2008.01276}.

\bibitem{MS} J. Maddocks and R.L Sachs \emph{On the stability of KdV multi--solitons}, Comm. Pure Appl. Math., 46: 867-901.


%
%


%
%
%
%
%
\bibitem{LOWX2020} B. Li, M. Ohta, Y. Wu, and J. Xue, \emph{Instability of the solitary waves for the generalized Boussinesq equations}, SIAM J. Math. Anal., 52(4), 3192--3221.

\bibitem{Linares} F. Linares, \emph{Global existence of small solutions for a generalized Boussinesq equation}, J. Diff. Eqns. 106 (1993), 257--293.

\bibitem{Liu} Y. Liu, \emph{Instability and blow-up of solutions to a generalized Boussinesq equation}, SIAM J. Math. Anal., 26(6), 1527--1546.

\bibitem{Martel_linearKDV} Y. Martel, \emph{Linear problems related to asymptotic stability of solitons of the generalized KdV equations}, SIAM J. Math. Anal., 38(3), 759-781.


\bibitem{Martel-Merle1} Y. Martel, and F. Merle, \emph{A Liouville theorem for the critical generalized Korteweg-de Vries equation,} J. Math. Pures Appl. (9) 79 (2000), no. 4, 339--425.

\bibitem{Martel-Merle2} Y. Martel, and F. Merle, \emph{Asymptotic stability of solitons for subcritical generalized KdV equations}  Arch. Ration. Mech. Anal. 157 (2001), no. 3, 219--254.

\bibitem{MM_solitonsKdV} Y. Martel, and F. Merle, \emph{Asymptotic stability of solitons of the subcritical gKdV equations revisited}, Nonlinearity 18 (2005), no. 1, 55--80.

\bibitem{MK} H. P. McKean, \emph{Boussinesq's equation on the circle}, Comm. Pure Appl. Math., 34 (1981), pp. 599--691.

\bibitem{MaMu} C. Maul\'en, and C. Mu\~noz, \emph{Decay in the one dimensional generalized Improved Boussinesq equation}, SN Partial Differential Equations and Applications volume 1, Article number: 1 (2020).

\bibitem{MPP} C. Mu\~noz, F. Poblete and J.C. Pozo, \emph{Scattering in the energy space for Boussinesq equations}. Commun. Math. Phys. 361, 127--141 (2018).

\bibitem{SR} Scott Russell, \emph{Report of waves}, 1844.

%
%
%
%
%
%
\bibitem{PW} R. Pego, M.  Weinstein, \emph{Eigenvalues, and instabilities of solitary waves}. Philos. Trans. Roy. Soc. London Ser. A 340 (1992), no. 1656, 47--94.
%
\bibitem{PW2} R. Pego, M.  Weinstein, \emph{Convective Linear Stability of Solitary Waves for Boussinesq Equations}, Studies in Applied Mathematics, 99, pp. 311-375 (1997).
%
%
\bibitem{Smereka}  P. Smereka, \emph{A remark on the solitary wave stability for a Boussinesq equation}. Nonlinear dispersive wave systems (Orlando, FL, 1991), 255-263, World Sci. Publ., River Edge, NJ, 1992.
%

\bibitem{weinstein_modulation} M. I. Weinstein, \emph{Modulational Stability of Ground States of Nonlinear Schr\"odinger Equations}, SIAM J. Math. Anal., 16(3), 472--491.


\bibitem{whitman} G. Whitham, \emph{Linear and nonlinear waves}, Pure and Applied Mathematics, John Wiley, 1974, 636pp. 

\bibitem{Z} V.E Zakharov, \emph{On stochastization of one-dimensional chains of nonlinear oscillations}, Soviet Phys. JEPT 38 (1974), 108-110.

\end{thebibliography}
\end{document}